\numberwithin{equation}{section}
\theoremstyle{plain}
\newcommand{\yh}[1]{}%{{\color{red} \sf YH: [#1]}}
\newcommand{\yhn}[1]{}%{{\color{red} \sf YHnew: [#1]}}
\renewcommand{\todo}[1]{}%{{\color{blue} \sf TODO: [#1]}}
\newcommand{\pn}[1]{}%{{\color{green} \sf PN: [#1]}}
\newcommand{\A}{\ensuremath{{\mathbb{A}}}}
\newcommand{\C}{\ensuremath{{\mathbb{C}}}}
\newcommand{\Z}{\ensuremath{{\mathbb{Z}}}}
\newcommand{\Q}{\ensuremath{{\mathbb{Q}}}}
\newcommand{\R}{\ensuremath{{\mathbb{R}}}}
\newcommand{\F}{\ensuremath{{\mathbb{F}}}}
\newcommand{\G}{\ensuremath{{\mathbb{G}}}}
\newcommand{\I}{\ensuremath{{\mathbb{I}}}}
\newcommand{\D}{\ensuremath{{\mathbb{D}}}}
\newcommand{\B}{\ensuremath{{\mathbb{B}}}}\newcommand{\PB}{\ensuremath{{\text{P}\mathbb{B}}}}
\newcommand{\g}{\ensuremath{{\mathfrak{g}}}}
\newcommand{\h}{\ensuremath{{\mathfrak{h}}}}
\newcommand{\vv}{\ensuremath{v_{\B,\L}}}
\newcommand{\ddager}{\ensuremath{\star}}
\newcommand{\pr}{\ensuremath{\text{Pr}}}
\newcommand{\E}{\ensuremath{{\mathbb{E}}}}
\renewcommand{\L}{\ensuremath{{\mathbb{L}}}}
\renewcommand{\l}{\ensuremath{d}}
\newcommand{\Vol}{\text{Vol}}
\newcommand{\GL}{\ensuremath{{\text{GL}}}}
\newcommand{\PGL}{\ensuremath{{\text{PGL}}}}
\newcommand{\Char}{\ensuremath{{\text{char}}}}
\newtheorem{theo}{Theorem}[section]
\newtheorem{lem}[theo]{Lemma}
\newtheorem{prop}[theo]{Proposition}
\newtheorem{cor}[theo]{Corollary}
\theoremstyle{remark}
\newtheorem{rem}[theo]{Remark}
\newtheorem{example}[theo]{Example}
\theoremstyle{definition}
\newtheorem{defn}[theo]{Definition}
\newtheorem*{cor*}{Corollary}
\newtheorem{asmp}[theo]{Setting}
\newcommand{\zxz}[4]{\begin{pmatrix} #1 & #2 \\ #3 & #4 \end{pmatrix}}
\newcommand{\Hom}{\operatorname{Hom}}
\newcommand{\Ind}{\operatorname{Ind}}
\newcommand{\Tr}{\text{Tr}}
\newcommand{\Nm}{\text{Nm}}
\newcommand{\Supp}{\text{Supp }}
\newcommand{\z}{\mathfrak{z}}
\newcommand{\epsiBL}{\ensuremath{{\epsilon_{\B,\L}}}}
\newcommand{\imaginary}{\text{minimal}} %Change notion
\newcommand{\epsi}{j}
\begin{document}
\bibliographystyle{plain}
\title{New test vector for Waldspurger's period integral, relative trace formula, and hybrid subconvexity bounds}
\author{Yueke Hu}
\address{Yau Mathematical Sciences Center\\ Tsinghua University\\
	 Beijing 100084\\
	  China}

\author{Paul D. Nelson}
\address{Department of Mathematics\\
  ETH Zurich\\
  Zurich\\
  Switzerland}

\begin{abstract}
  In this paper we give quantitative local test vectors for Waldspurger's period integral (i.e., a toric period on $\GL_2$) in new cases with joint ramifications.  The construction involves minimal vectors, rather than newforms and their variants.   %Such vectors have better properties than standard newforms in terms of their Whittaker functions and matrix coefficients, and were used in authors' previous work to study the sup norm problem and the relation between subconvexity and QUE problems. 
  This paper gives a uniform treatment for the matrix algebra and division algebra cases under mild assumptions, and establishes an explicit relation between the size of the local integral and the finite conductor $C(\pi\times\pi_{\chi^{-1}})$. As an application, we combine the test vector results with the relative trace formula, and prove a hybrid type subconvexity bound which can be as strong as the Weyl bound in proper range.
\end{abstract}

\maketitle
\tableofcontents

\section{Introduction}
\yhn{Notation changes: cyclic subgroups $\mu_\E$ lifting the nonzero elements of residue field is changed to $u_\E$. Characters $\nu$ which is confused with valuation are changed to $\mu$.}
% \yh
{This paper provides quantitative test vectors for the following
  local integral of Waldspurger's period integral for the cases
  untouched in the previous literatures: \pn{maybe $\varphi$
    rather than
    $\varphi^{\mathbb{B}}$?
    looks a bit noisy.  similarly for $\pi$}\yhn{I think this is a minor issue and would leave it be for the moment}
  % assuming trivial central character and odd residue field characteristic.}
  % The aim of this paper is to give a somewhat complete study of test vector 
  % \pn{clarify ``complete''; also, ``test vector'' vs. ``quantitative test vector''}
  % for the local factor of Waldspurger's period integral.
  \begin{equation}
    I(\varphi^\B,\chi)=\int\limits_{\F^\times \backslash\E^\times }\Phi_{\varphi^\B}(t)\chi^{-1}(t)dt.
  \end{equation}
  Here $\F$ is a %non-archimedean local
  $p-$adic field with odd residue field characteristic and $\E=\F(\sqrt{D})$ is an \'{e}tale quadratic algebra over $\F$, embedded into a quaternion algebra $\B$. $\Phi_{\varphi^\B}$ is the matrix coefficient associated to $\varphi^\B\in \pi^\B$, where $\pi^\B$ is the image of a representation $\pi$ of $\GL_2$ under Jacquet-Langlands correspondence. 
 We shall always enforce the normalization  $\Phi_{\varphi^\B}(1)=1$.
  $\chi$ is a character over $\E$ such that $w_\pi=\chi|_{\F^\times }$.
  % {is trivial}.
  % We assume that $p\neq 2$ is large enough.
  By a proper twist on both $\pi$ and $\chi$,  we can assume
%  \pn{why?  not clear in case of split extension.
%  should perhaps assume $\alpha_\chi$ imaginary?}
 that $\pi$ is minimal, i.e., its level satisfies $c(\pi)\leq c(\pi\otimes\mu)$ for any character $\mu$ of $\F^\times$. %\yhn{Changed the requirement}

  Explicit knowledge of the test vector and the resulting size of the local integral is very useful for applications. In some recent development for example, it has been used for the study of mass equidistribution  in \cite{Hu:a},  moments and subconvexity bound of L-functions in \cite{FileMartin:17a} and \cite{Wu:16a}, as well as congruent number and cube sum problems in \cite{TianYuanZhang:14a} and \cite{CaiShuTian:14a}. The basic goal of this paper is to provide a more complete study of test vectors and local integrals compared to the previously known cases, which the future applications can directly make use of. 

  The new input for this problem is the compact induction theory
  for supercuspidal representations, and we choose so-called
  minimal vectors as test vectors, which naturally arise from
  this theory and can be identified as eigenvectors for certain characters on large compact open subgroups.
%  \pn{work on this a bit; recall ``minimal vectors''
%    from previous works? maybe give an informal definition?}
  Such test vectors, and especially their matrix
  coefficients, have nice properties which have simple
  interpretations in terms of Lie algebras, whose applications to
  period integrals and analytic number theory problems 
  have not been fully exploited. When the nonzero contribution to
  the local integral is from the Lie algebra range,
 % \pn{tweaked} 
 we give a
  uniform construction of test vectors
  % making use of a Kirillov-type trace formula for the supercuspidal representations in a neighbourhood of the origin,
  and compute the size of the local integral.
  The proof does not directly
  rely on Tunnell-Saito's $\epsilon$-value test,
  and provides alternative perspective or reflection for that test. 
 % may be understood as giving a geometric reflection \pn{or something?}
 % of that test.
  When the nonzero contribution comes from the whole torus, then
  $\pi$ and $\pi_{\chi^{-1}}$ must be completely related, 
  and the choice of minimal vector is more direct. %directly comes from the compact induction theory.

  The method  is a natural $p$-adic analogue and extension of some of the
  local calculations
  at the real place in \cite{NelsonVenkatesh:}. In that paper, the Kirillov trace formula is the main tool and one has to stay away from the conductor dropping range. In this paper we make direct use of minimal vectors, which enables us to  deal with the whole conductor dropping range. 
 Hopefully the method applies also to a broader class of groups.

  As a byproduct or direct application, one can conveniently work out the relation between the standard newforms and minimal vectors, and use representation-theoretical approach to evaluate the local integral also for newforms. This strategy is carried out in \cite{HuYinShu18} and applied to study the 3-part full BSD conjecture in \cite{HuYinShu180}.
  
 We also present another application in this paper to an interesting hybrid subconvexity bound for the twisted base change L-function, which can be as strong as Weyl type bound in proper range. The strategy makes use of the relative trace formula developed in \cite{jacquet_sur_1987}, and is close to that of \cite{feigon_averages_2009} while focusing on joint ramifications. The main new  ingredient is to choose the test function to be essentially the matrix coefficient of a minimal vector. On the spectral side, this choice of test function allows us to not only make use of the local Waldspurger's period integral developed in this paper, but also shorten the length of the spectral sum when compared to, for example, choosing as test function the characteristic function of the congruence subgroup for a newform. Meanwhile, the geometric side is still simple enough to get a good upper bound. We shall use consistent languages for controlling the geometric side as those for the local Waldspurger's period integrals. %We won't use any amplification for this result, and one can in principle obtain sub-Weyl bound in proper range once combining the amplification method.
  % we can also give explicit results at least in favourable cases when choosing diagonal translates of standard newforms as test vectors, by studying the relation between minimal vectors and newforms. We shall see that there are additional obstruction for nonvanishing when using standard newforms.

%  \pn{is it worth emphasizing generalizations?}

\subsection{A brief history of test vectors for Waldspurger's period integral}
The study of test vectors when there are ramifications was initiated in \cite{GrossPrasad:91a}. It assumes disjoint ramifications, and describes a test vector in terms of invariance by proper compact open subgroup. For example when the level of the representation is $c(\pi)=4n$, $\E$ is inert and $\chi$ is unramified, the Gross-Prasad test vector is defined to be the unique vector invariant by 
\begin{equation}\label{eq:GrossPrasadTest}
 K_\E(2n)=\E^\times (1+\varpi^{2n}M_2(O_\F)).
\end{equation}%(See Definition \ref{Defn:3.1-compactgroups}).
% Unfortunately, it is a a bit unwieldly to work directly with such vectors in
% the Kirillov model, where they are described in terms of
% sums of $\eps$-values.
% % Giving such a test vector explicitly in a common model like the
% % Kirillov model involves exponential sums and $\eps$-values.
% It also gives little information on the local integral when testing on, for example, newforms.
% \pn{other way to say this?
% let's think more precisely about what we're really trying to
% communicate}

%\pn{
%  One of the aims of this work is to understand
%  the local period for a broader class of test vectors,
%  motivated by various applications.
%  In some applications, one wants to evaluate the local period
%  explicitly
%  on other vectors, such as newvectors.
%  Or something about working more explicitly with such test vectors.
%}

The work in \cite{FileMartin:17a}  gives test vectors in more general situations on $\GL_2$ side. In particular it solves the case when $\E$ is split. When $\E$ is a quadratic field extension, it gives test vector for the range $c(\chi)\geq c(\pi)$. The test vectors used are essentially newforms (more precisely, the diagonal translates of newforms), which can be explicitly given in the Kirillov model and also described by invariance under proper compact subgroups (precisely, conjugates of standard congruence subgroups by a diagonal matrix).  Their method should be directly applicable to a larger range when $c(\pi_{\chi^{-1}})>c(\pi)$, where $\pi_{\chi^{-1}}$ is the representation of $\GL_2$ associated to $\chi^{-1}$ via the theta correspondence or Langlands correspondence.
%Their method can be directly applied to the ramified field extension case, as long as $c(\pi_{\chi^{-1}})>c(\pi)$.

%\todo{To-publish ver. has less content}
In the arXiv version \cite{Hu:a} the first author gives a
partial study of test vectors when $c(\pi_{\chi^{-1}})\leq
c(\pi)$.  When $\pi$ is a supercuspidal representation, it was
found that newforms will fail in certain situations, and one has
to make use of twisted newforms (i.e., vectors associated to
newforms of twisted representations). Explicit evaluation for
the local integral is complicated and was only done in
easier situation when
$c(\pi)$ is much larger than $c(\pi_{\chi^{-1}})$.
\cite{Hu:a} also solves the case when $\pi$ is a principal series using twisted newforms.

Another issue is that \cite{FileMartin:17a} and \cite{Hu:a} heavily rely on newforms, which is not available on the division algebra side. The only case previously known on the division algebra side is from \cite{GrossPrasad:91a}, with disjoint ramifications.

Of course one can assume that Tunnell-Saito's $\epsilon-$value test passes and take the test vector to be the eigenvector for $\chi$ on $\E^\times$, then the local integral is simply the volume of the torus. 
Such test vectors however have not be given explicitly in the Kirillov model, provide no additional information on the local period integral for newforms, and are unwieldy for the relative trace formulae. Our goal is to construct test vectors which overcome these disadvantages and are independent of Tunnell-Saito's $\epsilon-$value test.

\subsection{Alternative choice of test vector}\label{subsec1-3:newtestvec}
In \cite{HuNelsonSaha:17a} a different type of test vectors from supercuspidal representations is used to study the sup-norm problem and the equivalence between QUE and subconvexity. Such test vectors exhibit particularly nice properties for their Whittaker functions and matrix coefficients. The case $c(\pi)=4n$ is considered in \cite{HuNelsonSaha:17a}, where $\pi$ is constructed from a character $\theta$ over another inert quadratic field extension $\L$. More explicitly $\pi$ is compactly induced from a character  $\tilde{\theta}$ of a neighbourhood $J$ of $\L^\times $ ($J=\L^\times (1+\varpi^{n}M_2(O_\F))$ for example for suitable embedding of $\L$), where  $\tilde{\theta}$  is an extension of $\theta $ to $J$. %(see Definition \ref{Defn:3.1-compactgroups} and \ref{Def3-1:extendtheta}). 
Naturally there exists an element $\varphi_\theta$ such that $J$ acts on it by the character  $\tilde{\theta}$, and it can be identified intrinsically in this way. %We will also explicitly describe it in the Kirillov model in Lemma \ref{cor:toricnewforminKirillov}. 

It is the analogue of lowest weight vector from discrete series representation, on which the compact subgroup $\text{SO}(2,\R)$ acts by a character.
It is referred to as \textit{minimal vector} in \cite{HuNelsonSaha:17a}.
The explicit description of its Whittaker function is the key ingredient used in \cite{HuNelsonSaha:17a} to get strong upper and lower bounds of the sup norm of the associated global modular form. Another interesting and important feature is that its associated matrix coefficient is multiplicative on the support. Further more if we consider a single translate of this element $\pi(k)\varphi_\theta$, the new matrix coefficient is a conjugate of the old one and is still multiplicative on support. 

In this paper we work with similar test vectors for the remaining supercuspidal representations for $\GL_2$ or $\D^\times $. 
Using the language of \cite{BushnellHenniart:06a}, a cuspidal type is a triple $(\mathfrak{A}, J, \Lambda)$ where $\mathfrak{A}$ is a chain order, $J$ is a compact (always understood as mod center) open subgroup, and $\Lambda$ is an irreducible representation of $J$ of dimension $1$, $q-1$ or $q$. A cuspidal type always contains a simple character $\tilde{\theta}$ of a subgroup $H^1$, with the property that $g\in G$ intertwines $\tilde{\theta}$ if and only if $g\in J$. In particular $\Lambda|_{H^1}$ is a multiple of $\tilde{\theta}$. The cuspidal type $(\mathfrak{A}, J, \Lambda)$ is associated to $\pi$ if $\pi\simeq c-\Ind_J^G\Lambda$. 
Then an element $\varphi\in\pi$ is called a minimal vector if it is an eigenvector for a simple character $(H^1,\tilde{\theta})$ contained in a cuspidal type $(\mathfrak{A}, J, \Lambda)$ associated to $\pi$.
See Section \ref{Sec:CompactinductionSummary+Kirillov} for exact
definitions and more details. The minimal vectors are also given in the Kirillov model in Appendix \ref{Section:Kirillov} using an explicit intertwining operator.

\begin{rem}\label{Rem1-2:Minimalvec}
\begin{enumerate}	
\item Note that the dimension of minimal vectors for a given cuspidal type is the same as $\dim\Lambda$. One can however require stronger equivariance property to uniquely identify a particular basis for the minimal vectors in the case $\dim \Lambda>1$. See Definition \ref{Defn:generalMinimalVec} for two ways to do this.
\item Any single translate $\pi(g)\varphi$ for a minimal vector $\varphi$ is a minimal vector for a conjugated cuspidal type associated to $\pi$, and all minimal vectors arise in this way since by  \cite{BushnellHenniart:06a} all  cuspidal types associated to $\pi$ are conjugate to each other. 
\item The matrix coefficient of a minimal vector, as given in Lemma \ref{Cor:MCofGeneralMinimalVec}, is supported only on $J$ and almost multiplicative on the support. In particular with a basis identified by Definition \ref{Defn:generalMinimalVec}, minimal vectors for all cuspidal types associated to $\pi$ form an orthonormal basis for $\pi$.
\item  For comparison, let $\varphi_{new}$ be the standard newform for $\pi$.
On one hand $\varphi_{new}$ is invariant by $K_0(p^{c(\pi)})$ with $\Vol(K_0(p^{c(\pi)}))\asymp \frac{1}{q^{c(\pi)}}$. The compact open subgroup defining the Gross-Prasad test vector in \eqref{eq:GrossPrasadTest} has similar volume. 
On the other hand for minimal vectors from the same representation, we have $\Vol(H^1)\asymp_q \frac{1}{q^{c(\pi)/2}}$.
From this perspective, we lose the invariance property by allowing a character, and what we gain is that a much larger compact open subgroup behaves well on the test vector.
\end{enumerate}
\end{rem}

%In Corollary \ref{Cor:MCofGeneralMinimalVec} we describe the associated matrix coefficient. %In Lemma \ref{cor:toricnewforminKirillov} we also describe the minimal vectors in Kirillov model explicitly on the $\GL_2$ side.

\yh{Regarding the definition of minimal vector: the main issue is to clarify what is the test vector naturally arising from a cuspidal type when $\Lambda$ is $q-$dimensional. Recall that $\Lambda|_{H^1}=q \tilde{\theta}$, and $\tilde{\theta}$ of $H^1$ is called simple character which is the starting point of construction of cuspidal type (simple characters already have the property that $g$ intertwines $\tilde{\theta}$ iff $g\in J$). There are a couple of choices.
\begin{enumerate}
\item like in the current version, specify $B^1$ (related to type 1 in Definition \ref{Defn:generalMinimalVec}, mainly for the Case A $c(\theta\chi^{-1})> 1$ and use Lie algebra language) and $H$ (type 2, for the Case B $c(\theta\chi^{-1})\leq1$) and require them to acts by a character. It may not be very natural, but one can emphasise that any conjugation will also be a cuspidal type for $\pi$, and I believe any cuspidal type in $\pi$ will be conjugated to each other.
\item Consider 'polarization' of $\tilde{\theta}$ on $H^1$ to include both types above. One need to however  discuss whether a third polarisation is possible, which I'm not sure how to do this.
\item Just requiring $H^1$ acts by $\tilde{\theta}$. So any vector in $\Lambda$ gives a minimal vector.
\end{enumerate}
For softness, (3) might actually be the right choice. For the strength of our result, I mentioned that the result is stronger than optimisation as we get the period integral is either 0 or maximal possible. Now I think about it, it is actually not completely proven in this paper because of different approaches and test vectors as mentioned in (1) above when $\dim \Lambda=q$. So if we are still aiming for '0 or maximal', then it is always necessary to discuss using type 2 for Case A and type 1 for Case B, which essentially rely on the relation of type 1 and type 2 test vectors. One can get an upper bound for the coefficients (when writing one basis as span of the other), but I see no reason why there will be a lower bound.

On the other hand, if we are only aiming for optimisation, then (3) also seems OK, as we can work with a basis of particular type to get maximum (as in (1)) and then use bilinear property to get an upper bound for a general test vector. I do hope to mention those special types and use them (especially type 1) for local period integral.
}

Note that minimal vectors with similar properties exist in greater generality according to \cite{BushnellKutzko:a} \cite{stevens_supercuspidal_2008}. In a recent work \cite{hu_sup_2018} by the first author, a sub-local sup norm bound was obtained for minimal vectors on $\PGL_n$, which is a direct generalisation of \cite{HuNelsonSaha:17a}.
%We remark here that any single translate of minimal vector can be thought of minimal vector for the conjugated embedding of $\L$.

%/\begin{rem}
%Note here that any element of form $\pi(g)\varphi$ where $\varphi$ is a minimal vector as above will also be thought of as a minimal vector associated to the conjugated compact open subgroup. 
%\end{rem}

\subsection{Main results}
The first purpose of this paper is to provide test vectors for Waldspurger's period integral when $\pi$ is supercuspidal, in a relatively uniform and simple fashion, covering almost all possible situation (especially when $c(\pi)\geq c(\pi_{\chi^{-1}})$).

Unlike the cases $c(\pi)< c(\pi_{\chi^{-1}})$ considered before, in general the local integral may be nonvanishing on the matrix algebra side or  the division algebra side according to Tunnell-Saito's $\epsilon-$value test \cite{Tunnell:83a} \cite{Saito:93a}, which can be tricky to compute.
A particular interesting challenge solved in this paper  is to give a uniform way to find test vectors whose local integral can automatically reflect the $\epsilon-$value test.

Instead of newforms, the nice properties of the minimal vectors discussed in  Remark \ref{Rem1-2:Minimalvec} 
motivate us to use them as test vectors. 
The minimal vectors exist also on the division algebra side, allowing uniform treatment as in the matrix algebra case. 
In fact as minimal vectors exist for general supercuspidal representations of classical groups, one can get a necessary condition for the local period integral to be nonzero using stationary phase analysis.
%similar families of test vectors exist for general supercuspidal representations of classical groups due to the compact induction theory in general.
 %Using their properties  together with the stationary phase analysis, one can get a necessary condition for the local period integrals to be nonzero for these families of test vectors. 
 % For the particular case at hand, see Lemma \ref{Lem:testforWaldsnecessary}. 
 The main goal of this paper is to also get sufficient condition for the non-vanishing of the local integral, using a method which hopefully can be applied in more general settings. %should be adaptable to more general settings.

A simple observation in the case $c(\pi)=4n$ is that, as the matrix coefficients $\Phi$ of the minimal vectors are multiplicative on its support , then $\Phi\chi^{-1}$ must be constant $1$ on the common support for the local integral to be nontrivial.
In general the matrix coefficient for minimal vectors may not be completely multiplicative on the support, but we still get the following main theorem.
\begin{theo}\label{Theo:Main1}
Suppose  that $p$ is large enough. Let $\pi$ be a supercuspidal representation of $\GL_2$ with central character $w_\pi$ and $c(\pi)>2$, 
and $\pi^\B$ be the associated supercuspidal representation for a quaternion algebra $\B$ under the Jacquet-Langlands correspondence. (We allow $\B^\times =\GL_2$ and $\pi^\B=\pi$.) Let $\E$ be a quadratic extension over $\F$ embedded into $\B$ and $\chi$ be a character over $\E$ such that $w_\pi=\chi|_{\F^\times }$. %, $c(\pi_{\chi^{-1}})\leq c(\pi)$.
Then there exists a nontrivial test vector for the local Waldspurger's period integral, if and only if there exists a \textbf{minimal vector }$\varphi^\B$ such that
\begin{equation}
\Phi_{\varphi^\B}\chi^{-1}=1 \text{\ on } \Supp \Phi_{\varphi^\B} \cap \E^\times .
\end{equation}
Moreover if $l$ is an integer such that $C(\pi\times\pi_{\chi^{-1}})=q^{c(\pi)+l}$, then
\begin{equation}
\max_{\text{minimal vector } \varphi'\in \pi^\B
}\{I(\varphi',\chi)\}\asymp_q I(\varphi^\B,\chi)=\Vol(\Supp
\Phi_{\varphi^\B} \cap \E^\times )\asymp_q \frac{1}{q^{l/4}}
\asymp_q
\left(
  \frac{C(\pi \times \bar{\pi})}{C(\pi\times\pi_{\chi^{-1}})} 
\right)^{1/4}.
\end{equation}
%Here  the asymptotic should be understood in the depth aspect.
%\pn{clarify what this means,
%  and rewrite in terms of actual conductors
%}
%\pn{The precise meaning of this statement should be clarified.
%  Are you saying that for each $C > c > 0$
%  there exists $c_0 \geq 0$
%  so that if $c(\pi) \geq c_0$
%  and
%  \[
%    \frac{c}{q^{j/4}} \leq I(\varphi^\B,\chi) \leq \frac{C}{q^{j/4}},
%  \]
%  then there is a test vector iff there is a minimal vector
%  satisfying \eqref{eq:1}?
%  Or is the second assertion \eqref{eq:2} independent of the
%  iff?
%  (If the latter, then ``In that case'' would fit better than ``and'' as the connective.)}

\end{theo}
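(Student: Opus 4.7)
The plan is to exploit the almost-multiplicative structure of matrix coefficients of minimal vectors, as promised by the compact induction theory summarized in Remark \ref{Rem1-2:Minimalvec}(3). Since $\Phi_{\varphi^\B}$ is supported on $J$ and agrees on large subsets with a character extending the simple character $\tilde\theta$ on $H^1$, the restriction $\Phi_{\varphi^\B}\chi^{-1}|_{\E^\times}$ essentially behaves like a character on an open subgroup of $\E^\times \cap J$. For the ``only if'' direction, if this character is nontrivial on some sufficiently small open subgroup of $\F^\times\backslash \E^\times$, then orthogonality of characters forces $I(\varphi^\B,\chi)=0$; applying this reasoning to every translate $\pi^\B(g)\varphi^\B$, which by Remark \ref{Rem1-2:Minimalvec}(2) ranges over all minimal vectors as the cuspidal type is conjugated, rules out nontrivial contributions outside the cases where the pointwise condition $\Phi_{\varphi^\B}\chi^{-1}=1$ on $\Supp\Phi_{\varphi^\B}\cap \E^\times$ holds.

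For the ``if'' direction and the exact evaluation, I would construct the required minimal vector by choosing a compatible embedding and cuspidal type $(\mathfrak{A}, J, \Lambda)$ so that the defining character of $J$ restricts correctly on $\E^\times$. This splits naturally into the two cases hinted at in Section \ref{subsec1-3:newtestvec}: the ``Lie algebra range'' case where $\pi$ is built from a character $\theta$ of a different quadratic extension $\L$ and the alignment $\tilde\theta=\chi$ has to be analyzed via exponential coordinates, versus the ``whole torus'' case where one takes a cuspidal type containing $\E^\times$ itself so that $\varphi^\B$ is literally a $\chi$-eigenvector and the integral is immediately a volume. In either case, once the pointwise condition holds, the integral collapses to $\int_{\F^\times\backslash\E^\times}\charf_{\Supp \Phi_{\varphi^\B}\cap\E^\times}(t)\,dt$, giving the central equality
\begin{equation*}
I(\varphi^\B,\chi)=\Vol\bigl(\Supp\Phi_{\varphi^\B}\cap \E^\times\bigr).
\end{equation*}

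The main obstacle is converting this volume into the announced asymptotic $\asymp_q q^{-l/4}$. My strategy is to identify the depth at which $\Phi_{\varphi^\B}$ survives along $\E^\times$ with the level at which the character data of $\pi$ agrees with that of $\pi_{\chi^{-1}}$; the latter is exactly what $C(\pi\times\pi_{\chi^{-1}})$ measures. Concretely, using that $\pi$ is minimal and supercuspidal one has $C(\pi\times\bar\pi)\asymp_q q^{c(\pi)}$, so that the stated final form $\bigl(C(\pi\times\bar\pi)/C(\pi\times\pi_{\chi^{-1}})\bigr)^{1/4}$ is equivalent to $q^{-l/4}$. The exponent $1/4$ reflects the fact that $\Vol(H^1)\asymp_q q^{-c(\pi)/2}$ (so that depths on $\E^\times\subset J$ convert to volumes with a factor of $1/2$), combined with the square root relating the conductor of $\pi\times\pi_{\chi^{-1}}$ to the level at which $\theta$ and $\chi$ diverge after pulling back through the appropriate intertwining. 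Computing this conductor comparison in terms of the character data (via the Langlands parameter of $\pi_{\chi^{-1}}$) is the technical heart of the argument.

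Finally, the maximality claim follows because, by Remark \ref{Rem1-2:Minimalvec}(2), every minimal vector is a $G$-translate of $\varphi^\B$ and hence its matrix coefficient is a conjugate of $\Phi_{\varphi^\B}$ supported on a conjugate of $J$. Any alternative aligned minimal vector then produces a support intersection $\Supp\Phi\cap\E^\times$ of at most the same size, yielding the asymptotic upper bound over all minimal vectors.
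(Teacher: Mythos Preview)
Your outline has the right skeleton---the case split between ``whole torus'' (where $\E^\times\subset J$, forcing $\L\simeq\E$ and $c(\theta\chi^{-1})$ or $c(\theta\overline{\chi}^{-1})\leq 1$) and ``Lie algebra range'' is exactly how the paper proceeds, and the observation that once the pointwise condition holds the integral is just a volume is the right reduction. But two key technical ideas are missing, and without them the argument does not close.

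First, in the Lie algebra range the paper does \emph{not} argue by orthogonality on the support directly. Instead it rewrites the matrix coefficient of a type~1 minimal vector on $\exp(\g_+)$ as an integral over a ball in the coadjoint orbit, namely $\Phi_{\varphi^\B}(\exp x)\cdot\Char_{B^1} = \int_{U_\L(1)\backslash B^1} e^{\langle g_0^{-1}\alpha_\theta g_0,\,x\rangle}\,dg_0$ (Lemma~\ref{lem:char=intforB1}). Plugging this into $I(\varphi^\B,\chi)$ and swapping integrals gives the clean criterion $\alpha\in(\alpha_\chi+\h_0^\dagger)\cap O_{\pi^\B}$ of Theorem~\ref{Theo:Liealgformulation}. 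This avoids having to identify $\Supp\Phi_{\varphi^\B}\cap\E^\times$ explicitly and is what makes the argument uniform across the six cases of Example~\ref{Example:ListofHJi}. Your orthogonality idea does work in spirit once one knows $J\cap\E^\times\subset ZB^1$ (so that $\Phi_{\varphi^\B}|_{\E^\times}$ really is a character there), but establishing that containment and then solving for which conjugate of the cuspidal type admits a compatible $\chi$ is exactly what the coadjoint formulation packages cleanly.

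Second, your account of the exponent $1/4$ is too heuristic. The paper's mechanism is the identity $c(\pi\times\pi_{\chi^{-1}})=-2v\bigl(\Nm(\alpha_\theta)-\Nm(\alpha_\chi)\bigr)$ (Lemma~\ref{Sec4.4Lem:conductorvsLiealg}), combined with the geometric statement that for $\alpha=\alpha_\chi+\alpha^\perp\in O_{\pi^\B}$ one has $v(\Nm(\alpha^\perp))=-\tfrac{c(\pi)+l}{2}$, and dually for $\beta\in\E$ imaginary, $v(\Nm(\beta^\perp))=v(\Nm(\beta))+\tfrac{c(\pi)-l}{2}$ (Proposition~\ref{Sec4.4Prop:differenceinalpha}). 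This last relation---proved via the commutator identity $[\alpha^\perp,\beta]=[\alpha,\beta^\perp]$---is what converts the conductor data into the depth condition $\vv(b\beta^\perp)\geq i$ governing $ZB^1\cap\E^\times$, and hence into the volume $\asymp_q q^{-l/4}$. Your sentence about ``a square root relating the conductor to the level at which $\theta$ and $\chi$ diverge'' gestures at this but does not supply it; this computation is not a formality and is genuinely the technical heart.

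Finally, your maximality argument is slightly off: not every minimal vector gives an \emph{aligned} cuspidal type, and the paper's claim is that for the particular bases of Definition~\ref{Defn:generalMinimalVec} the integral is either $0$ or equal to the displayed value, whence a general minimal vector (a linear combination) is bounded by it up to $\asymp_q$.
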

\begin{rem}
\begin{enumerate}
\item Here the implied constant can be a bounded power of $q$. 
But the precise dependence on $q$ in the relation $I(\varphi^\B,\chi)\asymp_q \frac{1}{q^{l/4}}$ will be given for different cases in the proof of Proposition \ref{Prop:SizeofLocalint}, or in the Appendix.
%But in the proof of Proposition \ref{Prop:SizeofLocalint}, we also made the dependence on $q$ very precise in all cases.
\item In the case $\dim\Lambda=1$, for any minimal vector $\varphi'$ we actually have either $I(\varphi',\chi)=0$ or $I(\varphi',\chi)=I(\varphi^\B,\chi)$. When $\dim\Lambda>1$, our proof makes use of the particular basis for minimal vectors as in Definition \ref{Defn:generalMinimalVec}. 
Writing a general minimal vector as a linear combination of these basis, one can then easily obtain the part $$\max_{\text{minimal vector } \varphi'\in \pi^\B }\{I(\varphi',\chi)\}\asymp_q I(\varphi^\B,\chi).$$
%Using bilinearity of the local period integral, the size of the local integral for arbitrary minimal vectors can differ at most by a fixed power of $q$.
%\item While we exclude the case $c(\pi)=2$ from Theorem \ref{Theo:Main1}, we shall give a complete discussion of this case in Appendix  \ref{Sec:cpi=2} (as the method used there is close to that in Appendix \ref{Sec:AppendixB}).
\end{enumerate}

\end{rem}
We  sketch the ideas for the computations here. It appears natural to divide the discussions according to whether $\Supp \Phi_{\varphi^\B} \cap \E^\times =\E^\times $. 
When $\Supp \Phi_{\varphi^\B} \cap \E^\times =\E^\times $, $\E^\times $ must act on a type 2 minimal vector (as in Definition \ref{Defn:generalMinimalVec}) exactly by $\chi$, and this occurs only if $\theta $ and $\chi$ are defined over the same quadratic extension and $c(\theta\chi^{-1})$ or $c(\theta\overline{\chi}^{-1})\leq 1$. 
One can easily find test vectors from the construction of $\Lambda$ in this case. 
One the other hand when $\Supp \Phi_{\varphi^\B} \cap \E^\times \subsetneq \E^\times $, 
we show in Lemma \ref{Lem:JcapEwholeorwhat} that when $p$ is large enough,  $\Supp \Phi_{\varphi^\B} \cap \E^\times$ is within the Lie algebra range. 
See Definition \ref{Defn:LiealgRange} for more precise meaning. Then the character $\tilde{\theta}$ used to determine the minimal vector can be identified with an element $\alpha_\theta$ in the dual Lie algebra, which is associated to $\theta$ by Lemma \ref{Lem:DualLiealgForChar}(2) and embedded into $\B$. Let $O_{\pi^\B}=\{g^{-1}\alpha_\theta g\}$ be the coadjoint orbit associated to $\pi^\B$.
The main new idea is to represent the matrix coefficient $\Phi_{\varphi^\B}$ of a type 1 minimal vector in a sufficiently large neighbourhood
as an integral over a ball in the coadjoint orbit of
$\alpha_\theta$ as in Corollary \ref{lem7.1:char=int}, Lemma 
\ref{lem:char=intforB1}. This way one can  avoid the
discussion about $\Supp \Phi_{\varphi^\B}\cap \E^\times $, and obtain an equivalent description for the local integral to be nonzero in a soft and uniform fashion. In particular we obtain the following result:

\begin{theo}\label{Theo:Liealgformulation}
Suppose that the nonzero contribution to the local integral comes from Lie algebra range. %\pn{This is a great improvement; it leaves only the ``low
%  ramification'' case
%  to think about.  Would be great to find a formulation that
%  works there too.
%}
Then there exists a nontrivial test vector for the local Waldspurger's period integral, if and only if there exists an element $\alpha$ in the coadjoint orbit $O_{\pi^\B}$ of $\alpha_\theta$ such that
\begin{equation}\label{eq:introLiealgCondition}
\alpha\in \alpha_\chi+\h_0^\dagger \cap O_{\pi^\B}.
\end{equation}
Here  $\h_0$ is a lattice in the Lie algebra for $\E$ as in Definition \ref{Defn:LiealgRange}, and $\h_0^\dagger$ is the dual module for $\h$ as in Definition \ref{Defn:dualLattice}. %, and the condition amounts to that $\alpha$ projects to $\alpha_\chi$ on $\h$.
In that case we have as in the previous version
%the corresponding local period integral is of size
%\begin{equation}
% I%=\Vol(\Exp(A_\varphi\cap A_\E))
% \asymp \frac{1}{q^{l/4}}.
%\end{equation}
\begin{equation}
\max_{\text{minimal vector } \varphi'\in \pi^\B }\{I(\varphi',\chi)\}\asymp I(\varphi^\B,\chi)=\Vol(\Supp \Phi_{\varphi^\B} \cap \E^\times )\asymp_q \frac{1}{q^{l/4}}.
\end{equation}
%Here $\Exp$ is the standard exponential map on the Lie algebra and $<\cdot,\cdot>_\B$ is the pairing between the Lie algebra and its dual by trace pairing.
\end{theo}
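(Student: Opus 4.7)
The plan is to translate the local period integral from the group level into Fourier analysis on the Lie algebra, using the integral representation of the matrix coefficient $\Phi_{\varphi^{\B}}$ over the coadjoint orbit $O_{\pi^{\B}}$. Since by hypothesis the nonzero contribution comes from the Lie algebra range, Lemma \ref{Lem:JcapEwholeorwhat} reduces the torus integral to one over a neighbourhood of the identity of the form $\exp(\h_0)$, so we may parametrise $t = \exp(X)$ with $X \in \h_0$. Under this identification, Lemma \ref{Lem:DualLiealgForChar}(2) (or its analogue on $\E^{\times}$) gives $\chi^{-1}(\exp X) = \psi(-\langle \alpha_\chi, X\rangle)$ for a fixed additive character $\psi$ of $\F$, while Corollary \ref{lem7.1:char=int} and Lemma \ref{lem:char=intforB1} express the matrix coefficient of a type 1 minimal vector $\varphi^{\B}$ as
\begin{equation*}
\Phi_{\varphi^{\B}}(\exp X) \;=\; \frac{1}{\Vol(B)} \int_{B} \psi(\langle \alpha, X\rangle)\, d\alpha,
\end{equation*}
where $B \subset O_{\pi^{\B}}$ is a small ball centred at the conjugate of $\alpha_\theta$ singled out by $\varphi^{\B}$.

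Next I would substitute these two expressions into the defining integral for $I(\varphi^{\B}, \chi)$ and interchange the order of integration, obtaining
\begin{equation*}
I(\varphi^{\B}, \chi) \;=\; \frac{1}{\Vol(B)} \int_{B} \left( \int_{\h_0} \psi\bigl(\langle \alpha - \alpha_\chi,\, X\rangle\bigr)\, dX \right) d\alpha.
\end{equation*}
The inner integral is the standard orthogonality computation on a lattice: it vanishes unless $\alpha - \alpha_\chi$ pairs trivially with $\h_0$, i.e.\ unless $\alpha \in \alpha_\chi + \h_0^\dagger$, in which case it equals $\Vol(\h_0)$. Thus
\begin{equation*}
I(\varphi^{\B}, \chi) \;\asymp_q\; \frac{\Vol(\h_0)}{\Vol(B)} \cdot \Vol\bigl( B \cap (\alpha_\chi + \h_0^\dagger) \bigr),
\end{equation*}
which is nonzero exactly when $B \cap (\alpha_\chi + \h_0^\dagger) \neq \emptyset$. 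Since we are free to conjugate the minimal vector, and thereby move $B$ to a neighbourhood of any preassigned point of $O_{\pi^{\B}}$, non-vanishing of some $I(\varphi', \chi)$ among minimal vectors is equivalent to the existence of some $\alpha \in O_{\pi^{\B}}$ with $\alpha - \alpha_\chi \in \h_0^\dagger$, which is the condition \eqref{eq:introLiealgCondition}.

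Finally, for the size estimate, once the intersection is nonempty it may be studied as a transverse intersection: the coadjoint orbit and the affine subspace $\alpha_\chi + \h_0^\dagger$ meet in a sublattice of the expected dimension, and the depth at which they meet is governed by the relative conductor $C(\pi \times \pi_{\chi^{-1}}) = q^{c(\pi)+l}$. A direct book-keeping of lattice indices, weighted against the volumes of $B$ and $\h_0$, produces the desired scaling $\asymp_q q^{-l/4}$. The hard part is to justify the integral representation of $\Phi_{\varphi^{\B}}$ on a neighbourhood large enough to contain all of $\exp(\h_0)$ (this is where being in the Lie algebra range, and the largeness of $p$, are essential), and then to verify that the transverse-intersection count has the asserted uniform behaviour across the splitting types of $\E/\F$ and across the matrix-algebra versus division-algebra cases for $\B$.
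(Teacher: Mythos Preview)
Your approach to the non-vanishing criterion is essentially the paper's: Proposition~\ref{Prop:localperiodintegral} carries out exactly the swap-of-integration you describe, arriving at
\[
I(\varphi,\chi)=\Vol(\z_0\backslash \h_0)\int_{g_0\in U_\L(1)\backslash B^1}\Char_{\alpha_\chi+\h_0^\dagger}(g_0^{-1}\alpha g_0)\,dg_0,
\]
and Corollary~\ref{Cor4.2:unrelatedcriterion} draws the same conclusion you do about \eqref{eq:introLiealgCondition}. One small point you gloss over: the torus integral is over $\F^\times\backslash\E^\times$, so on the Lie algebra side it is over $\z_0\backslash\h_0$, not $\h_0$; the central-character condition $w_\pi=\chi|_{\F^\times}$ is what makes this harmless.

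For the size estimate the paper takes a different route. Rather than computing the orbit--affine-space intersection volume you propose, it observes (Proposition~\ref{Prop:SizeofLocalint}) that since $\Phi_\varphi\chi^{-1}$ is multiplicative on the common support, non-vanishing forces it to be constantly $1$ there, so $I(\varphi,\chi)=\Vol(ZB^1\cap\E^\times)$. This volume is then computed on the torus side: writing $e=a+b\beta\in\E^\times$ with $\beta$ imaginary and decomposing $\beta=\beta_\L+\beta^\perp$ along $\L\oplus\L^\perp$, membership in $ZB^1$ reduces to the single condition $\vv(b\beta^\perp)\geq i$ (or $i'$), and Proposition~\ref{Sec4.4Prop:differenceinalpha} supplies $v(\Nm(\beta^\perp))$ in terms of $c(\pi)$ and $l$. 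The paper explicitly remarks that your direct intersection computation is also possible but that the torus-side approach is simpler and feeds more cleanly into the later orbital-integral estimates in Section~\ref{Section:LocalLemOrbitInt}. Your ``transverse intersection plus lattice book-keeping'' sketch would work, but would require making precise the key geometric input of Proposition~\ref{Sec4.4Prop:differenceinalpha} in orbit-side language.
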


\begin{rem}
	When $p$ is large enough, the vague condition on nonzero contribution coming from Lie algebra will be replaced by a more precise condition ($*$) before Corollary \ref{Cor4.2:unrelatedcriterion}.
	
	In Lemma \ref{Lem4.2:unrelatedbetter}, we will also show that \eqref{eq:introLiealgCondition} is equivalent to the existence of $\alpha\in\alpha_\chi+\E^\perp\cap O_{\pi^\B}$.
\end{rem}

\begin{rem}%\yhn{Also need update}
 One can take a more direct approach to parameterize the family of minimal vectors, carefully identify the common support $\Supp \Phi_{\varphi^\B} \cap \E^\times $ for them, and check whether $\Phi_{\varphi^\B}\chi^{-1}=1$ on the common support. 
 In this fashion, one can actually assume $p\neq 2$ only, and deal with the case $c(\pi)=2$ as well. But this method depends on explicit parametrizations and requires case by case computations. So for conciseness we leave them in Appendix and consider only certain settings.
\end{rem}
\begin{rem}
\eqref{eq:introLiealgCondition} encodes a lot of information regarding the relation between $\pi^\B$ and $\chi$.
We show in Proposition \ref{Prop:Dichotomy} that \eqref{eq:introLiealgCondition} has solution at exactly one side of the matrix algebra or division algebra.
Assuming multiplicity one, this is equivalent to the dichotomy that
%\pn{worth emphasizing?}
%the dichotomy  that
\begin{equation}
\dim \Hom_{\E^\times }(\pi, \chi)+\dim \Hom_{\E^\times }(\pi^\B, \chi)=1.
\end{equation}
%\pn{Perhaps soften a bit: Tunnell--Saito
%  can be proved using character theory,
%  and our proof amounts to an asymptotic/approximate application
%  of character theory,
%  so the proof is not really ``so new;''
%  instead it is basically an approximate proof of a less precise
%  result or something like that.
%}
%\yhn{Our method is not exactly character theory, by making use of a single vector and its matrix coefficient}
%\pn{
%  Something.
%  Add a couple words explaining difference between this and
%  XYZ test vectors
%  in that we produce it more directly, without
%  appealing to Tunnell--Saito.
%  That being said, our methods may be understood as
%  (TODO: fix) approximate
%  or parallel
%  variants of some of the character theoretic proofs of
%  Tunnell--Saito,
%  except that we work at the level of individual vectors?
%  (Make sure that we are \emph{informing} the reader
%  of what we're doing rather than, say, bragging that we managed
%  to avoid Tunnell--Saito, because nobody cares if you can
%  avoid a true theorem; the point is just to indicate
%  what's going on with our method, and that we get the vectors
%  more directly
%  and so they can be usefully applied in other applications.)
%}
%Our main results are also independent of the Tunnell-Saito's $\epsilon-$value test. 
Furthermore, whether $\Phi_{\varphi^\B}\chi^{-1}=1$ on  $\Supp \Phi_{\varphi^\B} \cap \E^\times $ is possible, or whether there exists $\alpha$ in the coadjoint orbit $O_{\pi^\B}$ satisfying \eqref{eq:introLiealgCondition} are both equivalent to whether certain quadratic equation has solutions after proper parametrisation, which in turn is equivalent to  Tunnell-Saito's $\epsilon-$value test,
%Tunnell and Saito's proofs use explicit trace character. Our method on the other hand essentially uses trace character
providing a geometric interpretation of the test in the related setting.
\end{rem}
\subsection{Application to  hybrid subconvexity bounds}
Now we show a global application for the test vector for Waldspurger's period integral we have developed so far. We shall work in the number field setting and use subscript $v$ to denote the local components in this subsection.

Let $\F$ be a totally real number field now.
Let $\pi$ be an automorphic representation of $\GL_2$ over $\F$ with trivial central character, and let $\chi$ be a Hecke character over a fixed quadratic field extension $\E$ such that $\chi|_{\A_\F^\times}=1$. Suppose that $L(\Pi\otimes\chi^{-1},1/2)\neq 0$, then by \cite{Walds} there exists a quaternion algebra $\B$ (as decided by Tunnell-Satio's $\epsilon-$value test)  such that for $\varphi^\B\in \pi^\B$, 
\begin{equation}\label{Intro1Eq:GlobalWalds}
\left|\int_{t\in [\F^\times\backslash\E^\times]}\varphi^\B(t)\chi^{-1}(t)dt\right|^2 \approx L(\Pi\otimes\chi^{-1},1/2)\prod\limits_{v}I^0_v(\varphi^\B,\chi).
\end{equation}
Here $\approx$ means up to some unimportant factors, and $I^0_v(\varphi^\B,\chi)$ is essentially the local Waldspurger's period integral as in Theorem \ref{Theo:Main1}, normalised by local L-factors. Both sides can be nontrivial with proper choice of test vectors.
Let $C(\pi),C(\pi_{\chi^{-1}})$ be the finite conductors of $\pi$ and $\pi_{\chi^{-1}}$.

The basic idea now, is to use Jacquet's relative trace formula for Waldspurger's period integral from \cite{jacquet_sur_1987}, under some specific settings from \cite{feigon_averages_2009}.
In accordance to the purpose of this paper, we restrict ourselves to the setting that $\pi_v$ is a supercuspidal representation at a fixed place $v=\mathfrak{p}$, and is unramified at other finite places. For simplicity, we shall also assume additional local conditions in Setting  \ref{Assumption:Localdisjoint}, \ref{Assumption:Local}, so that we can directly use the computations from \cite{feigon_averages_2009} at other places. For convenience, we consider two extreme scenarios, where $\pi$ and $\chi$ have either completely disjoint ramifications, or completely joint ramifications.
\begin{theo}\label{Theo:HybridsubDisjointram}
	For notations as above, suppose that $\pi$ and $\chi$ have  disjoint ramifications as in Setting \ref{Assumption:Localdisjoint}. Let $C(\pi)=C(\pi_{\chi^{-1}})^\delta$ for $0<\delta<\infty$. Then $C(\Pi\otimes\chi^{-1})=C(\pi_\chi)^{2(1+\delta)}$, and
	\begin{equation}\label{Eqintro:Hybridsubdisjoint}
	L(\Pi\otimes \chi^{-1},\frac{1}{2})\ll_{q,\epsilon} C(\pi_\chi)^{\frac{1}{2}\max\{\delta, \frac{\delta}{2}+1\}+\epsilon}.
	\end{equation}
	In particular we obtain a subconvexity bound for $L(\Pi\otimes \chi^{-1},\frac{1}{2})$ in the hybrid range $0<\delta<\infty$, which can be as strong as the Weyl bound when $\delta=2$.
\end{theo}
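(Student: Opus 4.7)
The plan is to deploy Jacquet's relative trace formula on $\B^\times$ in the framework of \cite{feigon_averages_2009}, the essential new ingredient being the choice at the supercuspidal place $\mathfrak{p}$ of the test function $f_\mathfrak{p}=\overline{\Phi_{\varphi^\B}}$, the matrix coefficient of the minimal vector singled out by Theorem \ref{Theo:Main1}. Concretely, take $f=\otimes_v f_v$ with $f_v$ the Feigon-type test functions at the remaining ramified places (dictated by Setting \ref{Assumption:Localdisjoint}) and the appropriate Hecke unit at unramified places, form the kernel $K_f(x,y)=\sum_{\gamma\in\B^\times(\F)}f(x^{-1}\gamma y)$, and consider the toric distribution $J(f)=\iint K_f(t_1,t_2)\chi(t_1)\chi^{-1}(t_2)\,dt_1\,dt_2$ over $[\F^\times\backslash\E^\times]^2$.

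On the spectral side, $\Pi'(f_\mathfrak{p})$ is a projection onto the space of minimal vectors in $\Pi'_\mathfrak{p}$, so only those $\Pi'$ whose $\mathfrak{p}$-component is the Jacquet--Langlands transfer of $\pi_\mathfrak{p}$ contribute; the choices at the other ramified places further localize the spectral sum to a short window around $\Pi$, which is the main advantage over congruence-subgroup test functions. The full expansion is a nonnegative sum of diagonal terms $|P(\Pi'(f)\varphi')|^2/\|\varphi'\|^2$ (where $P$ denotes the toric period), so dropping everything but $\Pi$ and invoking Waldspurger's formula \eqref{Intro1Eq:GlobalWalds} yields
\[
L(\Pi\otimes\chi^{-1},1/2)\,\prod_v I^0_v(\varphi^\B,\chi) \ll |J(f)|.
\]
In Setting \ref{Assumption:Localdisjoint}, disjointness of ramifications lets one evaluate the local factors via Theorem \ref{Theo:Main1} at $\mathfrak{p}$ and matching at the other places, so $\prod_v I^0_v$ is $C(\pi_\chi)^{o(1)}$ and the target $L$-value is dominated by $|J(f)|$.

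On the geometric side, $J(f)$ unfolds into orbital integrals indexed by $\E^\times\backslash\B^\times(\F)/\E^\times$. Away from $\mathfrak{p}$, the orbital estimates of \cite{feigon_averages_2009} apply verbatim. At $\mathfrak{p}$, $f_\mathfrak{p}$ is supported on the compact-open subgroup $J\supset\E^\times$ and is almost multiplicative by Remark \ref{Rem1-2:Minimalvec}(3); each regular orbital integral therefore collapses, up to a character twist along $\E^\times$, to a volume of the form $\Vol(\Supp\Phi_{\varphi^\B}\cap t\E^\times)$ controlled by the same technology as the main integral in Theorem \ref{Theo:Main1} (the Lie-algebra analysis of Section \ref{Sec:CompactinductionSummary+Kirillov}). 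Summing over orbits and places and keeping careful track of conductors gives $|J(f)|\ll_{q,\epsilon} C(\pi_\chi)^{\frac{1}{2}\max\{\delta,\,\delta/2+1\}+\epsilon}$, the two alternatives in the maximum reflecting, respectively, the dominance of orbits detected $\mathfrak{p}$-adically (contribution of size $\asymp C(\pi)^{1/2}=C(\pi_\chi)^{\delta/2}$) and orbits detected at the $\chi$-ramified places (contribution $\asymp C(\pi_\chi)^{\delta/4+1/2}$).

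The principal obstacle is this geometric estimate at $\mathfrak{p}$. The minimal-vector matrix coefficient has a substantially larger support than a congruence-subgroup indicator, so a crude bound on orbital integrals would be too weak; the saving must come from the sharp structural information of Theorem \ref{Theo:Main1}, namely the exact $p$-adic volume of $\Supp\Phi_{\varphi^\B}\cap\E^\times$ together with the near-multiplicativity of $\Phi_{\varphi^\B}$ on that support, which allows the regular $\mathfrak{p}$-adic orbital integrals to be evaluated rather than merely bounded. The crossover point $\delta=2$ is precisely where the $\mathfrak{p}$-adic and $\chi$-ramified geometric contributions equalize, producing the Weyl-strength exponent $1$ on $C(\pi_\chi)$, i.e.\ exponent $1/6$ on $C(\Pi\otimes\chi^{-1})=C(\pi_\chi)^6$.
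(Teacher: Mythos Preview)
Your overall strategy is right---relative trace formula with the matrix coefficient of a minimal vector as the test function at $\mathfrak{p}$, positivity on the spectral side, Feigon--Whitehouse bounds elsewhere---but the numerical bookkeeping contains a genuine error that would break the argument as written.

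The claim that $\prod_v I^0_v(\varphi^\B,\chi)$ is $C(\pi_\chi)^{o(1)}$ is false in Setting~\ref{Assumption:Localdisjoint}. At $\mathfrak{p}$ one has $c(\chi_\mathfrak{p})=0$, so $l=c(\pi_\mathfrak{p})$ and Theorem~\ref{Theo:Main1} gives $I^0_\mathfrak{p}\asymp_q q^{-c(\pi_\mathfrak{p})/4}$, not $\asymp 1$; at the $\chi$-ramified places $v$ one has $I^0_v\asymp C(\chi_v)^{-1/2}$. Thus the product of local periods is $\asymp q^{-c(\pi_\mathfrak{p})/4}\,C(\pi_{\chi^{-1}})^{-1/2}$, and the spectral inequality reads
\[
L(\Pi\otimes\chi^{-1},\tfrac12)\cdot q^{-c(\pi_\mathfrak{p})/4}\,C(\pi_{\chi^{-1}})^{-1/2}\ \ll\ J(f),
\]
not $L\ll J(f)$. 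Correspondingly the target geometric bound is $J(f)\ll q^{c(\pi_\mathfrak{p})/4}C(\pi_{\chi^{-1}})^{-1/2}+1$, and the two alternatives $\delta$ and $\delta/2+1$ in the maximum arise after \emph{dividing} by the local periods: the first from the irregular orbit $\I(0,f)$ (which matches the main term), the second from the sum over regular orbits (which contributes $O(1)$ to $J(f)$ and hence $q^{c(\pi_\mathfrak{p})/4}C(\pi_{\chi^{-1}})^{1/2}$ to $L$). Your attribution of the two exponents to ``$\mathfrak{p}$-adic orbits'' versus ``$\chi$-ramified orbits'' is therefore inverted.

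Two further points. First, the assertion that $J\supset\E^\times$ at $\mathfrak{p}$ is incorrect here: with $\chi_\mathfrak{p}$ unramified the tori $\L$ and $\E$ are not aligned (indeed $\alpha_\chi=0$ and $\alpha\in\E^\perp$), so $J\cap\E^\times$ is a proper compact-mod-center subgroup and the orbital analysis cannot be reduced to ``a volume of $\Supp\Phi\cap t\E^\times$''. The actual bound on $\I_\mathfrak{p}(\xi,f_\mathfrak{p})$ (Lemma~\ref{Lem:RegularLocal0}) requires decomposing $e(1+jxe')$ along $\L\oplus\L^\perp$ and tracking $v(\xi)$ against $c(\pi_\mathfrak{p})/2$; this, combined with a lattice-point count for the number of contributing $\xi$ (using $0<\iota_v(\eta)<1$ at archimedean places and congruence conditions at $\mathfrak{p}$ and the $\chi$-ramified places), is what produces the ``$+1$'' above. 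Second, the paper's $f_\mathfrak{p}$ is not $\overline{\Phi_{\varphi}}$ but its restriction to $ZB^1$ normalized by $\Vol(Z\backslash ZB^1)^{-1}\asymp q^{c(\pi)/2}$, so that $R(f_\mathfrak{p})$ is an exact projector; this normalization is what makes $\I_\mathfrak{p}(0,f_\mathfrak{p})\asymp q^{c(\pi_\mathfrak{p})/4}$ rather than $q^{-c(\pi_\mathfrak{p})/4}$.
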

\begin{rem}
	The main new ingredient for this result is to choose the test function $f_\mathfrak{p}$ in the relative trace formula to be essentially the matrix coefficient of a minimal vector, so that the local test vector picked out on the spectral side is the test vector as in Theorem \ref{Theo:Main1}. A minor benefit for such choice is that there will be no residue spectrum or continuous spectrum. One also does not have to worry about old forms.
	
	The more important feature resulting from this choice of test vector is that the length of the spectral sum becomes significantly shorter, when compared to, for example, using the characteristic function of the congruence subgroup for a newform (ignoring the various issue about newforms being a test vector). Thus one can, in principle, obtain better bound as long as one can show that the error terms (more precisely, those coming from regular orbit integrals on the geometric side) are controlled by the main term (from irregular orbits). 
	This `benifit' is however balanced by the fact that the sum of the error terms becomes longer, and more complicated to control in certain ranges.
	
	This explains the difference between Theorem \ref{Theo:HybridsubDisjointram} and \cite[Theorem 1.4]{feigon_averages_2009}, upon which Theorem \ref{Theo:HybridsubDisjointram} is supposed to generalize. \cite[Theorem 1.4]{feigon_averages_2009} considers the disjoint ramification scenario but with  $C(\pi)$ square free. In our terminologies, their result translates into the following  bound
	\begin{equation}\label{Eqintro:Hybridsubdisjointold}
	L(\Pi\otimes \chi^{-1},\frac{1}{2})\ll_{\epsilon} C(\pi_\chi)^{\max\{\delta, \frac{1}{2}\}+\epsilon},
	\end{equation}
	which is a subconvexity bound when $0<\delta<1$. Our result \eqref{Eqintro:Hybridsubdisjoint} is weaker than \eqref{Eqintro:Hybridsubdisjointold} in the range $0<\delta<2/3$, but is stronger in the complementary range, and in particular gives a subconvexity bound when $\delta\geq 1$.
\end{rem}

 The flexibility of our test vector theory also allows us to consider the other extreme situation, when the ramifications of $\pi$ and $\chi$  are concentrated at the same place $\mathfrak{p}$.
\begin{theo}\label{Theo:Hybridsubconvexity}
Suppose that $\pi$ and $\chi$ are both ramified only above $\mathfrak{p}$ as in Setting \ref{Assumption:Local}, and suppose
that
\pn{double check that this is okay}
$C(\pi) < C(\pi_\chi)$,
so that we may write $C(\pi)= C(\pi_\chi)^\delta$ for some $0 < \delta < 1$. Then $C(\pi\otimes\pi_{\chi^{-1}})=C(\pi_\chi)^2$, and
\begin{equation}
L(\Pi\otimes \chi^{-1},\frac{1}{2})\ll_{q,\epsilon}  C(\pi_\chi)^{\frac{1}{2}(\max\{\delta, 1-\frac{\delta}{2}\})+\epsilon}.
\end{equation}
In particular we obtain a subconvexity bound for $L(\Pi\otimes \chi^{-1},\frac{1}{2})$ in the hybrid range $0<\delta<1$, which can be as strong as the Weyl bound when $\delta=2/3$.
\end{theo}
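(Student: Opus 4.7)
The plan is to deduce Theorem \ref{Theo:Hybridsubconvexity} from Jacquet's relative trace formula \cite{jacquet_sur_1987}, adapting the framework of \cite{feigon_averages_2009} to the joint ramification regime. The crucial modification is that the local test function at $\mathfrak{p}$ is taken to be essentially the matrix coefficient $\Phi_{\varphi^\B_\mathfrak{p}}$ of the minimal vector $\varphi^\B_\mathfrak{p}$ supplied by Theorem \ref{Theo:Main1}, rather than the characteristic function of a congruence subgroup. As a starting point I invoke Waldspurger's formula \eqref{Intro1Eq:GlobalWalds} for the test vector $\varphi^\B = \otimes_v \varphi^\B_v$ with $\varphi^\B_\mathfrak{p}$ this minimal vector and $\varphi^\B_v$ spherical at all remaining finite places. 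From $C(\pi) = C(\pi_\chi)^\delta$, the identity $C(\pi \otimes \pi_{\chi^{-1}}) = C(\pi_\chi)^2$, and the size formula in Theorem \ref{Theo:Main1}, one obtains $|I_\mathfrak{p}^0(\varphi^\B,\chi)|^2 \asymp_q C(\pi_\chi)^{-(2-\delta)/2}$.

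Next I insert $f_\mathfrak{p}(g) = \overline{\Phi_{\varphi^\B_\mathfrak{p}}(g)}/\Vol(\Supp \Phi_{\varphi^\B_\mathfrak{p}})$ together with an unramified Hecke function at the remaining places into the relative trace formula. On the spectral side, since the matrix coefficient of a minimal vector is essentially idempotent on the isotypic component of $\pi_\mathfrak{p}$, only automorphic representations $\pi'$ whose local component at $\mathfrak{p}$ lies in a small neighbourhood of the conjugation class of $\pi_\mathfrak{p}$ contribute, and the continuous and residual spectrum drops out entirely. This shortens the spectral sum dramatically compared with the newform choice. By Waldspurger positivity, the isolated $\pi$-term, which by \eqref{Intro1Eq:GlobalWalds} and Theorem \ref{Theo:Main1} is proportional to $L(\Pi \otimes \chi^{-1}, 1/2) \cdot |I_\mathfrak{p}^0(\varphi^\B,\chi)|^2$ up to bounded global constants, is majorized by the geometric side.

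On the geometric side, the RTF decomposes into the irregular orbit and regular orbital integrals parameterized by $\F^\times$. The irregular term matches the diagonal of the spectral sum and contributes the trivial bound of size $C(\pi_\chi)^{\delta/2}$ for $L(\Pi \otimes \chi^{-1}, 1/2)$ after dividing through by $|I_\mathfrak{p}^0|^2$. Each regular orbital integral is controlled using the almost-multiplicativity and explicit support of $\Phi_{\varphi^\B_\mathfrak{p}}$ recorded in Remark \ref{Rem1-2:Minimalvec}, combined with the Lie algebra / coadjoint-orbit description underlying Theorem \ref{Theo:Liealgformulation}: once $\Phi_{\varphi^\B_\mathfrak{p}}$ is rewritten as an integral over a ball in the coadjoint orbit, each orbital integral becomes an oscillatory integral in a bounded region of $\B$ amenable to stationary phase. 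Summing over the regular-orbit parameter under the support constraint then contributes an error of size $C(\pi_\chi)^{1-\delta/2}$, and balancing this against the trivial bound yields the exponent $\tfrac{1}{2}\max\{\delta, 1-\delta/2\}$, with equality at $\delta = 2/3$ giving the Weyl bound $C(\pi_\chi)^{1/3}$.

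The main obstacle I anticipate is precisely the geometric side: in the joint ramification regime the oscillation carried by $\Phi_{\varphi^\B_\mathfrak{p}}$ and the support conditions imposed by $\chi_\mathfrak{p}$ interact nontrivially, so both must be tracked uniformly via the coadjoint-orbit language of Theorem \ref{Theo:Liealgformulation} rather than by ad hoc case analysis; this is what allows a unified treatment that simultaneously handles $\pi_\mathfrak{p}$ on $\GL_2$ and on the division algebra. A secondary difficulty is identifying the correct cutoff in the sum over the regular-orbit parameter in $\F^\times$, since the length of this sum is what controls the final exponent and distinguishes the present bound from the disjoint ramification case of Theorem \ref{Theo:HybridsubDisjointram}.
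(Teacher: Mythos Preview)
Your overall strategy matches the paper's: apply the relative trace formula with $f_\mathfrak{p}$ built from the matrix coefficient of the minimal vector, use Waldspurger plus positivity to isolate $L(\Pi\otimes\chi^{-1},\tfrac12)$ on the spectral side, and bound the geometric side by separating the irregular orbit from the regular ones. Two numerical slips: in Waldspurger's formula the local factor $I^0_\mathfrak{p}$ appears to the first power, not squared, so the spectral inequality reads $L\cdot I_\mathfrak{p}\ll \I(f,\chi)$ with $I_\mathfrak{p}\asymp C(\pi_\chi)^{-(2-\delta)/4}$; and accordingly the regular-orbit contribution to the bound for $L$ is $C(\pi_\chi)^{\frac12(1-\delta/2)}$, not $C(\pi_\chi)^{1-\delta/2}$. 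Your final exponent $\tfrac12\max\{\delta,1-\delta/2\}$ is nevertheless correct.

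Where you diverge from the paper is in how the regular orbital integrals $\I_\mathfrak{p}(\xi,f_\mathfrak{p})$ are controlled. You propose to rewrite $\Phi_{\varphi_\mathfrak{p}^\B}$ as an orbit integral and apply stationary phase. The paper does not do this: it computes $\I_\mathfrak{p}(\xi,f_\mathfrak{p})=\int_{\F^\times\backslash\E^\times}\int_{\E^1}f(e(1+jxe'))\chi^{-1}(e)\,de'\,de$ by parametrizing $e=a_1+b_1\beta$, $xe'=a_2+b_2\beta$ and decomposing $e(1+jxe')$ along $\L\oplus\L^\perp$ via the special element $j\in\E^\perp\cap\L^\perp$ from Proposition~\ref{Sec4.4Prop:differenceinalpha}. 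Membership in $ZB^1$ then translates into an explicit system of congruences on $(a_i,b_i)$, whose solution set has volume computed directly (Lemma~\ref{Lem:RegularLocal}); the oscillation of $f$ is simply bounded in absolute value. The key inputs from the coadjoint-orbit picture are the alignment relation $v(\Nm(\beta^\perp))=v(\Nm(\beta))+\tfrac{c(\pi)-l}{2}$ and the identity $j\beta^\perp=u\beta_\L$ with $v(1-u^2/j^2)=m$, not a stationary-phase expansion. The lattice counting for the global sum over $\xi$ (via $\eta=1/(1-\xi)$ in a fractional ideal) is as you anticipate, with the cutoff $v_\mathfrak{p}(1-\xi)\le m+O(1)$ coming out of the support analysis.
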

\begin{rem}
	It is not surprising that the above subconvexity bound does not extends to $\delta=1$, where the conductor $C(\pi\otimes\pi_{\chi^{-1}})$ could drop due to the possible relation between $\pi$ and $\pi_{\chi^{-1}}$, making the problem more difficult.
\end{rem}

\begin{rem}
	As indicated by the discussions above, the main technical task (and the most complicated part of this paper) is to control the (local) regular orbit integrals for our choice of test function. This is done in Section \ref{Section:LocalLemOrbitInt}. 
	We shall use similar language and strategy (though slightly more complicated) as for the proof of Theorem \ref{Theo:Liealgformulation}. Hopefully this will alleviate the complicated nature of the task for readers.
	
	It might be possible to further improve the control for the regular orbit integrals on the geometric side and get a sub-Weyl bound in proper range. We shall explore such possibility in future works.
	%We believe the bound we obtain there is sharp for each individual regular orbit integral. It is interesting to see if one can obtain further cancellation among regular orbits and obtain a sub-Weyl bound in proper range.
\end{rem}

\begin{rem}
	Note that when $2<\delta<\infty$ in Theorem \ref{Theo:HybridsubDisjointram}, and when $2/3<\delta<1$ (and actually when $\delta>1$ as well) in Theorem \ref{Theo:Hybridsubconvexity}, our computations actually give  asymptotic formulae for the first moment of $L(\pi\times \pi_{\chi^{-1}},1/2)$ ( weighted by $L^{-1}(\pi,Ad,1)$, see \eqref{Sec5.1Eq:GlobalWalds}), with a power saving on error terms, from which one can further get nonvanishing results for the special values of $L-$functions.
	
	On the  same range of $\delta$,  one can further apply amplifications to balance the main term and the error terms, and improve the above subconvexity bounds (and also obtains subconvexity bound for $\delta>1$ in Theorem \ref{Theo:Hybridsubconvexity}).
	
	On the other hand for the complementary range of $\delta$, our method might be combined with proper spectral average (i.e. choose proper $f_\mathfrak{p}$ so that more supercuspidal representations can be picked out on spectral side) to further improve the bound, and possibly bridge the difference between, for example, Theorem \ref{Theo:HybridsubDisjointram} and \eqref{Eqintro:Hybridsubdisjointold}.
	
	We leave the details to the interested readers.
\end{rem}

% We choose the test function at the place of interest to be essentially the matrix coefficient of the minimal vector as in Theorem \ref{Theo:Main1}.  
%
%The main benefit in doing so, is that the spectral side of the relative trace formula becomes easier and shorter. There won't be any residue spectrum and Eisenstein series on the spectral side, but only the cusp forms whose local component at $v$ is among the particular family of minimal vectors. Shortened spectral sum implies that, if one can obtain a Lindel\"{o}f on average, then one can obtain a better individual bound.
% From an equivalent perspective, observe that when $c(\pi_v)<c(\pi_{\chi,v})$, we have $l=2c(\pi_{\chi,v})-c(\pi_v)$, and by Theorem \ref{Theo:Main1} the size of the local Waldspurger's period integral is
%\begin{equation}
%I_v(\varphi, \chi)\asymp_q \frac{1}{q^{c(\pi_{\chi,v})/2}}q^{c(\pi_v)/4}.
%\end{equation}
%It is larger than the expectation from the convexity bound, and allows
%additional power saving as long as we can obtain good control of
%the geometric side of the relative trace formula.  
%
%On the other hand, while it is more difficult to obtain such control when the test
%function $f$ in the relative trace formula is the matrix
%coefficient of the minimal vector than when it is the 
%characteristic function of a standard congruence subgroup, it's
%easier than when $f$ is, for instance, the matrix coefficient
%of a newform.
%
%This result directly generalizes \cite{feigon_averages_2009}.
\begin{rem}
	We expect the similar strategy to work when $\pi_\mathfrak{p}$ is a principal series representation. One can also apply the similar strategy for the relative trace formula as in \cite{RR:05} and obtain a hybrid subconvexity bound for the standard twisted $L-$function, though one probably need more accurate control for the Archimedean integrals, and the resulting hybrid subconvexity bound may not be as strong as in the non-split case.
	It is also interesting to see if the strategy works for more general Gan-Gross-Prasad pairs, which is of course more challenging if ever possible. 
\end{rem}
\begin{rem}
	Besides \cite{feigon_averages_2009} one can also compare it with several other known Weyl bounds in level aspect for $\GL_2$. In \cite{BlomerMili:15} \cite{Assing18}, the Weyl bound was obtained for $L(\pi\otimes\chi^{-1},1/2)$ for fixed $\pi$ and increasing depth for $\chi$. On the other hand, \cite{IanYoung18}\cite{IanYoung19} proved a Weyl bound for $L(\pi,1/2)$ without twists, but the representation $\pi_v$ at the place of interest is a principal series representation.
%	Our result here is for twisted base change $L-$function only. But we hope that similar strategy works for the general $\GL_2\times \GL_1$ twisted $L-$function as well.
\end{rem}

\yh{This part is not so important to me. Should I remove it? Note that our test vector can recover the original Gross-Prasad test vector in some sense in the setting when $c(\pi)$ is even, $\E$ is inert and $c(\chi)=0$.  Take, for example, $c(\pi)=4n$. $K_\L(n)$ acts on $\varphi_\theta$ by $\tilde{\theta}$. In particular, $\varphi_\theta$ is invariant by $1+\varpi^{2n}M_2(O_\F)$ under a proper embedding. We will see that the test vector can be chosen as $\pi(k)\varphi_\theta$ where $k$ is from the maximal compact subgroup. Thus this test vector is also invariant by $1+\varpi^{2n}M_2(O_\F)$. Then a simple
average $\sum\limits_{t} \pi(t)\pi(k)\varphi_\theta$ will be the Gross-Prasad test vector as it has the required invariance and is also nontrivial (because the local integral is nontrivial).}

%Another byproduct of this approach is that we can explicitly tell what will happen when testing on newforms, by using the relation between newform and minimal vector in the Kirillov model, and a trick from representation theory. For conciseness we only consider the case $c(\pi_\theta)=4n$ here, but more cases can be dealt with similarly.
%In Corollary \ref{Cor:EasyzeroforNewform} and Proposition \ref{Prop:TestingonNewform} we show that the resulting integral is either zero or of form
%\begin{equation}
%I=\frac{1}{(q^2-1)q^{n+l-2}} (1+\theta\chi^{-1}(\sqrt{D}))^2.
%\end{equation}
%When nonzero, its size is basically $\frac{1}{C(\pi\times\pi_{\chi^{-1}})^{1/4}}$. Note that $\theta\chi^{-1}(\sqrt{D})$ could be $-1$ even when the $\epsilon-$value test passes, giving additional obstruction to the non-vanishing of local integral when using newforms.

\subsection{Organisation of the paper and acknowledgement}
The paper will be organised as follows. Section \ref{Sec:Notation} will introduce basic notations and review  Tunnell-Saito's theorem. In Section \ref{Sec:CompactinductionSummary+Kirillov} we summarise the compact induction theory and the properties of the minimal vectors. % in relatively abstract language, leaving the explicit details to the appendix. 
We also introduce the Lie algebra language and represent %the trace character and 
 the matrix coefficients for minimal vectors as integrals on coadjoint orbit.
%We also derive the Kirillov trace formula for supercuspidal representations in a neighbourhood as described above. 
In Section \ref{Sec:mainmethod} we present the soft and uniform method making use of the Lie algebra language when $p$ is large enough, discussing the dichotomy and Tunnell-Saito's $\epsilon-$value test along the way. %We also reprove the dichotomy easily using this method while assuming multiplicity one result.
In Section \ref{Sec:hybridSubConv} we recall the relative trace formula and prove the hybrid subconvexity bound, using the test vector developed so far.

The appendix contains authors' original approach without using Lie algebra language except Lemma \ref{Lem:DualLiealgForChar} (1).
In Appendix \ref{Sec:AppendixA} we use a particular embedding of $\L$, and give more details on matrix coefficient for compact induction, Kirillov model for minimal vectors, local Langlands/Jacquet-Langlands correspondence and explict Tunnell-Saito's $\epsilon-$value test.
%In Section \ref{Sec:CompactIndforGL2} we will recall the compact induction theory for $\GL_2$ and the minimal vectors in more explicit form%, and derive properties for the minimal vectors including their matrix coefficients and Kirillov model description
%. We will also recall the connection between such parametrisation with the standard Langlands correspondence in Section \ref{SubSec:Langlands-compactInd}%(to make clear the corresponding $\epsilon-$values)
%. %In Section \ref{Sec:Liealgebraformulation} we describe such test vectors in terms of Lie algebra and dual Lie algebra. 
In Appendix \ref{Sec:AppendixB}  we prove the variant Theorem \ref{Appendix:Maintheo} on the $\GL_2$ side case by case under slightly stronger conditions for $p\neq 2$.  We identify the common support and also check that whether the resulting quadratic equation has solutions is equivalent to Tunnell-Saito's $\epsilon-$value test. The integral itself is then simply the volume of the common support, consistent with the computations in Section \ref{Sec:mainmethod}. %Section \ref{Sec:Newform} will partially discuss what will happen if we choose the standard newform, by using explicit relation between the minimal vectors and the standard newform, as well as a trick from representation theory. We shall see the additional obstruction to non-vanishing local integral for newforms. In Section \ref{Sec:JLcorres-compactind} %and \ref{Sec:TestWalds-division} 
%we give minimal vector using compact induction theory on division algebra side and then prove Theorem \ref{Theo:Main1}  parallelly.

The authors are supported by SNF-169247.
Part of this work was completed
while the authors were
in residence at the
Mathematical Sciences Research Institute in Spring 2017 under National Science Foundation Grant No. DMS-1440140.
%\pn{insert standard acknowledgement}
%The first author also thanks Tonghai Yang for encouragement.

\section{Notations and preliminary results}\label{Sec:Notation}\label{Basic notations}
\subsection{Basics}
For a real number $a$, let $\lfloor a\rfloor\leq a$ be the largest possible integer, and $\lceil a\rceil \geq a$ be the smallest possible integer.

Let $\F$ be a p-adic field with  residue field $k_\F$ of order
$q$, uniformizer $\varpi=\varpi_\F$, ring of integers $O_\F$ and
p-adic valuation $v=v_\F$. Let $p$ be the characteristic of the
residue field.
%\pn{It seems that the following assumption is repeated
%  throughout the text in many places,
%  but not required in all.
%  So we should clarify whether we impose it as a blanket
%  assumption.
%}
We assume throughout the paper that $p\neq 2$. 
Let $\psi$ be an additive character of $\F$. 
$c(\psi)$ is the smallest integer such that  $\psi$ is trivial on $\varpi_\F^{c(\psi)}O_\F$. Usually $\psi$ is chosen to be unramified, or equivalently,   $c(\psi)=0$.
For $n\geq 1$, let $U_\F(n)=1+\varpi_\F^n O_\F$, and $U_\F(0)=O_\F^\times$.  %For simplicity we assume that $w_\pi=1$. 

Let $\L$  be a quadratic field extension over $\F$. Let $e_\L=e(\L/\F)$ be the ramification index. Let $\varpi_\L$ be a uniformizer for $\L$ and $v_\L$ be the valuation on $\L$. When $\L$ is unramified we shall identify $\varpi_\L$ with $\varpi_\F$. Otherwise we suppose that $\varpi_\L^2=\varpi_\F$. Let $x\mapsto \overline{x}$ be the unique nontrivial involution of $\L/\F$.
 Let $\psi_\L=\psi\circ \Tr_{\L/\F}$. One can easily check that
\begin{equation}\label{Eq:CpsiL}
 c(\psi_\L)=-e_\L+1.
\end{equation}
For $\chi$ a multiplicative character on $\F^\times $, let $c(\chi)$ be  the smallest integer such that $\chi$ is trivial on $U_\F(c(\chi))$. % $1+\varpi_\F^{c(\chi)} O_\F$. 

Let $\E$ be an \'{e}tale
%\pn{We should introduce the convention here that
%  ``quadratic algebra''
%  means ``etale quadratic algebra''
%  or ``separable quadratic algebra''.
%  We want to exclude quadratic algebras like $\mathbb{F}[x]/(x^2)$.
%}
quadratic algebra over $\F$. When $\E$ is a field extension, one can define similar notations as above. 
  Note that we shall assume that $\varpi_\E^2=\xi\varpi_\F$ for $\xi\in O_\F^\times -(O_\F^\times)^2$ if $\E$ and $\L$ are both ramified and distinct.
 When $\E$ splits, we fix an isomorphism $\iota:\E\rightarrow \F\times\F$. %The Haar measure on $\F^\times\backslash \E^\times$ will be normalized so that $\Vol(O_\F^\times\backslash \iota^{-1}(O_\F^\times\times O_\F^\times))=1$. 
 In this case, denote
 %\pn{Is the following element understood as being diagonally embedded in $\mathbb{E}$?}
 $\varpi_\E=\iota^{-1}(\varpi_\F,\varpi_\F)$, $O_\E=\iota^{-1}(O_\F\times O_\F)$, $U_\E(0)=O_\E^\times=\iota^{-1}(O_\F^\times\times O_\F^\times)$, and $U_\E(n)=\iota^{-1}(1+\varpi^n O_\F\times \varpi^n O_\F)$ for $n\geq 1$.
% \yhn{Maybe not necessary: 
%Let $\chi=(\chi_1,\chi_2)$ be a character of $\E^\times$ under this identification. Then we define $c(\chi)=\min\{c(\chi_1),c(\chi_2)\}$.}

A character $\mu$ over a p-adic field $\F$ is a multiplicative function $\mu:\F^\times \rightarrow \C^\times$, which is not necessarily unitary.

 \pn{
   Would it be possible for us to somehow reduce
   all of our results
   to some convenient case, like $c(\psi) = 0$?
   Doing so would improve the readability here,
   I think.
 }
\yhn{I think we also need $\psi_\L$, so a general discussion is better}
 \begin{lem}\label{Lem:DualLiealgForChar}
%   \pn{Clarify above whether this is a blanket assumption}
%Suppose $p\neq 2$. 
For a character
%\pn{Confusing to use $\nu$ for a multiplicative character
%  when we use $\nu$ also for the valuation in many places.
%  I would recommend using something like ``$\operatorname{ord}_F$''
%  for the valuation attached to $F$, or perhaps ``$v_F$.''}
  $\mu$ over $\F$ with $c(\mu)\geq 2$, there exists $\alpha_\mu\in \F^\times $ with $v_\F(\alpha_\mu)=-c(\mu)+c(\psi_\F)$ such that:
\begin{enumerate}
\item 
\begin{equation}%\label{eq:alphatheta}
 \mu(1+u)=\psi_\F(\alpha_\mu u) \text{\ for $u\in \varpi_\F^{\lceil c(\mu)/2\rceil} O_\F$},
\end{equation}
\begin{equation}\label{eq:alphathetaless}
 \mu(1+u)=\psi_\F(\alpha_\mu (u-\frac{u^2}{2})) \text{\ for $u\in \varpi_\F^{\lfloor c(\mu)/2\rfloor} O_\F$}.
\end{equation}
\item Suppose that either $p$ is large enough and $i=1$, or $i$
  is large enough%\pn{But still with $p \neq 2$?}
. Then there
  exists
  \pn{This ``unit group''
    does not immediately make sense,
    so we should define it separately.
    Also, I find the notation as a whole a bit ``noisy'':
    I would prefer simply $\mathfrak{p}^n$ or perhaps
    $\mathfrak{p}_F^n$
    to $\varpi^n \mathcal{O}_F$.}\yhn{$\mathfrak{p}^n$ or
    $\mathfrak{p}_F^n$ are nice but changing to these require a large amount of work. So I will leave them alone for the moment.}
  $\alpha_\mu\in \varpi^{-c(\mu)+c(\psi_\F)}O_\F^\times/U_\F(c(\mu)-i)$ such that
\begin{equation*}
\mu(1+u)=\psi_\F(\alpha_\mu \log(1+u)), \forall u\in \varpi_\F^i O_\F,
\end{equation*}
where $\log(1+u)$ is defined by the standard Taylor expansion for logarithm 
\begin{equation*}
\log(1+u)=u-\frac{u^2}{2}+\frac{u^3}{3}+\cdots.
\end{equation*}

\end{enumerate}

\end{lem}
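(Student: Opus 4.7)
My plan is to prove both parts using essentially the same principle: exploit that $\mu$ restricted to a sufficiently deep congruence subgroup factors through a quotient on which an appropriate truncation (or limit) of the formal $p$-adic logarithm provides a group isomorphism onto a neighbourhood of $0$ in the additive group, turning $\mu$ into an additive character which can then be represented via $\psi_\F$ by Pontryagin duality.

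For Part (1), first formula: on $1+\varpi^{\lceil c(\mu)/2 \rceil} O_\F$, I would verify that $u \mapsto \mu(1+u)$ is additive modulo $U_\F(c(\mu))$ by the computation $(1+u)(1+v) = 1+(u+v)+uv$ with $uv \in \varpi^{2\lceil c(\mu)/2\rceil} O_\F \subseteq \varpi^{c(\mu)}O_\F$, so $\mu((1+u)(1+v)) = \mu(1+u+v)$. Pontryagin duality then supplies $\alpha_\mu \in \F$ with $\mu(1+u) = \psi_\F(\alpha_\mu u)$, and the conductor being exactly $c(\mu)$ forces $v_\F(\alpha_\mu) = -c(\mu)+c(\psi_\F)$. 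For the second formula, I would use $\ell(u) := u - u^2/2$ (which requires $p \neq 2$) and check by direct expansion that $\ell(u+v+uv) - \ell(u) - \ell(v)$ has valuation $\geq 3\lfloor c(\mu)/2\rfloor \geq c(\mu)$ whenever $c(\mu) \geq 2$. Thus $(1+u) \mapsto \ell(u)$ induces a group homomorphism $U_\F(\lfloor c(\mu)/2\rfloor) / U_\F(c(\mu)) \to \varpi^{\lfloor c(\mu)/2\rfloor}O_\F / \varpi^{c(\mu)}O_\F$; it is the identity modulo $\varpi^{2\lfloor c(\mu)/2\rfloor}$, hence a bijection by a counting argument. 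Transporting $\mu$ across this isomorphism gives the required formula, with the $\alpha_\mu$ from the first formula (after possibly adjusting by an element killed on the smaller subgroup).

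For Part (2), I would invoke convergence of the $p$-adic logarithm: for $u \in \varpi^i O_\F$ the series $\log(1+u) = \sum (-1)^{n+1} u^n/n$ satisfies $v_\F(u^n/n) \geq ni - v_\F(n)$, and the worst case $n=p$ yields convergence provided $(p-1)i \geq e_\F$. This condition is exactly what is being encoded by ``$p$ large enough with $i=1$'' or ``$i$ large enough''. Under this hypothesis, $\log \colon U_\F(i) \to \varpi^i O_\F$ is a group isomorphism (identity on the associated graded, with inverse $\exp$), so $\mu$ pulls back to an additive character on $\varpi^i O_\F$, necessarily of the form $u \mapsto \psi_\F(\alpha_\mu u)$. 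The valuation $v_\F(\alpha_\mu) = -c(\mu) + c(\psi_\F)$ comes from matching conductors, and $\alpha_\mu$ is determined modulo the annihilator $\varpi^{-i + c(\psi_\F)}O_\F$ of $\varpi^i O_\F$ under $\psi_\F$, which translates to being well-defined in $\varpi^{-c(\mu)+c(\psi_\F)} O_\F^\times / U_\F(c(\mu)-i)$.

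The only genuine subtlety I expect is bookkeeping around the parity of $c(\mu)$ and the precise error terms in Part (1); Part (2) is essentially the standard fact that $\log$ and $\exp$ give mutually inverse group isomorphisms under appropriate convergence hypotheses, and matches the two-term truncation in Part (1) whenever both formulas apply.
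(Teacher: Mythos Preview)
Your proposal is correct and fleshes out what the paper leaves as essentially a remark: the paper simply notes that the hypothesis on $p$ or $i$ makes $\log\colon U_\F(i)\to\varpi^i O_\F$ a group isomorphism, and that Part (2) then recovers Part (1) by truncating the logarithm series on the appropriate domain. Your independent treatment of Part (1) via the two-term truncation $\ell(u)=u-u^2/2$ is a valid alternative to deducing it from Part (2), and the adjustment of $\alpha_\mu$ by an element of $\varpi^{-\lceil c(\mu)/2\rceil+c(\psi_\F)}O_\F$ (which preserves both the valuation and the first formula) handles the only subtle point.
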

Note that the condition $p$ or $i$ large enough guarantees that the $\log$ map  is a group isomorphism from the multiplicative group $U_\F(i)$ to the additive group $\varpi^i O_\F$. Part (2) directly recovers part (1) once $u$ is inside the corresponding domains.

%\begin{lem}\label{lem:characterlevel2n+1}
%Let $\theta$ be a character over $\E$ with $c(\theta)=2n+1$ and $\theta(1+x)=\psi_\E(\alpha_\theta x)$ for $v(x)\geq n+1$. Then
%\begin{equation}
%\theta(1+x)=\psi_\E(\alpha_\theta(x-x^2/2))
%\end{equation}
%for $v(x)\geq n.$
%\end{lem}

%Actually we can pick $\alpha_\theta\in O_\L^\times $ such that
%\begin{equation}
%\theta(1+u)=\psi_\L(\varpi^{-2n}\alpha_\theta \log (1+u))
%\end{equation}
%for all $u\in \varpi^j O_\L$ for some fixed constant $j$ which depends on the local field but not on $\theta$.

For a multiplicative character $\theta $ over $\L$ we can associate a similar element $\alpha_\theta\in \varpi_\L^{-c(\theta)-e_\L+1} O_\L^\times $.

We also need a lemma in the inverse direction. 
\begin{lem}\label{Lem:alphagivechi}
%For simplicity, assume $c(\psi_\F)=0$. Then
Assume $p$ or $i$ to be large enough as in Lemma \ref{Lem:DualLiealgForChar}(2).
For any $n\geq 2$ and $\alpha\in\F^\times$ with $v_\F(\alpha)=-n+c(\psi_\F)$, there exists a character $\mu$ of $\F^\times$ such that $c(\mu)=n$, and
\begin{equation}\label{Eq:characterliner2}
 \mu(1+u)=\psi_\F(\alpha \log(1+u)) \text{\ for $u\in \varpi_\F^{i } O_\F$}.
\end{equation}
\end{lem}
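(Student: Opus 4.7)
The plan is to first construct a character $\mu_0$ on $U_\F(i)$ using the formula (\ref{Eq:characterliner2}), and then to extend it to a character $\mu$ of $\F^\times$ with conductor exactly $n$. Under the hypothesis that $p$ or $i$ is large enough, the logarithm map $\log : U_\F(i) \to \varpi^i O_\F$ is a topological group isomorphism from the multiplicative group to the additive group. Setting $\mu_0(1+u) := \psi_\F(\alpha \log(1+u))$ for $u \in \varpi^i O_\F$ then defines a bona fide multiplicative character on $U_\F(i)$, since $\log$ converts products to sums.

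Next, I would read off the conductor behavior of $\mu_0$ directly from $v_\F(\alpha) = -n + c(\psi_\F)$. For $u \in \varpi^n O_\F$, $\alpha \log(1+u)$ lies in $\varpi^{c(\psi_\F)} O_\F$, hence $\mu_0$ is trivial on $U_\F(n) \cap U_\F(i)$. On the other hand, if $i \leq n-1$, then $\log(1+u) \equiv u \pmod{u^2 O_\F}$ combined with $2(n-1) \geq n$ (using $n \geq 2$) yields $\alpha \log(1+\varpi^{n-1}) \equiv \alpha \varpi^{n-1} \pmod{\varpi^{c(\psi_\F)} O_\F}$; the latter has valuation exactly $c(\psi_\F) - 1$, so $\mu_0$ is already non-trivial on $U_\F(n-1) \cap U_\F(i)$.

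Then I would extend $\mu_0$ from $U_\F(i)$ to a character $\mu$ of $\F^\times$. Using the topological decomposition of $\F^\times$ as $\varpi^\Z$ times the Teichm\"uller lift of $k_\F^\times$ times $U_\F(1)$, it suffices to extend $\mu_0$ from $U_\F(i)$ to $U_\F(1)$ and to define $\mu$ trivially on the two remaining factors. Since $U_\F(1)/U_\F(\max(n,i))$ is a finite abelian group and $\mu_0$ descends to a character of $U_\F(i)/(U_\F(i) \cap U_\F(n))$, Pontryagin duality supplies an extension to $U_\F(1)$ that is trivial on $U_\F(n)$. In the borderline case $i \geq n$, I would further arrange the extension to be non-trivial on $U_\F(n-1)/U_\F(n) \cong k_\F^+$, which is possible since this quotient is a non-trivial elementary abelian $p$-group.

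The hard part, though essentially routine, will be the case analysis in the extension step: verifying $c(\mu) = n$ requires knowing that $\mu$ is non-trivial on $U_\F(n-1)$, which follows from the formula itself when $i \leq n-1$ but from a carefully chosen extension when $i \geq n$. The upper bound $c(\mu) \leq n$ is built in via $\mu|_{U_\F(n)} = 1$, and (\ref{Eq:characterliner2}) holds on $\varpi^i O_\F$ by construction.
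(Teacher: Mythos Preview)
Your proof is correct and takes a different route from the paper. The paper argues by counting: the association $\mu \mapsto \alpha_\mu$ from Lemma~\ref{Lem:DualLiealgForChar}(2) induces a map $\{\mu : c(\mu) = n\}/\{\mu : c(\mu) \leq i\} \to (\varpi^{-n+c(\psi_\F)}O_\F / \varpi^{-i+c(\psi_\F)}O_\F)^\times$, and one checks it is a bijection by comparing cardinalities; surjectivity then yields the lemma. You instead build $\mu$ directly by defining $\mu_0$ on $U_\F(i)$ and extending to $\F^\times$ via Pontryagin duality and the decomposition $\F^\times \cong \varpi^\Z \times k_\F^\times \times U_\F(1)$. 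Your approach is more constructive and handles the borderline case $i \geq n$ explicitly, while the paper's argument is terser but tacitly assumes $i < n$.

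One minor slip: from $v_\F(\alpha \varpi^{n-1}) = c(\psi_\F) - 1$ alone you cannot infer $\psi_\F(\alpha \varpi^{n-1}) \neq 1$ when $q > p$, since a nontrivial additive character of $k_\F$ has nontrivial kernel. The remedy is to let $u$ vary: as $u$ ranges over $\varpi^{n-1}O_\F$, the product $\alpha u$ ranges over all of $\varpi^{c(\psi_\F)-1}O_\F$, on which $\psi_\F$ is nontrivial by the definition of $c(\psi_\F)$; hence $\mu_0$ is indeed nontrivial on $U_\F(n-1)$.
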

\begin{proof}
One can easily verify by counting that the association $\mu \mapsto \alpha_\mu$ satisfying \eqref{Eq:characterliner2}
 gives an isomorphism $$\{\mu\}_{c(\mu)= n}/\{\mu\}_{ c(\mu)\leq i }\rightarrow(\varpi^{-n+c(\psi_\F)}O_\F/\varpi^{-i+c(\psi_\F)} O_\F)^\times.$$
\end{proof}

Let $\B$ be a quaternion algebra, either the matrix algebra or the division algebra. It is equipped with the standard involution $x\mapsto \overline{x}$, which is consistent with the involution of any embedded quadratic algebra $\E$ in $\B$.
Let $\Tr$, $\Nm$ be the standard trace and norm on $\B$. Let $G=\B^\times$, and $Z$  be the center of $G$. 
Let $\pi$ be a smooth irreducible unitary representation of $\GL_2(\F)$  with central character $w_\pi$, and let $c(\pi)$ be the power of the conductor of $\pi$. 
Let $\pi^\B$ be the image of $\pi$ under the Jacquet-Langlands correspondence.
\begin{defn}\label{Defn:dualLattice}
For $x,y\in \B$, there is a non-degenerate pairing given by $$<x,y>=\Tr(xy).$$
For a fixed non-trivial additive character $\psi$, denote $$e^{<x,y>}=\psi(<x,y>).$$
For fixed $\psi$ and any $O_\F$-module $A$ of $\B$, denote
the dual module
$$A^\dagger=\{x\in \B,e^{<x,a>}=1 \forall a\in A\}.$$
When $\E\subset \B$ is a vector subspace, denote $$\E^\perp=\{x\in \B, <x,y>=0\text{\ for any }y\in \E\}.$$ 
Note that $\E^\perp=\E^\dagger$ in this case.
\end{defn}

We collect some basic properties of quaternion algebras with respect to a quadratic sub-algebra.
\begin{lem}\label{Lem2.1:Eperpbasic}
	Fix an embedding of an \'{e}tale quadratic algebra $\L$  %(not necessarily a field)  
in $\B$. Then for any $x_1, x_2\in\L^\perp$, we have $x_1 x_2\in \L$.  For any $x=x_\L+x^\perp$ with $x_\L\in \L$ and $x^\perp\in \L^\perp$,
	\[\Nm(x)=\Nm(x_\L)+\Nm(x^\perp).\] Furthermore,
	there exists $j$ such that 
	$j\in \L^\perp$, $\Nm(j)\neq 0$. Then we have the following:
	\begin{enumerate}
		\item $\overline{j}=-j$ (in particular  $j^2\in \F$), $l j=j\text{\ } \overline{l}$ for any $l\in \L$.
		\item 	$\B=\L+\L j$,  $\L^\perp=\L j$, with involution on $\B$ given by $l_1+l_2j\mapsto \overline{l_1}-l_2j$ for $l_i\in \L$.  
	\end{enumerate}

\end{lem}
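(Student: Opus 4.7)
My plan is to first establish the orthogonal decomposition $\B = \L \oplus \L^\perp$ with respect to the trace pairing, then use it to derive both the norm additivity and the structural properties of $j$. As a preliminary observation, every $x \in \L^\perp$ satisfies $\langle x, 1 \rangle = \Tr(x) = x + \overline{x} = 0$, so $\overline{x} = -x$; combined with the characteristic-polynomial identity $x^2 - \Tr(x) x + \Nm(x) = 0$ valid in any quaternion algebra, this immediately yields $x^2 = -\Nm(x) \in \F$.

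For the decomposition itself, I would invoke the fact that the reduced trace pairing $\Tr(xy)$ on $\B$ is non-degenerate (a standard property of central simple algebras), and that its restriction to $\L$ is the trace form of the \'{e}tale quadratic extension $\L/\F$, which is non-degenerate for $p \neq 2$. Hence $\L \cap \L^\perp = 0$ and $\dim_\F \L^\perp = 2$. The norm identity $\Nm(x_\L + x^\perp) = \Nm(x_\L) + \Nm(x^\perp)$ then follows from direct expansion: the cross-term equals $\Tr(x_\L \overline{x^\perp}) = -\Tr(x_\L x^\perp) = 0$, using $\overline{x^\perp} = -x^\perp$ and the defining property of $\L^\perp$.

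The step I expect to require the most care is producing $j \in \L^\perp$ with $\Nm(j) \neq 0$, since a priori the norm form could degenerate on $\L^\perp$. I would argue via Skolem--Noether: the nontrivial $\F$-algebra automorphism $l \mapsto \overline{l}$ of $\L$ extends to an inner automorphism of $\B$, so there exists $j_0 \in \B^\times$ with $l j_0 = j_0 \overline{l}$ for all $l \in \L$. Writing $j_0 = j_{0,\L} + j_0^\perp$ and projecting the identity onto $\L$ gives $j_{0,\L}(l - \overline{l}) = 0$; choosing $l \in \L$ for which $l - \overline{l}$ is a unit (available in both the field case and the split case) forces $j_{0,\L} = 0$. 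Then $j := j_0 \in \L^\perp$ is invertible, so $\Nm(j) \neq 0$.

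The remaining assertions follow almost mechanically. Part (1a) and $j^2 \in \F$ are already in hand. For $lj = j \overline{l}$, I would compute $\Tr(lj) = lj + \overline{j}\,\overline{l} = lj - j \overline{l}$ using $\overline{j} = -j$, while $\Tr(lj) = \langle l, j\rangle = 0$ since $j \in \L^\perp$, yielding the relation. For (2), the inclusion $\L j \subseteq \L^\perp$ follows because $\Tr(l' \cdot l j) = \Tr((l' l) j) = 0$ for all $l' \in \L$, and injectivity of $l \mapsto l j$ (from $\Nm(j) \neq 0$) gives $\dim_\F(\L j) = 2$, forcing equality. The involution formula is $\overline{l_2 j} = \overline{j}\,\overline{l_2} = -j \overline{l_2} = -l_2 j$, where the last step reapplies $l j = j \overline{l}$. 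Finally, the first assertion $x_1 x_2 \in \L$ is the computation $(l_1 j)(l_2 j) = l_1 \overline{l_2} j^2 \in \L$, since $j^2 \in \F \subset \L$.
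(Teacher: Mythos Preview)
Your proof is correct and the derivation of the structural assertions (the decomposition $\B = \L \oplus \L^\perp$, the norm additivity via the vanishing cross-term, $\L^\perp = \L j$, and the involution formula) is essentially the same as the paper's, just organized slightly differently.

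The genuine difference is in how you produce $j \in \L^\perp$ with $\Nm(j) \neq 0$. The paper argues by cases: when $\B$ is a division algebra every nonzero element has nonzero norm, and when $\B = M_2(\F)$ it normalizes the embedding of $\L$ and writes down an explicit matrix $j$ in each of the field and split subcases. Your route via Skolem--Noether is more conceptual and uniform: the nontrivial involution of $\L$ is realized by conjugation, and the conjugating element is forced to lie in $\L^\perp$. One caveat worth addressing: the classical Skolem--Noether theorem requires the subalgebra to be simple, so the split case $\L \simeq \F \times \F$ is not literally covered by the statement you invoke. You should either cite a version valid for separable (\'etale) subalgebras, or simply note that in the split case $\B$ must be $M_2(\F)$ (a division algebra has no nontrivial idempotents) and the swap is visibly inner via the permutation matrix. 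The paper's explicit case analysis sidesteps this point at the cost of being less uniform; your argument is cleaner once that detail is handled.
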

%\yhn{Should I delete (some part of) the proof as they are very simple?}
\begin{proof}
	First of all, assuming the existence of $j\in \L^\perp$ with $\Nm(j)\neq 0$, one can easily check the remaining claims. For example, by Definition \ref{Defn:dualLattice}, $j\perp l$ for $l\in \L$ implies that $lj=-\overline{lj}=-\overline{j}\text{\ }\overline{l}=j\text{\ }\overline{l}$, where the last step follows from that $j\perp 1$.
	
	 $\L^\perp=\L j$ because  on one hand, $\L j$ is a two dimensional subspace (since multiplication by $j$ is an invertible linear transformation as $\Nm(j)\neq 0$), while $$<l, l'j>=\Tr(ll'j)=<ll', j>=0, \forall l,l'\in \L.$$
	Thus $\L j\subset \L^\perp$. On the other hand, by the non-degeneracy of the trace pairing in Definition \ref{Defn:dualLattice}, $\L^\perp$ is two dimensional, forcing $\L^\perp=\L j$.
	
	For $x_1=l_1 j, x_2=l_2 j\in \L^\perp$, we have $x_1x_2=l_1jl_2j=l_1\overline{l_2} j^2\in \L$.
	
	Now we show the existence of $j\in \L^\perp$ with $\Nm(j)\neq 0$. When $\B$ is the division algebra, this is obvious as any nontrivial element $x\in \B$ has $\Nm(x)\neq 0$. For the matrix algebra, we can check case by case for some fixed embeddings, since all embeddings are conjugate to each other,  while the statements are independent of the conjugations.
	
	If $\L$ is a field we can assume after a conjugation that $\L=\F(\sqrt{D})$ is embedded into $\B$ as
	\begin{equation}\label{Eq:StandardEmbeddingL}
	a+b\sqrt{D}\mapsto \zxz{a}{b}{bD}{a}.
	\end{equation}
	Then one can pick $j=\zxz{-1}{0}{0}{1}$. If $\L\simeq\F\times \F$,  we can assume after a conjugation that $\L$ is the diagonal torus, while picking $j=\zxz{0}{1}{1}{0}$.
\end{proof}
\begin{lem}\label{Lem:ConjugacyRep}
	Let $f(x)=x^2+ax+b\in \F[x]$ be a quadratic polynomial which is either irreducible or has distinct roots. Then for any $x_1,x_2\in \B^\times$ with given characteristic polynomial $f(x)$, they are conjugate to each other.
\end{lem}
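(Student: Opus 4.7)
My plan is to reduce the statement to the Skolem--Noether theorem applied to embeddings of the quadratic \'{e}tale algebra $\L=\F[x]/(f(x))$ into $\B$. The hypothesis on $f$ (irreducible, or having distinct roots in $\F$) is exactly the statement that $f$ is separable, i.e.\ that $\L$ is \'{e}tale. Each $x_i\in\B^\times$ then determines an $\F$-algebra embedding $\phi_i\colon\L\hookrightarrow\B$ sending the class of $x$ in $\L$ to $x_i$, and showing $x_1,x_2$ are conjugate in $\B^\times$ is equivalent to showing $\phi_1,\phi_2$ differ by an inner automorphism of $\B$.

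In the irreducible case, $\L$ is a quadratic field and $\phi_1(\L),\phi_2(\L)\subset\B$ are simple $\F$-subalgebras of the central simple algebra $\B$. I would apply Skolem--Noether directly to the $\F$-algebra isomorphism $\phi_2\circ\phi_1^{-1}\colon\phi_1(\L)\to\phi_2(\L)$, producing $u\in\B^\times$ with $ux_1u^{-1}=x_2$. A small subtlety is that a priori one might worry whether Skolem--Noether outputs $x_2$ or its Galois conjugate $\bar x_2$; but we apply it to the specific isomorphism sending $x_1\mapsto x_2$, so this is not an issue. (As a safety net, Lemma \ref{Lem2.1:Eperpbasic} gives $j\in\phi_1(\L)^\perp$ with $\Nm(j)\ne0$ and $jlj^{-1}=\bar l$ for $l\in\phi_1(\L)$, so conjugation by $j$ realizes the Galois involution and one can always correct the ``wrong'' output.)

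In the case where $f(x)=(x-\alpha)(x-\beta)$ with $\alpha\ne\beta$ in $\F$, the element $(x_i-\beta)/(\alpha-\beta)\in\B$ is a nontrivial idempotent, which forces $\B=M_2(\F)$ since a quaternion division algebra contains no zero divisors. Then $x_1,x_2\in\GL_2(\F)$ are diagonalizable with the same unordered pair of eigenvalues $\{\alpha,\beta\}$, hence conjugate by elementary linear algebra (pick bases of eigenvectors and use the transition matrix).

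The main conceptual input is Skolem--Noether; I do not anticipate a real obstacle, only the bookkeeping of separating the field and split cases and, in the latter, observing that $\B$ itself must split.
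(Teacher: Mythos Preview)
Your proof is correct and takes a genuinely different route from the paper's.

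The paper argues via the rational canonical form: for $\B^\times=\GL_2(\F)$, since $x_i$ is not a scalar one picks a cyclic vector $e_2$ (any vector with $x_i e_2\notin\F e_2$) and observes that in the basis $(x_i e_2,e_2)$ the matrix of $x_i$ is the companion matrix $\zxz{\Tr(x_i)}{1}{-\Nm(x_i)}{0}$, so both $x_1,x_2$ are conjugate to this fixed matrix. For the division algebra the paper embeds $\B^\times\hookrightarrow\GL_2(\L)$ via left multiplication on $\B$ regarded as a $2$-dimensional right $\L$-module for a fixed maximal subfield $\L$, and repeats the cyclic-vector argument. You instead split according to the shape of $f$: when $f$ is irreducible you apply Skolem--Noether to the two embeddings $\F[x]/(f)\hookrightarrow\B$, and when $f=(x-\alpha)(x-\beta)$ with $\alpha\neq\beta$ you extract the nontrivial idempotent $(x_i-\beta)/(\alpha-\beta)$ to force $\B=M_2(\F)$ and then diagonalize.

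What each buys: the paper's companion-matrix argument is elementary and handles irreducible and split $f$ uniformly in the $\GL_2(\F)$ case, but its division-algebra step is terse---the cyclic-vector change of basis a priori produces a conjugating element only in $\GL_2(\L)$, and it is not spelled out why one may take it in $\B^\times$; this is precisely the content of Skolem--Noether. Your argument is conceptually cleaner and avoids that wrinkle at the cost of citing a standard external theorem; your idempotent observation is also a crisp way to see that the split-$f$ case cannot occur in the division algebra, something the paper does not isolate.
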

\pn{The lemma is an immediate consequence of the Skolem--Noether
  theorem.
I trust Vigernas's book mentions this somewhere}
\yhn{I googled Skolem--Noether   theorem, which do seem relavant. But I think it is not exactly the same. I think I will keep this lemma for the moment.}
\begin{proof}
	Consider first the case $\B^\times=\GL_2(\F)$, which acts on a 2-dimensional vector space $V$ over $\F$. For $x_i$ with given characteristic polynomial $f(x)$, $x_i$ can not be a scalar by the condition on $f$. Then there exists a non-trivial element $e_2$ such that $e_1=x_i\cdot  e_2$ is not a scalar multiple of $e_2$. Choosing $(e_1,e_2)$ as a new basis, we see that $x_i$ is conjugate to $\zxz{\Tr(x_i)}{1}{-\Nm(x_i)}{0}$. 
	
	When $\B$ is the division algebra, we fix a quadratic field extension $\L$ in $\B$, and let $\B^\times$ acts on $\B/\L$ by the usual multiplication. This gives rise to an embedding $\B^\times \mapsto \GL_2(\L)$. Then by the same argument as above, we get that $x_i$ is conjugate to $\zxz{\Tr(x_i)}{1}{-\Nm(x_i)}{0}\in \GL_2(\L)$ by elements in $\B^\times$.
\end{proof}
\begin{defn}
For $i=1,2$, let $H_i$ be compact open subgroups of $G$ and $\rho_i$ be irreducible representations of $H_i$. We say $g\in G$ intertwines $\rho_1$ with $\rho_2$ if
$$\Hom_{H_1\cap H_2^g}(\rho_1,\rho_2^g)\neq 0.$$
Here $H_2^g=g^{-1}H_2 g$ and $\rho_2^g$ is the representation of $H_2^g$ on the same space as $\rho_2$ with the action given by $\rho_2^g(g^{-1}hg)=\rho_2(h)$. In the case $H_1=H_2=H$ and $\rho_1=\rho_2=\rho$, we simply say $g$ intertwines $\rho$.
\end{defn}

We also recall Tunnell-Saito's $\epsilon-$value test.

\begin{theo}[\cite{Tunnell:83a} \cite{Saito:93a}]\label{Tunnell}
Let $\chi$ be a character of $\E^\times$ such that $\chi|_{\F^\times }=w_\pi$.
The space $\Hom_{\E^\times }(\pi^\B\otimes \chi^{-1},\C)$ is at most one-dimensional. It is nonzero if and only if 
\begin{equation}
\epsilon(\pi_\E\times\chi^{-1})=\chi(-1)\epsilon(\B).
\end{equation}
Here $\pi_\E$ is the base change of $\pi$ to $\E$. $\epsilon(\B)=1$ if $\B$ is a matrix algebra, and $-1$ if it is a division algebra.
\end{theo}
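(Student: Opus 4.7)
The plan is to establish this classical result in two stages --- multiplicity one, then the dichotomy controlled by the epsilon factor --- following a synthesis of the arguments of Tunnell, Saito and Prasad.

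For multiplicity one, I would restrict $\pi^\B$ to $\E^\times$ and analyse its isotypic decomposition. When $\E$ splits, a nonzero $\E^\times$-equivariant functional is determined up to scalar by its germ along a single Bruhat cell in the Kirillov model. When $\E/\F$ is a field and $\B = M_2(\F)$, Mackey theory identifies $\Hom_{\E^\times}(\pi \otimes \chi^{-1},\C)$ with intertwiners between $\pi$ and $\Ind_{\E^\times}^{\GL_2(\F)} \chi$; bounding this space by one then amounts to a standard Whittaker model argument together with the analysis of orbits of $\E^\times$ on the flag variety. When $\B$ is a division algebra, $\pi^\B$ is finite-dimensional and the claim reduces to a direct character-theoretic computation on the compact group $\B^\times/\F^\times$ via orthogonality relations.

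For the dichotomy, the core identity to establish is
\[
\dim \Hom_{\E^\times}(\pi \otimes \chi^{-1},\C) + \dim \Hom_{\E^\times}(\pi^\B \otimes \chi^{-1},\C) = 1,
\]
which I would derive from a seesaw argument for the reductive dual pair $(\mathrm{GSO}(\B), \GL_2)$ combined with Howe duality, or equivalently from Jacquet--Langlands coupled with an analysis of the local theta lift. Either route recasts the question of which summand is nonzero as a root-number computation. To match the sign $\chi(-1)\epsilon(\B)$ against $\epsilon(\tfrac{1}{2}, \pi_\E \otimes \chi^{-1})$, one proceeds case-by-case: for principal series and Steinberg the comparison follows by multiplicativity of epsilon factors and the local functional equation, while for supercuspidal $\pi$ induced from a character $\theta$ of some $\L^\times$ one invokes the Deligne--Henniart formula for $\epsilon(\Ind_\L^\F \theta)$ together with the base change identity to reduce the problem to tame symbol-type calculations.

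The main obstacle is the supercuspidal case: the epsilon-factor arithmetic requires delicate tracking of the interplay between $\L/\F$, $\E/\F$ and their compositum, particularly when these extensions ramify jointly. Saito's uniform gamma-factor treatment via the base change $\pi_\E$ sidesteps much of this case analysis. Notably, the perspective of minimal vectors developed later in this paper recasts the epsilon-factor condition geometrically, as the solvability of a quadratic equation on the coadjoint orbit --- which is the starting point for Theorem 1.3 and the remark following it, and offers an alternative point of view on the content of Tunnell--Saito in the range treated here.
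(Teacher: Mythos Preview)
The paper does not prove this theorem. It is stated in Section~\ref{Sec:Notation} as a classical result, with attribution to Tunnell and Saito, and no proof is supplied; the paper simply recalls it for later use. So there is nothing in the paper's own treatment of Theorem~\ref{Tunnell} against which to compare your proposal.

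That said, your sketch is a reasonable outline of the standard literature proof, and your closing paragraph correctly anticipates how the paper relates to this result: rather than reproving Tunnell--Saito, the paper offers an \emph{independent} route to the dichotomy in the supercuspidal range it treats. Specifically, Proposition~\ref{Prop:Dichotomy} shows directly --- via the coadjoint-orbit criterion of Corollary~\ref{Cor4.2:unrelatedcriterion} and Lemma~\ref{Lem4.2:unrelatedbetter} --- that exactly one of the two quaternion algebras supports a nonzero period, by reducing the question to whether $\Nm(\alpha)-\Nm(\alpha_\chi)$ lies in $(\Nm(\E^\perp))^\times$. The appendix then checks, in explicit coordinates and under restrictive hypotheses, that this geometric criterion matches the epsilon-factor condition recorded in Section~\ref{Sec:epsilontest}. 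So the paper's contribution is not a new proof of Theorem~\ref{Tunnell} in full generality, but rather a geometric reinterpretation (and partial re-derivation) of its content in the cases relevant to the period-integral computations.
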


%Note that one has
%\begin{equation}
%\epsilon(\pi_\E\times\chi^{-1})=\epsilon(\sigma|_\E\otimes \chi^{-1}).
%\end{equation}
%Here $\sigma $ is the representation of $\DW_\F$ associated to $\pi$ via local Langlands correspondence.

\subsection{Imaginary and minimal elements}
%For a general quadratic algebra $\E$, we make the following definition.
\begin{defn}\label{Sec2.1Def:imaginarypart}
Let $\E$ be an \'{e}tale quadratic algebra over $\F$ and $x\mapsto \overline{x}$ be the nontrivial involution on $\E$. $x\in \E^\times$ is called imaginary  if $\overline{x}=-x$. For general $x$, denote 
\begin{equation}\label{Sec2.1Eq:trace0part}
x_0=x-\frac{\Tr(x)}{2}
\end{equation} to be the imaginary part (or trace 0 part) of $x$.
\end{defn}
%\begin{rem}
%When $\E$ is a field and $p\neq 2$, one can easily verify that $x$ being imaginary implies that $x$ is minimal.
%\end{rem}
It is straightforward to verify that
\begin{equation}\label{Sec2.1Eq:Nmxandx0}
\Nm(x)=\Nm(x_0)+\frac{\Tr(x)^2}{4}=\frac{\Tr(x)^2}{4}-x_0^2.
\end{equation}

\begin{lem}\label{Sec2.1Lem:disjointimaginaryNm}
Suppose %$p\neq 2$, and 
that $\E\not\simeq \L$ are two distinct quadratic algebras. Let $\E_0^\times$ (respectively $\L_0^\times$) be the set of nontrivial imaginary elements of $\E$ (respectively $\L$). 
Then for any $x\in \Nm(\E_0^\times), y\in \Nm(\L_0^\times)$, we
have
%\pn{Here $\nu$ is the valuation, whereas earlier it was a character; see comment above}
\begin{equation}
v(x-y)=\min\{v(x),v(y)\}.
\end{equation}
\end{lem}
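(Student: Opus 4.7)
The plan is to reduce the lemma to a purely multiplicative statement about square classes in $\F^\times/(\F^\times)^2$ and then apply the ultrametric inequality. First I would give an explicit description of $\Nm(\E_0^\times)$: if $\E = \F(\sqrt{D_\E})$ is a field then a nontrivial imaginary element has the form $b\sqrt{D_\E}$ with $b \in \F^\times$, so $\Nm(\E_0^\times) = -D_\E (\F^\times)^2$; if $\E \simeq \F \times \F$ splits then imaginary elements are $(a,-a)$ with $a \in \F^\times$, so $\Nm(\E_0^\times) = -(\F^\times)^2$. In either case we may write
\[
\Nm(\E_0^\times) = -D_\E (\F^\times)^2, \qquad \Nm(\L_0^\times) = -D_\L (\F^\times)^2,
\]
for representatives $D_\E, D_\L$ of classes in $\F^\times/(\F^\times)^2$ (with $D_\E \equiv 1$ in the split case). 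The hypothesis $\E \not\simeq \L$ translates exactly to $D_\E/D_\L \notin (\F^\times)^2$.

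Next I would write $x = -D_\E a^2$ and $y = -D_\L b^2$ with $a,b \in \F^\times$. The case $v(x) \neq v(y)$ is immediate from the ultrametric property of $v$, so the crux is the case $v(x) = v(y)$, where I must show $v(x-y) = v(x)$. Here $v(x) = v(y)$ forces $v(D_\E) \equiv v(D_\L) \pmod 2$. After writing $D_\E = \varpi^{v(D_\E)} d_\E$ and $D_\L = \varpi^{v(D_\L)} d_\L$ with units $d_\E, d_\L$, the ratio $x/y \in O_\F^\times$ becomes $d_\E/d_\L$ times a unit square.

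The key step is then the observation that $v(x - y) > v(x)$ is equivalent to $x/y \equiv 1 \pmod{\varpi}$, which would force $d_\E/d_\L$ to be a square in the residue field $k_\F$. Since $p \neq 2$, Hensel's lemma lifts this to the statement that $d_\E/d_\L$ is a unit square in $\F$; combined with the parity condition $v(D_\E) \equiv v(D_\L) \pmod 2$ already noted, this makes $D_\E/D_\L$ a square in $\F^\times$, contradicting $\E \not\simeq \L$.

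The main (and only mild) obstacle is keeping the split vs.\ field case analyses of $\E$ and $\L$ uniform; this is handled cleanly by the above reformulation in terms of a single square-class representative $D_\E$. No finer case analysis based on ramified/unramified types for $\E,\L$ should be needed, since everything is controlled by $\F^\times/(\F^\times)^2$ and Hensel's lemma in the form valid for odd residue characteristic.
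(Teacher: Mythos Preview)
Your proposal is correct and follows essentially the same approach as the paper: both arguments identify $\Nm(\E_0^\times)$ with a single coset $-D_\E(\F^\times)^2$ in $\F^\times/(\F^\times)^2$, note that distinct quadratic algebras correspond to distinct such cosets, and conclude that $v(x-y)>\min\{v(x),v(y)\}$ would force $x/y\in 1+\varpi O_\F\subset(\F^\times)^2$ and hence $D_\E/D_\L\in(\F^\times)^2$, a contradiction. The paper's version is simply terser, leaving the Hensel/unit-square step implicit in the phrase ``there is no congruence between $x$ and $y$.''
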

\begin{proof}
We know for $p-$adic valuations $v(x-y)\geq \min\{v(x),v(y)\}$. It suffices to show that there is no congruence between $x$ and $y$.
Let $\mathcal{E}=\{1,D, \varpi, \varpi D\}$ be a set of representatives of $\F^\times/(\F^\times)^2$, with $v(D)=0$. Then according to $\E$ being different quadratic algebras, $-\Nm(\E_0^\times)= \epsilon (\F^\times)^2$ for $\epsilon \in \mathcal{E}$ have disjoint images. For example, the split case for $\E$ corresponds to $\epsilon=1$, the inert quadratic field extension case corresponds to $\epsilon =D$.
%The lemma is then straightforward to verify.
\end{proof}

Now we talk about minimal characters/representations and minimal elements.
Let $\widehat{\F^\times}$ be the set of
%\pn{We should clarify somewhere in the preliminaries that
%  ``characters''
%  are not necessarily unitary, assuming that this is the
%  intended convention.
%}
characters on $\F^\times$. 
For  $\mu\in \widehat{\F^\times}$, 
let $\mu_\L$ denote the character $\mu\circ\Nm_{\L/\F}$, and
let $\pi\otimes \mu$ denote the representation of $G$ on the same space of representation as $\pi$ with the action given by $\pi\otimes\mu(g)=\mu(\det g)\pi(g)$.
\begin{defn}\label{Defn:minimalcharrep}
Let $\L$ be a quadratic field extension over $\F$, and $\chi$ be a character of $\L^\times$. Then $\chi$ is called minimal if $c(\chi)=\min\{c(\chi\mu_\L)| \mu \in \widehat{\F^\times}\}$. 
Let $\pi$ be a smooth irreducible %unitary 
representation of $G$.
%\pn{the notation $G = \mathbb{B}^\times$ is used earlier, so it
%  seems
%unnecessary to use $G(\mathbb{F})$ here}
%$G(\F)$. 
Then $\pi$ is called minimal if $c(\pi)=\min\{c(\pi\otimes\mu)| \mu \in\widehat{\F^\times}\}$.
\end{defn}
We shall see later on in the compact induction theory that a character $\theta$ of $\L^\times$ is minimal if and only if its associated supercuspidal representation is minimal.

\begin{defn}\label{Defn:imaginary}
%Suppose $p\neq 2$.
Let $\L$ be a quadratic field extension and $x\in \L$. $x$ is said to be \imaginary\text{\ } if
$v_\L(x)=v_\L(x_0)$.
%$v_\L(x+\overline{x})>v_\L(x)$. In particular we have 
\end{defn}

\begin{cor}\label{Sec2.1Cor:vofimaginary}
	If $\L$ is a field and $x\in \L^\times$ is minimal, then for  $\forall a,b \in \mathbb{F}$,
% \pn{for all $a,b \in \mathbb{F}$, perhaps?}
 $$v_\L(a+bx)=\min\{v_\L(a),v_\L(bx)\}.$$
\end{cor}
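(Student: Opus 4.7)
The plan is to decompose $a+bx$ into its ``real'' and ``imaginary'' components and show that, thanks to the minimality of $x$, the cross-cancellation is blocked by parity or residue-field considerations. Write $t = \Tr(x)/2 \in \F$ so that $x = t + x_0$ with $x_0$ imaginary. Then
\[
a + bx = (a + bt) + b x_0,
\]
where $a + bt \in \F$ is real and $b x_0 \in \L_0$ is imaginary. The minimality hypothesis $v_\L(x) = v_\L(x_0)$ gives $v_\L(bx) = v_\L(b x_0)$; furthermore $\Tr(x) = x + \bar{x}$ and $v_\L(\bar x) = v_\L(x)$ force $v_\L(bt) \geq v_\L(bx)$.

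The only nontrivial case is $v_\L(a) = v_\L(bx)$; otherwise the ultrametric inequality gives equality immediately. In that case I will reduce the problem to the following auxiliary claim: for any $r \in \F^\times$ and any nonzero imaginary $s \in \L_0^\times$, one has
\[
v_\L(r+s) \;=\; \min\{v_\L(r),\, v_\L(s)\}.
\]
Granted this claim, applying it to $r = a + bt$ and $s = b x_0$ and tracking the inequalities $v_\L(bt) \geq v_\L(bx)$ and $v_\L(a) = v_\L(bx) = v_\L(s)$ recovers $v_\L(a+bx) = \min\{v_\L(a), v_\L(bx)\}$ in all subcases.

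For the auxiliary claim I will split on the ramification of $\L/\F$. When $\L/\F$ is ramified with $\L = \F(\sqrt D)$ and $v_\F(D)$ odd, nonzero imaginary elements lie in $\F \cdot \sqrt D$ and so have odd $v_\L$-valuation, while real elements have even $v_\L$-valuation; hence $v_\L(r) \neq v_\L(s)$ and the ultrametric inequality gives equality for free. When $\L/\F$ is unramified I argue in the residue field: a strict inequality $v_\L(r+s) > v_\L(r) = v_\L(s)$ would, after dividing by $\varpi_\L^{v_\L(r)} \in \F$, yield units $r' \in k_\F$ and $s' \in k_\L$ imaginary with $r' + s' \equiv 0$. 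Then $s' = -r' \in k_\F$ would be fixed by the Galois involution, while imaginarity forces $\bar{s'} = -s'$, so $2 s' = 0$; the standing assumption $p \neq 2$ now forces $s' = 0$, contradicting that $s'$ is a unit.

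The main (and only) delicate point is this unramified subcase, which genuinely uses $p \neq 2$; everything else is bookkeeping with the ultrametric inequality and the definition of the imaginary part. The argument also illustrates why the minimality hypothesis is the correct one: it is precisely what guarantees that the imaginary component $b x_0$ achieves the minimal valuation among the summands, so that parity/residue obstructions can block cancellation.
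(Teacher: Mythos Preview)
Your proof is correct, and the overall reduction from the minimal case to the imaginary case (via the decomposition $a+bx=(a+bt)+bx_0$ with $t=\Tr(x)/2$) matches the paper's argument. The difference lies in how you prove the auxiliary claim that $v_\L(r+s)=\min\{v_\L(r),v_\L(s)\}$ for $r\in\F^\times$ and $s\in\L_0^\times$. The paper does this via norms: it writes $v_\L(r+s)=\tfrac{e_\L}{2}v_\F(\Nm(s)-(-r^2))$, observes that $-r^2$ is the norm of an imaginary element of the split algebra $\F\times\F$, and invokes Lemma~\ref{Sec2.1Lem:disjointimaginaryNm} (norms of imaginary elements from distinct quadratic algebras cannot be congruent). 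You instead argue directly by parity of $v_\L$ in the ramified case and by a residue-field contradiction in the unramified case.

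Your route is more self-contained and does not rely on Lemma~\ref{Sec2.1Lem:disjointimaginaryNm}; it also makes transparent exactly where $p\neq 2$ enters. The paper's route has the virtue of folding this corollary into the general principle that imaginary norms from non-isomorphic quadratic algebras never collide, which is reused elsewhere. One small point worth making explicit: your auxiliary claim is stated for $r\in\F^\times$, but in the case $v_\L(a)=v_\L(bx)$ the element $r=a+bt$ may vanish or have strictly larger valuation than $v_\L(a)$; in either situation $v_\L(s)=v_\L(bx_0)=v_\L(bx)=v_\L(a)$ still gives the conclusion, so the argument goes through, but it would be cleaner to say so.
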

\begin{proof}
	We first prove the claim for $x$ imaginary.
	Note that
	$$v_\L(a+bx)=\frac{e_\L}{2}v(\Nm(a+bx))=\frac{e_\L}{2}v(\Nm(bx)-(-a^2)).$$
	Here $-a^2$ can viewed as an element in $ \Nm(\E_0^\times)$ for a split quadratic algebra $\E$. Then the claim follows from Lemma \ref{Sec2.1Lem:disjointimaginaryNm}.
	
	In general when $x$ is minimal, we can write $x=x_0+\Tr(x)/2$ with $v_\L(x_0)=v_\L(x)\leq v_\L(\Tr(x)/2)$. Then $a+bx=(a+b\Tr(x)/2)+bx_0$. We have either $v_\L(a)\geq v_\L(bx)=v_\L(bx_0)$, in which case $v_\L(a+bx)=v_\L(bx_0)=\min\{v_\L(a),v_\L(bx)\}$ by the imaginary case; or we have $v_\L(a)< v_\L(bx)$, in which case $v_\L(a+bx)=v_\L(a+b\Tr(x)/2)=v_\L(a)=\min\{v_\L(a),v_\L(bx)\}$. In either case, the claim is true.
\end{proof}

We show now the relation of a minima character and a minimal element.

\begin{lem}\label{Lem:MinimalEqv}
Let $\theta$ be a character over $\L$ with
%  \pn{Let $\theta$ be a character of $\mathbb{L}^\times$, presumably?} 
%Assume that 
$c(\theta)\geq 2$. Let $\alpha_\theta$ be associated to $\theta$ as in Lemma \ref{Lem:DualLiealgForChar}(2). Then
$\theta$ is minimal iff $\alpha_\theta$ is minimal.
\end{lem}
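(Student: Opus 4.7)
The plan is to use Lemma \ref{Lem:DualLiealgForChar}(2) to translate the problem about characters into one about elements $\alpha_\theta, \alpha_\mu$ in the (dual) Lie algebra, and then invoke Corollary \ref{Sec2.1Cor:vofimaginary} and Lemma \ref{Lem:alphagivechi} to handle the two directions symmetrically.

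First I would establish that twisting $\theta$ by $\mu_\L$ corresponds, at the level of dual elements, to translation $\alpha_\theta \mapsto \alpha_\theta + \alpha_\mu$, where $\alpha_\mu \in \F$ is viewed inside $\L$ via the natural inclusion. This follows from the identity $\log \Nm(1+u) = \Tr \log(1+u)$, valid on a sufficiently deep neighbourhood of $1$ in $\L^\times$, which gives
\[
\mu_\L(1+u) = \mu(\Nm(1+u)) = \psi_\F(\alpha_\mu \Tr \log(1+u)) = \psi_\L(\alpha_\mu \log(1+u));
\]
multiplying by the analogous formula for $\theta$ identifies $\alpha_\theta + \alpha_\mu$ as the dual element for $\theta\mu_\L$ in the sense of Lemma \ref{Lem:DualLiealgForChar}(2). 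In particular $c(\theta\mu_\L) = -v_\L(\alpha_\theta + \alpha_\mu) + c(\psi_\L)$ whenever the right side is at least $2$, which is all one needs for comparison with $c(\theta)$.

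For the forward direction, suppose $\alpha_\theta$ is minimal. Applying Corollary \ref{Sec2.1Cor:vofimaginary} with $a = \alpha_\mu \in \F$, $b = 1$, $x = \alpha_\theta$ yields $v_\L(\alpha_\theta + \alpha_\mu) = \min\{v_\L(\alpha_\mu), v_\L(\alpha_\theta)\} \leq v_\L(\alpha_\theta)$, so $c(\theta\mu_\L) \geq c(\theta)$ for every twist, proving that $\theta$ is minimal. For the converse, suppose $\alpha_\theta$ is not minimal, so that $v_\L((\alpha_\theta)_0) > v_\L(\alpha_\theta)$; taking $c = -\Tr(\alpha_\theta)/2 \in \F$ gives $\alpha_\theta + c = (\alpha_\theta)_0$, with strictly larger $v_\L$. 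Lemma \ref{Lem:alphagivechi} then produces a character $\mu$ of $\F^\times$ whose dual element realizes this $c$, and the identification above yields $c(\theta\mu_\L) < c(\theta)$, so $\theta$ is not minimal.

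The main subtlety lies in the bookkeeping of the ``modulo a lattice'' ambiguity in $\alpha_\theta$ and $\alpha_\mu$ permitted by Lemma \ref{Lem:DualLiealgForChar}(2), together with verifying that the largeness hypotheses on $p$ or the depth $i$ needed to invoke Lemma \ref{Lem:alphagivechi} hold at the depth set by $c(\theta)$; one must also briefly note that twists by $\mu$ of very small conductor do not change $c(\theta)$, so they can be ignored. These are routine once the depths are tabulated, leaving the argument as a short reduction to Corollary \ref{Sec2.1Cor:vofimaginary}.
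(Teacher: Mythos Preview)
Your proposal is correct and follows essentially the same approach as the paper: both reduce minimality of $\theta$ to the condition $v_\L(\alpha_\theta+\alpha_\mu)\le v_\L(\alpha_\theta)$ for all $\alpha_\mu\in\F$ via the same $\log\circ\Nm=\Tr\circ\log$ identity, and both invoke Corollary~\ref{Sec2.1Cor:vofimaginary} for one implication and Lemma~\ref{Lem:alphagivechi} for the other. The only cosmetic difference is that the paper packages both directions into a single computation of $\max_{\alpha_\mu} v_\L(\alpha_\theta+\alpha_\mu)=v_\L(\alpha_{\theta,0})$, whereas you treat the two directions separately.
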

\begin{proof}
First of all, note that if $\alpha_\mu$ is associated to a character $\mu$ of $\F^\times$ by Lemma \ref{Lem:DualLiealgForChar}(2), then it is also the constant associated to $\mu_\L$, as
$$\mu_{\L}(1+u)=\mu((1+u)(1+\overline{u}))=\psi(\alpha_\mu \log((1+u)(1+\overline{u})))=\psi_\L(\alpha_\mu \log(1+u)).$$
%$$\mu_{\L}(1+u)=\mu(1+u+\overline{u}+u\overline{u})=\psi(\alpha_\mu (u+\overline{u}))=\psi_\L(\alpha_\mu u).$$
Note that $c(\theta\mu_\L)=-v_\L(\alpha_{\theta\mu_\L})+c(\psi_\L)$, and $\alpha_{\theta\mu_\L}=\alpha_\theta+\alpha_\mu$. Thus $\theta$ is minimal iff for any $\alpha_\mu\in\F^\times$, %$v_\L(\alpha_\theta)$ is maximal among $\{v_\L(\alpha_{\theta\mu_\L})\}_{\mu\in \widehat{\F^\times}}$. 
\begin{equation}
v_\L(\alpha_\theta+\alpha_\mu)\leq v_\L(\alpha_\theta).
\end{equation}
Here we have used Lemma \ref{Lem:DualLiealgForChar} for one direction, and Lemma \ref{Lem:alphagivechi} for the other. 
Equivalently, $\theta$ is minimal if and only if
\begin{equation}
\max\{v_\L(\alpha_\theta+\alpha_\mu)\}=\max\{v_\L((\alpha_\theta-\alpha_{\theta,0}+\alpha_\mu)+\alpha_{\theta,0})\}=v_\L(\alpha_{\theta,0})\leq v_\L(\alpha_\theta).
\end{equation}
Here the second equality follows from Corollary \ref{Sec2.1Cor:vofimaginary}, as $\alpha_\theta-\alpha_{\theta,0}+\alpha_\mu\in \F$ and $\alpha_{\theta,0}$ is imaginary.
%easily from the relation $v_\L(x)=\frac{2}{e_\L}v(\Nm(x))$ and Lemma \ref{Sec2.1Lem:disjointimaginaryNm}, by viewing $(\alpha_\theta-\alpha_{\theta,0}+\alpha_\mu)^2$ as elements in $-\Nm(\E^\times_0)$ for a split quadratic algebra $\E$.

On the other hand,
 $$v_\L(x_0)=v_\L(x-\overline{x})\geq \min\{v_\L(x), v_\L(\overline{x})\}=v_\L(x)$$ is always true. 

Thus $\theta$ is minimal if and only if $v_\L(\alpha_\theta)=v_\L(\alpha_{\theta,0})$, which by definition is equivalent to $\alpha_\theta$ being minimal.
%To prove the lemma, it suffices to show that 
%$v_\L(\alpha_\theta+\alpha_\mu)> v_\L(\alpha_\theta)$ for some $\alpha_\mu\in\F^\times$ is equivalent to that $v_\L(\alpha_\theta-\overline{\alpha_\theta})> v_\L(\alpha_\theta)$.
%Suppose $v_\L(\alpha_\theta-\overline{\alpha_\theta})> v_\L(\alpha_\theta)$ first. Then by taking $\alpha_\mu=-\frac{1}{2}\Tr(\alpha_\theta)$, we get that
%\begin{equation}
%v_\L(\alpha_\theta+\alpha_\mu)=v_\L(\frac{1}{2}(\alpha_\theta-\overline{\alpha_\theta}))> v_\L(\alpha_\theta).
%\end{equation}
%Here we have used that $2\nmid p$.
%On the other hand, suppose that for some $\alpha_\mu\in\F^\times$,
%$v_\L(\alpha_\theta+\alpha_\mu)> v_\L(\alpha_\theta)$. This implies that %$v_\L(\alpha_\theta)=v_\L(\alpha_\mu)<v_\L(\alpha_\theta+\alpha_\mu)$, so
%$$v_\L(\alpha_\theta-\overline{\alpha_\theta})=v_\L(\alpha_\theta+\alpha_\mu-\overline{(\alpha_\theta+\alpha_\mu)})\geq v_\L(\alpha_\theta+\alpha_\mu)>v_\L(\alpha_\theta).$$
\end{proof}

\begin{rem}
Definition \ref{Defn:minimalcharrep} and \ref{Defn:imaginary} for the minimalities of a character and an element are not exactly the same as in \cite{BushnellHenniart:06a}. But one can easily check that Definition \ref{Defn:imaginary} is equivalent to \cite[Definition 13.4]{BushnellHenniart:06a} for an element. Then by Lemma \ref{Lem:MinimalEqv} and \cite[Proposition 18.2]{BushnellHenniart:06a}, the definitions for the minimality of a character are also equivalent. 
\end{rem}

\subsection{Normalization of measures}
The Haar measure on $\F^\times\backslash \E^\times$ will be normalized so that 

$$\Vol(O_\F^\times\backslash O_\E^\times)=1.$$
\begin{lem}\label{Sec2.3Lem:VolumeofE}

%\pn{The element $\beta$ does not seem to be relevant
%  to the statement of the lemma, unless I missed something.
%}
For any $x\in \E^\times$ and integer $m>0$,
\begin{equation}
\Vol(\F^\times\backslash \F^\times U_\E(e_\E m +e_\E-1)x)=\Vol(\F^\times\backslash \F^\times U_\E(e_\E m +e_\E-1))=
\frac{1}{q^m}L(\eta_{\E/\F},1).
\end{equation}
Here $\eta_{\E/\F}$ is the quadratic character associated to the extension $\E/\F$ and $L(\eta_{\E/\F},s)$ is the associated local $L-$factor. Note that when $\E$ is ramified, $\F^\times U_\E(2n)=\F^\times  U_\E(2n+1)$.
\end{lem}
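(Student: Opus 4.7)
The first equality is immediate from the translation-invariance of the quotient measure on $\F^\times \backslash \E^\times$: since $\F^\times$ is central, right-multiplication by $x \in \E^\times$ descends to a measure-preserving map on the quotient, so the two volumes coincide.

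For the second equality, I would use the homeomorphism $\F^\times \backslash \F^\times U_\E(n) \cong (\F^\times \cap U_\E(n)) \backslash U_\E(n)$, applied with $n = e_\E m + e_\E - 1$. A short computation with the identity $O_\F \cap \varpi_\E^n O_\E = \varpi_\F^{\lceil n/e_\E \rceil} O_\F$ yields $\F^\times \cap U_\E(n) = U_\F(\lceil n/e_\E\rceil) = U_\F(m+1)$, since $e_\E m < n < e_\E(m+1)$. With respect to the normalization $\Vol(O_\F^\times \backslash O_\E^\times) = 1$, unwinding the isomorphism $\F^\times \backslash \F^\times O_\E^\times \cong O_\F^\times \backslash O_\E^\times$ and the analogous one for $U_\E(n)$ reduces the desired volume to the purely combinatorial quantity
\[
\frac{[O_\F^\times : U_\F(m+1)]}{[O_\E^\times : U_\E(n)]}.
\]

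The final step is a case-by-case verification over the three possibilities for the \'etale algebra $\E$, using the standard counts $|k_\E^\times| = q^{f_\E}-1$ and $[U_\E(i):U_\E(i+1)] = q^{f_\E}$ for $i\geq 1$, together with the explicit values of $L(\eta_{\E/\F},1)$. In the unramified field case ($e_\E=1$, $f_\E=2$, $n=m$) the ratio equals $((q+1)q^{m-1})^{-1}$, which matches $q^{-m} \cdot q/(q+1)$. In the ramified field case ($e_\E=2$, $f_\E=1$, $n=2m+1$) it equals $q^{-m}$, matching $q^{-m} \cdot L(\eta,1) = q^{-m}$. In the split case ($e_\E=1$, $f_\E=1$ on each factor, $n=m$) it equals $((q-1)q^{m-1})^{-1}$, matching $q^{-m} \cdot q/(q-1)$.

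No step here is really an obstacle; the only point that requires a moment of care is the value of $\lceil n/e_\E\rceil$ in the ramified case, and the parenthetical claim that $\F^\times U_\E(2n) = \F^\times U_\E(2n+1)$ when $\E$ is ramified, which follows because the surjection $U_\F(n) \to U_\E(2n)/U_\E(2n+1) \cong k_\E = k_\F$ already exhausts the quotient, so $U_\F(n) \cdot U_\E(2n+1) = U_\E(2n)$.
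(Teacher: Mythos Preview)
Your approach is correct and essentially the same as the paper's: both argue translation-invariance for the first equality and reduce the second to a case-by-case index computation (the paper uses the basis $1,\beta$ with $\beta$ imaginary to write $\F^\times U_\E(e_\E m + e_\E -1) = \F^\times\{a+b\beta : v(a)=0,\, v(b)\geq m\}$ and leaves the index count to the reader, while you work directly with the filtration $U_\E(n)$).

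There is, however, a slip in your intermediate step. You assert $e_\E m < n < e_\E(m+1)$ and hence $\lceil n/e_\E\rceil = m+1$, but for $e_\E=1$ one has $n = e_\E m + e_\E - 1 = m$, so the strict inequality $e_\E m < n$ fails and $\lceil n/e_\E\rceil = m$, not $m+1$. Thus $\F^\times \cap U_\E(n)$ equals $U_\F(m)$ in the unramified and split cases, and $U_\F(m+1)$ only in the ramified case. Your final numerical verifications are all correct, which confirms that you actually computed with the right subgroup in each case; only the unified formula is misstated. Replacing $U_\F(m+1)$ by $U_\F(\lceil n/e_\E\rceil)$ (with the case distinction just noted) repairs the write-up.
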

\begin{proof}

The first equality follows from the property of a Haar measure. For the second equality, 
let $\beta\in \E$ be an imaginary element with
$v(\Nm(\beta))=e_\E-1$ (where $e_\E=1$ if $\E$ splits).
Note that $\F^\times U_\E(e_\E m +e_\E-1)=\F^\times\{a+b\beta| v(a)=0, v(b)\geq m\}.$
So the lemma amounts to a case by case check for the index of the set $ \F^\times\{a+b\beta| v(a)=0, v(b)\geq m\}$ in $\F^\times O_\E^\times$. We leave the details to interested readers.
\end{proof}

Denote $\E^1=\{e\in \E^\times, \Nm(e)=1\}$. By Hilbert 90 when $\E$ is a field, or direct computation when $\E$ splits, we get an isomorphism $\gamma: \F^\times \backslash \E^\times \rightarrow \E^1$, $x\mapsto x/\overline{x}$. Let
the Haar measure on $\E^1$ be normalized so that its pull back to $\F^\times \backslash \E^\times$ agrees with the Haar measure normalized as above.
\begin{lem}\label{Sec2.3Lem:VolumeE1}
%Suppose that $p\neq 2$. 
For any $x\in \E^1$ and $m>0$,
\begin{align}
\Vol(\E^1\cap  x U_\E(e_\E m+e_\E-1))=\Vol(\E^1\cap   U_\E(e_\E m+e_\E-1))=\Vol(\F^\times\backslash \F^\times U_\E(e_\E m +e_\E-1))
%\\ &=\frac{1}{q^m}L(\eta_{\E/\F},1).\notag
\end{align}
\end{lem}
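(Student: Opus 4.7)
The plan is to handle the two equalities separately, exploiting that $\E^1 \cap U_\E(e_\E m + e_\E - 1)$ is a subgroup of $\E^1$ and that the isomorphism $\gamma \colon \F^\times\backslash\E^\times \to \E^1$ of the preceding paragraph was set up to be measure preserving. Write $n := e_\E m + e_\E - 1$ throughout; note $n \geq 1$ under the hypothesis $m > 0$.

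For the first equality, observe that $U_\E(n)$ is a subgroup of $\E^\times$, so $\E^1 \cap U_\E(n)$ is a (compact open) subgroup of $\E^1$. Since $x \in \E^1$, left translation by $x$ carries this subgroup bijectively onto $\E^1 \cap x U_\E(n)$. By translation invariance of the Haar measure on $\E^1$, the two sets have the same volume.

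For the second equality, since $\gamma$ is measure preserving by definition and has kernel $\F^\times$, it suffices to establish the set-theoretic identity
\begin{equation*}
\gamma(U_\E(n)) \;=\; \E^1 \cap U_\E(n),
\end{equation*}
which then gives $\Vol(\F^\times \backslash \F^\times U_\E(n)) = \Vol(\gamma(U_\E(n))) = \Vol(\E^1 \cap U_\E(n))$. The inclusion $\subseteq$ follows from a direct computation: for $u = 1 + v$ with $v \in \varpi_\E^n O_\E$, one has $u/\bar u = 1 + (v-\bar v)/(1+\bar v) \in U_\E(n)$, and clearly $u/\bar u \in \E^1$. For the reverse inclusion $\supseteq$, given $w \in \E^1 \cap U_\E(n)$ I would set $u := (1+w)/2$; since $p \neq 2$ and $w - 1 \in \varpi_\E^n O_\E$, this lies in $U_\E(n)$, and the identity $w\bar w = 1$ yields
\begin{equation*}
\frac{u}{\bar u} \;=\; \frac{1+w}{1+\bar w} \;=\; \frac{(1+w)w}{w + w\bar w} \;=\; w.
\end{equation*}

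The only mild subtlety is in the split case $\E \simeq \F \times \F$, where $\bar{\cdot}$ interchanges the two factors and one must unpack the conventions for $U_\E(n)$ and $\E^1$; however, the same formula $u = (1+w)/2$ works verbatim and I would relegate this to a one-line remark. I do not expect any serious obstacle — once $\gamma$ is known to be measure preserving, the entire argument is a calculation with $2 \in O_\F^\times$.
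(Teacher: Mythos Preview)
Your argument is correct and follows the same overall strategy as the paper: establish the first equality by translation invariance of Haar measure on $\E^1$, and the second by showing that $\gamma$ carries $\F^\times U_\E(n)/\F^\times$ bijectively onto $\E^1\cap U_\E(n)$.

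Where you differ is in the proof of the nontrivial inclusion. The paper writes an arbitrary $x=a+b\beta\in\E^\times$ with $x/\bar x\in U_\E(n)$, does a case split on $v(a)$ versus $v(b)$ (using that $-U_\E(1)\cap U_\E(1)=\emptyset$ for $p\neq 2$ to rule out $v(a)>v(b)$), and then reads off $v(b)\geq m$ from the congruence $a+b\beta\equiv a-b\beta$. Your midpoint trick $u=(1+w)/2$ is cleaner: it produces an explicit preimage in $U_\E(n)$ directly from $w\in\E^1\cap U_\E(n)$, avoiding coordinates and case analysis entirely, and it works uniformly in the split case. The paper's approach has the minor advantage of staying in the $\{a+b\beta\}$ parametrization used throughout the surrounding lemmas, but yours is the more transparent argument.
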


\begin{proof}
The first equality follows again from the property of the Haar measure.
For the second equality it suffices to check that $\gamma^{-1}( U_\E(e_\E m+e_\E-1))=\F^\times U_\E(e_\E m+e_\E-1)$. It is clear that $\gamma^{-1}( U_\E(e_\E m+e_\E-1))\supset \F^\times U_\E(e_\E m+e_\E-1)$. For the other direction, let $x=a+b\beta\in \E^\times$ be such that $x/\overline{x}\in U_\E(e_\E m+e_\E-1)$. 
Then either $v(a)\leq v(b)$, or $v(a)>v(b)$. Recall that $0\leq v(\Nm(\beta))=e_\E-1\leq 1$. When $v(a)>v(b)$, one can check that $x/\overline{x}\in -U_\E(1)$, and $-U_\E(1)\cap U_\E(1)=\emptyset$ when $p\neq 2$.
So up to a constant, one can assume that $v(a)=0, v(b)\geq 0$.%, unless when $e_\E=2$ and $v_\E(x)$ is odd. In this exceptional case, however, one can assume that $v(b)=0$ and $v(a)>0$, and then check that $x/\overline{x}\in -U_\E(1)$. As $2\nmid p$, this exceptional case can be ruled out as $-U_\E(1)\cap U_\E(1)=\emptyset$.

Now from $x/\overline{x}\in U_\E(e_\E m+e_\E-1)$, we get that
\begin{equation}
a+b\beta\equiv a-b\beta \mod \varpi_\E^{e_\E m+e_\E-1},
\end{equation}
i.e. $b\beta\equiv 0 \mod \varpi_\E^{e_\E m+e_\E-1}$. This implies that $v(b)\geq m$ and thus the claim is true.
\end{proof}

\section{Compact induction theory and Kirillov formula in a neighbourhood}\label{Sec:CompactinductionSummary+Kirillov}
\subsection{An overview of compact induction theory}\label{Sec:CompactInd}
Our equivalent formulations here differ slightly from those in \cite{BushnellHenniart:06a}, as we shall introduce  semi-valuations on $\B$ first (with different normalizations in some cases) and then the compact open subgroups, while \cite{BushnellHenniart:06a} introduced them in reverse order. The reason for this reformulation is that we would like to emphasize the properties of the semi-valuations, which are critical for later computations. 
\begin{defn}\label{Defn3.1:semivaluation}
	For any fixed embedding of a quadratic field extension $\L\hookrightarrow \B$, define a $\frac{1}{2}\Z-$valued function $\vv$ on $\B$ as follows:
	\begin{enumerate}
		\item[(i)] $\vv(l)=v_\L(l)$ for any $l\in \L$,
		\item[(ii)] $\vv(l^\perp)=\frac{1}{2}v_\L((l^\perp)^2)$ for any $l^\perp\in \L^\perp$.
		\item[(iii)] In general for  $x=x_\L+x^\perp\in \B$ with $x_\L\in \L$ and $x^\perp\in \L^\perp$,
		\[\vv(x)=\min\{\vv(x_\L),\vv(x^\perp)\}.\]
	\end{enumerate}
\end{defn}
 Note that $(l^\perp)^2\in \L$ by Lemma \ref{Lem2.1:Eperpbasic} so (ii) makes sense. Actually when we write $l^\perp=l j$ for $j$ as in Lemma \ref{Lem2.1:Eperpbasic} and some $l\in\L$, then we have
 \begin{equation}\label{Eq3.1:valuationhalf}
 \vv(l^\perp)=\frac{1}{2}v_\L((l^\perp)^2)=v_\L(l)+\frac{1}{2}v_\L(j^2)=v_\L(l)+\vv(j).
 \end{equation}
\begin{lem}\label{Lem3.1semivaluation}
$\vv$ is a semi-valuation on $\B$ satisfying the following properties:
\begin{enumerate}
	\item $\vv(xl)=\vv(lx)=v_\L(l)\vv(x)$ for any $l\in\L$. $\vv(\overline{x})=\vv(x)$.
	\item For any $x,y\in \L^\perp$, we have $v_\L(xy)=\vv(x)+\vv(y)$, $\vv(x)=\frac{e_\L}{2}v_\F(\Nm(x))$.
	\item $\vv(xy)\geq \vv(x)+\vv(y)$.  
	\item $\vv(x+y)\geq\min\{\vv(x),\vv(y)\}$
\end{enumerate}
\end{lem}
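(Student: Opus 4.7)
The plan is to leverage the decomposition $\B=\L\oplus\L^\perp=\L\oplus \L j$ from Lemma \ref{Lem2.1:Eperpbasic} throughout, reducing every statement about $\vv$ to a statement about the honest valuation $v_\L$ on $\L$. The single computational tool I would reuse repeatedly is \eqref{Eq3.1:valuationhalf}: for $l^\perp=lj\in \L^\perp$ with $l\in\L$, we have $\vv(l^\perp)=v_\L(l)+\vv(j)$. Also note that for $l^\perp\in\L^\perp$ one has $\overline{l^\perp}=-l^\perp$, while $\bar l\in\L$ has $v_\L(\bar l)=v_\L(l)$; and from $lj=j\bar l$ one gets the ``twisting'' rule that pushing $l\in\L$ past $j$ replaces $l$ by $\bar l$, which costs nothing in $v_\L$.

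For (1), I would write $x=x_\L+x^\perp$ and decompose $xl=(x_\L l)+(x^\perp l)$. The $\L$-term $x_\L l$ is in $\L$ and satisfies $\vv(x_\L l)=v_\L(x_\L)+v_\L(l)$ by multiplicativity of $v_\L$. Writing $x^\perp=mj$ we get $x^\perp l=m\bar l\,j\in \L^\perp$, and \eqref{Eq3.1:valuationhalf} gives $\vv(x^\perp l)=v_\L(m)+v_\L(\bar l)+\vv(j)=\vv(x^\perp)+v_\L(l)$. Taking the minimum yields $\vv(xl)=\vv(x)+v_\L(l)$; the argument for $lx$ is symmetric. For $\vv(\bar x)=\vv(x)$, decompose $\bar x=\bar{x_\L}-x^\perp$ and note that $v_\L(\bar{x_\L})=v_\L(x_\L)$ while $(-x^\perp)^2=(x^\perp)^2$, so $\vv(-x^\perp)=\vv(x^\perp)$ by definition.

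For (2), given $x=mj,\,y=m'j\in\L^\perp$, compute $xy=mj\cdot m'j=m\bar{m'}j^2$, which is in $\L$, and $v_\L(xy)=v_\L(m)+v_\L(m')+v_\L(j^2)=\vv(x)+\vv(y)$ using \eqref{Eq3.1:valuationhalf} twice. The identity $\vv(x)=\tfrac{e_\L}{2}v_\F(\Nm(x))$ for $x\in\L^\perp$ follows from $\Nm(x)=x\bar x=-x^2\in\F$ and $v_\L=e_\L v_\F$ on $\F$, so $\vv(x)=\tfrac12 v_\L(x^2)=\tfrac{e_\L}{2}v_\F(\Nm(x))$.

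For (3) and (4), I expand $xy$ into its four bilinear terms, noting $x^\perp y^\perp\in\L$ and $x_\L y^\perp, x^\perp y_\L\in\L^\perp$, then apply (1) and (2) termwise to bound the $\vv$-value of each piece by $\vv(x)+\vv(y)$, and take the minimum; for (4), writing $x^\perp=mj,\,y^\perp=m'j$ reduces the $\L^\perp$ part to $v_\L(m+m')\geq \min\{v_\L(m),v_\L(m')\}$, and the $\L$-part is handled directly by the ultrametric inequality for $v_\L$. The only step that needs a little care is checking that the ``semi-valuation'' claim (including the non-degeneracy $\vv(x)=\infty \iff x=0$) is intrinsic to $\B$ and does not depend on the choice of splitting $\B=\L\oplus\L j$—this follows since the $\L$- and $\L^\perp$-components are canonically defined via the trace pairing of Definition \ref{Defn:dualLattice}. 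The main (mild) obstacle is simply the bookkeeping of normalizations: the half-integer values on $\L^\perp$ when $\L$ is unramified versus ramified, and keeping track that the formula \eqref{Eq3.1:valuationhalf} is well-defined (independent of the choice of $j$), which reduces to the observation that any two choices of $j$ differ by multiplication by an element of $\L^\times$.
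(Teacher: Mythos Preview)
Your proposal is correct and follows essentially the same route as the paper: the paper also writes $x=l_1+l_2j$, $y=l_3+l_4j$ via Lemma~\ref{Lem2.1:Eperpbasic} and \eqref{Eq3.1:valuationhalf}, says (1) and (2) are immediate from these, and for (3)--(4) expands $xy$ and $x+y$ into their $\L$- and $\L^\perp$-components and bounds termwise using the ultrametric property of $v_\L$. Your extra remarks on independence of the choice of $j$ and non-degeneracy are fine but not needed for the lemma as stated.
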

\begin{proof}
	(1) and (2) follow directly from the definition, Lemma \ref{Lem2.1:Eperpbasic} and \eqref{Eq3.1:valuationhalf}.
	Now we check (3) and  (4).
	Write $x=l_1+l_2j$ and $y=l_3+l_4j$ for $j$ as in Lemma \ref{Lem2.1:Eperpbasic}. By definition and the property of $v_\L$, 
	\begin{align*}
	\vv(xy)&=\vv((l_1l_3+l_2\overline{l_4}j^2)+(l_2\overline{l_3}+l_1l_4)j)=\min\{v_\L(l_1l_3+l_2\overline{l_4}j^2), v_\L(l_2\overline{l_3}+l_1l_4)+\vv(j)\}\notag\\
	&\geq\min\{v_\L(l_1l_3), v_\L(l_2\overline{l_4}j^2), v_\L(l_2\overline{l_3})+\vv(j), v_\L(l_1l_4)+\vv(j)\}\notag\\
	&=\min\{v_\L(l_1),\vv(l_2 j)\}+\min\{v_\L(l_3),\vv(l_4j)\}=\vv(x)+\vv(y).\notag
	\end{align*}
	\begin{align*}
	\vv(x+y)&=\min\{ v_\L(l_1+l_3), v_\L(l_2+l_4)+\vv(j)\}\geq \min\{
v_\L(l_1), v_\L(l_3), v_\L(l_2)+\vv(j),v_\L(l_4)+\vv(j)\}\\
&=\min\{\min\{v_\L(l_1),v_\L(l_2)+\vv(j)\}, \min\{v_\L(l_2),v_\L(l_4)+\vv(j)\}\}=\min\{\vv(x),\vv(y)\} \notag	\end{align*}

\end{proof}
\eqref{Eq3.1:valuationhalf} motivates us to make the following definition:
\begin{defn}
	Let $\epsiBL\in \{0,1\}$ be such that $\epsiBL\equiv v_\L(j^2)\mod(2)$.
\end{defn}
By \eqref{Eq3.1:valuationhalf}, $\frac{\epsiBL}{2}$ measures the difference between the sets $\vv(\L^\times)$ and $\vv((\L^{\perp})^{\times})$, or equivalently,
$\epsiBL$ measures the difference between the sets $v_\L((\Nm(\L))^\times)$ and $v_\L((\Nm(\L^\perp))^\times)$.

Note that by multiplying an element in $\L^\times$, we can in particular choose $j$ such that $v_\L(j^2)=\epsiBL$. 

\begin{lem}
	\begin{equation}\label{Sec3.1Eq:vjsquare}
	\epsiBL=\begin{cases}
	0, \text{\ if $\B$ is the matrix algebra},\\
	2-e_\L, \text{\ if $\B$ is the division algebra.}
	\end{cases}
	\end{equation}
\end{lem}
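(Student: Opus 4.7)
The plan is to unpack the definition of $\epsiBL$ and compute $v_\L(j^2)\bmod 2$ directly. Recall from Lemma \ref{Lem2.1:Eperpbasic} that any $j\in\L^\perp$ with $\Nm(j)\neq 0$ satisfies $\overline{j}=-j$, so
\[
j^2=-\overline{j}\,j=-\Nm(j)\in \F^\times.
\]
Since $v_\L$ restricts to $e_\L v_\F$ on $\F^\times$, we have $v_\L(j^2)=e_\L\,v_\F(\Nm(j))$. Thus computing $\epsiBL$ is the same as determining $v_\F(\Nm(j))\bmod 2$ (when $e_\L=1$) or showing that $v_\F(\Nm(j))$ can be taken even (when $e_\L=2$). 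First I would verify that this parity is well-defined: any two candidates $j,j'\in \L^\perp$ with nonzero norm differ by multiplication by some $l\in\L^\times$, so $\Nm(j')=\Nm_{\L/\F}(l)\Nm(j)$, and $v_\L$ of this correction term is always even (one checks this directly in the unramified and ramified cases from $v_\L(\Nm_{\L/\F}(l))=2v_\L(l)/1$ or $=2v_\L(l)/2\cdot 2$ respectively, giving an even contribution in both cases).

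For the matrix algebra case, the cleanest route is to simply invoke the explicit embeddings already produced in the proof of Lemma \ref{Lem2.1:Eperpbasic}: when $\L$ is a field one takes $j=\zxz{-1}{0}{0}{1}$, and when $\L$ splits one takes $j=\zxz{0}{1}{1}{0}$; in either case $j^2=1$, so $v_\L(j^2)=0$ and hence $\epsiBL=0$.

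For the division algebra case, the natural tool is the criterion that $\B\simeq \L\{j\}$ with $j^2\in\F^\times$ and $jl=\overline{l}j$ is split if and only if $\Nm(j)\in\Nm_{\L/\F}(\L^\times)$. So in the division algebra case $\Nm(j)$ lies in the nontrivial coset of $\F^\times/\Nm_{\L/\F}(\L^\times)$. When $e_\L=1$, local class field theory identifies $\Nm_{\L/\F}(\L^\times)=\{x\in\F^\times: v_\F(x)\in 2\Z\}$, so the nontrivial coset forces $v_\F(\Nm(j))$ to be odd; then $v_\L(j^2)=1\cdot(\text{odd})$ is odd and $\epsiBL=1=2-e_\L$. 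When $e_\L=2$, the norm map is surjective on valuations (since $\Nm_{\L/\F}(\varpi_\L)$ is a unit multiple of $\varpi_\F$), so multiplying $j$ by a suitable power of $\varpi_\L$ we may assume $\Nm(j)\in O_\F^\times$; then $v_\L(j^2)=2\cdot 0=0$, so $\epsiBL=0=2-e_\L$.

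The main obstacle is not really technical but cosmetic: one must be careful with the two conventions for $v_\L$ on elements of $\F$ (namely $v_\L|_\F=e_\L v_\F$) and remember that multiplication by $\L^\times$ rescales $\Nm(j)$ by elements of $\Nm_{\L/\F}(\L^\times)$, which is exactly the ambiguity that makes $\epsiBL$ well-defined. Once these normalizations are aligned, the computation in each of the three cases is immediate from the standard description of quaternion algebras by $(\L,a)$ with $a=-\Nm(j)\in \F^\times/\Nm_{\L/\F}(\L^\times)$.
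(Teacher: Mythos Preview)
Your argument is correct and reaches the same conclusion by essentially the same mechanism as the paper, though organized a bit differently. The paper argues uniformly: since $\Nm(x)=\Nm(x_\L)+\Nm(x^\perp)$, the coset $(\Nm(\L^\perp))^\times$ equals $-\Nm(\L^\times)$ exactly when $\B$ has a nontrivial zero-norm element (i.e.\ is the matrix algebra), and is the complementary coset otherwise; the parity of $v_\L(j^2)$ is then read off case by case. You instead exhibit explicit $j$'s in the matrix algebra case (using the elements already produced in Lemma~\ref{Lem2.1:Eperpbasic}) and invoke the standard cyclic-algebra splitting criterion for the division algebra. These are equivalent: your criterion is just the paper's $-\Nm(\L^\perp)^\times=\Nm(\L^\times)$ rephrased.

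One small slip worth flagging: the splitting criterion is $j^2\in\Nm_{\L/\F}(\L^\times)$, not $\Nm(j)\in\Nm_{\L/\F}(\L^\times)$, and these differ by a sign since $j^2=-\Nm(j)$. This is harmless for your argument because in the unramified case (the only place you actually use the criterion) $-1\in O_\F^\times\subset\Nm(\L^\times)$, so the two conditions coincide; and in the ramified case you correctly bypass the criterion entirely, since $v_\L(j^2)=2v_\F(j^2)$ is automatically even. Your well-definedness check and the explicit matrix computations are fine but not strictly needed, as the definition of $\epsiBL$ already builds in the mod-$2$ invariance.
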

\begin{proof}
	As $\Nm(lj)=\Nm(l)\Nm(j)$, $(\Nm(\L^\perp))^\times$ is a single coset of $\Nm(\L^\times)$ in $\F^\times$. Recall that $\B$ is the matrix algebra iff $0=\Nm(x)$ for some non-trivial $x\in \B$. Since $\Nm(x)=\Nm(x_\L)+\Nm(x^\perp)$ by Lemma \ref{Lem2.1:Eperpbasic}, we have that $(\Nm(\L^\perp))^\times=-\Nm(\L^\times)$ when $\B$ is the matrix algebra, and the complementary of $-\Nm(\L^\times)$ in $\F^\times$ when $\B$ is the division algebra. Then the lemma is easy to check case by case.
\end{proof}
\begin{rem}
	From this lemma we see that $\vv$ takes integer values except when $\B$ is the division algebra and $e_\L=1$, in which case it takes half integer values.
\end{rem}
\begin{defn}\label{Defn3.1:orders+groups}
	The semi-valuation $\vv$ further gives rise to a chain order $\mathfrak{A}=\{b\in \B,\vv(b)\geq 0\}$ and its ideals $\mathcal{B}^n=\{b\in \B,\vv(b)\geq n\}$, and a filtration of compact open subgroups $$K_\mathfrak{A}(0)=\mathfrak{A}^\times, K_\mathfrak{A}(n)=I+\mathcal{B}^n\text{\ for }n\geq 0, $$ 
	all of which are normalised by $\L^\times $. 
	
\end{defn}

%Note that different quadratic field extensions may give rise to the same semi-valuation.

\begin{lem}\label{Cor3.1:psitrace=1}
	If $\vv(x)>c(\psi_\L)-1$, then $\psi\circ\Tr(x)=1$.
\end{lem}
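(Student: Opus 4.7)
The plan is to reduce the statement on $\B$ to a statement on $\L$ via the orthogonal decomposition $\B = \L + \L^\perp$ of Lemma \ref{Lem2.1:Eperpbasic}, and then invoke the definition of $c(\psi_\L)$.

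First, I would write $x = x_\L + x^\perp$ with $x_\L \in \L$ and $x^\perp \in \L^\perp$. The key observation is that $1 \in \L$, so $\langle x^\perp, 1\rangle = \Tr(x^\perp) = 0$. Hence
\[
\Tr(x) \;=\; \Tr(x_\L) \;=\; \Tr_{\L/\F}(x_\L),
\]
and the problem reduces to showing $\psi_\L(x_\L) = 1$.

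Next, by Definition \ref{Defn3.1:semivaluation}(iii), $\vv(x) = \min\{\vv(x_\L),\vv(x^\perp)\}$, so the hypothesis $\vv(x) > c(\psi_\L)-1$ forces $v_\L(x_\L) = \vv(x_\L) > c(\psi_\L) - 1$. Since $v_\L$ is integer-valued on $\L$, this gives $v_\L(x_\L) \geq c(\psi_\L)$, i.e.\ $x_\L \in \varpi_\L^{c(\psi_\L)} O_\L$. By the very definition of $c(\psi_\L)$ (which is the smallest integer $n$ with $\psi_\L$ trivial on $\varpi_\L^n O_\L$), we conclude $\psi_\L(x_\L) = 1$, and the lemma follows.

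There is essentially no obstacle here; the only subtle point is that $\vv$ can take half-integer values when $\B$ is the division algebra with $\L/\F$ unramified (by equation \eqref{Sec3.1Eq:vjsquare}), but this only affects the $\L^\perp$-component and is irrelevant since only $v_\L(x_\L)$ enters the argument, and this is a genuine integer. The identity $\Tr(x^\perp) = 0$ for $x^\perp \in \L^\perp$ is the one conceptual input, and it is immediate from $1 \in \L$ together with the definition of $\L^\perp$ in terms of the trace pairing.
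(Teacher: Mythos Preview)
Your proof is correct and follows essentially the same approach as the paper's own proof: decompose $x = x_\L + x^\perp$, observe that $\Tr(x^\perp) = 0$ so that $\psi\circ\Tr(x) = \psi_\L(x_\L)$, and then use $\vv(x) > c(\psi_\L)-1 \Rightarrow v_\L(x_\L) \geq c(\psi_\L)$ to conclude. You have simply made explicit the steps (the reason $\Tr(x^\perp)=0$, and the passage from the strict inequality to $\geq$ via integrality of $v_\L$) that the paper leaves implicit in its three-line argument.
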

\begin{proof}
	Write $x=x_\L+x^\perp$ for $x_\L\in \L$ and $x^\perp\in \L^\perp$. Then $\vv(x)>c(\psi_\L)-1$ implies that $\vv(x_\L)\geq c(\psi_\L)$. Thus $\psi\circ\Tr(x)=\psi_\L(x_\L)=1$.
\end{proof}
\begin{cor}\label{Sec3.1Cor:DualofBn}
	In the notation of Definition \ref{Defn:dualLattice}, we have 
	\begin{equation}
	(\mathcal{B}^n)^\dagger=\mathcal{B}^{-n+c(\psi_\L)}.
	\end{equation}
\end{cor}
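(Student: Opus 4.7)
The plan is to establish both inclusions separately, combining the subadditivity of $\vv$ (Lemma \ref{Lem3.1semivaluation}) with the triviality criterion of Corollary \ref{Cor3.1:psitrace=1}, and then exploiting the decomposition $\B=\L\oplus\L^\perp$ (Lemma \ref{Lem2.1:Eperpbasic}) to build explicit witnesses for the reverse direction.

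First I would show $\mathcal{B}^{-n+c(\psi_\L)}\subseteq (\mathcal{B}^n)^\dagger$. Given $x\in\mathcal{B}^{-n+c(\psi_\L)}$ and $a\in\mathcal{B}^n$, Lemma \ref{Lem3.1semivaluation}(3) yields $\vv(xa)\geq \vv(x)+\vv(a)\geq c(\psi_\L)$, so by Corollary \ref{Cor3.1:psitrace=1} we get $\psi(\Tr(xa))=1$. This is the soft direction.

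For the reverse inclusion, I would argue by contrapositive. Suppose $x\notin\mathcal{B}^{-n+c(\psi_\L)}$, decompose $x=x_\L+x^\perp$ with $x_\L\in\L$ and $x^\perp\in\L^\perp$, and set $N:=-n+c(\psi_\L)$. By Definition \ref{Defn3.1:semivaluation}(iii), either $v_\L(x_\L)<N$ or $\vv(x^\perp)<N$. In the first case, test against $a=\varpi_\L^n u$ for $u\in O_\L^\times$: since $\L\cdot\L^\perp\subseteq\L^\perp$ (Lemma \ref{Lem2.1:Eperpbasic}), we have $x^\perp a\in\L^\perp$ and $\Tr(x^\perp a)=0$, so $\Tr(xa)=\Tr_{\L/\F}(x_\L a)$ and $\psi\circ\Tr(xa)=\psi_\L(x_\L\varpi_\L^n u)$; as $u$ ranges over $O_\L^\times$, the argument ranges over a subset of $\varpi_\L^{v_\L(x_\L)+n}O_\L^\times$ which by assumption has valuation $<c(\psi_\L)$, and by the defining property of $c(\psi_\L)$ there must exist $u$ with $\psi_\L\neq 1$ on that coset. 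In the second case, test against $a=l'j$ with $j$ as in Lemma \ref{Lem2.1:Eperpbasic} and $v_\L(l')=n-\vv(j)$; then $x_\L a\in\L^\perp$ contributes zero trace, while $x^\perp a\in\L$ (again Lemma \ref{Lem2.1:Eperpbasic}) satisfies $v_\L(x^\perp a)=\vv(x^\perp)+n<c(\psi_\L)$ by Lemma \ref{Lem3.1semivaluation}(2), and the same argument produces a choice of $l'$ with $\psi_\L(x^\perp a)\neq 1$. In both cases the constructed $a$ lies in $\mathcal{B}^n$, so $x\notin(\mathcal{B}^n)^\dagger$.

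The routine step is the first inclusion; the only genuine care is in the second case of the reverse inclusion, where one must track that multiplication by $j$ converts the $\vv$-valuation on $\L^\perp$ into the honest $v_\L$-valuation on $\L$ via Lemma \ref{Lem3.1semivaluation}(2), so that the non-triviality of $\psi_\L$ below its conductor can be invoked. No new ingredient beyond Lemmas \ref{Lem2.1:Eperpbasic}, \ref{Lem3.1semivaluation} and Corollary \ref{Cor3.1:psitrace=1} should be needed.
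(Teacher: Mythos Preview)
Your proof is correct and follows essentially the same approach as the paper's: both establish the easy inclusion via subadditivity of $\vv$ together with the triviality criterion, and both handle the reverse inclusion by decomposing $z=l_2+l_2^\perp$ and testing separately against elements of $\L\cap\mathcal{B}^n$ and $\L^\perp\cap\mathcal{B}^n$, invoking Lemma~\ref{Lem3.1semivaluation}(2) in the latter case. The only cosmetic difference is that you spell out the witnesses $a=\varpi_\L^n u$ and $a=l'j$ explicitly (note that $n-\vv(j)$ need not be an integer in the division/$e_\L=1$ case, so it is cleaner to write $\vv(a)\geq n$ as the paper does), whereas the paper simply asserts their existence.
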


\begin{proof}
	To show $(\mathcal{B}^n)^\dagger\supset\mathcal{B}^{-n+c(\psi_\L)},$ let $x\in \mathcal{B}^n$, $y\in\mathcal{B}^{-n+c(\psi_\L)}$. Then $xy=l_1+l_1^\perp$ for $l_1\in \L$ and $l_1^\perp\in \L^\perp$. Then by Definition \ref{Defn3.1:semivaluation} and Lemma \ref{Lem3.1semivaluation}, we have $\vv(l_1)\geq \vv(xy)\geq \vv(x)+\vv(y)\geq c(\psi_\L)$. Thus
	\begin{equation}
	e^{<x,y>}=\psi\circ\Tr(l_1)=\psi_\L(l_1)=1.
	\end{equation}
	Conversely let $z=l_2+l_2^\perp \in (\mathcal{B}^n)^\dagger$. Suppose that $\vv(l_2)< -n+c(\psi_\L)$. Then we can pick $l_3$ such that $v_\L(l_3)\geq n$ and $e^{<z,l_3>}=\psi_\L(l_2l_3)\neq 1$, contradiction.
	Similarly if  $ \vv(l_2^\perp)< -n+c(\psi_\L)$, we can pick $l_3^\perp$ such that $\vv(l_3^\perp)\geq n$ and $e^{<z,l_3^\perp>}=\psi_\L(l_2^\perp l_3^\perp)\neq 1$. Here we have used Lemma \ref{Lem3.1semivaluation}(2).
	
\end{proof}

\begin{example}\label{Example:explicitideal}
	When $\B$ is the matrix algebra $M_2(\F)$,  $e_\L=1$ and $\L$ is embedded as in \eqref{Eq:StandardEmbeddingL} with  $v_\F(D)=0$, we have that $\mathfrak{A}=M_2(O_\F)$ and  $\mathcal{B}=\varpi M_2(O_\F)$. When $e_\L=2$ and $\L$ is embedded similarly with $v_\F(D)=1$, we have that $\mathfrak{A}=\zxz{O_\F}{O_\F}{\varpi O_\F}{O_\F}$  and $\mathcal{B}=\zxz{\varpi O_\F}{O_\F}{\varpi O_\F}{\varpi O_\F}$. 
	
	When $\B$ is the division algebra $\D$, it is equipped with a valuation $v_\D$. Then $\vv$ is actually  the scaled valuation $\frac{e_\L v_\D}{2}$, and the chain orders $\mathfrak{A}=\{x,v_\D(x)\geq 0\}$ defined for inert and ramified quadratic field extensions are the same.
\end{example}
\begin{rem}\label{Rem3.1:OrderToValuation}
	Alternatively, as in \cite{BushnellHenniart:06a}, one can start with a general definition for a chain order $\mathfrak{A}$, and then construct $\mathcal{B}$ and define a semi-valuation $v_{\B,\mathfrak{A}}(x)$ to be the minimal integer $n$ such that $x\in \mathcal{B}^n$. ($v_{\B,\mathfrak{A}}(x)$ will come from some quadratic field extension $\L$ and $\vv$, though the normalisation is different in the case $\epsilon(\B)=-1$ and $e_\L=1$.) %Our approach suits the purpose of this paper better as we need to make essential use of the property of the semi-valuation $\vv$ while we can avoid the definitions of general chain orders.
\end{rem}
It is natural to raise the question that when two quadratic field extensions $\L$ and $\E$ give rise to the same semi-valuation. As discussed in Example \ref{Example:explicitideal}, when $\B$ is the division algebra, this happens if and only if $e_\L=e_\E$. For the matrix algebra, we have the following criterion:

\begin{lem}\label{Lem3.1:whensamevalua}
Let $\L, \E$ be two quadratic field extensions over $\F$, embedded into $\B$. Let $\mathfrak{A}_\L$ $\mathfrak{A}_\E$ and other compact open subgroups be defined for $\L,\E$ respectively as in Definition \ref{Defn3.1:orders+groups}. Then $\vv=v_{\B,\E}	$ iff $\E^\times \subset \L^\times \mathfrak{A}_\L^\times$ iff $\L^\times \subset \E^\times\mathfrak{A}_\E^\times$.
	
\end{lem}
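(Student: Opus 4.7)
We restrict to the matrix algebra case $\B=M_2(\F)$, since the discussion immediately preceding the lemma handles the division algebra. By the evident symmetry of the second and third conditions under the exchange $\L\leftrightarrow\E$, it is enough to prove that the first two are equivalent; the equivalence with the third then follows by swapping the roles of $\L$ and $\E$. For the forward direction $\vv=v_{\B,\E}\Rightarrow\E^\times\subset\L^\times\mathfrak{A}_\L^\times$, fix $e\in\E^\times$ and choose $l\in\L^\times$ with $v_\L(l)=v_\E(e)$, which is possible since $v_\L\colon\L^\times\to\Z$ is surjective. Then Lemma~\ref{Lem3.1semivaluation}(1) gives $\vv(el^{-1})=\vv(e)-v_\L(l)$; the hypothesis $\vv=v_{\B,\E}$ identifies $\vv(e)$ with $v_\E(e)$, so $\vv(el^{-1})=0$, and the parallel computation with $e^{-1}$ in place of $e$ yields $\vv(le^{-1})=0$. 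Both $el^{-1}$ and its inverse $le^{-1}$ lie in $\mathfrak{A}_\L$, so $el^{-1}\in\mathfrak{A}_\L^\times$ and $e\in\L^\times\mathfrak{A}_\L^\times$.

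For the reverse direction $\E^\times\subset\L^\times\mathfrak{A}_\L^\times\Rightarrow\vv=v_{\B,\E}$, note that both $\L^\times$ (by Definition~\ref{Defn3.1:orders+groups}) and $\mathfrak{A}_\L^\times$ (trivially) normalise $\mathfrak{A}_\L$, so their product $\L^\times\mathfrak{A}_\L^\times$ does as well, and the hypothesis forces $\E^\times$ to normalise $\mathfrak{A}_\L$. The argument then reduces to the \emph{key claim} that $\mathfrak{A}_\E$ is the unique chain order of $M_2(\F)$ normalised by $\E^\times$. Granting this, $\mathfrak{A}_\L=\mathfrak{A}_\E$; and since $\mathcal{B}=\operatorname{rad}(\mathfrak{A})$ and the entire filtration $\{\mathcal{B}^n\}$ (hence the semi-valuation) is recovered from the chain order alone (cf.\ Remark~\ref{Rem3.1:OrderToValuation}), this equality of orders upgrades to $\vv=v_{\B,\E}$.

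To prove the key claim I would identify chain orders of $M_2(\F)$ with simplices of the Bruhat--Tits tree $\mathcal{T}$ of $\PGL_2(\F)$---maximal orders with vertices, Iwahori orders with edges---so that normalising a chain order is equivalent to stabilising its simplex. The image of $\E^\times$ in $\PGL_2(\F)$ is a compact anisotropic torus, and its fixed-point set in $\mathcal{T}$ is a single point: a vertex if $\E/\F$ is unramified, the midpoint of an edge if $\E/\F$ is ramified. In the unramified case the cyclic quotient of $\E^\times/\F^\times$ of order $q+1$ acts simply transitively on the $q+1$ edges incident to the fixed vertex, excluding any fixed edge; in the ramified case a uniformiser $\varpi_\E\in\E^\times$ has odd $v_\F(\det\varpi_\E)$ (so fixes no vertex) and satisfies $\varpi_\E^2\in\F^\times$ (so has order $2$ modulo the centre and flips a unique edge). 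This unique fixed point is the geometric realisation of $\mathfrak{A}_\E$, and any stabilised simplex must contain it, forcing it to equal $\mathfrak{A}_\E$. The main obstacle is precisely this uniqueness analysis; a tree-free alternative would be to view $\F^2$ as a rank-one $\E$-module---whose $O_\E$-stable $O_\F$-lattices form the single chain $\{\varpi_\E^k O_\E\}_{k\in\Z}$---and verify that modulo $\F^\times$-scaling there is exactly one $\E^\times$-stable lattice chain, namely the one giving $\mathfrak{A}_\E$.
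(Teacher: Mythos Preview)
Your proposal is correct and follows essentially the same route as the paper: the forward direction is identical, and for the reverse direction both arguments reduce to showing $\mathfrak{A}_\L=\mathfrak{A}_\E$ via the uniqueness of the chain order normalised by $\E^\times$. The only difference is that the paper cites \cite[Proposition 12.4]{BushnellHenniart:06a} for this uniqueness, whereas you supply a self-contained sketch via the Bruhat--Tits tree (or equivalently via $O_\E$-lattice chains); this is an expository rather than a mathematical divergence.
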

\begin{proof}
	By symmetry, we only prove that $\vv=v_{\B,\E}	$ if and only if $\E^\times \subset \L^\times \mathfrak{A}_\L^\times$.
	First of all, suppose that $\vv=v_{\B,\E}$. For any $e\in \E^\times$, let $l\in \L^\times$ such that $v_\L(l)=v_\E(e)$. Thus it suffices to show that $l^{-1}e\in \mathfrak{A}_\L^\times$. % in order to prove that $\E^\times \subset\L^\times \mathfrak{A}^\times$. Note that
	Indeed $\vv(l^{-1}e)\geq \vv(l^{-1})+\vv(e)=-v_\L(l)+v_\E(e)=0$, while 
	$\vv(e^{-1}l)\geq -v_\E(e)+v_\L(l)=0$.
	For the other direction, as mentioned in Remark \ref{Rem3.1:OrderToValuation}, it suffices to show that $\mathfrak{A}_\L=\mathfrak{A}_\E$. % in order to prove that $\vv=v_{\B,\E}	$.
	Note that $\E^\times \subset \E^\times \mathfrak{A}_\E^\times$ is automatic, and $\E^\times \subset \L^\times \mathfrak{A}_\L^\times$ by condition. Then by \cite[Proposition 12.4]{BushnellHenniart:06a}(2)(3), such possible $\mathfrak{A}$ is unique.
\end{proof}

Given a  quadratic field extension $\L$ embedded in the quaternion algebra $\B$ and a minimal character $\theta$ over $\L$, we can now start the construction of a minimal supercuspidal representation.  A general supercuspidal representation can be constructed from a minimal supercuspidal representation by a twist. The following constructions are taken from \cite[Chapter 4, 5, 13]{BushnellHenniart:06a} with slight modifications\footnote{Some of them come from the differences of the meaning of $c(\chi)$, the choice of the additive character $\psi$, and the normalization of $\vv$ when $\epsilon(\B)=-1$ and $e_\L=1$.}.

%\begin{cor}\label{Cor:twovaluations2}
%For $t=ej\in\L^\perp$ with $e\in \L$ and $j$ as above, we have
%\begin{equation}
%v_\F(\Nm(t))=\frac{2}{e_\L}\vv(t).
%\end{equation}%
%\end{cor}
%\begin{proof}
%We have
%\begin{equation}
%\Nm(t)=-\Nm(e)j^2,
%\end{equation}
%Thus
%%\begin{equation}
%v_\F(\Nm(t))=v_\F(\Nm(e))v_\F(j^2)=\frac{2 v_\L(e)}{e_\L} +\epsiBL=\frac{2}{e_\L}(\vv(e)+\vv(j))=\frac{2}{e_\L}\vv(t).
%\end{equation}
%\end{proof}

%It is uniquely determined by this and Lemma \ref{Lem2.1:Eperpbasic} below. 

%\yh{To put a description of $c(\theta)=1$. Focus on $\geq 2$ in the following}
\begin{defn}\label{Defn:ii'&JHs}
Numerically, let $i=\lfloor\frac{c(\theta)-\epsiBL}{2}\rfloor+\frac{\epsiBL}{2}$, $i'=\lceil \frac{c(\theta)-\epsiBL}{2}\rceil+\frac{\epsiBL}{2}$.
Let $J=\L^\times  K_{\mathfrak{A}}(i)$,
$J^1=U_{\L}(1) K_\mathfrak{A}(i)$, $H=\L^\times  K_\mathfrak{A}(i')$ and $H^1= U_{\L}(1)K_\mathfrak{A}(i')$.

\end{defn}
\begin{rem}
	Note that $i'-i=0$ or $1$. When $i=i'$, we have $H=J$ and $H^1=J^1$. When $i'=i+1$, $J^1/H^1$ is a $2-$dimensional space over the residue field.
\end{rem}
\begin{rem}
 Intrinsically when $c(\pi)>2$, the values of $i, i'$ are determined by that $i'\in \vv((\L^\perp)^\times)$, $i'\geq c(\theta)/2$ is minimal, and  that $i+i'=c(\theta)$ (so that Lemma \ref{lem:mainsec:intertwining0}, \ref{Lem:DuallatticeEperp} are true).
\end{rem}

\begin{example}\label{Example:ListofHJi}
%Following Example \ref{Example:explicitideal}, 
We have the following complete list of datum for minimal supercuspidal representations when $p\neq 2$.
\begin{enumerate}
\item $\epsilon(\B)=1$, $e_\L=1$, $c(\theta)=2n$, ($c(\pi)=4n$,) then $H=J$ and $i=n$.
\item $\epsilon(\B)=1$, $e_\L=1$, $c(\theta)=2n+1$, ($c(\pi)=4n+2$,) then $H\subsetneq J$ and $i=n$.
\item $\epsilon(\B)=1$, $e_\L=2$, $c(\theta)=2n$, ($c(\pi)=2n+1$,) then $H=J$ and $i=n$.
\item $\epsilon(\B)=-1$, $e_\L=1$, $c(\theta)=2n$, then $H\subsetneq J$ and $i=\frac{2n-1}{2}$.
\item $\epsilon(\B)=-1$, $e_\L=1$, $c(\theta)=2n+1$, then $H=J$ and $i=\frac{2n+1}{2}$.
\item $\epsilon(\B)=-1$, $e_\L=2$, $c(\theta)=2n$, then $H= J$ and $i=n$.
\end{enumerate}
\end{example}

Now we move on to discuss the extension of $\theta$ to a character  $\tilde{\theta}$ on $H^1$, and representation $\Lambda$ on $J$. 
We first focus on the case when $c_\L(\theta)\geq 2$, so $\theta $ gives rise to an element $\alpha_\theta\in \L^\times $ as an element of dual Lie algebra as in Lemma \ref{Lem:DualLiealgForChar}, with 
\begin{equation}\label{Eq:valphatheta}
 v_\L(\alpha_\theta)=-c(\theta)+c(\psi_\L).
\end{equation} 
\begin{defn}
Define the character $\tilde{\theta}$ on $H^1$ by
\begin{equation}\label{Eq:charextension}
\tilde{\theta}(l(1+t))=\theta(l)e^{<\alpha_\theta ,t>},
\end{equation}
where $l\in \L^\times$ and $1+t\in K_{\mathfrak{A}}(i')$. 
The pair $(H^1,\tilde{\theta})$ is called a simple character.
\end{defn}
The group $J$ can be alternatively describe by the following property:
\begin{lem}\label{lem:mainsec:intertwining0}
	 $g\in \B^\times $ intertwines $\tilde{\theta}$ on $H^1$ iff $g$ conjugates $\tilde{\theta}$ iff $g\in J$.
\end{lem}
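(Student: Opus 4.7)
The plan is to prove the three-way equivalence via the cycle: $g \in J$ implies $g$ conjugates $\tilde{\theta}$, which trivially implies $g$ intertwines $\tilde{\theta}$, which in turn implies $g \in J$. The middle implication is formal: if $g$ normalizes $H^1$ with $\tilde{\theta}^g = \tilde{\theta}$, then $H^1 \cap g^{-1}H^1 g = H^1$ and the identity map is a nonzero element of $\Hom_{H^1}(\tilde{\theta}, \tilde{\theta}^g)$. So the real work is in the two end arrows.

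For $g \in J = \L^\times K_{\mathfrak{A}}(i)$ conjugating $\tilde{\theta}$, I would handle the two factors separately. For $l \in \L^\times$, both $U_\L(1)$ and $K_{\mathfrak{A}}(i')$ are normalized by $\L^\times$ by Definition \ref{Defn3.1:orders+groups}, and $\alpha_\theta \in \L$ commutes with $l$, giving immediate preservation of $\tilde{\theta}$. For $k = 1 + s \in K_{\mathfrak{A}}(i)$, commutator manipulations show $k^{-1}l_1 k \in U_\L(1) K_{\mathfrak{A}}(i')$ (using $\vv([l_1 - 1, s]) \geq 1 + i \geq i'$) and $k^{-1}(1+t)k \in K_{\mathfrak{A}}(i')$. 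Preservation of $\tilde{\theta}$ then reduces to the triviality of trace pairings like $\psi \circ \Tr((k\alpha_\theta k^{-1} - \alpha_\theta)\,t)$; using $\vv(\alpha_\theta) = -c(\theta) + c(\psi_\L)$ and the key numerical identity $i + i' = c(\theta)$, the product has $\vv$-valuation at least $c(\psi_\L)$ and Corollary \ref{Cor3.1:psitrace=1} applies. The $U_\L(1)$-piece is an exact cancellation coming from the commutativity of $\L$ inside the trace.

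For the hard direction, suppose $g$ intertwines $\tilde{\theta}$. One-dimensionality of $\tilde{\theta}$ converts intertwining into the character equality $\tilde{\theta}(h) = \tilde{\theta}(g h g^{-1})$ on $H^1 \cap g^{-1}H^1 g$. Testing against $h = 1 + t$ with $\vv(t)$ sufficiently large so that $g(1+t)g^{-1} \in K_{\mathfrak{A}}(i') \subset H^1$, the extension formula for $\tilde{\theta}$ yields
\begin{equation*}
\psi \circ \Tr\bigl((g^{-1}\alpha_\theta g - \alpha_\theta)\,t\bigr) = 1
\end{equation*}
for all $t$ in a suitable $\vv$-lattice, and Corollary \ref{Sec3.1Cor:DualofBn} converts this into a dual-lattice bound on $g^{-1}\alpha_\theta g - \alpha_\theta$. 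Decomposing $g = g_\L + g^\perp$ and using the swap rule $\alpha_\theta g^\perp = g^\perp \overline{\alpha_\theta}$ from Lemma \ref{Lem2.1:Eperpbasic}, one simplifies
\begin{equation*}
g^{-1}\alpha_\theta g - \alpha_\theta = g^{-1}[\alpha_\theta, g^\perp] = g^{-1}(\alpha_\theta - \overline{\alpha_\theta})\,g^\perp = 2\,g^{-1}\alpha_{\theta,0}\,g^\perp,
\end{equation*}
so the dual-lattice condition becomes a $\vv$-bound on $g^{-1}\alpha_{\theta,0}\,g^\perp$.

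The main obstacle is the bootstrap from this weak bound to the sharp constraint $g \in J$. The key input is the minimality of $\theta$: Lemma \ref{Lem:MinimalEqv} forces $\vv(\alpha_{\theta,0}) = \vv(\alpha_\theta)$, so there is no $\vv$-cancellation and the bound transfers cleanly onto $g^\perp$. After normalizing $g$ by an element of $\L^\times$ so that $g_\L \in O_\L^\times$, one concludes $\vv(g_\L^{-1} g^\perp) \geq i$, i.e., $g_\L^{-1}g \in K_{\mathfrak{A}}(i)$, placing $g \in \L^\times K_{\mathfrak{A}}(i) = J$ as required.
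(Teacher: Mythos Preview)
The paper does not prove this lemma; it is quoted from the theory of simple characters in \cite{BushnellHenniart:06a}. So there is no in-paper argument to compare against. Your easy direction ($J$ normalizes $(H^1,\tilde\theta)$) and the formal middle implication are fine.

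Your hard direction has a real gap at precisely the step you label the ``bootstrap''. Testing on $1+t$ with $\vv(t)\ge N$ gives only $g^{-1}\alpha_\theta g-\alpha_\theta\in\mathcal B^{-N+c(\psi_\L)}$, with $N$ chosen in a $g$-dependent way; you then simply assert $\vv(g_\L^{-1}g^\perp)\ge i$ without derivation. The argument can in fact be closed along your lines, but it requires a case split you have not performed. First, the normalization ``$g_\L\in O_\L^\times$'' misses $g_\L=0$; there one reduces to $g'=j$, which normalizes $H^1$, so $N=i'$ is admissible and the bound forces $\vv(-2\alpha_{\theta,0})\ge -i'+c(\psi_\L)$, i.e.\ $c(\theta)\le i'$, contradicting $c(\theta)=i+i'$. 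In the main case normalize to $g'=1+sj$ with $s\in\L$, set $m=\vv(sj)$, and expand
\[
g'^{-1}\alpha_\theta g'-\alpha_\theta=\frac{2}{1+\Nm(sj)}\bigl(\alpha_{\theta,0}\,sj-\Nm(sj)\,\alpha_{\theta,0}\bigr)
\]
into its $\L^\perp$- and $\L$-components. One checks that $\vv(g')+\vv(g'^{-1})=0$ except in the degenerate sub-case $m=0$, $v_\F(1+\Nm(sj))=r>0$, so $N=i'$ (respectively $N=i'+e_\L r$) is always admissible. When $m\ge 1$ the $\L^\perp$-component forces $m\ge c(\theta)-i'=i$, as you want. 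But when $m\le 0$ that component gives nothing useful; the contradiction instead comes from the $\L$-component $\tfrac{-2\Nm(sj)\alpha_{\theta,0}}{1+\Nm(sj)}$, whose valuation $-c(\theta)+c(\psi_\L)$ violates the bound. Your phrase ``the bound transfers cleanly onto $g^\perp$'' applies only to the regime $m\ge 1$ and leaves the cases $m\le 0$ (and $g_\L=0$) genuinely open.
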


Let $u_\E$ be the cyclic subgroup in $\E^\times$ lifting the nontrivial elements in $k_\E$ by Hensel's lemma, 
and let $u_\F$ be the similar subgroup for $\F^\times$.
The extension of $\tilde{\theta}$ to $\Lambda$ on $J$ satisfies the following properties, by which it can be uniquely determined:
\begin{lem}\label{lem:mainsec:intertwining}
\begin{enumerate}
\item $\dim \Lambda=\sqrt{[H^1:J^1]}$.
\item $\Lambda|_{H^1}$ is a multiple of $\tilde{\theta}$. $\Lambda|_{\F^\times}$ is a multiple of $\theta|_{\F^\times}$.
\item When $\dim\Lambda=1$, $\Lambda|_{\L^\times }=\theta$. When $\dim \Lambda=q$, $\Tr(\Lambda(x))=-\theta(x)$ for any $x\in u_\E-u_\F$; Equivalently, 
$\Lambda|_{\L^\times }=\oplus \theta\mu$ where $c(\mu)=1$ and $\mu|_{\F^\times }=1$. 
\end{enumerate}

\end{lem}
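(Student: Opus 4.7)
The plan is to construct $\Lambda$ in two stages, following \cite{BushnellHenniart:06a}: first extend $\tilde{\theta}$ from $H^1$ to a Heisenberg-type representation $\eta$ on $J^1$, then extend $\eta$ from $J^1$ to $\Lambda$ on $J=\L^\times J^1$ using the conjugation action of $\L^\times$. The three claims will be read off from this construction one by one.

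For the first stage, I would check that $[J^1,J^1]\subset H^1$ using Lemma \ref{Lem3.1semivaluation} together with the numerical relation $i+i'\geq c(\theta)$, so that the commutator pairing
\begin{equation*}
h_\theta : J^1/H^1 \times J^1/H^1 \to \C^\times, \qquad h_\theta(\bar x,\bar y) = \tilde{\theta}([x,y])
\end{equation*}
descends to a well-defined alternating $k_\F$-bilinear form in the multiplicative sense. Its non-degeneracy follows from Lemma \ref{lem:mainsec:intertwining0}: any element in the radical would intertwine $\tilde{\theta}$, hence lie in $J$, and therefore in $J \cap J^1 = H^1$. Stone--von Neumann for the resulting finite Heisenberg quotient then produces a unique irreducible $\eta$ on $J^1$ whose restriction to $H^1$ is a multiple of $\tilde{\theta}$, with $\dim \eta = \sqrt{[J^1:H^1]}$. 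This settles (1).

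For the second stage, $\L^\times$ normalizes $H^1$ and fixes $\tilde{\theta}$ by Lemma \ref{lem:mainsec:intertwining0}, so uniqueness forces $\eta^\ell \simeq \eta$ for every $\ell \in \L^\times$; this extends $\eta$ to a projective representation of $J$, and the obstruction to lifting to a genuine representation vanishes because $J/J^1$ is abelian. Among the resulting finite family of extensions (differing by characters of $J/J^1$), I would normalize $\Lambda$ by requiring $\Lambda|_{\L^\times}$ to contain $\theta$, which is consistent with $\tilde{\theta}|_{U_\L(1)} = \theta|_{U_\L(1)}$ from the definition of $\tilde{\theta}$. Part (2) is then immediate: $\Lambda|_{H^1}$ is a multiple of $\tilde{\theta}$ because this already holds for $\eta$, and $\Lambda|_{\F^\times}$ is a multiple of $\theta|_{\F^\times}$ by Schur's lemma, as $\F^\times$ is central and $\Lambda$ is irreducible.

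For (3), when $\dim \Lambda = 1$ we have $H = J$, so $\Lambda = \tilde{\theta}$ and $\Lambda|_{\L^\times} = \theta$ by the chosen normalization. When $\dim \Lambda = q$, I would invoke the standard character formula for a Heisenberg-extended representation: for $g$ with non-trivial image in $J/J^1$, $\Tr \Lambda(g)$ equals $\theta(g)$ times a Gauss-type sum indexed by the $\mathrm{ad}(g)$-fixed vectors of $J^1/H^1$. For $g \in u_\L - u_\F$, conjugation by $g$ acts on $J^1/H^1$ as multiplication by a non-trivial element of $k_\L^\times \setminus k_\F^\times$, which has no non-zero fixed vectors, and the Gauss sum collapses to $-1$, yielding $\Tr \Lambda(g) = -\theta(g)$. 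The equivalent decomposition $\Lambda|_{\L^\times} = \bigoplus_\mu \theta\mu$ then follows by Frobenius reciprocity from the realization $\Lambda \simeq \Ind_H^J(\theta \otimes \tilde{\theta})$. The main obstacle is to identify $J^1/H^1$ as the correct $k_\L$-module in each of the subcases of Example \ref{Example:ListofHJi} so that $u_\L$ indeed acts without non-trivial fixed vectors; the division-algebra cases with $e_\L = 1$ are the most delicate because of the half-integer values of $\vv$.
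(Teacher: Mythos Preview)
Your overall architecture---Heisenberg extension from $H^1$ to $J^1$, then extension across $\L^\times$---matches the construction the paper sketches following \cite{BushnellHenniart:06a}, and the paper itself does not give a self-contained proof but refers there. However, two steps in your write-up do not hold as stated.

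First, your non-degeneracy argument is vacuous. You write that an element of the radical intertwines $\tilde{\theta}$, hence lies in $J$, hence in ``$J\cap J^1=H^1$''. But $J^1\subset J$, so $J\cap J^1=J^1$, not $H^1$; and Lemma~\ref{lem:mainsec:intertwining0} already tells you every element of $J^1$ intertwines $\tilde{\theta}$, so invoking it gives no constraint. The non-degeneracy of the pairing on $J^1/H^1$ is a genuine computation (essentially the content of Lemma~\ref{Lem:DuallatticeEperp} in the paper, or \cite[Section 16]{BushnellHenniart:06a}), not a formal consequence of the intertwining lemma.

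Second, your proposed normalization ``require $\Lambda|_{\L^\times}$ to contain $\theta$'' is incompatible with part (3). When $\dim\Lambda=q$, the conclusion is $\Lambda|_{\L^\times}=\bigoplus_\mu \theta\mu$ with $c(\mu)=1$, i.e.\ $\mu$ \emph{nontrivial}; so $\theta$ itself is \emph{not} a constituent. The correct normalization is exactly the trace condition $\Tr(\Lambda(x))=-\theta(x)$ on $u_\L\setminus u_\F$ (this pins down the extension among the $q+1$ candidates differing by characters of $\L^\times/\F^\times U_\L(1)$), and the decomposition into $\theta\mu$'s is then a consequence, not an input. Relatedly, the claim that the projective obstruction vanishes ``because $J/J^1$ is abelian'' is too quick: $J/J^1\simeq \L^\times/U_\L(1)$ is abelian but not cyclic, so $H^2$ need not vanish for that reason alone; the actual argument in \cite[Section 19]{BushnellHenniart:06a} is more specific.
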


The construction of $\Lambda$ depends on whether $i=i'$. When $i=i'$, \eqref{Eq:charextension} can be extended to the whole subgroup $J$ and $\Lambda =\tilde{\theta}$. 
When $i'=i+1$, consider (any) intermediate group $B^1$ between $J^1$ and $H^1$ such that $B^1/H^1$ is a $1-$dimensional  subspace of $J^1/H^1$, giving a polarization of $J^1/H^1$ under the  pairing (which is alternating by \cite[Section 16.4]{BushnellHenniart:06a}) given by 
\begin{equation}\label{Eq3.1:alternatvepairing}
 \Psi_{\alpha_\theta}(1+x,1+y)\mapsto e^{<\alpha_\theta, [x,y]>}, \forall x,y\in\mathcal{B}^i.
\end{equation} 
Here $[x,y]=xy-yx$ is the Lie bracket.
Because of this one can extend $\tilde{\theta}$ to $B^1$ (not unique), which we still denote by $\tilde{\theta}$ by abuse of notation. Then 
\begin{equation}
 \Lambda|_{J^1}=\Ind_{B^1}^{J_1}\tilde{\theta}.
\end{equation}
This step is called the Heisenberg extension of $(H^1,\tilde{\theta})$. It has the required dimension and is independent of the choices of $(B^1,\tilde{\theta})$. See \cite[Section 16]{BushnellHenniart:06a} for more details.
It remains to identify the action of $\L^\times/U_\L(1)$ on the space $\Ind_{B^1}^{J_1}\tilde{\theta}$ which is consistent with Lemma \ref{lem:mainsec:intertwining} (3). See \cite[Section 19]{BushnellHenniart:06a} for the details of this step.
%Then  $\pi$ is constructed as $c-\Ind_{J}^{\B^\times }\Lambda$, $J=\L^\times  K_{\mathfrak{A}}(i)$ for suitable half-interger $i\geq 1$ related to $c(\theta)$, $\epsilon(\B)$, $e_\L$ and is roughly $-\frac{\vv(\alpha_\theta)}{2}$, where  $\Lambda$ is an irreducible  representation of $J$ of dimension either 1 or $q$. Let $J^1$ be the subgroup of elements in $J$  of form $ U_{\L}(1) K_\mathfrak{A}(i)$. More specifically in the case $\dim \Lambda=q$, let $H=\L^\times  K_\mathfrak{A}(i+1)$, $H^1= U_{\L}(1)K_\mathfrak{A}(i+1)\subset J^1$ and $\tilde{\theta}$ on $H^1$ given by 

For uniformity we take $B^1=J^1$  in the case $i=i'$. 
\begin{defn}\label{Def3.1:cupsidaltype}
The triple $(\mathfrak{A}, J, \Lambda)$ is called a cuspidal type. It is said to be associated to $\pi$ if $\pi\simeq c-\Ind_{J}^G\Lambda$. 
\end{defn}
By \cite{BushnellHenniart:06a}, all cuspidal types associated to $\pi$ are conjugate to each other.
\begin{defn}\label{Defn:GeneralMinimalVec}
 An element $\varphi\in\pi$ is called a minimal vector if there
exists a cuspidal type $(\mathfrak{A}, J, \Lambda)$ associated
to $\pi$, such that $\varphi$ is an eigenvector for the  simple
character
$(H^1, \tilde{\theta})$ associated to $(\mathfrak{A}, J, \Lambda)$.
\end{defn}

When $i=i'$, the minimal vector is actually unique up to a constant. When $i'=i+1$, the space of eigenvectors for the simple character is $q-$dimensional.
To see these, just apply Mackey theory for the compact induction and use Lemma  \ref{lem:mainsec:intertwining0}.

For applications, it is sometimes convenient to specify a particular basis when $\dim \Lambda>1$.
\begin{defn}\label{Defn:generalMinimalVec}
A minimal vector $\varphi$ can be uniquely identified by that
\begin{enumerate}
\item[Type 1] either $B^1$ acts on it by $\tilde{\theta}$ for some intermediate polarizing subgroup $B^1$,
\item[Type 2] or $H$ acts on it by $\tilde{\theta}\mu$ for some character $\mu$ on $\L^\times$ with $c(\mu)=1$ and $\mu|_{\F^\times }=1$.
\end{enumerate}
Here $\tilde{\theta}\mu$ is the extension of $\theta\mu$ to $H$ similarly as in \eqref{Eq:charextension}.
Then $\{\pi(g)\varphi\}$ for type 1 minimal vector $\varphi$ and $g\in J^1/B^1$, and $\{\varphi\}$ for type 2 minimal vectors with all possible $\mu$ provide two orthogonal basis for the space of minimal vectors associated to a particular cuspidal type $(\mathfrak{A}, J, \Lambda)$.
%Conjugations of the pair $(B^1,\tilde{\theta})$ or $(H,\tilde{\theta}\mu)$ give rise to minimal vectors which are simple translate of each other. They provide orthogonal basis for $\pi$.
\end{defn}
Note that these two types coincide when $\dim\Lambda=1$.

%\begin{rem}
%It is possible to write one type of minimal vectors in terms of the other type of minimal vectors, with the coefficients being certain exponential sums over the residue field. While it may be possible to prove the square root cancellation for the upper bound of  such exponential sums, a lower bound seems however out of reach.
%\end{rem}

\begin{lem}\label{Cor:MCofGeneralMinimalVec}
For simplicity let $\pi$ be a unitary irreducible and supercuspidal representation, and let $\Phi_\varphi$ be the matrix coefficient associated to a minimal vector $\varphi$. Then $\Phi_\varphi$ is supported on $J$.   When $\varphi$ is type 1 and $b\in B^1$,
\begin{equation}
\Phi_\varphi(bx)=\Phi_\varphi(xb)=\tilde{\theta}(b)\Phi_\varphi(x).
\end{equation}
Furthermore $\Phi_\varphi|_{J^1}$ is supported on $B^1$. 
When $\varphi$ is a type 2 minimal vector associated to $\tilde{\theta}\mu$, $h\in H$,
\begin{equation}
\Phi_\varphi(hx)=\Phi_\varphi(xh)=\tilde{\theta}\mu(h)\Phi_\varphi(x).
\end{equation}
%for any $h\in H$ when $\varphi$ is type 2.
\end{lem}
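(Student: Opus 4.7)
The plan is to work inside the compact-induction realization $\pi \simeq c\text{-}\Ind_J^G \Lambda$, viewing $\varphi$ as a $\Lambda$-valued function on $G$.

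First, I would locate the minimal vector inside this model. By Mackey/Frobenius reciprocity applied to $\Hom_{H^1}(\tilde\theta, \pi)$, the space of $(H^1,\tilde\theta)$-eigenvectors decomposes as a direct sum over double cosets $H^1 \backslash G / J$, with each summand given by an intertwining space. Lemma \ref{lem:mainsec:intertwining0} says $g \in G$ intertwines $\tilde\theta$ only when $g \in J$, so only the trivial double coset contributes; hence any minimal vector is supported on $J$ as a function $G \to \Lambda$. The support claim for $\Phi_\varphi$ then follows immediately: in the compact-induction model, $\langle \pi(g)\varphi,\varphi\rangle$ is an integral over $J \backslash G$ of pointwise inner products, which vanishes unless $\Supp \varphi$ and $\Supp \pi(g)\varphi = Jg^{-1}$ intersect, forcing $g \in J$.

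Second, the left-right equivariance under $B^1$ (type 1) or $H$ (type 2) is a formal consequence of the eigenvalue hypothesis and unitarity of $\pi$ and of $\tilde\theta$. For $b \in B^1$ acting on a type 1 vector by $\tilde\theta(b)$, one computes
\[
\Phi_\varphi(xb) = \langle \pi(x)\pi(b)\varphi,\varphi\rangle = \tilde\theta(b)\Phi_\varphi(x),
\qquad
\Phi_\varphi(bx) = \langle \pi(x)\varphi, \pi(b^{-1})\varphi\rangle = \tilde\theta(b)\Phi_\varphi(x),
\]
and the identical calculation with $H$ and $\tilde\theta\mu$ handles the type 2 case.

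Third, for the refined support statement $\Supp \Phi_\varphi|_{J^1} \subset B^1$ in the type 1 case, I would use $\Lambda|_{J^1} \simeq \Ind_{B^1}^{J^1}\tilde\theta$ and identify $\varphi$ with (a scalar multiple of) the canonical induced-model function $f_0$ supported on $B^1$ with $f_0|_{B^1} = \tilde\theta$. Uniqueness of such a $(B^1,\tilde\theta)$-eigenvector inside $\Ind_{B^1}^{J^1}\tilde\theta$ follows from another Frobenius/Mackey computation, using non-degeneracy of the alternating pairing $\Psi_{\alpha_\theta}$ from \eqref{Eq3.1:alternatvepairing} on $J^1/H^1$ with $B^1/H^1$ maximal isotropic, so that only the trivial $B^1$-double coset in $J^1$ contributes an intertwiner. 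Computing the matrix coefficient of $f_0$ inside $\Ind_{B^1}^{J^1}\tilde\theta$ as the usual sum over $B^1\backslash J^1$ then visibly vanishes off $B^1$. The main obstacle is precisely this last identification: realizing the type 1 defining vector concretely as $f_0$ inside the Heisenberg representation and verifying the non-degeneracy/polarization property of $\Psi_{\alpha_\theta}$ with respect to the chosen $B^1$. The remainder is routine Mackey-theoretic bookkeeping and unitary-character manipulation.
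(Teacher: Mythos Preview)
Your proposal is correct and follows essentially the same route as the paper. The paper's argument (in Appendix~\ref{Sec:MatrixcoeffSC}) is packaged slightly differently: it invokes the single general formula of Lemma~\ref{lem3.3:matrixcoeffInduction}, which for $\varphi=f_{1,v}$ gives $\Phi_\varphi(g)=\langle \Lambda(g)v,v\rangle_\Lambda$ when $g\in J$ and $0$ otherwise, thereby reducing all four claims to matrix-coefficient computations of the vector $v$ inside the finite-dimensional representation $\Lambda$ (and, for the refined support, inside $\Lambda|_{J^1}=\Ind_{B^1}^{J^1}\tilde\theta$). Your Mackey-theoretic unpacking is exactly the content behind that formula, and your identification of the type~1 vector with the function $f_0$ supported on $B^1$ is precisely how the paper's reduction works; the ``obstacle'' you flag is not really one, since by construction of the Heisenberg extension the $(B^1,\tilde\theta)$-eigenvector in $\Ind_{B^1}^{J^1}\tilde\theta$ is $f_0$.
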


Now consider the case when $c_\L(\theta)=1$, which only occurs for $\L$ inert and $c(\pi)=2$.  %except \eqref{Eq:conductorofRS} when $\E\simeq \L$ and $c(\chi)\leq 1$. The test vector problem for case can however be dealt with directly.%, but we can equivalently take $\alpha_\theta=0$ in the previous formulations.

In this case $\tilde{\theta}$ is defined to be the trivial character on $H^1$. When $\B$ is the division algebra, $H=J$ and we can take $\Lambda$ to be the extension of $\tilde{\theta}$ to $J$ similarly. 

When $\B$ is the matrix algebra, we fixed the embedding of $\L$ as in \eqref{Eq:StandardEmbeddingL} with $v(D)=0$. $\Lambda$ is a $q-1$ dimensional representation on $\L^\times\GL_2(O_\F)$, inflated from a cuspidal representation $\lambda$ of $\GL_2(k_\F)$ over the residue field (see \cite[Section 6]{BushnellHenniart:06a} for more details on $\lambda$). It has the property that $\Lambda|_{\L^\times }=\oplus_{c(\theta')=1} \theta'$ where $\theta'|_{\F^\times }=\theta|_{\F^\times }$ and $\theta'\neq \theta$ or $\overline{\theta}$. Here $\overline{\theta}(x)=\theta(\overline{x})$. 
The minimal vectors in this case %for a particular cuspidal type 
are $q-1$ dimensional and $H^1$ invariant. One can get type 2 minimal vectors by requiring that $H$ acts by the character $\theta'$ as above.  

On the other hand, $\lambda$ contains one (and thus all $q-1$ of them) nontrivial character $\psi_N$ of a unipotent subgroup $N(k_\F)$. Let $B^1$ be the preimage of $N(k_\F)$ under the map $\GL_2(O_\F)\rightarrow \GL_2(k_\F)$, and let $\tilde{\theta}$ be the pull back character of $\psi_N$ on $B^1$. One can then obtain type 1 minimal vectors by requiring that $B^1$ acts by $\tilde{\theta}$. %By abuse of notations, we also assign $\alpha_\theta\in \varpi_\E^{-1}O_\E$ such that $\tilde{\theta}(1+t)=\psi\circ\Tr(\alpha_\theta t)$ for $1+t\in B^1$.

To sum up, the type 1 and type 2 minimal vectors are also well-defined when $c(\pi)=2$.

%There is no natural $\alpha_\theta$ associated in this case. 
%But most formulations remain true by taking any $\alpha_\theta\in \varpi_\L^{-1}O_\L^\times$.
%But type 1 minimal vectors do not exist in this case.

%There is no natural $\alpha_\theta$ associated in this case, and essentially only type 2 minimal vectors exist. But with the restriction $c(\pi)\geq c(\pi_{\chi^{-1}})$, we shall see in Section \ref{Sec:wholetoruscase} they are enough.
%When $\B$ is the division algebra, $\pi$ compactly induced from a single character, and is the easier case. We shall mainly review what happens when $\B$ is the matrix algebra. In this case, $\pi$ is compactly induced from a $q-1$ dimensional representation $\Lambda$ on the maximal compact subgroup, inflated from a representation of $\GL_2$ over residue field.  Then one can similarly define type 2 minimal vectors.

%In the remaining of this section, we only consider the case when $c_\L(\theta)\geq 2$.
\subsection{Some results for lattices}
%\begin{cor}
%Following Example \ref{Example:ListofHJi}, one can phenomenally write
%\begin{equation}
%i=\lfloor \frac{-\frac{e_\L}{2}v_\F(\Nm(\alpha_\theta))+\epsiBL}{2}\rfloor-\frac{\epsiBL}{2}.
%\end{equation}
%\end{cor}

\begin{lem}\label{lem7.1:conjugationEperp}
%Suppose $p\neq 2$. 
Let $ \alpha\in \L^\times-\F^\times$, $t\in \mathcal{B}^n$ for $n>0$. Then 
\begin{equation}\label{Eq3.2:conjugation}
(1+t)^{-1}\alpha(1+t)\equiv \alpha+[\alpha,t]\mod\mathcal{B}^{2n+v_\L(\alpha)},
\end{equation}
whereas the map $t\mapsto [\alpha,t]$ induces an isomorphism  $\B /\L\rightarrow \L^\perp$. Further more if $\alpha$ is \imaginary, and $t\in \L^\perp$, then
\begin{equation}
\vv([\alpha,t])=v_\L(\alpha)+\vv(t).
\end{equation}
\end{lem}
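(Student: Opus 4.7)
The plan is to handle the three assertions separately, each using straightforward computation combined with the semi-valuation properties of $\vv$ established in Lemma \ref{Lem3.1semivaluation}.

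First, for the congruence, I would expand $(1+t)^{-1} = 1 - t + t^2 - \cdots$, valid since $t \in \mathcal{B}^n$ with $n>0$. Using the identity $\alpha(1+t) = (1+t)\alpha + [\alpha,t]$, one gets
\[
(1+t)^{-1}\alpha(1+t) = \alpha + (1+t)^{-1}[\alpha,t] = \alpha + [\alpha,t] - t[\alpha,t] + t^2[\alpha,t] - \cdots.
\]
By Lemma \ref{Lem3.1semivaluation}(3), $\vv([\alpha,t]) \geq \vv(\alpha) + \vv(t) \geq v_\L(\alpha) + n$, so the terms $t^k[\alpha,t]$ for $k \geq 1$ have $\vv \geq 2n + v_\L(\alpha)$, giving the claimed congruence.

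Next, for the isomorphism $\B/\L \to \L^\perp$ induced by $t \mapsto [\alpha,t]$: since $\alpha \in \L$ commutes with $\L$, the map vanishes on $\L$, hence descends. Using Lemma \ref{Lem2.1:Eperpbasic}, write an arbitrary element of $\L^\perp$ as $lj$ with $l \in \L$, and use $j\alpha = \bar\alpha j$ (which follows from $\alpha j = j \bar\alpha$) to compute
\[
[\alpha, lj] = \alpha l j - l j \alpha = (\alpha - \bar\alpha)\, lj = 2\alpha_0 \cdot lj,
\]
where $\alpha_0$ is the imaginary part of $\alpha$. Since $\alpha \notin \F^\times$, $\alpha_0 \in \L^\times$, so multiplication by $2\alpha_0$ is an $\F$-linear bijection $\L^\perp \to \L^\perp$. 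Combined with $\B = \L \oplus \L^\perp$ and vanishing on $\L$, this yields the required isomorphism.

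Finally, when $\alpha$ is imaginary we have $\alpha_0 = \alpha$, hence $[\alpha, t] = 2\alpha t$ for $t \in \L^\perp$. Since $p \neq 2$ the factor $2$ is a unit, and Lemma \ref{Lem3.1semivaluation}(1) gives
\[
\vv([\alpha,t]) = \vv(2\alpha t) = v_\L(\alpha) + \vv(t).
\]
No step is really an obstacle; the only care needed is in correctly using the commutation rule $lj = j\bar l$ from Lemma \ref{Lem2.1:Eperpbasic} when computing the commutator.
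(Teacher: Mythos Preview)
Your argument for the congruence and for the isomorphism $\B/\L \to \L^\perp$ is correct and follows essentially the same line as the paper's proof; your explicit computation $[\alpha,lj] = (\alpha-\bar\alpha)lj$ is a slightly more concrete variant of the paper's trace argument showing that the image lands in $\L^\perp$.

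However, there is a gap in your treatment of the third part. The hypothesis is that $\alpha$ is \emph{minimal} (the macro \texttt{\textbackslash imaginary} in this paper expands to ``minimal''; see Definition~\ref{Defn:imaginary}), meaning only $v_\L(\alpha) = v_\L(\alpha_0)$, not that $\alpha$ is trace-zero. So your step ``when $\alpha$ is imaginary we have $\alpha_0 = \alpha$, hence $[\alpha,t] = 2\alpha t$'' assumes too much. The fix is immediate and already implicit in your own computation: you have $[\alpha,t] = 2\alpha_0 \cdot t$ for $t \in \L^\perp$, and Lemma~\ref{Lem3.1semivaluation}(1) gives $\vv([\alpha,t]) = v_\L(2\alpha_0) + \vv(t) = v_\L(\alpha_0) + \vv(t)$ (using $p \neq 2$); minimality of $\alpha$ is precisely what supplies the final equality $v_\L(\alpha_0) = v_\L(\alpha)$. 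This is exactly how the paper's proof proceeds.
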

\begin{proof}
	\eqref{Eq3.2:conjugation} follows from the Taylor expansion for $(1+t)^{-1}$ and $n>0$. 
$t\mapsto [\alpha,t]$ induces an isomorphism  $\B /\L\rightarrow \L^\perp$ as $\dim (\B/\L)=\dim \L^\perp$, while $$\Tr(l [\alpha,t])=\Tr(l\alpha t)-\Tr(lt\alpha)=\Tr(\alpha l t)-\Tr(t\alpha l)=0.$$

Now let
$t=lj$ by Lemma \ref{Lem2.1:Eperpbasic}, we have
\begin{equation}
[\alpha,t]=\alpha lj-lj\alpha=(\alpha-\overline{\alpha})lj.
\end{equation}
As $\alpha$ is \imaginary, we have
\begin{equation}
\vv(\alpha_0)=\vv(\alpha-\overline{\alpha})=\vv(\alpha),
\end{equation}
and the claim follows immediately from Lemma \ref{Lem3.1semivaluation}.
\end{proof}

\begin{defn}
For a minimal element $\alpha_\theta$, define a non-degenerate unitary pairing 
\begin{align}
\Psi_{\alpha_\theta}:\L^\perp\times \L^\perp &\rightarrow \C\\
(t,x)&\mapsto e^{<[\alpha_\theta,t], x>}=e^{<\alpha_\theta,[t,x]>}.\notag
\end{align}
For any $O_\F-$lattice $A$ in $\L^\perp$, define $A^{\ddager}=\{x\in \L^\perp, \Psi_{\alpha_\theta}(x,y)=1 \forall y\in A\}$.
For any element in $ \L^\times  K_{\mathfrak{A}}(i)$, one can uniquely write it in the form $e(1+t)$ for $e\in \L$ and $t\in \L^\perp$ with $\vv(t)\geq i$. Define the map
\begin{align}
\pr: &\L^\times \backslash \L^\times  K_{\mathfrak{A}}(1) \rightarrow \L^\perp\\
&e(1+t)\mapsto t.\notag
\end{align}
%It has a splitting map $t\mapsto 1+t$.
\end{defn}

\begin{lem}\label{Lem:DuallatticeEperp}
$\pr(J)=\pr (J^1)$, $\pr (H^1)$, $\pr (B^1)$ are $O_\F-$lattices in $\L^\perp$, and $\pr (J^1)^{\ddager}=\pr(H^1)$, $\pr (B^1)^{\ddager}=\pr(B^1)$.
\end{lem}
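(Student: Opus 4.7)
My plan is to unfold the definitions carefully, convert everything into statements about the semi-valuation $\vv$ restricted to $\L^\perp$, and then reduce the duality claim for $B^1$ to the Lagrangian property that built $B^1$ in the first place.

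First, I would establish the three sets explicitly. Given $g \in J = \L^\times K_\mathfrak{A}(i)$, write $g = l(1+y)$ with $l \in \L^\times$ and $y \in \mathcal{B}^i$, and decompose $y = y_\L + y^\perp$ according to $\B = \L \oplus \L^\perp$. Since $1 + y_\L \in \L^\times$, the identity $1+y = (1+y_\L)\bigl(1 + (1+y_\L)^{-1}y^\perp\bigr)$ with $(1+y_\L)^{-1} y^\perp \in \L^\perp$ (using $\L \cdot \L^\perp \subset \L^\perp$ from Lemma \ref{Lem2.1:Eperpbasic}) shows that $g$ has a representative $1+t$ modulo $\L^\times$ with $t \in \L^\perp$ and $\vv(t) \geq i$. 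Conversely, every such $t$ arises this way. This gives
\[
\pr(J) = \pr(J^1) = \mathcal{B}^i \cap \L^\perp, \qquad \pr(H^1) = \mathcal{B}^{i'} \cap \L^\perp,
\]
which are visibly $O_\F$-lattices in $\L^\perp$. Since $B^1/H^1$ is an $O_\F$-submodule of $J^1/H^1$, the set $\pr(B^1)$ is an $O_\F$-lattice sandwiched between $\pr(H^1)$ and $\pr(J^1)$ (when $i=i'$ they all agree with $\pr(J^1)$).

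Next, I would compute $\pr(J^1)^\ddager$ directly. For $t,x \in \L^\perp$, Lemma \ref{lem7.1:conjugationEperp} gives $[\alpha_\theta,t] \in \L^\perp$ with $\vv([\alpha_\theta,t]) = v_\L(\alpha_\theta) + \vv(t)$ (here the minimality of $\alpha_\theta$, equivalent to the minimality of $\theta$ by Lemma \ref{Lem:MinimalEqv}, is used to replace $\alpha_\theta$ by $\alpha_{\theta,0}$ without loss of $\L$-valuation). Then $[\alpha_\theta,t]x \in \L$ by Lemma \ref{Lem2.1:Eperpbasic} and $v_\L([\alpha_\theta,t]x) = \vv([\alpha_\theta,t]) + \vv(x)$ by Lemma \ref{Lem3.1semivaluation}(2). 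Combined with $v_\L(\alpha_\theta) = -c(\theta) + c(\psi_\L)$, the condition $\Psi_{\alpha_\theta}(t,x) = \psi_\L([\alpha_\theta,t]x) = 1$ for all $t \in \mathcal{B}^i \cap \L^\perp$ translates, using $c(\theta) = i+i'$, into $\vv(x) \geq i'$. This gives $\pr(J^1)^\ddager = \pr(H^1)$, and by symmetry also $\pr(H^1)^\ddager = \pr(J^1)$.

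Finally, I would obtain the $B^1$ claim from the fact that $B^1$ was defined precisely as an isotropic polarization of the alternating pairing \eqref{Eq3.1:alternatvepairing} on $J^1/H^1$. Since the pairing $\Psi_{\alpha_\theta}$ on $\L^\perp \times \L^\perp$ used in Definition 3.11 agrees with the alternating pairing from \eqref{Eq3.1:alternatvepairing} up to the sign change $<\alpha_\theta,[t,x]> = -<\alpha_\theta,[x,t]>$ (which is irrelevant for triviality), the induced pairing on the two-dimensional $k_\F$-vector space $\pr(J^1)/\pr(H^1)$ is the same alternating form. The subspace $\pr(B^1)/\pr(H^1)$ is a one-dimensional (hence Lagrangian) subspace, so $(\pr(B^1)/\pr(H^1))^\perp = \pr(B^1)/\pr(H^1)$ inside $\pr(J^1)/\pr(H^1)$; lifting back via the already-established $\pr(J^1)^\ddager = \pr(H^1)$ yields $\pr(B^1)^\ddager = \pr(B^1)$.

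The only delicate point I anticipate is bookkeeping the half-integer values of $\vv$ when $\epsiBL = 1$: I need to be sure that $i, i' \in \vv(\L^\perp) = \Z + \epsiBL/2$ so that the extremal valuations $\vv(t) = i$ and $\vv(x) = i'$ are actually attained in $\L^\perp$, which is needed both to realize these as lattices and to rule out $x$ with $\vv(x) < i'$ from the dual. This is built into the definition of $i,i'$ in Definition \ref{Defn:ii'&JHs} but should be remarked on explicitly.
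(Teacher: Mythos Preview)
Your proposal is correct and follows essentially the same route as the paper: identify $\pr(J^1)$ and $\pr(H^1)$ as the sets $\{t\in\L^\perp : \vv(t)\geq i\}$ and $\{x\in\L^\perp : \vv(x)\geq i'\}$, use Lemma~\ref{lem7.1:conjugationEperp} and Lemma~\ref{Lem3.1semivaluation}(2) to compute $v_\L([\alpha_\theta,t]x)$, and reduce the duality to the numerical relation $i+i'=c(\theta)$ combined with $v_\L(\alpha_\theta)=-c(\theta)+c(\psi_\L)$. Your treatment of $\pr(B^1)^\ddager=\pr(B^1)$ via the Lagrangian property of $B^1/H^1$ is exactly what the paper has in mind when it says this ``is directly related to that $B^1/H^1$ is a polarization of $J^1/H^1$'', only you have spelled it out more fully; your caveat about half-integer values of $\vv$ and the attainability of $i,i'$ in $\vv(\L^\perp)$ is a worthwhile remark that the paper leaves implicit in Definition~\ref{Defn:ii'&JHs}.
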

\begin{proof}
The first part is direct. 
To prove $\pr (J^1)^{\ddager}=\pr(H^1)$, note first that by definition $\pr(J^1)=\{t\in \L^\perp| \vv(t)\geq i\}$, $\pr(H^1)=\{x\in \L^\perp| \vv(x)\geq i'\}$. For $t\in \pr(J^1)$, we have $[\alpha_\theta,t]\in \L^\perp$ with $\vv([\alpha_\theta,t])=v_\L(\alpha_\theta)+\vv(t)$ by Lemma \ref{lem7.1:conjugationEperp}. By Lemma \ref{Lem3.1semivaluation}(2), $[\alpha_\theta,t] x\in \L$ with  $v_\L([\alpha_\theta,t] x)=v_\L(\alpha_\theta)+\vv(t)+\vv(x)$. 
Note that
$e^{<[\alpha_\theta,t], x>}=\psi_\L([\alpha_\theta,t] x)$, with $c(\psi_\L)=-e_\L+1$ by \eqref{Eq:CpsiL}. Thus $\Psi_{\alpha_\theta}(t,x)=1$ for any $t\in \pr(J^1)$ iff $\vv(x)+i+v_\L(\alpha_\theta)\geq -e_\L+1$. 
This in turn is equivalent to that $\vv(x)\geq i'$ by Definition \ref{Defn:ii'&JHs} and \eqref{Eq:valphatheta}, concluding the proof for $\pr (J^1)^{\ddager}=\pr(H^1)$.

The relation $\pr (B^1)^{\ddager}=\pr(B^1)$ can be proven similarly and is directly related to that $B^1/H^1$ is a polarization of $J^1/H^1$ with the given pairing. %When $c(\pi)$ is small, the argument will be more cumbersome regarding the higher order terms. But anyway one can also easily check case by case using \eqref{Eq:valphatheta} and Example \ref{Example:ListofHJi}.
\end{proof}

\begin{rem}
The relation $\pr (J^1)^{\ddager}=\pr(H^1)$ is basically a reformulation of Lemma \ref{lem:mainsec:intertwining0}. 
Indeed for $g=1+t\in J$ with $t\in \L^\perp$, $g$ conjugating $\tilde{\theta}$ on $H^1$ is equivalent, by Lemma \ref{lem7.1:conjugationEperp} while ignoring higher order terms, to that
\begin{equation}
\psi(\Tr([\alpha_\theta, t]x))=1, \forall x\in \pr(H^1).
\end{equation}

\end{rem}

\subsection{Lie algebra, linearisation  and Matrix coefficients in proper neighbourhoods}\label{Sec3.3:Liealg}

To make use of the Lie algebra language, we assume that $p$ is large enough and $J=\L^\times K_\mathfrak{A}(i)$ for $i>0$ throughout this section.
For $F\in C^\infty_c(G)$, $\varphi\in \pi$, let
\begin{equation}
\pi(F) \varphi=\int\limits_{x\in G}F(x)\pi(x)\varphi dx.
\end{equation}

\begin{lem}\label{Lem7.1:FrobCharacter}
Let $\pi=c-\Ind_J^G \Lambda$ for some compact (mod center) open subgroup $J$ and irreducible finite dimensional representation $\Lambda$. Then
\begin{align}
\Tr(\pi(F))&=\int_{g\in J\backslash G}\int_{j\in J}F(g^{-1}jg)\Tr(\Lambda(j)) djdg\\
&=\int_{g\in J\backslash G}\int_{j\in G}F(j)\Tr(\Lambda(gjg^{-1}))\Char_{J}(gjg^{-1}) djdg.\notag
\end{align}
\end{lem}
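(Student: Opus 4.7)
The plan is to realize $\pi(F)$ as an integral operator in the compact induction model and read off its trace as the integral of the trace of the kernel over the diagonal.

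First I would pick a model for $c\text{-}\Ind_J^G\Lambda$: functions $f\colon G\to V_\Lambda$ satisfying $f(jx)=\Lambda(j)f(x)$ for $j\in J$, with support compact modulo $J$, and with the usual right regular action $\pi(y)f(x)=f(xy)$. Then, after the substitution $z=xy$,
\[
\pi(F)f(x)=\int_G F(y)f(xy)\,dy=\int_G F(x^{-1}z)f(z)\,dz.
\]
Breaking the outer integral along the fibration $G\to J\backslash G$ and using the equivariance of $f$, I obtain
\[
\pi(F)f(x)=\int_{J\backslash G}K_F(x,z)f(z)\,dz,\qquad K_F(x,z)=\int_J F(x^{-1}jz)\Lambda(j)\,dj,
\]
an operator-valued kernel on $J\backslash G\times J\backslash G$.

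Second, using the standard fact that the trace of such an integral operator on $c\text{-}\Ind$ equals the integral of $\Tr K_F(x,x)$ along the diagonal of $J\backslash G$, I get
\[
\Tr(\pi(F))=\int_{J\backslash G}\Tr\bigl(K_F(x,x)\bigr)\,dx=\int_{J\backslash G}\int_J F(x^{-1}jx)\Tr(\Lambda(j))\,dj\,dx,
\]
which, after renaming $x$ to $g$, is the first displayed identity.

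Third, for the second identity I would perform the change of variable $j\mapsto gjg^{-1}$ in the inner integral. Since the groups in play (the multiplicative group of a quaternion algebra, modulo center) are unimodular, conjugation preserves Haar measure, so $dj$ is unchanged. The condition $j\in J$ is no longer automatic and becomes $gjg^{-1}\in J$, which is exactly what the factor $\charf_{J}(gjg^{-1})$ records; thus the inner integration may be extended from $J$ to all of $G$ at the cost of inserting this characteristic function. This produces the second displayed identity.

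The only delicate point will be the convergence/finite-trace justification: because $\Lambda$ is finite-dimensional and $J$ is compact modulo center while $F$ is compactly supported, the kernel $K_F(x,z)$ is supported on a set compact modulo $J\times J$, so $\pi(F)$ is of finite rank and all interchanges of integrals are legitimate. Everything else is bookkeeping with Haar measure and the equivariance of the induced model.
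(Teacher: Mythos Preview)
Your proof is correct and follows essentially the same route as the paper's: both realize $\pi(F)$ as an integral operator in the compact induction model with kernel $K_F(x,z)=\int_J F(x^{-1}jz)\Lambda(j)\,dj$, read off the trace as $\int_{J\backslash G}\Tr K_F(g,g)\,dg$, and then obtain the second identity by the change of variable $j\mapsto gjg^{-1}$. Your added remarks on unimodularity and finite-rank/convergence are welcome refinements but do not constitute a different approach.
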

\begin{proof}
For the first equality, we view $\varphi\in \pi$ as a vector-valued function in the compact induction model satisfying that $\varphi(jg)=\Lambda(j)\varphi(g)$.
Then
\begin{equation}
\pi(F) \varphi(h)=\int_{g\in G}F(g)\varphi(hg)dg=\int_{g\in G}F(h^{-1}g)\varphi(g)dg=\int_{g\in J\backslash G}\int_{j\in J}F(h^{-1}jg)\Lambda(j)\varphi(g)djdg.
\end{equation}
$K_F(h,g)=\int_{j\in J}F(h^{-1}jg)\Lambda(j)dj$ is then the kernel of the integral operator and
\begin{equation}
\Tr(\pi(F))=\int_{g\in J\backslash G} \Tr(K_F(g,g))=\int_{g\in J\backslash G}\int_{j\in J}F(g^{-1}jg)\Tr(\Lambda(j)) djdg.
\end{equation}
For the second equality, we have
\begin{align}
 \int_{g\in J\backslash G}\int_{j\in J}F(g^{-1}jg)\Tr(\Lambda(j)) djdg= \int_{g\in J\backslash G}\int_{j\in G}F(g^{-1}jg)\Tr(\Lambda(j))\Char_{J}(j) djdg.
\end{align}
Then one can get the required equality by a change of variable.
\end{proof}
Now we introduce the Lie algebra language. See \cite{Howe:aa} for more details.
\begin{defn}\label{Defn:LiealgRange}
\begin{enumerate}
\item Let $\g=\B$ be the Lie algebra for $\B^\times$. Let $\exp$ denote the exponential map from $\g$ to $\B^\times$.
Let $\g_0$ be an $O_\F-$ lattice of $\g$ on which the Lie bracket is closed, and $\exp$ is convergent (with the usual  $\log$ being the inverse), inducing a group homomorphism where the group structure on $\g_0$ is given by the Campbell-Hausdorff 
formula (which also need to be convergent)
\begin{equation}\label{Eq3.3:CHlaw}
Z(x,y)=x+y+\frac{1}{2}[x,y]+O(x^2y,xy^2).
\end{equation}
%. 
Here the remainder terms $O(x^2y,xy^2)$ involve at least two commutators of $x,y$.
For simplicity we shall assume in the remaining of this section that $p$ is large enough. Then we can take $$\g_0=\{x\in \B|\vv(x)>0\}.$$ In general we can take $\g_0=\{x\in \B|\vv(x)\geq c_0\}$ for some absolutely bounded integer $c_0$.

\item  %Let $J_0=J\cap \exp(\g_0)$. 
Define $j_0=\{x\in \g_0, \exp(x)\in J\}$, $h_0=\{x\in \g_0,\exp(x)\in H\}$, $b _0=\{x\in \g_0, \exp(x)\in B^1\}$.
\item Define the set $\g_+=\{x\in \B| v_{\B,\L'}(x)>0 \text{\ for some embedded }\L'\}$. Define $\h_0=\g_+\cap \E$,  $\mathfrak{z}_0=\g_+\cap Z$. %$\mathfrak{e}_0=\{x\in \g_0, \exp(x)\in \E\}$ 

\item For $f\in C_c^\infty(\g_+)$, % supported on elements $x$ with $v(\Nm(x))>0$, 
$\varphi\in \pi$, define
\begin{equation}
\pi(f) \varphi=\int\limits_{\g} f(x)\pi(\exp(x))\varphi dx.
\end{equation}

\item Let $\alpha_\theta$ be the element associated to $\theta$ by Lemma \ref{Lem:DualLiealgForChar}(2), considered as an element of $\B$ under the embedding $\L\hookrightarrow \B$.
 The $G-$coadjoint orbit of $\alpha_\theta$ is defined to be $O_\pi=\{g^{-1}\alpha_\theta g\}_{g\in G}$, or equivalently the set of all possible embeddings of $\alpha_\theta$. We normalize the invariant measure on $O_\pi$ so that 
\begin{equation}
\Vol(\{g^{-1}\alpha_\theta g, g\in G_{\alpha_\theta}\backslash J\})=\dim \Lambda.
\end{equation}
Here $G_{\alpha_\theta}\simeq \L^\times$ is the stabilizer of $\alpha_\theta$ in $G$ under the conjugation.
%\item To make the notations and  computations easier to follow, we denote
%\begin{equation}
% e^{<\alpha,x>}=\psi\circ \Tr(\alpha x).
%\end{equation}
\end{enumerate}

\end{defn}
\begin{rem}\label{rem3.3:CHj0}
 $j_0$ can be alternatively described as
		$$j_0=\log (J^1)=\{x=x_\L+x^\perp, v_\L(x_\L)>0, \vv(x^\perp)\geq i\}.$$
		The $\L^\perp$ part of $j_0$ is the same as $\pr(J)$. Similar statements hold for $b_0$ and $h_0$.

\end{rem}
\begin{lem}\label{Lem3.3:g+h0}
	When $p$ is large enough, we have $\g_+=\{x\in \B| v(\Tr(x)), v(\Nm(x))>0\}$. 
	As a result, $\z_0=\varpi_\F O_\F$, and
	$$
	\h_0=\begin{cases}
	\varpi_\E O_\E, &\text{\ if $\E$ is a field},\\
	\iota^{-1}(\varpi_\F O_\F\times \varpi_\F O_\F), &\text{\ if }\iota(\E)=\F\times \F.
	\end{cases} $$
\end{lem}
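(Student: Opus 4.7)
The plan is to first establish the characterization $\g_+=\{x\in \B| v(\Tr(x)), v(\Nm(x))>0\}$, from which the computations of $\z_0$ and $\h_0$ will follow by direct verification.

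For the inclusion $\g_+\subseteq\{x:v(\Tr(x)),v(\Nm(x))>0\}$, pick $\L'$ with $\vv(x)>0$ for $\vv=v_{\B,\L'}$. By Lemma \ref{Lem3.1semivaluation}(1), $\vv(\bar x)=\vv(x)$. Combining this with parts (3) and (4) of the same lemma gives $\vv(\Tr(x))=\vv(x+\bar x)\geq \vv(x)>0$ and $\vv(\Nm(x))=\vv(x\bar x)\geq 2\vv(x)>0$. Since $\Tr(x),\Nm(x)\in\F$ and $\vv|_{\F^\times}=e_{\L'}v_\F$, both have positive $\F$-valuation.

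The reverse inclusion is the main technical point. Given $x$ with $v_\F(\Tr(x)),v_\F(\Nm(x))>0$, I will produce an embedded quadratic field $\L'$ with $\vv(x)>0$ by analyzing $\F[x]\subset\B$. If $x\in\F$, then $v_\F(x)>0$ (using $p\neq 2$ and $\Tr(x)=2x$), so $x\in\varpi_\F O_\F\subset \mathcal{B}^1_{\L'}$ for any embedded $\L'$. If $\F[x]$ is a quadratic field, take $\L'=\F[x]$; then $x\in\L'$ and $v_{\L'}(x)=f_{\L'/\F}^{-1}v_\F(\Nm(x))>0$, which gives $\vv(x)=v_{\L'}(x)>0$ by Definition \ref{Defn3.1:semivaluation}. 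Otherwise $\F[x]\simeq \F\times \F$, forcing $\B=M_2(\F)$. The characteristic polynomial then splits in $\F[t]$, and the trace/norm hypotheses force both roots $\lambda_1,\lambda_2$ to lie in $\varpi_\F O_\F$ (a short valuation argument on $v(\lambda_1+\lambda_2)$ and $v(\lambda_1\lambda_2)$). When the roots are distinct, Lemma \ref{Lem:ConjugacyRep} conjugates $x$ by some $g\in\GL_2(\F)$ into $\mathrm{diag}(\lambda_1,\lambda_2)\in \varpi_\F M_2(O_\F)$, which is $\mathcal{B}^1_{\L'_0}$ for the standard unramified embedding $\L'_0$ (Example \ref{Example:explicitideal}); taking $\L'=g\L'_0 g^{-1}$ does the job. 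The remaining degenerate subcase $\lambda_1=\lambda_2$ is handled similarly by conjugating $x$ to $\bigl(\begin{smallmatrix}\lambda&1\\0&\lambda\end{smallmatrix}\bigr)$ and using an embedded ramified $\L'_0$ with $\mathcal{B}^1_{\L'_0}=\bigl(\begin{smallmatrix}\varpi O_\F& O_\F\\ \varpi O_\F& \varpi O_\F\end{smallmatrix}\bigr)$.

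The computations for $\z_0$ and $\h_0$ are then immediate from the characterization. For $\z_0=\F\cap\g_+$, $\Tr(x)=2x$ and $\Nm(x)=x^2$ both having positive valuation is equivalent (as $p\neq 2$) to $v_\F(x)>0$, giving $\varpi_\F O_\F$. For $\h_0=\E\cap\g_+$ with $\E$ a field, one uses $v_\F(\Nm_{\E/\F}(x))=f_{\E/\F}v_\E(x)$ to see $v_\F(\Nm(x))>0\Leftrightarrow x\in\varpi_\E O_\E$, and then checks $\Tr(x)\in\varpi_\E O_\E\cap\F=\varpi_\F O_\F$ automatically (splitting $O_\E=O_\F+O_\F\varpi_\E$). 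For the split case, writing $x=\iota^{-1}(a,b)$ reduces to the elementary fact that $v(a+b),v(ab)>0$ iff $v(a),v(b)>0$. The only mildly subtle step in the whole proof is the matrix-algebra case of the reverse inclusion, which is why I isolate it with Lemma \ref{Lem:ConjugacyRep} plus the degenerate Jordan case.
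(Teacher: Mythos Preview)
Your proof is correct. The overall architecture matches the paper's: establish $\g_+\subset\{v(\Tr),v(\Nm)>0\}$ via Lemma \ref{Lem3.1semivaluation}, then for the reverse inclusion produce a suitable $\L'$ by conjugation, and finally read off $\z_0,\h_0$.

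The difference is in how the reverse inclusion is organized. You case on the algebra $\F[x]$: when it is a quadratic field you take $\L'=\F[x]$ directly; when it is split you diagonalize and use an unramified $\L'$; when it is $\F[t]/t^2$ you pass to a Jordan block and use a ramified $\L'$. The paper instead treats all non-scalar $x$ uniformly: any such $x$ is conjugate (via the argument in the proof of Lemma \ref{Lem:ConjugacyRep}) to the companion matrix $\bigl(\begin{smallmatrix}\Tr(x)&1\\-\Nm(x)&0\end{smallmatrix}\bigr)$, which lands in the ramified Iwahori ideal $\mathcal{B}$ of Example \ref{Example:explicitideal} as soon as $v(\Tr(x)),v(\Nm(x))>0$. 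The paper also disposes of the division algebra case in one line by noting that all $\vv$ are scalings of the canonical $v_\B$.

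What each buys: the paper's route is shorter and avoids your case split entirely, since the companion form works regardless of whether the characteristic polynomial is irreducible, split with distinct roots, or a perfect square. Your route is a bit more transparent about \emph{why} each case works and in fact handles the repeated-eigenvalue non-scalar case more explicitly than the paper does (the paper invokes Lemma \ref{Lem:ConjugacyRep}, whose \emph{statement} excludes that case, though its \emph{proof} covers it). Either way the content is the same.
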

\begin{proof}
	Denote $\g_+'=\{x\in \B| v(\Tr(x)), v(\Nm(x))>0\}$.
We shall only prove the statement that $\g_+=\g_+'$, as the rest statements are easy to check. 
In the division algebra case, all semi-valuations $\vv$ agrees with $v_\B$ up to a constant, and it is clear that $\g_+=\g_+'=\{x|v_\B(x)>0\}$. So we shall focus on the matrix algebra case. Note that both sets are invariant under the conjugation. The inclusion $\g_+\subset \g_+'$ follows from Lemma \ref{Lem3.1semivaluation}. On the other hand, if $x\in \g_+'$, $x$ is either a scalar or conjugate to $\zxz{\Tr(x)}{1}{-\Nm(x)}{0}$ according to Lemma \ref{Lem:ConjugacyRep}. In both cases after proper conjugation $x$ belongs to the particular $\mathcal{B}$ for the $e_\L=2$ case in Example \ref{Example:explicitideal}.

\end{proof}
Recall that %$e^{<\alpha,x>}=\psi\circ \Tr(\alpha x)$, and that
 a type 1 minimal vector $\varphi$ can be uniquely determined by that $B^1$ acts on $\varphi$ by $\tilde{\theta}$. 
 On the other hand, any $\alpha\in O_\pi$ gives rise to an embedding $\L\rightarrow \F[\alpha]$, and thus to $\vv$, a cuspidal type $(\mathcal{U},J,\Lambda)$ associated to $\pi$, as well as the lattices $\g_0$, $j_0$, $h_0$.
 %Then the pull back of $\tilde{\theta}$ to $b _0$ will be of shape $e^{<\alpha, x>}$ for some $\alpha \in O_\pi$, as its restriction to $\mathfrak{h}_0$ must agree with Lemma 2.1(2). Then we have the following alternative description: 

%As a direct result, we have the following corollary:
%Note that when $\dim \Lambda=q$, type 2 minimal vectors can't be distinguished by their behaviour within Lie algebra range, but type 1 minimal vectors can. In particular we have the following:
\begin{cor}\label{Cor:type1MinVecAlpha}
For any fixed $\alpha\in O_\pi$ and a choice of intermediate lattice $h_0\subset b_0\subset j_0$, the function 
$x\mapsto e^{<\alpha,x>}$ defines a character on $b_0$ with respect to the group law in \eqref{Eq3.3:CHlaw}.
Then a type 1 minimal vector $\varphi$ is uniquely identified by that for any $x\in b _0$,
\begin{equation}
\pi(\exp(x))\varphi=e^{<\alpha,x>}\varphi.
\end{equation}
\end{cor}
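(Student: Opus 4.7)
The plan is to reduce both assertions to facts already established: the self-polarizing property of $\pr(B^{1})$ from Lemma \ref{Lem:DuallatticeEperp}, the description of $\tilde{\theta}$ on $H^{1}$ and its extension to $B^{1}$ via the polarization, and Lemma \ref{Lem:DualLiealgForChar}(2) identifying $\theta$ on $U_{\L}(1)$ with the additive character $u\mapsto\psi_{\L}(\alpha_{\theta}\log(1+u))$. Throughout we use that $p$ is large enough so that $\exp$ and $\log$ are mutually inverse bijections $j_{0}\leftrightarrow J^{1}$ and the Campbell--Hausdorff series \eqref{Eq3.3:CHlaw} converges with all higher brackets lying in lattices of large valuation.

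First I would verify the character property. For $x,y\in b_{0}$, write $Z(x,y)=x+y+\tfrac{1}{2}[x,y]+R(x,y)$ where $R$ involves at least two commutators. Because $\alpha\in\L$ commutes with the $\L$-part of any element, the pairing $<\alpha,[x,y]>$ reduces to $\Psi_{\alpha}$ applied to the $\L^{\perp}$-components $\pr(x),\pr(y)\in\pr(B^{1})$; by Lemma \ref{Lem:DuallatticeEperp} the lattice $\pr(B^{1})$ is self-dual under $\Psi_{\alpha}$, so this pairing equals $1$ after applying $\psi$. For the remainder $R(x,y)$, each summand is a product of at least three elements of $b_{0}$; using Lemma \ref{Lem3.1semivaluation} and $v_{\L}(\alpha)=-c(\theta)+c(\psi_{\L})$, the $\L$-part of $\alpha R(x,y)$ has valuation at least $c(\psi_{\L})$, so $\psi\circ\Tr(\alpha R(x,y))=1$ by Lemma \ref{Cor3.1:psitrace=1}. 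Hence $e^{<\alpha,Z(x,y)>}=e^{<\alpha,x>}e^{<\alpha,y>}$.

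Next I would translate the Definition \ref{Defn:generalMinimalVec} Type 1 condition, that $B^{1}$ acts on $\varphi$ by $\tilde{\theta}$, into the Lie-algebra form. Given $x=x_{\L}+x^{\perp}\in b_{0}$, write $\exp(x)=\exp(x_{\L})(1+t)$ with $t\in\L^{\perp}$; the CH formula gives $t=x^{\perp}+\tfrac{1}{2}[-x_{\L},x^{\perp}]+\cdots$. Then
\[
\tilde{\theta}(\exp(x))=\theta(\exp(x_{\L}))\,e^{<\alpha_{\theta},t>}.
\]
Lemma \ref{Lem:DualLiealgForChar}(2), together with the observation that the $\B$-trace restricted to $\L$ coincides with $\Tr_{\L/\F}$, yields $\theta(\exp(x_{\L}))=\psi_{\L}(\alpha_{\theta}x_{\L})=e^{<\alpha,x_{\L}>}$. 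For the remaining factor, the leading term of $t$ contributes $e^{<\alpha,x^{\perp}>}$, while each correction in $t-x^{\perp}$ has the form $[\alpha_{\theta},\text{commutator}]$-type expression whose pairing with $\alpha$ vanishes: the bracket $[x_{\L},x^{\perp}]$ gives $<\alpha,[x_{\L},x^{\perp}]>=<[\alpha,x_{\L}],x^{\perp}>=0$ since $\alpha,x_{\L}\in\L$ commute, and deeper brackets land in lattices of valuation larger than $c(\psi_{\L})-v_{\L}(\alpha)$ when $p$ is large. Combining, $\tilde{\theta}(\exp(x))=e^{<\alpha,x>}$, so the two eigencharacter conditions on $\varphi$ coincide.

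Finally, the uniqueness (up to scalar) of $\varphi$ comes directly from Definition \ref{Defn:generalMinimalVec}, since the Type 1 minimal vector attached to the polarization $B^{1}\subset J^{1}$ is uniquely characterized by being an eigenvector for $\tilde{\theta}$ on $B^{1}$. The principal technical nuisance is bookkeeping the CH higher-order terms and verifying the required valuation estimates uniformly in the six cases of Example \ref{Example:ListofHJi}; I expect this to be routine once the $p$-adic smallness is fixed, the real content being the self-polarization identity $\pr(B^{1})^{\ddagger}=\pr(B^{1})$ already proved in Lemma \ref{Lem:DuallatticeEperp}.
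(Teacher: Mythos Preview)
Your argument follows the same route as the paper's: kill the commutator $[x,y]$ via the self-polarization $\pr(B^1)^{\ddagger}=\pr(B^1)$ of Lemma~\ref{Lem:DuallatticeEperp}, control the higher Campbell--Hausdorff terms by valuation, and then match with the defining property of $\tilde\theta$. Two refinements relative to the paper are worth noting.

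First, your estimate on $R(x,y)$ as ``a product of at least three elements of $b_0$'' is not sufficient as stated: the $\L$-components of elements of $b_0$ may each have valuation only $1$, so a naive bound yields $\vv(\alpha R)\ge 3-c(\theta)+c(\psi_\L)$, which fails once $c(\theta)>3$. What is needed is that any iterated commutator with nonzero $\L$-component must involve at least two $\L^\perp$-factors (since $\L$ is commutative), forcing valuation $\ge 2i+1-c(\theta)+c(\psi_\L)\ge c(\psi_\L)$. The paper makes this explicit via the Jacobi identity, rewriting $[x,[y,z]]$ so that two of the three slots carry $\L^\perp$-projections before invoking Lemma~\ref{Cor3.1:psitrace=1}.

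Second, your identification step is more involved than necessary. The extension of $\tilde\theta$ from $H^1$ to $B^1$ is not unique, so you need not compute $\tilde\theta(\exp(x))$ on all of $B^1$; it suffices to observe that $x\mapsto e^{<\alpha,x>}$, once shown to be a character on $b_0$, restricts to $\tilde\theta$ on $H^1$ (immediate from \eqref{Eq:charextension} and Lemma~\ref{Lem:DualLiealgForChar}(2)), and hence \emph{is} one of the admissible extensions $\tilde\theta$ on $B^1$ used in Definition~\ref{Defn:generalMinimalVec}. This is how the paper concludes.
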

\begin{proof}
To prove that $x\mapsto e^{<\alpha,x>}$ is a character on $b_0$,   it suffices to show  that $e^{<\alpha,\cdot>}=1$ on all commutator terms in \eqref{Eq3.3:CHlaw}.
For this, we first show that for any fixed $y$, 
	$e^{<\alpha,[x,y]>}=e^{<\alpha, [x^\perp, y]>}.$ 
Indeed we have $e^{<\alpha, x_\L y>}=\psi\circ\Tr(\alpha x_\L y)=\psi\circ\Tr(x_\L\alpha  y)=\psi\circ\Tr(\alpha  yx_\L)=e^{<\alpha,y x_\L>}$ as $\alpha, x_\L \in \L$. 
Similar argument holds for $y$.
Then $e^{<\alpha,[x,y]>}=e^{<\alpha,[x^\perp,y^\perp]>}=1$ follows from Lemma \ref{Lem:DuallatticeEperp}. 

Higher commutator terms can be dealt with using similar argument together with Jacobi identity. For example,
\begin{align}
e^{<\alpha,[x,[y,z]]>}=e^{<\alpha,[x^\perp,[y,z]]>}=e^{<\alpha,-[y,[z,x^\perp]]-[z,[x^\perp,y]]>}=
e^{<\alpha,-[y^\perp,[z,x^\perp]]-[z^\perp,[x^\perp,y]]>}.
\end{align}
It is then trivial according to Lemma \ref{Cor3.1:psitrace=1} since, for example, 
$\vv(\alpha y^\perp z x^\perp)> 2i-c(\theta)+c(\psi)\geq c(\psi)-1$.

	From this, we get that the push forward of $e^{<\alpha,x>}$ gives rise to a character on $B^1$. It is automatically consistent with $\tilde{\theta}$ on $H^1$ by \eqref{Eq:charextension} and Lemma \ref{Lem:DualLiealgForChar}(2). The lemma then follows from the discussion in Section \ref{Sec:CompactInd}.
\end{proof}

We shall now develop the analogue of the Kirillov formula representing the trace of integral operators as integrals along coadjoint orbit in a suitable neighborhood.
\begin{lem}\label{Lem3.3:vanishingorbitint} For any $x\in \g_+$, %$v(\Nm(x))>0$,
	 $x\notin j_0$, we have
	\begin{equation}
	\int_{g_0\in G_{\alpha_\theta}\backslash J}e^{<g_0^{-1}\alpha_\theta g_0, x>} dg_0=0.
	\end{equation}
	\begin{proof}
		%For simplicity, consider the case $G=\GL_2$, $\L$ is unramified, and $c(\theta)=2n$ or $2n+1$. The other cases are similar. Then $J=\L^\times K_{\mathfrak{A}}(n)$.
		%By the assumption on $x$, 
	%	Note that when $x\notin j_0$, we can write $x=x_\L+x^\perp$ with $x_\L\in \L$, $x^\perp \in  \L^\perp-\pr(J)$.  
	We  write $x=x_\L+x^\perp$ with $x_\L\in \L$, $x^\perp \in  \L^\perp$.
	We claim that $\vv(x^\perp)<i$ if $x\notin j_0$ and $v(\Nm(x))>0$. This is because either $\vv(x_\L)\geq 1$, so $\vv(x^\perp)\geq i$ directly implies that $x\in j_0$, or $\vv(x_\L)\leq 0$, in which case $1\leq v(\Nm(x))=v(\Nm(x_\L)+\Nm(x^\perp))$ implies that $\vv(x^\perp)=\vv(x_\L)\leq 0$.
		
		%On the one hand, $J$ stabilises the character $\theta$ on $\L^\times $, so we have
		%\begin{equation}
		% \int_{g_0\in G_{\alpha_\theta}\backslash J}e^{<g_0^{-1}\alpha_\theta g_0, x>} dg_0=\int_{g_0\in G_{\alpha_\theta}\backslash J}e^{<\alpha_\theta, x_\L>} e^{<g_0^{-1}\alpha_\theta g_0,x^\perp>}dg_0.
		%\end{equation}
		%On the other hand b
		By a change of variable, we have that for any $t\in \pr(J)$,
		\begin{equation}
		\int_{g_0\in G_{\alpha_\theta}\backslash J}e^{<g_0^{-1}\alpha_\theta g_0, x>} dg_0=\int_{g_0\in G_{\alpha_\theta}\backslash J}e^{<(1+t)^{-1}g_0^{-1}\alpha_\theta g_0(1+t), x>}dg_0%\\
		%&=\int_{g_0\in G_{\alpha_\theta}\backslash J}e^{<g_0^{-1}\alpha_\theta g_0+[g_0^{-1}\alpha_\theta g_0,t]+O(t^2\alpha_\theta),x>}dg_0\notag
		\end{equation}
		%Suppose now $\dim \Lambda=1$. 
		By Lemma \ref{lem7.1:conjugationEperp}, \ref{Lem:DuallatticeEperp}, we can choose proper $t\in \pr(J)$ with largest possible $\vv(t)$ such that $e^{<[\alpha_\theta,t], x^\perp>}\neq 1$.
		In the current case we can choose such $t$ with 
		\begin{equation}\label{Eq3.3:vvt}
		\vv(t)=c(\theta)-\vv(x^\perp)-1\geq i'\geq i.
		\end{equation}

		We claim that for such $t$, \begin{equation}
		e^{<(1+t)^{-1}g_0^{-1}\alpha_\theta g_0(1+t), x>}=e^{<g_0^{-1}\alpha_\theta g_0, x>}e^{<[\alpha_\theta,t], x^\perp>}.
		\end{equation}
		 From this claim we have
		\begin{equation}
		\int_{g_0\in G_{\alpha_\theta}\backslash J}e^{<g_0^{-1}\alpha_\theta g_0, x>} dg_0=e^{<[\alpha_\theta,t], x^\perp>}\int_{g_0\in G_{\alpha_\theta}\backslash J}e^{<g_0^{-1}\alpha_\theta g_0, x>} dg_0.
		\end{equation}
		Thus the integral has to be zero.
		
		To prove the claim, we take the coset representative $g_0=1+t_0$ for $t_0\in \pr(J)$, and assume WLOG that $\vv(t_0)\leq \vv(t)$. Then as in the proof of Lemma \ref{lem7.1:conjugationEperp}, we get via Taylor expansion that
		\begin{equation}
		(1+t)^{-1}g_0^{-1}\alpha_\theta g_0(1+t)=g_0^{-1}\alpha_\theta g_0+[\alpha_\theta, t]+O(t t_0\alpha_\theta).
		\end{equation}
		Note that $e^{<[\alpha_\theta, t],x>}=e^{<[\alpha_\theta, t],x^\perp>}$ as $[\alpha_\theta, t]\in \L^\perp$.
		The remainder terms (and the order of the product for each of them) do not matter, because %as in the beginning of this proof 
		we have either $\vv(x)>0$, in which case (regardless of the order of the product) 
		\begin{equation}\label{Eq3.3:vvRemainder}
		\vv(t t_0\alpha_\theta x)>2i-c(\theta)+c(\psi_\L)\geq c(\psi_\L)-1
		\end{equation} so that $e^{<O(t t_0\alpha_\theta), x>}=1$ by Lemma \ref{Cor3.1:psitrace=1}; 
		Or $\vv(x)\leq 0$, in which case the condition $v(\Nm(x))>0$ forces  $\vv(x_\L)=\vv(x^\perp)= \vv(x)$, and thus by \eqref{Eq3.3:vvt} \begin{equation}
		\vv(t t_0\alpha_\theta x)\geq \vv(t_0\alpha_\theta)+c(\theta)-\vv(x^\perp)-1+\vv(x^\perp)\geq i-1+c(\psi_\L)>c(\psi_\L)-1,
		\end{equation}
		so again $e^{<O(t t_0\alpha_\theta), x>}=1$.
	\end{proof}

\end{lem}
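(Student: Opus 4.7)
The plan is an orthogonality-of-characters argument: I will produce $t\in \pr(J)$ such that the change of variable $g_0\mapsto g_0(1+t)$ preserves the orbit integral but multiplies the integrand by a nontrivial scalar, forcing the integral to vanish.

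The first key step will be to establish $\vv(x^\perp) < i$ for the decomposition $x = x_\L + x^\perp$ with $x_\L \in \L$, $x^\perp \in \L^\perp$. By Remark \ref{rem3.3:CHj0}, $x \notin j_0$ means that either $\vv(x_\L) \leq 0$ or $\vv(x^\perp) < i$. In the subcase $\vv(x_\L) \geq 1$ the second alternative holds directly. Otherwise $\vv(x_\L) \leq 0$, and Lemma \ref{Lem3.3:g+h0} together with the identity $\Nm(x) = \Nm(x_\L) + \Nm(x^\perp)$ forces $\vv(x^\perp) = \vv(x_\L) \leq 0 < i$. Given this, the duality $\pr(J^1)^{\ddager} = \pr(H^1)$ from Lemma \ref{Lem:DuallatticeEperp} together with the non-degeneracy of $\Psi_{\alpha_\theta}$ produces $t \in \pr(J)$ with $\vv(t) = c(\theta) - \vv(x^\perp) - 1 \geq i'$ for which $e^{<[\alpha_\theta, t], x^\perp>} \neq 1$.

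With such $t$ in hand, I substitute $g_0 \mapsto g_0(1+t)$ in the orbit integral. Writing $g_0 = 1 + t_0$ with $t_0 \in \pr(J)$ and applying the Taylor expansion of conjugation from Lemma \ref{lem7.1:conjugationEperp} yields
\begin{equation*}
(1+t)^{-1} g_0^{-1} \alpha_\theta g_0 (1+t) = g_0^{-1}\alpha_\theta g_0 + [\alpha_\theta, t] + O(t t_0 \alpha_\theta),
\end{equation*}
so the integrand picks up the factor $e^{<[\alpha_\theta, t], x^\perp>}$ from the cross term, plus a contribution from the error. The main obstacle will be verifying $e^{<O(t t_0 \alpha_\theta), x>} = 1$. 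I expect this to split into two subcases: when $\vv(x) > 0$, the crude estimate $\vv(t t_0 \alpha_\theta x) > 2i - c(\theta) + c(\psi_\L) \geq c(\psi_\L) - 1$ combined with Corollary \ref{Cor3.1:psitrace=1} suffices; when $\vv(x) \leq 0$, the constraint $v(\Nm(x)) > 0$ forces $\vv(x_\L) = \vv(x^\perp) = \vv(x)$, and a slightly more careful count using the extremal value of $\vv(t)$ yields the same conclusion. Once this is established, the resulting identity $\int = e^{<[\alpha_\theta, t], x^\perp>}\int$ together with $e^{<[\alpha_\theta, t], x^\perp>} \neq 1$ forces the integral to vanish.
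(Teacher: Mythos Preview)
Your proposal is correct and follows essentially the same argument as the paper's proof: the same decomposition of $x$, the same reduction to $\vv(x^\perp)<i$, the same choice of $t\in\pr(J)$ with $\vv(t)=c(\theta)-\vv(x^\perp)-1$, the same Taylor expansion of the conjugation, and the same two-subcase verification that the remainder terms vanish. The only cosmetic difference is that the paper adds the harmless normalization $\vv(t_0)\leq\vv(t)$ before bounding the error, but your estimates go through without it.
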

\begin{lem}\label{Lem3.3:Kirillovforcompact}
	%Suppose that $J=\E^\times  J_0$ and $\Lambda|_{J_0}$ is irreducible. Then f
	For any $x\in j_0$,  
	\begin{equation}
	\int_{g_0\in G_{\alpha_\theta}\backslash J}e^{<g_0^{-1}\alpha_\theta g_0, x>} dg_0=\Tr(\Lambda(\exp(x))).
	\end{equation}
\end{lem}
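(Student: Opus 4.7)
The plan is to linearize the orbit integral by conjugation, reduce it to an additive-character integral over $\pr(J^1)$, evaluate that integral via the Heisenberg self-duality of Lemma \ref{Lem:DuallatticeEperp}, and finally match the result with the trace computed from the compact-induction structure of $\Lambda$.

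First, since $G_{\alpha_\theta} = \L^\times$ commutes with $\alpha_\theta$, the map $\pr$ identifies $G_{\alpha_\theta}\backslash J$ with $\pr(J^1) = \{t\in\L^\perp : \vv(t)\geq i\}$, and on the representative $g_0 = 1+t_0$ one has $g_0^{-1}\alpha_\theta g_0 = (1+t_0)^{-1}\alpha_\theta(1+t_0)$. The normalization in Definition \ref{Defn:LiealgRange}(5) gives $\Vol(\pr(J^1)) = \dim\Lambda$. Using Lemma \ref{lem7.1:conjugationEperp} and Taylor expansion, write $(1+t_0)^{-1}\alpha_\theta(1+t_0) = \alpha_\theta + [\alpha_\theta, t_0] + R(t_0)$ with every term of $R(t_0)$ involving at least two factors of $t_0$. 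Semi-valuation estimates as in the proof of Lemma \ref{Lem3.3:vanishingorbitint} (using $\vv(t_0)\geq i$, $v_\L(\alpha_\theta) = -c(\theta)+c(\psi_\L)$, $i+i'=c(\theta)$, and $\vv(x)\geq \min\{1,i\}$ for $x\in j_0$) show that for $p$ large enough, $\vv(R(t_0)\cdot x) > c(\psi_\L) - 1$, so $e^{<R(t_0), x>} = 1$ by Lemma \ref{Cor3.1:psitrace=1}. Since $[\alpha_\theta, t_0]\in\L^\perp$ pairs trivially with $x_\L$, the orbit integral reduces to
\begin{equation*}
\int_{G_{\alpha_\theta}\backslash J} e^{<g_0^{-1}\alpha_\theta g_0,\, x>}\, dg_0 \;=\; e^{<\alpha_\theta,\, x>}\int_{t_0\in\pr(J^1)} \Psi_{\alpha_\theta}(t_0,\, x^\perp)\, dt_0.
\end{equation*}

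By Lemma \ref{Lem:DuallatticeEperp}, $\pr(J^1)^\ddager = \pr(H^1)$, so the inner additive-character integral vanishes unless $x^\perp \in \pr(H^1)$ (equivalently, $x \in h_0$), in which case it equals $\Vol(\pr(J^1)) = \dim\Lambda$. Hence the left-hand side equals $\dim\Lambda \cdot e^{<\alpha_\theta,\, x>}$ for $x\in h_0$ and vanishes for $x\in j_0\setminus h_0$.

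For the right-hand side: when $\dim\Lambda = 1$ we have $j_0 = h_0$ and $\Lambda = \tilde\theta$ on $J$, so $\Tr(\Lambda(\exp(x))) = \tilde\theta(\exp(x)) = e^{<\alpha_\theta, x>}$, matching immediately. When $\dim\Lambda = q$ (so $i' = i+1$), $\Lambda|_{J^1} = \Ind_{B^1}^{J^1}\tilde\theta$; the Frobenius character formula $\Tr(\Lambda)(y) = \sum_{\bar z\in B^1\backslash J^1,\, zyz^{-1}\in B^1} \tilde\theta(zyz^{-1})$, combined with $[J^1, J^1] \subset H^1 \subset B^1$, gives $\Tr(\Lambda(\exp x)) = q\tilde\theta(\exp x)$ for $x\in h_0$; for $x\in j_0\setminus h_0$, either $\exp(x)\notin B^1$ (so no conjugate of $\exp(x)$ by $J^1$ lies in $B^1$, killing the sum) or $\exp(x)\in B^1\setminus H^1$, in which case the character $\bar z\mapsto \tilde\theta([z,\exp x])$ on $J^1/B^1$ is nontrivial by the same Heisenberg duality of Lemma \ref{Lem:DuallatticeEperp}, again forcing the sum to vanish. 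Both sides agree, completing the proof.

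The main obstacle is that a single Heisenberg pairing $\Psi_{\alpha_\theta}$ on $J^1/H^1$ has to control both the vanishing of the oscillatory integral and the vanishing of the trace; the matching of the two sides is essentially the Stone--von Neumann theorem in Kirillov-orbit form, after which the remaining work is the semi-valuation bookkeeping needed to justify the linearization uniformly in $x\in j_0$.
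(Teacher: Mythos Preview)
Your argument is correct, and for the left-hand side it is essentially equivalent to the paper's: both reduce to showing that the integrand picks up the factor $\Psi_{\alpha_\theta}(s,x^\perp)$ under right-translation by $1+s$, which is exactly the change-of-variable trick the paper imports from Lemma~\ref{Lem3.3:vanishingorbitint}.  One small point: your line ``the inner additive-character integral vanishes'' tacitly treats the pushforward measure $dt_0$ on $\pr(J^1)$ as additive Haar, which it is not literally (right-multiplication by $1+s$ sends $t_0$ to $(1+t_0 s)^{-1}(t_0+s)$, not $t_0+s$).  The conclusion is still valid because the higher-order discrepancy has $\vv \geq 4i + v_\L(\alpha_\theta)$ and so is killed by the same semi-valuation estimate you already invoke for $R(t_0)$; equivalently, one just runs the paper's translation argument directly on $dg_0$.

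The genuine difference is on the right-hand side.  The paper exploits the freedom in the Heisenberg construction: since $\Lambda|_{J^1}$ is independent of the choice of polarization $B^1$, it picks $B^1$ with $\exp(x)\notin B^1$, proves that $J^1$ normalizes $B^1$ by a direct commutator estimate, and concludes that $\exp(x)$ permutes the $B^1$-cosets without fixed points, so the trace vanishes.  You instead keep $B^1$ fixed and split into the two cases $\exp(x)\notin B^1$ and $\exp(x)\in B^1\setminus H^1$, handling the latter via the nontriviality of $z\mapsto\tilde\theta([z,\exp x])$ on $J^1/B^1$.  Both routes rest on the same two facts---normality of $B^1$ in $J^1$ (which follows from your $[J^1,J^1]\subset H^1$, itself a restatement of $J^1/H^1$ being abelian) and the nondegeneracy of $\Psi_{\alpha_\theta}$---so they are interchangeable; the paper's choice avoids one case at the cost of invoking the polarization-independence of $\Lambda|_{J^1}$, while yours is more self-contained but does the Heisenberg character computation in full.
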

Note that this is essentially the explicit Kirillov formula for compact groups. See \cite{Howe:aa}.
\begin{proof}
	%One way to prove this result is to reformulate the Kirillov formula for profinite groups. 
	%Here we sketch another proof which works when $c(\pi)$ is sufficiently large.
	
	The normalisation in Definition \ref{Defn:LiealgRange}(4) guarantees the equality when $x=0$ here. For $x\in j_0$ such that $\exp(x)\in H^1$, both sides are multiples of $\tilde{\theta}$ by Lemma \ref{lem:mainsec:intertwining0}, \ref{lem:mainsec:intertwining} and Corollary \ref{Cor:type1MinVecAlpha}. 
	
	Now we show that both sides are vanishing when $\dim\Lambda=q$ and $\exp(x)\in J^1-H^1$. 
	For the left hand side, we use the same arguments as in the proof for Lemma \ref{Lem3.3:vanishingorbitint}. Note that in this case $\vv(x)=i$, and we can still pick $t\in \pr(J)$ such that $e^{<[\alpha_\theta,t],x^\perp>}\neq 1$ and $\vv(t)=c(\theta)-\vv(x^\perp)-1\geq i$. The remaining arguments are the same, as \eqref{Eq3.3:vvRemainder} requires only $\vv(t)\geq i$.
	
 For the right hand side, recall that $\Lambda|_{J^1}=\Ind _{B^1}^{J^1}\tilde{\theta}$, and is actually independent of the choices of $(B^1, \tilde{\theta})$. 
 In particular we choose $B^1$ such that $\exp(x)\in J^1-B^1$, which is possible as any 1-dimensional subspace of $J^1/H^1$ gives a polarization of the alternate pairing \eqref{Eq3.1:alternatvepairing}.
 Pick a basis for $\Lambda|_{J^1}$ to be functions supported only on a single $B^1-$coset. We claim that the permutation of cosets induced by $\exp(x)$ does not fix any one of them, then it is clear that $\Lambda(\exp(x))$ has trace 0. 
 
 To prove the claim, suppose that $g=\exp(x)$ satisfies $B^1 g_i g=B^1 g_i$ for some coset representative $g_i$. Then $g\in g_i^{-1}B^1 g_i$. As $g=\exp(x)\notin B^1$, it suffices to show that actually any $g_i\in J^1$ normalizes $B^1$. If we write $h=(1+l)(1+z)\in B^1$ for $1+l\in U_\L(1)$ and $y\in \Pr(B^1)$ and $1+l'\in U_\L(1)$, 
\[ (1+l')^{-1}h(1+l')=(1+l)(1+l')^{-1}(1+z)(1+l')=(1+l)(1+z+O(l'z))=h(1+O(l'z))\]
by Taylor expansion, with $1+O(l'z)\in H^1$ as $\vv(l'z)\geq i+1$. Similarly for $1+y\in J^1$ with $y\in \pr(J^1)$, we have
 \[(1+y)^{-1} h (1+y)=h(1+O(ly+zy))\]
 with $1+O(ly+zy)\in H^1$ as $\vv(O(ly+zy))>i$.
   As any $g_i\in J^1$ is in the shape of $(1+y)(1+l')$, so indeed $g_i^{-1}hg_i\in B^1$.
 \yhn{Does this sound right? Also any reference is welcome to replace this argument.}

\end{proof}
%This result can be a reformulation of the Kirillov formula for profinite groups. 
%\begin{rem}
%	
%\end{rem}
Combining the previous two lemmas, we get the following result:
\begin{cor}\label{lem7.1:char=int}
	For any $x\in \g_+$, %For any $x\in \g$ with $v(\Nm(x))>0$ and $\exp(x)$ convergent, 
	we have
	\begin{equation}
	\Tr(\Lambda (\exp(x)))\Char_{J}(\exp(x))=\int_{g_0\in G_{\alpha_\theta}\backslash J}e^{<g_0^{-1}\alpha_\theta g_0,x>}dg_0.
	\end{equation}
\end{cor}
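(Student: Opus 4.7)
The plan is to deduce the corollary as a straightforward case analysis combining the two immediately preceding lemmas, splitting on whether $x$ lies in $j_0$.

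First I would handle $x \in j_0$. By Remark \ref{rem3.3:CHj0}, $j_0 = \log(J^1)$, so $\exp(x) \in J^1 \subset J$, and therefore $\Char_J(\exp(x)) = 1$. Lemma \ref{Lem3.3:Kirillovforcompact} then identifies the orbit integral on the right-hand side with $\Tr(\Lambda(\exp(x)))$, which matches the left-hand side on the nose.

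Next I would treat $x \in \g_+ \setminus j_0$. Lemma \ref{Lem3.3:vanishingorbitint} immediately gives that the right-hand side vanishes, so it remains to check that the left-hand side vanishes as well, i.e.\ that $\Tr(\Lambda(\exp(x)))\Char_J(\exp(x)) = 0$. When $x \in \g_0 \setminus j_0$, the very definition $j_0 = \{x \in \g_0 \mid \exp(x) \in J\}$ already forces $\exp(x) \notin J$, and $\Char_J$ kills the product. When $x \in \g_+ \setminus \g_0$, I would invoke the characterization $\g_+ = \{x \in \B \mid v(\Tr(x)),\, v(\Nm(x)) > 0\}$ from Lemma \ref{Lem3.3:g+h0} together with the structure $J = \L^\times K_\mathfrak{A}(i)$: for $p$ large enough, any element of $J$ of the form $\exp(y)$ forces $y$ to lie in $j_0$, so if $\exp(x)$ even makes sense as an element of $J$ it would contradict $x \notin j_0$; hence $\Char_J(\exp(x)) = 0$ in this regime too.

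The whole argument is mostly bookkeeping, as the substantive content has been moved into Lemmas \ref{Lem3.3:vanishingorbitint} and \ref{Lem3.3:Kirillovforcompact}. The only mildly delicate point is the last subcase $x \in \g_+ \setminus \g_0$, where one must be careful that $\exp(x)$, if convergent, cannot slip into $J$; this is where the large $p$ assumption and the explicit description of $\g_+$ in Lemma \ref{Lem3.3:g+h0} are used. No new estimates or orbit-integral manipulations are needed beyond what is already established.
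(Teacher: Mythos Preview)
Your proof is correct and follows exactly the approach the paper takes: the paper's entire proof is the one-line remark ``Combining the previous two lemmas, we get the following result,'' and your case split on $x \in j_0$ versus $x \in \g_+ \setminus j_0$ is precisely that combination. Your treatment of the subcase $x \in \g_+ \setminus \g_0$ is more careful than the paper's (which says nothing), and your sketch there is right: for $x \in \g_+$ one has $\exp(x) \in 1 + \mathcal{B}'^1$ for some chain order, hence its characteristic polynomial reduces to $(\lambda-1)^2$ mod $\varpi$, which forces $\exp(x) \in J^1 = \exp(j_0)$ if it lies in $J$ at all, and then injectivity of $\exp$ gives $x \in j_0$.
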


\begin{prop}\label{Prop:Kirillovformula}
For notations as above, we have
\begin{equation}
\Tr(\pi(f))=\int_{\xi\in O_\pi}\int_{x\in\g_0}f(x)e^{<\xi,x>} dx d\xi=\int_{\xi\in O_\pi}\check{f}(\xi) d\xi,
\end{equation}
where $\check{f}(\xi)=\int_{\g}f(x)e^{<\xi,x>}dx$. 
\end{prop}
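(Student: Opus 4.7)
The plan is to combine the Frobenius-type formula of Lemma \ref{Lem7.1:FrobCharacter} with the orbit integral representation of the character in Corollary \ref{lem7.1:char=int}, and then recognize the resulting double integral as an integral along the coadjoint orbit $O_\pi$.

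First I would translate $\pi(f)$ back to an integral over the group. Since $p$ is large, $\exp:\g_+\to G$ is a bijection onto its image with Jacobian $1$ on the subsets we care about, so we may set $F(y)=f(\log y)$ for $y\in\exp(\g_+)$ and $F=0$ elsewhere, obtaining $\pi(f)=\pi(F)$. Lemma \ref{Lem7.1:FrobCharacter} then gives
\begin{equation*}
\Tr(\pi(f))=\int_{g\in J\backslash G}\int_{x\in\g_+}f(x)\,\Tr(\Lambda(g\exp(x)g^{-1}))\,\Char_J(g\exp(x)g^{-1})\,dx\,dg.
\end{equation*}
Since conjugation commutes with $\exp$, we have $g\exp(x)g^{-1}=\exp(gxg^{-1})$, and $gxg^{-1}\in\g_+$ whenever $x\in\g_+$.

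Next I would apply Corollary \ref{lem7.1:char=int} with $x$ replaced by $gxg^{-1}$, which converts the product of the character trace and the characteristic function of $J$ into the inner integral
\begin{equation*}
\Tr(\Lambda(\exp(gxg^{-1})))\,\Char_J(\exp(gxg^{-1}))=\int_{g_0\in G_{\alpha_\theta}\backslash J}e^{<g_0^{-1}\alpha_\theta g_0,\,gxg^{-1}>}\,dg_0.
\end{equation*}
Using the conjugation-invariance of the trace pairing $<\cdot,\cdot>$, the exponent rewrites as $<(g_0g)^{-1}\alpha_\theta(g_0g),\,x>$. Plugging this back in and exchanging the order of integration yields
\begin{equation*}
\Tr(\pi(f))=\int_{g\in J\backslash G}\int_{g_0\in G_{\alpha_\theta}\backslash J}\int_{x\in\g_+}f(x)\,e^{<(g_0g)^{-1}\alpha_\theta(g_0g),\,x>}\,dx\,dg_0\,dg.
\end{equation*}

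Finally I would merge the outer two integrals. The composition of the measures on $G_{\alpha_\theta}\backslash J$ and $J\backslash G$ gives the measure on $G_{\alpha_\theta}\backslash G$, and the map $h\mapsto h^{-1}\alpha_\theta h$ identifies this homogeneous space with the coadjoint orbit $O_\pi$. The normalization in Definition \ref{Defn:LiealgRange}(5) was chosen precisely so that the pushforward of the measure on $G_{\alpha_\theta}\backslash J$ to the piece $\{g_0^{-1}\alpha_\theta g_0:g_0\in G_{\alpha_\theta}\backslash J\}\subset O_\pi$ has total mass $\dim\Lambda$, which is exactly what is needed for Corollary \ref{lem7.1:char=int} to match the compact Kirillov formula; consequently the combined measure is the chosen invariant measure on $O_\pi$. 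This yields
\begin{equation*}
\Tr(\pi(f))=\int_{\xi\in O_\pi}\int_{x\in\g_0}f(x)\,e^{<\xi,x>}\,dx\,d\xi=\int_{\xi\in O_\pi}\check{f}(\xi)\,d\xi,
\end{equation*}
as desired.

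The main technical point, and the place where care is required, is the consistency of measures: one must check that the normalization of $d\xi$ on $O_\pi$ fixed in Definition \ref{Defn:LiealgRange}(5) is exactly the pushforward of the product measure on $(G_{\alpha_\theta}\backslash J)\times(J\backslash G)$. Given the way the $\dim\Lambda$ factor was built into the normalization, this is essentially automatic, but it is the only non-formal ingredient; everything else is a direct substitution based on Lemmas \ref{Lem7.1:FrobCharacter}--\ref{Lem3.3:Kirillovforcompact} and Corollary \ref{lem7.1:char=int}.
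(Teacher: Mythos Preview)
Your proposal is correct and follows the same approach as the paper: both use Lemma \ref{Lem7.1:FrobCharacter} to express $\Tr(\pi(f))$ as an integral over $J\backslash G$, invoke Corollary \ref{lem7.1:char=int} to replace $\Tr(\Lambda(\cdot))\Char_J(\cdot)$ by an orbit integral over $G_{\alpha_\theta}\backslash J$, and then identify the resulting double integral with the integral over $O_\pi$. The only cosmetic difference is that the paper computes the two sides separately and matches them via the corollary, whereas you substitute directly and push through; the content is identical.
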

\begin{proof}\yh{Need to say some thing about Jacobian for exponential map}
Define $$F(g)=\begin{cases}
f(x), &\text{\ if $g=\exp(x)$ for }x\in \Supp f\\
0, &\text{\ otherwise.}
\end{cases}$$
Then by Lemma \ref{Lem7.1:FrobCharacter}, 
\begin{equation}
\Tr(\pi(f))=\Tr(\pi(F))=\int_{g\in J\backslash G}\int_{x\in\g} f(x)\Tr(\Lambda (g\exp(x)g^{-1}))\Char_{J}(g\exp(x) g^{-1})dxdg.
\end{equation}

On the other hand, let $G_{\alpha_\theta}\simeq \L^\times $ be the $G-$stabiliser of $\alpha_\theta$. Then
\begin{align}
\int_{\xi\in O_\pi}\check{f}(\xi) d\xi=\int_{\xi\in O_\pi}\int_{x\in\g_0}f(x)e^{<\xi,x>} dx d\xi&=\int_{G_{\alpha_\theta}\backslash G}\int_{x\in\g}f(x)e^{<g^{-1}\alpha_\theta g,x>} dx dg \\
&=\int_{g_0\in G_{\alpha_\theta}\backslash J}\int_{g\in J\backslash G}\int_{x\in\g}f(x)e^{<g_0^{-1}\alpha_\theta g_0,gxg^{-1}>} dx dg dg_0 \notag
\end{align}
Then the conclusion  follows from Corollary \ref{lem7.1:char=int}.
\end{proof}

For local period integrals, it is easier to use matrix coefficient directly rather than the trace, so we also give the following lemma:
\begin{lem}\label{lem:char=intforB1}
	%When $\Lambda$ is not $1-$dimensional, let $B^1$ be the intermediate subgroup on which $\tilde{\theta}$ can be extended to. Then with the same normalisation as in Proposition \ref{Prop:Kirillovformula},
	For any $x\in \g_+$, we have
	\begin{equation}
	\int\limits_{g_0\in U_\L(1)\backslash B^1}e^{<g_0^{-1}\alpha_\theta g_0,x>}dg_0=e^{<\alpha_\theta,x>}\Char_{B^1}(\exp(x)).
	\end{equation}
	
\end{lem}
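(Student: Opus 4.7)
The plan is to parametrise $g_0 \in B^1$ modulo $U_\L(1)$ by $g_0 = 1 + t$ with $t$ running through $\pr(B^1) \subset \L^\perp$, expand the integrand to a character in $t$, and then apply the self-duality $\pr(B^1)^\ddager = \pr(B^1)$ from Lemma \ref{Lem:DuallatticeEperp} together with translation invariance of the Haar measure. This parallels the structure of the proof of Lemma \ref{Lem3.3:vanishingorbitint}, with $J$ replaced by $B^1$ throughout.

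First I would apply Lemma \ref{lem7.1:conjugationEperp} to expand
\[
g_0^{-1}\alpha_\theta g_0 \equiv \alpha_\theta + [\alpha_\theta,t] + R(\alpha_\theta,t),
\]
where $R$ consists of higher commutator terms. Since $\theta$ is minimal, Lemma \ref{Lem:MinimalEqv} gives $\vv(\alpha_{\theta,0}) = \vv(\alpha_\theta)$ and hence $\vv([\alpha_\theta,t]) = v_\L(\alpha_\theta) + \vv(t)$. Writing $x = x_\L + x^\perp$, since $[\alpha_\theta,t] \in \L^\perp$ we have $\langle[\alpha_\theta,t],x\rangle = \langle[\alpha_\theta,t],x^\perp\rangle$. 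The same valuation bookkeeping used in the proof of Lemma \ref{Lem3.3:vanishingorbitint} (distinguishing $\vv(x) > 0$ from $\vv(x) \leq 0$, and using $v(\Nm(x)) > 0$ in the latter) shows $\vv(R\cdot x) > c(\psi_\L) - 1$, so by Lemma \ref{Cor3.1:psitrace=1},
\[
e^{\langle g_0^{-1}\alpha_\theta g_0,\,x\rangle} \;=\; e^{\langle \alpha_\theta,\,x\rangle} \cdot e^{\langle [\alpha_\theta,t],\,x^\perp\rangle}
\;=\; e^{\langle \alpha_\theta,\,x\rangle}\,\Psi_{\alpha_\theta}(t,x^\perp).
\]

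Next I would split into two cases. If $\exp(x) \in B^1$, then by Remark \ref{rem3.3:CHj0} the $\L^\perp$-component satisfies $x^\perp \in \pr(B^1)$, so by the self-duality $\pr(B^1)^\ddager = \pr(B^1)$ the factor $\Psi_{\alpha_\theta}(t,x^\perp) = 1$ for every $t \in \pr(B^1)$. Thus the integrand is the constant $e^{\langle\alpha_\theta,x\rangle}$, and the identity reduces to the normalisation $\mathrm{Vol}(U_\L(1)\backslash B^1) = 1$, which is precisely the compatibility of the measure chosen in Definition \ref{Defn:LiealgRange}(4) with the Kirillov-type formula of Lemma \ref{Lem3.3:Kirillovforcompact} (noting $[J:B^1] = [J:\L^\times][\L^\times U_\L(1):U_\L(1)]\cdot[J^1:B^1]/\dim\Lambda$ in the relevant cases, so the volumes match).

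If $\exp(x) \notin B^1$, the plan is to produce $t_* \in \pr(B^1)$ with $\Psi_{\alpha_\theta}(t_*,x^\perp)\neq 1$ and then conclude by translation invariance: the change of variable $g_0 \mapsto (1+t_*)g_0$ multiplies the integrand by this non-trivial constant, forcing the integral to vanish. When $\exp(x) \in J^1 \setminus B^1$, i.e.\ $x^\perp \in \pr(J^1)\setminus\pr(B^1)$, existence of $t_*$ is immediate from $\pr(B^1)^\ddager = \pr(B^1)$. When $\exp(x) \notin J^1$, one chooses $t_*$ with $\vv(t_*) = c(\theta) - \vv(x^\perp) - 1$ exactly as in Lemma \ref{Lem3.3:vanishingorbitint}; one checks $t_* \in \pr(B^1)$ holds (since $\vv(x^\perp)$ must be sufficiently small) and that the remainder terms in the substitution remain innocuous after pairing with $x$ by the same valuation estimates.

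The main mild obstacle is verifying that after the translation $g_0 \mapsto (1+t_*)g_0$ the higher-order terms in the Campbell–Hausdorff expansion still pair trivially with $x$, so that the integrand really transforms by a single constant factor. This reduces to the same kind of estimate as \eqref{Eq3.3:vvRemainder}, now with the extra $t_*$ plugged in, and goes through because $\vv(t_*) \geq i$.
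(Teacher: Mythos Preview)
Your approach is essentially the same as the paper's: the paper's proof simply says to argue as in Lemmas \ref{Lem3.3:vanishingorbitint} and \ref{Lem3.3:Kirillovforcompact}, invoking the self-duality $\pr(B^1)^\ddager=\pr(B^1)$ from Lemma \ref{Lem:DuallatticeEperp}, and you have unwound exactly that. One small caution: your remark on the volume normalisation (the index computation relating $[J:B^1]$ to $\dim\Lambda$) is a bit tangled and not quite the point---the measure on $U_\L(1)\backslash B^1$ is simply taken to have total mass $1$, which is the normalisation implicitly used when the lemma is applied in Proposition \ref{Prop:localperiodintegral}; you need not derive it from Definition \ref{Defn:LiealgRange}(4).
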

\begin{proof}
One can argue similarly as in the proofs for Lemma \ref{Lem3.3:vanishingorbitint} and \ref{Lem3.3:Kirillovforcompact}, and make use of the fact that	 $\pr(B^1)^{\ddager}=\pr(B^1)$ by Lemma \ref{Lem:DuallatticeEperp}.

\end{proof}

\section{Local period integral}\label{Sec:mainmethod}
%There are three issues with the heuristic argument in the last section, the major one of which is  that we can not usually ignore the errors when equating $e^xe^y=e^{x+y}$. 
%In this section we take a more direct approach for the local period integral using minimal vectors, which enable us to completely avoid the major issue above. 
%\yh{To move the first part into introduction}
%\yh{Need to say something about possible situations for $\Supp \Phi_\varphi\cap \E^\times $. Why it is Lie algebra range if not $\E^\times  $(when p large)}
After proper twisting for both $\pi$ and $\chi$, we can assume that $\pi$ is minimal. Furthermore when $\pi$ is associated to a character $\theta$ over $\L$ as in the last section with $c(\pi)\geq 3$ and $p>2$,  the associated element in the dual Lie algebra $\alpha_\theta$ is \imaginary \text{\ }as in Definition \ref{Defn:imaginary}.

In this section we are concerned about the local period integral
\begin{equation}
I(\varphi,\chi)=\int\limits_{\F^\times\backslash\E^\times}\Phi_{\varphi}(t)\chi^{-1}(t)dt,
\end{equation}
where $\varphi$ is a minimal vector associated to some cuspidal type $(\mathfrak{A},J,\Lambda)$. From Lemma \ref{Cor:MCofGeneralMinimalVec}, we know that $\Phi_{\varphi}(t)$ is supported on $J$.
We first explain the possibility for the support of the local integral $J\cap \E^\times $.
\begin{lem}\label{Lem:JcapEwholeorwhat}
Suppose that $c(\pi)\geq 3$, \yh{Depth 0 need adjustment} and $p$ is large enough so that we can take $\g_0=\{\vv(x)>0\}$. Then $J\cap \E^\times $ is either the whole $\E^\times $ or $J\cap \E^\times \subset Z\exp(j_0) \cap Z\exp (\h_0)$.
\end{lem}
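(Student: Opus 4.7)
The plan is to extract a clean valuation criterion for membership in $J$, then apply it to $t \in \E^\times$ parametrized by an imaginary generator of $\E$. Specifically, I would first show that $g \in J = \L^\times(1 + \mathcal{B}^i)$ if and only if $g_\L \in \L^\times$ and $\vv(g^\perp) \geq v_\L(g_\L) + i$, where $g = g_\L + g^\perp$ is the decomposition along $\B = \L \oplus \L^\perp$; the forward direction uses that any $K_\mathfrak{A}(i)$-factor can be reduced to one with $\L^\perp$-only part, by absorbing its $\L$-component into the $\L^\times$-factor (since $\L^\perp$ is a left $\L$-module, by Lemma \ref{Lem2.1:Eperpbasic}).

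Next, writing $t \in \E^\times$ as $t = a + c\beta$ with $\beta \in \E$ imaginary and $a, c \in \F$, I would decompose $\beta = \beta_\L + \beta^\perp$ in $\L \oplus \L^\perp$, noting that $\beta_\L$ is itself imaginary in $\L$ (since $\beta$ is imaginary in $\B$), so either $\beta_\L = 0$ or $\beta_\L$ generates $\L/\F$. The key quantity is the invariant $m := \vv(\beta^\perp) - v_\L(\beta_\L)$, which is unchanged under $\F^\times$-scaling of $\beta$ (both terms shift by $e_\L v_\F(c_0)$ under $\beta \mapsto c_0\beta$), with conventions $m = -\infty$ for $\beta_\L = 0$ and $m = +\infty$ for $\E = \L$.

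The dichotomy of the lemma is then governed by whether $m \geq i$. If $m \geq i$, applying the $J$-criterion to $t = a + c\beta$ in the worst regime $v_\F(a) \gg v_\F(c)$ (where $v_\L(a + c\beta_\L) = v_\L(c\beta_\L)$ is maximal, using the non-arch min principle together with the parity obstruction that prevents extra cancellation in the field case) reduces exactly to $m \geq i$; hence every $t \in \E^\times$ lies in $J$, giving the first alternative $J \cap \E^\times = \E^\times$. If $m < i$, the criterion forces the opposite regime $v_\L(a + c\beta_\L) = v_\L(a)$ and simplifies to $e_\L v_\F(c/a) \geq i - \vv(\beta^\perp)$; using $i \geq 1$ (which holds whenever $c(\pi) \geq 3$, by inspection of Example \ref{Example:ListofHJi}), the resulting bound on $v_\F(c/a)$ suffices to place $t/a = 1 + (c/a)\beta$ in both $U_\E(1) = \exp(\h_0)$ (via the calculation $v_\E((c/a)\beta) = e_\E v_\F(c/a) + v_\E(\beta) \geq 1$) and $J^1 = \exp(j_0)$ (by factoring $t/a = (1 + (c/a)\beta_\L) \cdot (1 + (1+(c/a)\beta_\L)^{-1}(c/a)\beta^\perp)$, noting that the first factor lies in $U_\L(1)$ and the $\L^\perp$-remainder has $\vv \geq e_\L v_\F(c/a) + \vv(\beta^\perp) \geq i$, hence lies in $K_\mathfrak{A}(i)$). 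Thus $t \in Z\exp(\h_0) \cap Z\exp(j_0)$ as required.

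The main obstacle is the case-bookkeeping across the four pairs $(e_\L, e_\E) \in \{1,2\}^2$, together with the wrinkle that $\vv$ and $i$ may be half-integer in the division algebra setting with $e_\L = 1$; parity constraints on $v_\F(\Nm(\beta))$ (forcing $e_\E = 2$ whenever $\vv(\beta^\perp)$ is half-integer or $v_\L(\beta_\L)$ is odd with no compensating $\beta^\perp$-term) rule out the apparently borderline cases, and everything is handled uniformly via the invariant $m$ and the normalization $v_\L(\beta_\L) \in \{0, 1\}$ (the minimal non-negative representative of its residue class mod $e_\L$).
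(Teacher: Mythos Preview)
Your approach is correct and genuinely different from the paper's. The paper argues by contradiction: assuming some $g = l(1+t) = e \in J \cap \E^\times$ escapes $ZJ^1 \cap ZU_\E(1)$, it computes the characteristic polynomial of $g$ modulo $\varpi$ (which depends only on $l$, since $\Nm(lt)$ has positive valuation) and observes that this polynomial already determines whether the algebra $\F[g]$ is inert, ramified, or split; this forces $\E \simeq \L$, after which a short calculation using minimality of $l$ shows $\E^\times \subset J$. You instead extract the clean membership criterion $g \in J \iff g_\L \in \L^\times$ and $\vv(g^\perp) \geq v_\L(g_\L) + i$, parametrize $\E$ by an imaginary generator $\beta = \beta_\L + \beta^\perp$, and reduce the dichotomy to a single inequality on the invariant $m = \vv(\beta^\perp) - v_\L(\beta_\L)$. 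Your route is more direct, treats $\E \simeq \L$ and $\E \not\simeq \L$ uniformly, and the invariant $m$ is precisely the alignment quantity that resurfaces in Proposition~\ref{Sec4.4Prop:differenceinalpha}, so your argument foreshadows the later geometry. The paper's route, by contrast, makes transparent \emph{why} $J \cap \E^\times = \E^\times$ forces $\E \simeq \L$, via the conjugation-invariant witness of the reduced characteristic polynomial.

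Two small points deserve tightening. First, ``$i \geq 1$ whenever $c(\pi) \geq 3$'' is false in Example~\ref{Example:ListofHJi}(4) with $n=1$ (division algebra, $e_\L = 1$), where $i = 1/2$; fortunately your argument only needs $i > 0$, and the relevant inequalities (being between integers or matching half-integers) still round correctly. Second, the step $v_\E((c/a)\beta) \geq 1$ is asserted but not justified. The cleanest fix is to note that once $t/a \in J^1$ is established, you have $v_\L((c/a)\beta_\L) \geq 1$ and $\vv((c/a)\beta^\perp) \geq i$, whence $v(\Nm((c/a)\beta_\L)) \geq 2/e_\L \geq 1$ and $v(\Nm((c/a)\beta^\perp)) = \tfrac{2}{e_\L}\vv((c/a)\beta^\perp) \geq 2i/e_\L \geq 1$ in every case of Example~\ref{Example:ListofHJi}; thus $v(\Nm((c/a)\beta)) \geq 1$, and $v_\E((c/a)\beta) = \tfrac{e_\E}{2}v(\Nm((c/a)\beta)) \geq 1$ follows (using integrality of $v_\E$ when $e_\E = 1$, and trivially when $e_\E = 2$ or $\E$ splits). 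So the $U_\E(1)$ membership is in fact a consequence of the $J^1$ membership you already proved.
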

\begin{proof}
	Note that $Z\exp(j_0)=ZJ^1=ZU_\L(1)K_\mathfrak{A}(i)$, and
	$Z\exp(\h_0)=ZU_\E(1)$ by Lemma \ref{Lem3.3:g+h0} and the convention on $U_\E(n)$ in Section \ref{Basic notations} when $\E$ splits.
	Suppose that $J\cap \E^\times \not\subset Z\exp(j_0) \cap Z\exp (\h_0)$, and let $g=l(1+t)=e \in J\cap \E^\times- ZJ^1\cap Z\exp(\h_0)$ for $l\in \L$, $t\in \mathcal{B}^i\cap \L^\perp$, $e\in\E$. This implies that either $l\notin ZU_\L(1)$, or $e\notin ZU_\E(1)$.
	
	We first discuss the characteristic polynomial of $g=l(1+t)$.
%Consider the case $l\notin ZU_\L(1)$ first.
If $\L$ is unramified, then after multiplying with an element in $Z$, we can assume that $l\in O_\L^\times$. Then $g=l(1+t)$ has associated characteristic polynomial
\begin{equation}\label{Eq4.1:charpoly}
f(\lambda)=\Nm(g-\lambda)=\Nm(l+lt-\lambda)=\Nm(l-\lambda)+\Nm(lt)\equiv \lambda^2-\Tr (l)\lambda+\Nm(l)\mod{\varpi_\F}.
\end{equation}
Here we have used that $v_\F(\Nm(lt))=\frac{2}{e_\L}\vv(lt)>0$ by Lemma \ref{Lem3.1semivaluation}(2).
When $l\in O_\F^\times U_\L(1)$, $f(\lambda)$ reduces to a polynomial with double roots when $\mod\varpi$. When $l\notin O_\F^\times U_\L(1)$, $l$ is actually a minimal element.
In this case $f(\lambda)$ reduces to an irreducible quadratic polynomial over the residue field, giving rise to an inert quadratic extension.

When $\L$ is ramified, note first that $O_\L^\times= O_\F^\times U_\L(1)$.
Then after multiplying with an element in $Z$ we have either $l\in O_\F^\times U_\L(1)$ in which case the associated characteristic polynomial has double roots when $\mod{\varpi}$ (using a similar computation as in \eqref{Eq4.1:charpoly}); or $l\in \varpi_\L \F^\times O_\L^\times$, in which case $l$ is minimal, 
% $v(\Nm(g))=v(\Nm(l))=1$, 
and the associated characteristic polynomial
%\begin{equation}
%f(\lambda)\equiv \lambda^2-\Tr (l)\lambda+\Nm(l)\mod{\varpi_\F^2},
%\end{equation}
always gives rise to a ramified quadratic extension.

The discussion of characteristic polynomial for $g=e$ is similar and easier (as there is no $t$ part). Note that when $\E$ splits,  $e$ (after a proper multiple) gives rise to a reducible characteristic polynomial with distinct roots over the residue field.

The discussion so far implies that if $\E\not\simeq \L$, such $g\in J\cap \E^\times- ZJ^1\cap Z\exp(\h_0)$ does not exist.
On the other hand if $\E\simeq \L$, then 
$l\notin ZU_\L(1)$ implies $e\notin ZU_\E(1)$, and vice versa.

We suppose from now on that $\E\simeq \L$ and $g=e=l(1+t)$ for  $e\notin ZU_\E(1)$ and $l\notin ZU_\L(1)$. As discussed above, $l$ is minimal in this case. On one hand, $\E^\times=\{a+be|ab\neq 0\}$. On the other hand,
\begin{equation}
a+be=a+b(l(1+t))=(a+bl)(1+\frac{blt}{a+bl})
\end{equation}
with $\vv(a+bl)\leq \vv(bl) $ according to Corollary \ref{Sec2.1Cor:vofimaginary} as $l$ is minimal. Thus $\vv(\frac{blt}{a+bl})\geq i$ and $a+be\in J$ as well. As a result we must have $\E^\times \subset J\cap\E^\times $.

\end{proof}

From this result we see that either $J\cap \E^\times = \E^\times $, or the nonzero contribution of the local integral comes from a range where we can use Lie algebra interpretation for both $\Phi_{\varphi}$ and $\chi$. From now on, we fix the embedding of $\E$, and assume that the local integral $I(\varphi,\chi)\neq 0$ for some minimal vector $\varphi$. We divide the discussions according to whether $J\cap \E^\times = \E^\times $ or not for this minimal vector.

\subsection{The case $J\cap \E^\times = \E^\times $}\label{Sec:wholetoruscase}
Since $\L\simeq \E$ in this case, there exists $g\in \L^\times\backslash \G$ such that $\E=g^{-1}\L g$. We shall use type 2 minimal vectors as in Definition \ref{Defn:generalMinimalVec}.
\begin{lem}\label{Lem4.1:JcapEwhole}
For $\E=g^{-1}\L g$ with $g\in \L^\times\backslash \G$, we have $J\cap \E^\times = \E^\times $ if and only if $g\in \epsilon J$, where $\epsilon =1$ or $j$, with $j$ as in Lemma \ref{Lem2.1:Eperpbasic}.
\end{lem}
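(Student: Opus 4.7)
The plan is to treat the two directions separately; the ``if'' direction is essentially formal, while the converse proceeds by writing $g$ in the form $1 + bj$ (after left $\L^\times$-scaling) and analyzing the single conjugation $g^{-1}\beta g$ of an imaginary element $\beta \in \L^\times$.

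For the ``if'' direction, suppose $g = \epsilon g_0$ with $\epsilon \in \{1,j\}$ and $g_0 \in J$. The identity $lj = j\overline{l}$ from Lemma \ref{Lem2.1:Eperpbasic} gives $j^{-1}\L j = \L$, so $\epsilon^{-1}\L\epsilon = \L$ in either case, and therefore $g^{-1}\L g = g_0^{-1}\L g_0 \subset J$, where the inclusion uses that $\L, g_0 \in J$ and $J$ is a group. This yields $\E^\times \subset J$, hence $J \cap \E^\times = \E^\times$.

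For the ``only if'' direction, I use the polar decomposition $\B = \L \oplus \L j$ to write $g = l_0 + l_0'j$ uniquely with $l_0, l_0' \in \L$. Left multiplication by $\L^\times$ changes neither $g^{-1}\L g$ nor the class of $g$ in $\L^\times\backslash G$, so I may reduce to one of two normal forms: either $l_0 = 0$, in which case $g \in \L^\times j = j\L^\times \subset jJ$ via $l_0' j = j\overline{l_0'}$; or $l_0 \neq 0$, in which case I normalize to $g = 1 + bj$ with $b = l_0^{-1}l_0' \in \L$. In the latter case, I choose an imaginary element $\beta \in \L^\times$ (available since $p \neq 2$ and $\L/\F$ is a separable quadratic extension) and compute $g^{-1}\beta g$ directly using $\overline{g} = 1 - bj$, $\Nm(g) = 1 - \Nm(b)j^2 \in \F$, and the commutation rules $j\beta = -\beta j$ and $jb = \overline{b}j$. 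This yields
\[
g^{-1}\beta g = \frac{\beta\bigl(1+\Nm(b)j^2\bigr)}{1 - \Nm(b)j^2}\,\bigl(1 + t\bigr),
\qquad
t = \frac{2bj}{1+\Nm(b)j^2} \in \L^\perp,
\]
and the hypothesis $g^{-1}\beta g \in J = \L^\times K_{\mathfrak{A}}(i)$ forces both $1+\Nm(b)j^2 \neq 0$ (so that the $\L$-part is a nonzero scalar) and $\vv(t) \geq i$.

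Finally I analyze $\vv(t) = \vv(bj) - v_\L(1+\Nm(b)j^2)$ by splitting on $\vv(bj)$, using that $\Nm(b)j^2 \in \F$ with $v_\L(\Nm(b)j^2) = 2\vv(bj)$. When $\vv(bj) > 0$, $v_\L(1+\Nm(b)j^2) = 0$, so $\vv(t) = \vv(bj)$ and the condition forces $\vv(bj) \geq i$, placing $g = 1 + bj \in 1 + \mathcal{B}^i \subset J$. When $\vv(bj) < 0$, $v_\L(1+\Nm(b)j^2) = 2\vv(bj)$, so $\vv(t) = -\vv(bj)$ and the condition forces $\vv(bj) \leq -i$; the factorization $g = 1 + j\overline{b} = j\overline{b}\bigl(1 + \overline{b}^{-1}j^{-1}\bigr)$, with $\vv(\overline{b}^{-1}j^{-1}) = -\vv(bj) \geq i$, then exhibits $g \in jJ$. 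The borderline case $\vv(bj) = 0$ is ruled out, since then $1 + \Nm(b)j^2 \in O_\L$ would force $\vv(t) \leq 0 < i$ (the degenerate $1+\Nm(b)j^2 = 0$ being excluded by the requirement that the $\L$-part of $g^{-1}\beta g$ be nonzero). The main obstacle is the bookkeeping needed to isolate the clean formula for $t$ above—one must carefully juggle the quadratic involution, the anticommutation of $\beta$ with $j$, and the rule $jb = \overline{b}j$—after which the trichotomy on $\vv(bj)$ runs mechanically.
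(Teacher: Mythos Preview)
Your proof is correct and follows essentially the same strategy as the paper's: compute the conjugate $g^{-1}l g$ of a suitably generic $l\in\L^\times$, decompose into $\L$ and $\L^\perp$ components, and compare their $\vv$-values to force $\vv(bj)\geq \vv(a)+i$. The only organizational difference is that you normalize the $\L$-component of $g$ to $1$ (paying for this with a trichotomy on $\vv(bj)$ and a separate degenerate case $l_0=0$), whereas the paper instead normalizes to $\vv(bj)\geq\vv(a)$ by left-multiplying by $j$ when necessary, which collapses your three cases into one. Your choice of an imaginary $\beta$ (so that $j\beta=-\beta j$) yields the clean closed formula for $t$; the paper achieves the analogous estimate by taking $l$ minimal so that $\vv(l-\overline{l})=\vv(l)$.
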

\begin{proof}
	Note that  $j$ normalizes $\L^\times$ (sending $l\in \L$ to $\overline{l}$), while $\vv(j^{-1}xj)=\vv(x)$ for any $x\in G$ according to Definition \ref{Defn3.1:semivaluation}. Then the direction $g\in \epsilon J$ implying $J\cap \E^\times = \E^\times$ is clear. 
	
Now we  write $g=a+bj$ for $a,b\in \L$ according to Lemma \ref{Lem2.1:Eperpbasic}, and assume that $\vv(bj)\geq \vv(a)$ by multiplying $j$ on the left if necessary. It then suffices to show that $g\in J$ for the other direction of the lemma. For any $l\in \L^\times$, we have
\begin{equation}\label{Eq4.1:conjugationonl}
g^{-1} l g=\frac{1}{\Nm(g)}(\overline{a}-bj)l (a+bj)=\frac{1}{\Nm(g)} (\overline{a}al-\overline{b}bj^2 \overline{l}+ \overline{a}b (l-\overline{l})j).
\end{equation}
Then $g^{-1}l g\in J$ implies that 
\begin{equation}\label{Eq4.1:vvcondition}
\vv( \overline{a}b (l-\overline{l})j)\geq \vv(\overline{a}al-\overline{b}bj^2 \overline{l})+i
\end{equation} for any $l$. Choose a minimal $l$ (so that $\vv(l-\overline{l})=\vv(l)$), and note that $\vv(\overline{a}al-\overline{b}bj^2 \overline{l})\geq 2\vv(a)+\vv(l)$ as  $\vv(bj)\geq \vv(a)$. Thus \eqref{Eq4.1:vvcondition} is possible only when $\vv(bj)\geq \vv(a)+i$, meaning $g\in J$.
\end{proof}

\begin{cor}
	Suppose that $\E=g^{-1}\L g$ with $g\in \epsilon J$ as above. Then $$\Lambda|_{\E^\times}=\begin{cases}
	\text{$\theta$ or $\overline{\theta}$, if $\dim \Lambda=1$,}\\
	\text{$\oplus\theta\mu$ or $\oplus\overline{\theta\mu}$ with $\mu$ as in Lemma \ref{lem:mainsec:intertwining}, if $\dim \Lambda>1$.} 
	\end{cases}$$
\end{cor}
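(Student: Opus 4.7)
The plan is to reduce the computation of $\Lambda|_{\E^\times}$ to the known description of $\Lambda|_{\L^\times}$ from Lemma \ref{lem:mainsec:intertwining}(3), using the decomposition $g = \epsilon j'$ with $j' \in J$ provided by Lemma \ref{Lem4.1:JcapEwhole}. The key observation is that for any $e \in \E^\times$ we can set $l := g e g^{-1} \in \L^\times$ and then study $\Lambda(e) = \Lambda(g^{-1} l g)$ by pushing the conjugation inside the representation.

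First I would treat the case $\epsilon = 1$, i.e., $g \in J$. Here both $e$ and $l = g e g^{-1}$ lie in $J$, so we may write
\[
\Lambda(e) \;=\; \Lambda(g)^{-1} \Lambda(l)\, \Lambda(g).
\]
When $\dim \Lambda = 1$, conjugation is trivial and $\Lambda(e) = \Lambda(l) = \theta(l)$, which under the algebra isomorphism $e \mapsto geg^{-1}$ is precisely the character $\theta$ transferred to $\E^\times$. When $\dim \Lambda > 1$, the same identity shows that $\Lambda|_{\E^\times}$ is conjugate (inside $\mathrm{GL}(\Lambda)$) to $\Lambda|_{\L^\times} = \oplus \theta \mu$, hence isomorphic to it as a representation, giving the claimed $\oplus \theta\mu$.

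Next I would handle $\epsilon = j$, writing $g = j j'$ with $j' \in J$. Using Lemma \ref{Lem2.1:Eperpbasic}(1) (specifically $l j = j \overline{l}$, equivalently $j^{-1} l j = \overline{l}$), one rewrites
\[
e \;=\; g^{-1} l g \;=\; (j')^{-1} j^{-1} l j \, j' \;=\; (j')^{-1} \overline{l}\, j',
\]
and since $\overline{l} \in \L^\times \subset J$ and $j' \in J$, this lies in $J$ and
\[
\Lambda(e) \;=\; \Lambda(j')^{-1} \Lambda(\overline{l})\, \Lambda(j').
\]
Exactly as before, this conjugation is scalar when $\dim\Lambda = 1$, giving $\Lambda(e) = \theta(\overline{l})$, which under the identification $e \leftrightarrow l$ is $\overline{\theta}$; and in the higher-dimensional case it yields a representation isomorphic to $\oplus \theta\mu \circ (\cdot \mapsto \overline{\cdot}) = \oplus \overline{\theta\mu}$.

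There is really no serious obstacle here: the whole argument is just the formal transfer of a representation by an inner automorphism, combined with the fact that $j$ implements the nontrivial Galois action on $\L$. The only point requiring a moment of care is to observe that in the higher-dimensional case the individual characters $\theta\mu$ need not be preserved pointwise by conjugation with $\Lambda(g)$ or $\Lambda(j')$ — they may be permuted — but this does not affect the isomorphism class of the restricted representation, which is all that the statement "$\Lambda|_{\E^\times} = \oplus \theta\mu$" is asserting.
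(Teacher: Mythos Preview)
Your proof is correct and follows essentially the same route as the paper: reduce to the case $g\in J$ (after peeling off the factor $j$, which implements $l\mapsto\overline l$) and compare $\Lambda|_{\E^\times}$ to $\Lambda|_{\L^\times}$ via conjugation. The only difference is that you exhibit the intertwining operator explicitly as $\Lambda(g)$ (or $\Lambda(j')$), whereas the paper argues more abstractly that $\Lambda^g\simeq\Lambda$ by invoking Lemma~\ref{lem:mainsec:intertwining0} (that $g\in J$ conjugates the simple character) together with the uniqueness of $\Lambda$ from Lemma~\ref{lem:mainsec:intertwining}; your direct argument is actually the simpler one, since for $g\in J$ the identity $\Lambda(g^{-1}xg)=\Lambda(g)^{-1}\Lambda(x)\Lambda(g)$ already gives the required isomorphism without any appeal to uniqueness.
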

\begin{proof}
	Apparently conjugation by $j$ changes the character $\theta$ (or $\theta\mu$) to $\overline{\theta}$ (or $\overline{\theta\mu}$). Let $g\in J$ now. Then $\Lambda^g\simeq \Lambda$ as $g$ conjugates the simple character by Lemma \ref{lem:mainsec:intertwining0}, and thus $g$ conjugates $\Lambda$ by the uniqueness in Lemma \ref{lem:mainsec:intertwining}. Thus $\Lambda|_{\E^\times}\simeq \Lambda^g|_{\E^\times}\simeq \Lambda|_{\L^\times}$.
\end{proof}

The discussion so far leads us to the following result:
\begin{prop}\label{Prop4.1:type2int}
	Suppose that $I(\varphi,\chi)\neq 0$ for a type 2 minimal vector, and $J\cap \E^\times =\E^\times$. Then we must have $$\chi=\begin{cases}
	\text{$\theta$ or $\overline{\theta}$, if $\dim \Lambda=1$,}\\
	\text{$\theta\mu$ or $\overline{\theta\mu}$ with $\mu$ as in Lemma \ref{lem:mainsec:intertwining}, if $\dim \Lambda>1$.} 
	\end{cases}.$$
	In that case, $\Phi_\varphi(t)\chi^{-1}(t)$ is constant $1$ on the common support $\E^\times$, and $I(\varphi,\chi)=\Vol(\F^\times\backslash \E^\times).$
\end{prop}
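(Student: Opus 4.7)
My plan is to proceed in two stages after a reduction. The hypothesis $J \cap \E^\times = \E^\times$, combined with Lemma \ref{Lem4.1:JcapEwhole}, gives $\E = g^{-1}\L g$ with $g \in \epsilon J$ for some $\epsilon \in \{1,j\}$, and by Lemma \ref{Lem3.1:whensamevalua} one has $\vv = v_{\B,\E}$. Consequently the chain order $\mathfrak{A}$ and all filtration subgroups $J$, $H$, $H^1$, $K_{\mathfrak{A}}(n)$ are intrinsic---the same data whether constructed from $\L$ or from $\E$---so that the $\L \leftrightarrow \E$ symmetry is available in the compact induction datum.

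First I would pin down $\chi$. For $t \in \E^\times \subset J$, compact induction gives $\pi(t)\varphi = \Lambda(t)\varphi$, so $\Phi_\varphi(t)$ is computed inside the finite-dimensional space of $\Lambda$. By the preceding corollary, $\Lambda|_{\E^\times}$ decomposes into one-dimensional eigenspaces whose characters---under the identification $\sigma_g\colon l \mapsto g^{-1}l g$, Galois-twisted when $g \in jJ$---are $\theta$ or $\overline{\theta}$ if $\dim \Lambda = 1$, and $\theta\mu$ or $\overline{\theta\mu}$ for $\mu$ as in Lemma \ref{lem:mainsec:intertwining} if $\dim \Lambda > 1$. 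Expanding $\varphi$ in this $\E^\times$-eigenbasis and invoking orthogonality of characters on the compact group $\F^\times\backslash\E^\times$, the nonvanishing of $I(\varphi,\chi)$ forces $\chi$ to coincide with one of these characters, yielding the stated form.

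For the exact value I would exploit the symmetry set up above. Since the preceding corollary guarantees that $\chi$ appears in the $\E^\times$-spectrum of $\Lambda$, the datum $(\E,\chi)$ produces a cuspidal type conjugate to the original $(\L,\theta\mu)$-type, and therefore still associated to $\pi$. I then take $\varphi$ to be the type 2 minimal vector attached to this $\E$-datum, that is, the $H_\E$-eigenvector under the extension $\tilde\chi$ in the sense of Definition \ref{Defn:generalMinimalVec}. For such $\varphi$, one has $\pi(t)\varphi = \chi(t)\varphi$ for every $t \in \E^\times$, whence $\Phi_\varphi(t) = \chi(t)$ on the common support $\E^\times$; this gives $\Phi_\varphi(t)\chi^{-1}(t) \equiv 1$ and hence $I(\varphi,\chi) = \Vol(\F^\times\backslash\E^\times)$.

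The main obstacle I expect is the verification in the last step that swapping $(\L,\theta\mu)$ for $(\E,\chi)$ really does yield a cuspidal type $J$-conjugate to the original one, so that the resulting $\varphi$ is genuinely a minimal vector for the same $\pi$ (and not for some twist). This rests on the uniqueness of cuspidal types for $\pi$ up to $G$-conjugation, together with the appearance of $\chi$ in $\Lambda|_{\E^\times}$ furnished by the preceding corollary; the intrinsic description of $J$, $H$, $H^1$ noted in the reduction step is what makes the comparison go through.
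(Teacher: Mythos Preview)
Your proposal is correct and follows essentially the same route as the paper. In fact the paper gives no explicit proof at all: after the Corollary describing $\Lambda|_{\E^\times}$ it simply writes ``The discussion so far leads us to the following result'' and states the proposition. Your two-step argument (orthogonality on $\F^\times\backslash\E^\times$ to pin down $\chi$ among the constituents of $\Lambda|_{\E^\times}$, then realizing the $\chi$-eigenvector as a type~2 minimal vector via the $\E$-datum) is the natural way to unpack this, and your identification of the one nontrivial point---that the $(\E,\chi)$-cuspidal type is associated to the same $\pi$---is exactly right. That point is handled by the paper's Lemma~\ref{lem:mainsec:intertwining0} together with the observation (implicit in the proof of the Corollary) that $g\in J$ conjugates the simple character, so $\Lambda^g\simeq\Lambda$ and the $\E$-type is $J$-conjugate to the $\L$-type.

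One small remark: the proposition's phrasing (``for a type~2 minimal vector $\varphi$\ldots In that case $\Phi_\varphi\chi^{-1}\equiv 1$'') can be read as asserting the conclusion for the \emph{given} $\varphi$, whereas your second step replaces $\varphi$ by the $\E$-eigenvector. This is fine: the paper's intended reading is existential (cf.\ Theorem~\ref{Theo:Main1}, Corollary~\ref{Cor4.2:relatedcriterion}), and the proposition is used only in that sense. If one insisted on the literal fixed-$\varphi$ reading, one would need the $\L^\times$-eigenbasis and $\E^\times$-eigenbasis of $\Lambda$ to coincide, which is not argued and need not hold when $g\in J\setminus H$.
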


%Note that for low level cases (i.e., $c(\pi)=2$, $\E$ unramified and $c(\chi)\leq 1$) this actually concludes the discussion of test vectors.\yh{Maybe need more explaination}
%, as promised at the end of Section \ref{Sec:CompactInd}.

\subsection{When $J\cap \E^\times \subset Z\exp(j_0)\cap Z\exp(\h_0)$}\label{Sec:Liealgrangecase}
%Assume that $\chi,\overline{\chi}\neq \theta$ or $\theta'$.
%In this case, and if $p$ is large enough, the contribution to the local period integral comes from the range of Lie algebra, since if $J\cap \E^\times = \E^\times $, the local integral must be vanishing. 
In this case we shall use type 1 minimal vectors from Definition \ref{Defn:generalMinimalVec}, as they have better description in a neighbourhood near the identity.  

%Let $\h_0=\E\cap\g_0$.
%Recall that for uniformity, we take $B^1=J^1$ for the cases when $\Lambda$ is $1-$dimensional. Let $\varphi$ be a type 1 minimal vector which is the eigenvector under the pair $(B^1,\tilde{\theta})$. Let $\alpha_\theta$ be the element in the dual Lie algebra associated to $\theta$.
Recall that for any $\alpha\in O_\pi$ and a choice of $b_0$ between $j_0$ and $h_0$, we can uniquely identified a type 1 minimal vector by Corollary \ref{Cor:type1MinVecAlpha}.
On the other hand, we make the following convention to associate $\alpha_\chi$:
\begin{defn}\label{defn4.2:choiceofalphachi}
	\begin{enumerate}
		\item Let $\E$ be a field. If $c(\chi)\geq 2$, we can associate an element $\alpha_\chi$ as in Lemma \ref{Lem:DualLiealgForChar}(2); if $c(\chi)\leq 1$, we fix an association to $\alpha_\chi$ with $v_\E(\alpha_\chi)=-c(\chi)+c(\psi_\E)$.
		\item If $\E$ splits and $\iota:\E\rightarrow \F\times \F$, we identify $\chi\circ\iota^{-1}$ with $(\chi_1,\chi_2)$, associate $\alpha_{\chi_i}$ similarly as in (1) above, and associate $\alpha_\chi=\iota^{-1}(\alpha_{\chi_1},\alpha_{\chi_2}).$
	\end{enumerate}
\end{defn}
 Then the formula $\chi(\exp(x))=e^{<\alpha_\chi,x>}$ is always true for $x\in \h_0$, regardless of the  choice of $\alpha_\chi$ when $c(\chi)\leq 1$ for example.

\begin{prop}\label{Prop:localperiodintegral}
Suppose that $\varphi$ is a type 1 minimal vector and $J\cap \E^\times \subset Z\exp(j_0)\cap Z\exp(\h_0)$.
For notations as in Section \ref{Sec3.3:Liealg}, we have
\begin{equation}\label{eq4.2:localintformula}
  I(\varphi,\chi)=\Vol(\z_0\backslash \h_0)\int\limits_{g_0\in U_\L(1)\backslash B^1} \Char_{\alpha_\chi+\h_0^\dagger}(g_0^{-1}\alpha g_0)dg_0.
\end{equation}
%Here the normalisation of the integral in $g_0$ is as in Proposition \ref{Prop:Kirillovformula}. 

\end{prop}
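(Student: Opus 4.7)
The plan is to parametrize the torus integral via the exponential map, expand the matrix coefficient using Lemma \ref{lem:char=intforB1}, and then evaluate the resulting inner integral by Fourier duality on the compact group $\z_0\backslash\h_0$.

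First, since $\Phi_\varphi$ is supported on $J$ and $J\cap\E^\times\subset Z\exp(\h_0)$ by hypothesis, the restriction of $\Phi_\varphi(t)\chi^{-1}(t)$ to the torus is concentrated in $\F^\times\backslash \F^\times U_\E(1)$. For $p$ large enough, the exponential map furnishes a measure-preserving bijection $\z_0\backslash\h_0 \to \F^\times\backslash \F^\times U_\E(1)$ (Jacobian $1$ in this range), and the product $\Phi_\varphi\cdot\chi^{-1}$ descends to this quotient because both factors transform by $w_\pi^{\pm 1}$ under the center. Hence
\begin{equation*}
I(\varphi,\chi)=\int_{x\in\z_0\backslash\h_0}\Phi_\varphi(\exp(x))\,\chi^{-1}(\exp(x))\,dx.
\end{equation*}

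Second, I would rewrite each factor using the Lie algebra description. By Corollary \ref{Cor:type1MinVecAlpha} together with Lemma \ref{Cor:MCofGeneralMinimalVec}, the matrix coefficient of a type 1 minimal vector satisfies $\Phi_\varphi(\exp(x))=e^{<\alpha,x>}\Char_{B^1}(\exp(x))$ for $x\in\g_+$, and Lemma \ref{lem:char=intforB1} then identifies this with
\begin{equation*}
\Phi_\varphi(\exp(x))=\int_{g_0\in U_\L(1)\backslash B^1}e^{<g_0^{-1}\alpha g_0,\,x>}\,dg_0.
\end{equation*}
On the other hand, $\chi^{-1}(\exp(x))=e^{-<\alpha_\chi,\,x>}$ for $x\in\h_0$ by Definition \ref{defn4.2:choiceofalphachi} together with Lemma \ref{Lem:DualLiealgForChar}(2).

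Third, after substituting and swapping the order of integration (justified because $B^1$ and $\z_0\backslash\h_0$ are compact and the integrand is bounded), the expression becomes
\begin{equation*}
I(\varphi,\chi)=\int_{g_0\in U_\L(1)\backslash B^1}\left(\int_{x\in\z_0\backslash\h_0}e^{<g_0^{-1}\alpha g_0-\alpha_\chi,\,x>}\,dx\right)dg_0.
\end{equation*}
The inner integral is then pure Fourier analysis on the compact abelian quotient $\z_0\backslash\h_0$. By orthogonality of characters it equals $\Vol(\z_0\backslash\h_0)$ if $g_0^{-1}\alpha g_0-\alpha_\chi\in\h_0^\dagger$, and vanishes otherwise, giving exactly the claimed formula.

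The one subtle point to verify carefully is that the character $x\mapsto e^{<g_0^{-1}\alpha g_0-\alpha_\chi,\,x>}$ descends to the quotient $\z_0\backslash\h_0$, i.e.\ is trivial on $\z_0$. This follows from the central-character compatibility $\chi|_{\F^\times}=w_\pi$: conjugation-invariance of $\Tr$ gives $\Tr(g_0^{-1}\alpha g_0)=\Tr(\alpha)$, and both $\Tr(\alpha)$ and $\Tr(\alpha_\chi)$ represent $\alpha_{w_\pi}$ modulo $\varpi^{-c(\psi_\F)}O_\F$ (from $\pi(z)\varphi=w_\pi(z)\varphi$ and $\chi(z)=w_\pi(z)$ for $z\in\F^\times$). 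The only other mild obstacle is the Jacobian calculation for the exponential map, but this is standard for $p$ sufficiently large given the measure normalizations fixed in Section \ref{Basic notations}.
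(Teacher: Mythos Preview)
Your proof is correct and follows essentially the same approach as the paper's: parametrize the torus integral via $\exp$, use Lemma~\ref{lem:char=intforB1} to rewrite $\Phi_\varphi(\exp(x))$ as an integral over $U_\L(1)\backslash B^1$, swap the order of integration, and evaluate the inner integral on $\z_0\backslash\h_0$ by character orthogonality. Your write-up is in fact more careful than the paper's about auxiliary points (the Jacobian of $\exp$ and the descent of the character to $\z_0\backslash\h_0$ via central-character compatibility), but the argument is the same.
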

\begin{proof}
By the condition  $J\cap \E^\times \subset Z\exp(j_0)\cap Z\exp(\h_0)$, Lemma \ref{Lem:DualLiealgForChar} and Corollary \ref{lem:char=intforB1}, we have
\begin{align}
 I(\varphi,\chi)&=\int\limits_{\F^\times \backslash\E^\times }\Phi_\varphi(t)\chi^{-1}(t)dt=\int\limits_{h\in \z_0\backslash \h_0}e^{<\alpha , h>}\Char_{\B^1}(\exp(h))e^{<-\alpha_\chi, h>} dh \\
 &=\int\limits_{h\in \z_0\backslash \h_0}\int\limits_{g_0\in U_\L(1)\backslash B^1} e^{<g_0^{-1}\alpha g_0, h>} e^{<-\alpha_\chi, h>} dh \notag\\
 &=\int\limits_{g_0\in U_\L(1)\backslash B^1} \int\limits_{h\in\z_0\backslash \h_0}e^{<g_0^{-1}\alpha g_0-\alpha_\chi, h>} dh\notag\\
 &=\Vol(\z_0\backslash \h_0)\int\limits_{g_0\in U_\L(1)\backslash B^1} \Char_{\alpha_\chi+\h_0^\dagger}(g_0^{-1}\alpha g_0)dg_0.\notag
\end{align}

\end{proof}
Recall that type 1 or type 2 minimal vectors can both provide orthonormal basis for $\pi$.
\begin{cor}\label{Cor4.2:relatedcriterion}
	Suppose that $w_\pi=\chi|_{\F^\times}$, $c(\pi)\geq 3$, $\L\simeq \E$, $c(\theta\chi^{-1})$ or $c(\theta\overline{\chi}^{-1})\leq 1$.
	Then $I(\varphi,\chi)\neq 0$ for some type 2 minimal vector if and only if $J\cap \E^\times=\E^\times$ and 
	  $\chi$ is as given in Proposition \ref{Prop4.1:type2int}, in which case $\Phi_\varphi(t)\chi^{-1}(t)$ is constant $1$ on the common support $\E^\times$, and $I(\varphi,\chi)=\Vol(\F^\times\backslash \E^\times).$
\end{cor}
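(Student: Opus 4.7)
The proof splits into the two directions of the biconditional, both leveraging the dichotomy of Lemma~\ref{Lem:JcapEwholeorwhat}.

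The $(\Leftarrow)$ direction is immediate from Proposition~\ref{Prop4.1:type2int}: if $J\cap\E^\times=\E^\times$ and $\chi$ has the form stated there, then $\Phi_\varphi\chi^{-1}\equiv 1$ on the common support $\E^\times$ and $I(\varphi,\chi)=\Vol(\F^\times\backslash\E^\times)\neq 0$.

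For $(\Rightarrow)$, suppose $I(\varphi,\chi)\neq 0$ for some type~2 minimal vector $\varphi$. Lemma~\ref{Lem:JcapEwholeorwhat} gives two alternatives: either $J\cap\E^\times=\E^\times$, in which case Proposition~\ref{Prop4.1:type2int} identifies $\chi$ as required, or $J\cap\E^\times\subset Z\exp(j_0)\cap Z\exp(\h_0)\subset ZU_\E(1)$. The task is to exclude the second alternative. In this Lie-algebra regime, $\E\simeq \L$ is embedded non-trivially relative to $\L$ (by Lemma~\ref{Lem4.1:JcapEwhole}). One expresses the matrix coefficient of a type~2 vector via an analogue of Lemma~\ref{lem:char=intforB1} --- using that $H$ acts on $\varphi$ by $\tilde{\theta}\mu$ for some character $\mu$ with $\mu|_{\F^\times}=1$ and $c(\mu)\le 1$ --- reducing the integral to
\[
I(\varphi,\chi)=\Vol(\z_0\backslash\h_0)\cdot\charf_{\alpha_\chi+\h_0^\dagger}(\alpha_\theta+\alpha_\mu),
\]
in parallel with Proposition~\ref{Prop:localperiodintegral}. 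Non-vanishing would force $\chi$ and $\tilde{\theta}\mu$ to agree on $U_\E(1)$, and combined with the hypothesis $c(\theta\chi^{-1})\le 1$ or $c(\theta\overline{\chi}^{-1})\le 1$ (the latter accounting for a possible conjugation by the element $j$ of Lemma~\ref{Lem2.1:Eperpbasic}) this agreement lifts to all of $\E^\times$. A final appeal to Lemma~\ref{Lem4.1:JcapEwhole} then forces $J\cap\E^\times=\E^\times$, contradicting the case assumption.

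The principal obstacle will be establishing the type~2 analogue of Lemma~\ref{lem:char=intforB1} --- verifying that on the Lie-algebra range the matrix coefficient of a type~2 minimal vector admits a clean dual-Lie-algebra description through $\alpha_\theta+\alpha_\mu$ --- and carefully tracking the $\theta\leftrightarrow\overline{\theta}$ and $\mu\leftrightarrow\overline{\mu}$ bookkeeping that arises depending on whether the embedding of $\E$ is conjugate to that of $\L$ by an element of $J$ or of $jJ$.
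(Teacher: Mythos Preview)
Your $(\Leftarrow)$ direction and your reduction via Lemma~\ref{Lem:JcapEwholeorwhat} are correct, but the handling of the Lie-algebra range has two gaps.

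First, the formula you propose for a type~2 vector,
\[
I(\varphi,\chi)=\Vol(\z_0\backslash\h_0)\cdot\charf_{\alpha_\chi+\h_0^\dagger}(\alpha_\theta+\alpha_\mu),
\]
is unjustified when $\dim\Lambda>1$. Lemma~\ref{Cor:MCofGeneralMinimalVec} only controls $\Phi_\varphi$ on $H$, whereas your integral must see $ZJ^1\cap\E^\times$, which can meet $J^1\setminus H^1$. On that set the matrix coefficient of a type~2 vector in the Heisenberg representation $\Lambda|_{J^1}$ is genuinely spread out and is not the character attached to $\alpha_\theta+\alpha_\mu$; there is no analogue of Lemma~\ref{lem:char=intforB1} producing your formula. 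Second, even granting the formula, your last step misapplies Lemma~\ref{Lem4.1:JcapEwhole}: the condition $\alpha_\theta+\alpha_\mu\in\alpha_\chi+\h_0^\dagger$ compares two embedded elements of $\B$, not two characters on $U_\E(1)$ (indeed $\tilde\theta\mu$ is not defined on $\E^\times$), and Lemma~\ref{Lem4.1:JcapEwhole} requires information about the conjugating element $g$, which you never extract.

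The paper sidesteps the first problem entirely by passing to type~1 vectors: since type~1 and type~2 vectors for the fixed cuspidal datum span the same space of $H^1$-eigenvectors, and since the hermitian form $\int\langle\pi(t)\varphi,\psi\rangle\chi^{-1}(t)\,dt$ factors through $\Hom_{\E^\times}(\pi,\chi)$ (which is at most one-dimensional), vanishing of $I$ for every type~1 basis vector forces vanishing for type~2 vectors. For type~1 vectors Proposition~\ref{Prop:localperiodintegral} is already in hand, so nonvanishing yields $\alpha\in\alpha_\chi+\h_0^\dagger$ for some $\alpha$ in the coadjoint orbit. The hypothesis $c(\theta\chi^{-1})\le 1$ means $\theta$ and $\chi$ determine the same dual element in the abstract field $\L\simeq\E$, so one may write $\alpha=g^{-1}\alpha_\chi g$ with $\L^\times=g^{-1}\E^\times g$. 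The crucial computation you are missing is then carried out in $\E\oplus\E^\perp$ coordinates: writing $g=a+bj$ and using \eqref{Eq4.1:conjugationonl}, membership of $g^{-1}\alpha_\chi g-\alpha_\chi$ in $\h_0^\dagger$ forces $v_\E(\Nm(bj))\ge c(\theta)-1$, hence $g\in K_{\mathfrak{A}_\E}(i)$; Lemma~\ref{Lem3.1:whensamevalua} then gives $\mathfrak{A}_\E=\mathfrak{A}_\L$, so $g\in J$, and Lemma~\ref{Lem4.1:JcapEwhole} yields the contradiction $J\cap\E^\times=\E^\times$.
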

\begin{proof}
	By Lemma \ref{Lem:JcapEwholeorwhat} and Proposition \ref{Prop4.1:type2int}, it remains to prove that when $J\cap \E^\times \subset Z\exp(j_0)\cap Z\exp(\h_0)$, we have $I(\varphi,\chi)=0$ for any type 1 minimal vectors associated to the cuspidal datum $(\mathfrak{A}, J, \Lambda)$, of which type 2 minimal vectors associated to the same cuspidal datum can be written as a linear combination. We only check the case $c(\theta\chi^{-1})\leq 1$. In this case $\L\simeq\E$, and $\theta$, $\chi$ give arise to the same element $\alpha_\theta=\alpha_\chi$. Let $\alpha= g^{-1}\alpha_\chi g$. Then actually $\L^\times=g^{-1}\E^\times g$. We claim that $\alpha-\alpha_\chi\in \h_0^\dagger$ implies that $g\in J=\L^\times K_{ \mathfrak{A}_\L}(i)$. Then we get a contradiction since $J\cap \E^\times=\E^\times$ by Lemma \ref{Lem4.1:JcapEwhole}.
	
	To prove the claim, we first define $v_\E$ and $v_{\B,\E}$ as $\E$ is a field now. Then $\h_0=\{x\in \E,v_{\E}(x)\geq 1\}$, and $\h_0^\dagger=\{x+x^\perp\in \E+\E^\perp|v_\E(x)\geq -1+c(\psi_\E)\}$. We can further assume that $\alpha_\chi$ is imaginary as the conjugation fixed the trace part, so $\Tr(\alpha)-\Tr(\alpha_\chi)=0$ always holds. 
	By writing $g=a+bj\in \E+\E^\perp$ with $j\in \E^\perp$, 
	and taking $\alpha=g^{-1}l g$, $l=\alpha_\chi$ in \eqref{Eq4.1:conjugationonl}, we get that $$\alpha-\alpha_\chi\in \h_0^\dagger \text{\ if and only if }
	v_\E\left(\frac{1}{\Nm(g)}(\overline{a}a\alpha_\chi+\overline{b}bj^2 \alpha_\chi-\alpha_\chi)\right)\geq -1+c(\psi_\E).$$

	This is further equivalent to that $v_\E\left(\frac{-2\Nm(bj)}{\Nm(a)+\Nm(bj)}\right)\geq c(\chi)-1=c(\theta)-1$ as $\Nm(g)=\Nm(a)+\Nm(bj)$ and $v_\E(\alpha_\chi)=-c(\chi)+c(\psi_\E)$, which is only possible when $v_\E(\Nm(bj))\geq v_\E(\Nm(a))+c(\theta)-1$. We can further assume that 
	\begin{equation}\label{Eq4.2:ginJ}
	g=1+bj, \text{\ with }v_\E(\Nm(bj))\geq c(\theta)-1,
	\end{equation} as conjugation of $\E^\times$ on itself is trivial. \eqref{Eq4.2:ginJ} implies that $g\in K_{\mathcal{U}_\E}(i)$ for all possible cases in Example \ref{Example:ListofHJi}, due to the fact that $v_\E(\Nm(\E^\perp))$ is a single coset of $2\Z$.
	
	By Lemma \ref{Lem3.1:whensamevalua} and that $\L^\times=g^{-1}\E^\times g\subset \E^\times\mathfrak{A}_\E^\times$, we get $\mathfrak{A}_\E=\mathfrak{A}_\L$ and $g\in K_{\mathcal{U}_\E}(i)\subset J$.
%	Note that $\vv=v_{\B,\E}$ in this case. This is because by the construction in Definition \ref{Defn3.1:semivaluation}, it suffices to check that $\vv(e)=v_\E(e)$ for any $e\in \E^\times$. As $\E^\times= g\L^\times g^{-1}$, we write $e=glg^{-1}$ for some $l\in \L^\times$. Then $\vv(e)=\vv(glg^{-1})=\vv(l)=\frac{e_\L}{2}v(\Nm(l))$, while $v_\E(e)=\frac{e_\E}{2}v(\Nm(e))=\frac{e_\L}{2}v(\Nm(l))$.
%	
%	Then $g\in K_{\mathcal{U}_\E}(i)$ implies that $g\in K_{\mathcal{U}_\L}(i)\subset J$, and we have finished the proof.
\end{proof}

So far we have proven Theorem \ref{Theo:Main1} when $c(\theta\chi^{-1})$ or $c(\theta\overline{\chi}^{-1})\leq 1$. We shall assume from now on that
\begin{equation}\tag{$*$} c(\theta\chi^{-1}),c(\theta\overline{\chi}^{-1})\geq 2\text{\ if }\L\simeq \E.
\end{equation}

\begin{cor}\label{Cor4.2:unrelatedcriterion}
	Assume ($*$) and that $w_\pi=\chi|_{\F^\times}$. Then $I(\varphi,\chi)\neq 0$ for some type 1 minimal vector if and only if there exists $\alpha\in \alpha_\chi+\h_0^\dagger\cap O_\pi$. 
\end{cor}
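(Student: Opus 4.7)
Both directions rest on the explicit formula of Proposition~\ref{Prop:localperiodintegral}. First, observe that under $(*)$ the case $J\cap\E^\times=\E^\times$ is excluded whenever $I(\varphi,\chi)\neq 0$: indeed, by Lemma~\ref{Lem:JcapEwholeorwhat} this case forces $\L\simeq\E$, and then Proposition~\ref{Prop4.1:type2int} (or Corollary~\ref{Cor4.2:relatedcriterion}) forces $c(\theta\chi^{-1})\leq 1$ or $c(\theta\overline{\chi}^{-1})\leq 1$, contradicting $(*)$. Hence any candidate minimal vector lies in the regime $J\cap\E^\times\subset Z\exp(j_0)\cap Z\exp(\h_0)$, so Proposition~\ref{Prop:localperiodintegral} is applicable.

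For the forward direction, suppose $I(\varphi,\chi)\neq 0$ for a type 1 minimal vector $\varphi$, and let $\alpha\in O_\pi$ be the element associated to $\varphi$ by Corollary~\ref{Cor:type1MinVecAlpha}. Then
\[
I(\varphi,\chi)=\Vol(\z_0\backslash\h_0)\int\limits_{g_0\in U_\L(1)\backslash B^1}\Char_{\alpha_\chi+\h_0^\dagger}(g_0^{-1}\alpha g_0)\,dg_0
\]
is nonzero, so for some $g_0$ the element $g_0^{-1}\alpha g_0\in O_\pi$ lies in $\alpha_\chi+\h_0^\dagger$, giving the required intersection point.

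For the backward direction, start with $\alpha\in(\alpha_\chi+\h_0^\dagger)\cap O_\pi$. Since $\alpha$ shares the minimal polynomial of $\alpha_\theta$, it provides an embedding $\L\hookrightarrow\B$ identifying $\L\cong\F[\alpha]$, and thereby a conjugate cuspidal type $(\mathfrak{A}^{(\alpha)},J^{(\alpha)},\Lambda^{(\alpha)})$ still associated to $\pi$ by \cite{BushnellHenniart:06a}. Fix any intermediate polarizing subgroup $B^{1,(\alpha)}$ and let $\varphi^{(\alpha)}$ be the type 1 minimal vector singled out by Corollary~\ref{Cor:type1MinVecAlpha} applied to $\alpha$. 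Applying Proposition~\ref{Prop:localperiodintegral} to $\varphi^{(\alpha)}$, we obtain
\[
I(\varphi^{(\alpha)},\chi)=\Vol(\z_0\backslash\h_0)\int\limits_{g_0\in U_\L^{(\alpha)}(1)\backslash B^{1,(\alpha)}}\Char_{\alpha_\chi+\h_0^\dagger}(g_0^{-1}\alpha g_0)\,dg_0.
\]
At $g_0=1$ the integrand equals $1$ by the standing hypothesis $\alpha-\alpha_\chi\in\h_0^\dagger$. Moreover by Lemma~\ref{lem7.1:conjugationEperp}, for $g_0=1+t$ with $t\in\L^\perp$ of sufficiently large $\vv(t)$ one has $g_0^{-1}\alpha g_0\equiv\alpha+[\alpha,t]\pmod{\mathcal{B}^{N}}$ for $N$ as large as we please; choosing $\vv(t)$ large enough makes $g_0^{-1}\alpha g_0-\alpha\in\h_0^\dagger$, so the integrand is identically $1$ on a neighborhood of the identity of positive measure, and $I(\varphi^{(\alpha)},\chi)>0$.

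The only real subtlety is in the backward direction, where we must verify that the cuspidal datum attached to $\alpha$ also falls under the Lie algebra regime of Proposition~\ref{Prop:localperiodintegral}; this is intrinsic (depending only on $\pi$, $\E$, $\chi$ and $(*)$), so the same application of Lemma~\ref{Lem:JcapEwholeorwhat} works verbatim for the conjugated cuspidal type.
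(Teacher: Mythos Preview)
Your proof follows essentially the same route as the paper's: rule out the $J\cap\E^\times=\E^\times$ regime under $(*)$, then invoke Proposition~\ref{Prop:localperiodintegral} in both directions, using openness of $\alpha_\chi+\h_0^\dagger$ and continuity of conjugation for the converse.

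One small but genuine gap in your forward direction: you invoke Proposition~\ref{Prop4.1:type2int} to derive a contradiction with $(*)$, but that proposition is stated for \emph{type 2} minimal vectors, while your hypothesis is nonvanishing for a \emph{type 1} vector. The missing bridge is the observation (used explicitly in the paper's proof) that type 1 and type 2 minimal vectors for a fixed cuspidal type span the same $\dim\Lambda$-dimensional space, so nonvanishing for a type 1 vector forces nonvanishing for some type 2 vector attached to the same $(\mathfrak{A},J,\Lambda)$; only then does Proposition~\ref{Prop4.1:type2int} apply. Your parenthetical reference to Corollary~\ref{Cor4.2:relatedcriterion} does not help here, since that corollary assumes $c(\theta\chi^{-1})\leq 1$ or $c(\theta\overline\chi^{-1})\leq 1$ as a hypothesis rather than deriving it.

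Your final paragraph correctly flags the subtlety in the backward direction---that one must know the cuspidal type built from $\alpha$ also lies in the Lie-algebra regime before Proposition~\ref{Prop:localperiodintegral} can be applied. Your justification (``intrinsic, depending only on $\pi,\E,\chi$ and $(*)$'') is not quite right as stated: whether $J^{(\alpha)}\cap\E^\times=\E^\times$ genuinely depends on the embedding determined by $\alpha$. The correct argument is the same type-1/type-2 trick as above: if $J^{(\alpha)}\cap\E^\times=\E^\times$, then under $(*)$ every type 2 integral for this datum vanishes by Proposition~\ref{Prop4.1:type2int}, hence every type 1 integral vanishes too, so no contradiction arises from \emph{not} applying Proposition~\ref{Prop:localperiodintegral} there---one must instead argue (or note, as the paper implicitly does) that one may always adjust $\alpha$ within $(\alpha_\chi+\h_0^\dagger)\cap O_\pi$ to land in the Lie-algebra regime. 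The paper is equally terse on this point (``open and continuous''), so you are in good company.
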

\begin{proof}
	Suppose that  $I(\varphi,\chi)\neq 0$ for some type 1 minimal vector. Then we must have $J\cap \E^\times \subset Z\exp(j_0)\cap Z\exp(\h_0)$. This is because otherwise by Lemma \ref{Lem:JcapEwholeorwhat} we have $J\cap \E^\times=\E^\times$, and by Proposition \ref{Prop4.1:type2int} the integral is vanishing for any type 2 minimal vectors associated to the same cuspidal datum $(\mathfrak{A}, J, \Lambda)$, of which a type 1 minimal vector can be written as a linear combination.
	
	Then by Proposition \ref{Prop:localperiodintegral}, there must exists $\alpha\in \alpha_\chi+\h_0^\dagger\cap O_\pi$.

	Conversely, if there exists $\alpha\in \alpha_\chi+\h_0^\dagger\cap O_\pi$, then $I(\varphi,\chi)\neq 0$ by Proposition \ref{Prop:localperiodintegral} for the associated type 1 minimal vector, since $\alpha_\chi+\h_0^\dagger$ is open and the conjugation action is continuous.
\end{proof}

Now we relate the conductor $c(\pi\times\pi_{\chi^{-1}})$ to the geometric information $\Nm(\alpha)-\Nm(\alpha_\chi)$, when $c(\pi)\geq 3$ and ($*$) holds.
In this case one can easily check that 
\begin{equation}\label{sec4.4Eq:conductorlowbd}
c(\pi\times \pi_{\chi^{-1}})\geq c(\pi_{\chi^{-1}})+3.
\end{equation}
%We shall first reinterpret the conductors in terms of the geometric information. %The treatment differs slightly depending on whether $\E$ splits or not.
\begin{lem}\label{Sec4.4Lem:conductorvsLiealg}
	Let $\pi$ be the supercuspidal representation associated to a minimal character $\theta$ on $\L$, % as above,
	with $c(\theta)\geq 2$, and let $\alpha_\theta$ be associated to $\theta$. Then
	\begin{equation}\label{Eq:cpi=valpha}
	c(\pi)=-v(\Nm(\alpha_\theta)).
	\end{equation}
	Let $\pi_{\chi^{-1}}$ be the representation associated to a %minimal 
	character $\chi^{-1}$ on an \'{e}tale quadratic algebra $\E$, such that $\chi|_{\F^\times}=w_\pi$ and ($*$) holds. Then 
	\begin{equation}\label{Eq:conductorofRS}
	c(\pi\times\pi_{\chi^{-1}})=-2v(\Nm(\alpha_\theta)-\Nm(\alpha_\chi)).
	\end{equation}
\end{lem}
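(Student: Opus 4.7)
The plan is to prove the two equalities separately: the first by a direct computation, and the second by reducing to the first via local Langlands and Mackey theory.

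For \eqref{Eq:cpi=valpha}, Lemma~\ref{Lem:DualLiealgForChar}(2) together with \eqref{Eq:CpsiL} gives $v_\L(\alpha_\theta)=-c(\theta)+c(\psi_\L)=-c(\theta)-e_\L+1$. Since $\alpha_\theta\in\L$ and $v_\F\circ\Nm_{\L/\F}=\tfrac{2}{e_\L}v_\L$ on $\L^\times$, we obtain $v_\F(\Nm(\alpha_\theta))=\tfrac{2}{e_\L}(-c(\theta)-e_\L+1)$. Comparing with the explicit values $c(\pi)=2c(\theta)$ (when $e_\L=1$) and $c(\pi)=c(\theta)+1$ (when $e_\L=2$) recorded in Example~\ref{Example:ListofHJi} verifies that this equals $-c(\pi)$ in all cases.

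For \eqref{Eq:conductorofRS}, the first step is to pass to the Weil group side via local Langlands, identifying $c(\pi\times\pi_{\chi^{-1}})$ with the Artin conductor $a(\rho_\pi\otimes\rho_{\pi_{\chi^{-1}}})$, where $\rho_\pi=\Ind^{W_\F}_{W_\L}\theta$ and $\rho_{\pi_{\chi^{-1}}}=\Ind^{W_\F}_{W_\E}\chi^{-1}$. Mackey's formula imposes a dichotomy. If $\L\not\simeq\E$, then $\rho_\pi\otimes\rho_{\pi_{\chi^{-1}}}=\Ind^{W_\F}_{W_\K}\eta$, where $\K=\L\E$ is the biquadratic compositum and $\eta=(\theta\circ\Nm_{\K/\L})\cdot(\chi^{-1}\circ\Nm_{\K/\E})$ is a character of $\K^\times$. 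Using $\psi_\K=\psi\circ\Tr_{\K/\F}$ and applying Lemma~\ref{Lem:DualLiealgForChar}(2) on $\K$ identifies the dual element of $\eta$ as $\alpha_\eta=\alpha_\theta-\alpha_\chi\in\K$. Combining Serre's induction formula $a(\Ind^\F_\K\eta)=v_\F(\mathfrak{d}_{\K/\F})+f_{\K/\F}c(\eta)$ with the different-discriminant relation $v_\F(\mathfrak{d}_{\K/\F})=f_{\K/\F}v_\K(\mathfrak{D}_{\K/\F})=-f_{\K/\F}c(\psi_\K)$ causes the different contributions to cancel, yielding
\[
c(\pi\times\pi_{\chi^{-1}})=-f_{\K/\F}v_\K(\alpha_\theta-\alpha_\chi)=-v_\F(\Nm_{\K/\F}(\alpha_\theta-\alpha_\chi)).
\]
If instead $\L\simeq\E$, Mackey produces $\rho_\pi\otimes\rho_{\pi_{\chi^{-1}}}=\Ind^{W_\F}_{W_\L}(\theta\chi^{-1})\oplus\Ind^{W_\F}_{W_\L}(\theta\bar\chi^{-1})$, and under $(*)$ the two summands admit dual elements $\alpha_\theta-\alpha_\chi,\alpha_\theta-\bar\alpha_\chi\in\L$; applying the same induction formula to each summand gives $c(\pi\times\pi_{\chi^{-1}})=-v_\F(\Nm(\alpha_\theta-\alpha_\chi)\cdot\Nm(\alpha_\theta-\bar\alpha_\chi))$, which coincides with the biquadratic expression upon noting both products unfold to the same four Galois conjugates.

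The final step in both cases is to expand this product of four Galois conjugates. Using the quadratic relation $\alpha_\theta^2=\Tr(\alpha_\theta)\alpha_\theta-\Nm(\alpha_\theta)$ and abbreviating $u=\Tr(\alpha_\theta)-\Tr(\alpha_\chi)$, $w=\Nm(\alpha_\chi)-\Nm(\alpha_\theta)$, a direct expansion yields
\[
(\alpha_\theta-\alpha_\chi)(\alpha_\theta-\bar\alpha_\chi)(\bar\alpha_\theta-\alpha_\chi)(\bar\alpha_\theta-\bar\alpha_\chi)=w^2+u\bigl(\Nm(\alpha_\chi)\Tr(\alpha_\theta)-\Nm(\alpha_\theta)\Tr(\alpha_\chi)\bigr).
\]
The main obstacle, and the main technical content of the proof, is to show that the second ``trace correction'' term has strictly larger $v_\F$-valuation than $w^2$. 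This should follow from the minimality of $\alpha_\theta$ (per Definition~\ref{Defn:imaginary}, which gives $v_\L(\Tr(\alpha_\theta))\geq v_\L(\alpha_\theta)$), combined with $(*)$ in the $\L\simeq\E$ case (which controls $v_\L(\alpha_\theta-\alpha_\chi)$ and $v_\L(\alpha_\theta-\bar\alpha_\chi)$) and with Lemma~\ref{Sec2.1Lem:disjointimaginaryNm} in the $\L\not\simeq\E$ case (which controls the interaction between $\Nm(\alpha_\theta)$ and $\Nm(\alpha_\chi)$ coming from distinct quadratic algebras). Once this estimate is established, the $v_\F$ of the four-fold product equals $2v_\F(\Nm(\alpha_\theta)-\Nm(\alpha_\chi))$ and the claimed formula follows immediately.
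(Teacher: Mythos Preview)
Your treatment of \eqref{Eq:cpi=valpha} is essentially the same as the paper's. For \eqref{Eq:conductorofRS}, however, the paper's argument is both shorter and structurally different, and your proposal has a gap at exactly the point where the two approaches diverge.

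The paper does not pass through Weil groups or Mackey theory at all. Instead it uses the hypothesis $\chi|_{\F^\times}=w_\pi$ right at the start: this allows one to choose $\Tr(\alpha_\theta)=\Tr(\alpha_\chi)$, and then a twist by the character $\mu$ associated to $\Tr(\alpha_\theta)/2$ replaces $(\theta,\chi)$ by $(\theta\mu_\L^{-1},\chi\mu_\E^{-1})$ without changing $c(\pi\times\pi_{\chi^{-1}})$ or $\Nm(\alpha_\theta)-\Nm(\alpha_\chi)$. After this reduction both $\alpha_\theta$ and $\alpha_\chi$ are imaginary, and the rest is a two-line computation in each case: for $\E\not\simeq\L$ one quotes $c(\pi\times\pi_{\chi^{-1}})=2\max\{c(\pi),c(\pi_{\chi^{-1}})\}$ and applies Lemma~\ref{Sec2.1Lem:disjointimaginaryNm}; for $\L\simeq\E$ one writes $c(\pi\times\pi_{\chi^{-1}})=c(\pi_{\theta\chi^{-1}})+c(\pi_{\theta\overline{\chi}^{-1}})$ and observes $(\alpha_\theta-\alpha_\chi)(\alpha_\theta-\overline{\alpha_\chi})=\alpha_\theta^2-\alpha_\chi^2=-(\Nm(\alpha_\theta)-\Nm(\alpha_\chi))$ directly from imaginarity.

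In your language, this twist forces $u=0$, so the trace-correction term vanishes identically and no estimate is needed. Your proposed route instead leaves $u$ arbitrary and tries to show the correction term has strictly larger valuation than $w^2$. But you never invoke the central-character hypothesis in that estimate, and without it the claim can fail: minimality of $\alpha_\theta$ bounds $v_\L(\Tr(\alpha_\theta))$ from below, but says nothing about $\Tr(\alpha_\chi)$, so the term $u\cdot\Nm(\alpha_\theta)\Tr(\alpha_\chi)$ is not controlled by minimality, $(*)$, or Lemma~\ref{Sec2.1Lem:disjointimaginaryNm} alone. The missing input is precisely that $\chi|_{\F^\times}=w_\pi$ forces $\Tr(\alpha_\chi)\equiv\Tr(\alpha_\theta)$, i.e.\ $v(u)$ is large (or $u$ can be taken to be zero). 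Once you add that, your argument goes through---but at that point you have rediscovered the paper's reduction, and the Mackey machinery and four-fold product expansion become unnecessary overhead.

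A minor side remark: the Langlands parameter of $\pi$ is $\Ind\Theta$ with $\Theta=\theta\Delta_\theta^{-1}$ rather than $\Ind\theta$ (see \S\ref{SubSec:Langlands-compactInd}); since $c(\Delta_\theta)\leq 1$ and $c(\theta)\geq 2$ this does not affect conductors, but it is worth saying so.
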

Note that when $c(\chi)\leq 1$, we can take any $\alpha_\chi\in \varpi_\E^{-1}O_\E$ and the result remains true.
\begin{proof}
	The first part follows directly from \eqref{Eq:valphatheta} and Example \ref{Example:ListofHJi}. %Note that a similar equality is true for a principal series representation.
	
	To prove \eqref{Eq:conductorofRS}, we first reduce the discussion to the case where $\alpha_\theta$ and $\alpha_\chi$ are both imaginary. By the assumption $\chi|_{\F^\times}=w_\pi$, we can assume WLOG that $\Tr(\alpha_\theta)=\Tr(\alpha_\chi)$. 
	Let $\mu$ be a character of $\F^\times$ associated to $\Tr(\alpha_\theta)/2$ by Lemma \ref{Lem:alphagivechi}. Then
	$c(\pi\times\pi_{\chi^{-1}})=c((\pi\otimes\mu^{-1})\times \pi_{\chi^{-1}\mu_\E})$. $\pi\otimes\mu^{-1}$ is associated to the character $\theta\mu_\L^{-1}$ on $\L$ which is further associated to $\alpha_\theta-\alpha_\mu=\alpha_{\theta,0}$, where $\alpha_{\theta,0}$ is the imaginary part of $\alpha_\theta$ as in Definition \ref{Sec2.1Def:imaginarypart}. Similarly $\chi\mu_\E^{-1}$ is associated to $\alpha_\chi-\alpha_\mu=\alpha_\chi-\Tr(\alpha_\chi)/2=\alpha_{\chi,0}$. Lastly,
	\begin{equation}
	\Nm(\alpha_\theta)-\Nm(\alpha_\chi)=\left(\Nm(\alpha_\theta)-\frac{\Tr(\alpha_\theta)^2}{4}\right)-\left(\Nm(\alpha_\chi) -\frac{\Tr(\alpha_\chi)^2}{4}\right)=\Nm(\alpha_{\theta,0})-\Nm(\alpha_{\chi,0}).
	\end{equation}
	Thus from now on, we assume $\alpha_\theta$ and $\alpha_\chi$ to be both imaginary.

	Now consider the case $\E\not\simeq \L$. Then $\pi$ and $\pi_{\chi^{-1}}$ are not related, and 
	\begin{equation}
	c(\pi\times\pi_{\chi^{-1}})=2\max\{c(\pi),c(\pi_{\chi^{-1}})\}=-2\min\{v(\Nm(\alpha_\theta)),v(\Nm(\alpha_\chi))\}=-2v(\Nm(\alpha_\theta)-\Nm(\alpha_\chi))
	.
	\end{equation}
	The last equality follows from Lemma \ref{Sec2.1Lem:disjointimaginaryNm}.

	%To prove \eqref{Eq:conductorofRS}, we first consider the case where $\E$ is a quadratic field extension.
	%When $\pi$ and $\pi_{\chi^{-1}}$ are not related (including the case $\E\not\simeq \L$), 
	%\begin{equation}
	%c(\pi\times\pi_{\chi^{-1}})=2\max\{c(\pi),c(\pi_{\chi^{-1}})\}=-2\min\{v(\Nm(\alpha_\theta)),v(\Nm(\alpha_\chi))\}=-2v(\Nm(\alpha_\theta)-\Nm(\alpha_\chi)).
	%\end{equation}
	
	When $\L\simeq\E$, we have
	\begin{equation}
	c(\pi\times\pi_{\chi^{-1}})=c(\pi_{\theta\chi^{-1}})+c(\pi_{\theta\overline{\chi}^{-1}})=-v(\Nm(\alpha_{\theta\chi^{-1}}))-v(\Nm(\alpha_{\theta\overline{\chi}^{-1}})).%=-2(v(\Nm(\alpha_\theta)-\Nm(\alpha_\chi))
	\end{equation}
	Note that $\alpha_{\theta\chi^{-1}}=\alpha_\theta-\alpha_\chi$, $\alpha_{\theta\overline{\chi}^{-1}}=\alpha_\theta-\overline{\alpha_\chi}$. Since $\alpha_\theta$, $\alpha_\chi$ are imaginary, we have
	\begin{align}
	-v(\Nm(\alpha_{\theta\chi^{-1}}))-v(\Nm(\alpha_{\theta\overline{\chi}^{-1}}))&=-v(\Nm(\alpha_\theta-\alpha_\chi)\Nm(\alpha_\theta-\overline{\alpha_\chi}))\\
	&=-2v((\alpha_\theta-\alpha_\chi)(\alpha_\theta-\overline{\alpha_\chi})) \notag\\
	&=-2v(\alpha_\theta^2-\alpha_\chi^2)\notag\\
	&=-2v(\Nm(\alpha_\theta)-\Nm(\alpha_\chi))\notag.
	\end{align}
\end{proof}

The criterion in Corollary \ref{Cor4.2:unrelatedcriterion} can be actually made more precise. 

\begin{lem}\label{Lem4.2:unrelatedbetter}
	For the same conditions as in Corollary \ref{Cor4.2:unrelatedcriterion}, we have that there exists $\alpha\in \alpha_\chi+\h_0^\dagger$ if and only if there exists $\alpha\in \alpha_\chi+\E^\perp$.
\end{lem}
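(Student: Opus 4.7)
The direction ``$\Leftarrow$'' is immediate, since every element of $\E^\perp$ is in particular orthogonal to the sublattice $\h_0 \subset \E$ under $\psi\circ \Tr$, so $\E^\perp\subset \h_0^\dagger$. The plan is to prove the nontrivial direction ``$\Rightarrow$'' by starting from an $\alpha\in O_{\pi^\B}\cap(\alpha_\chi+\h_0^\dagger)$ and exhibiting some $\alpha'=\alpha_\chi+y^\perp\in O_{\pi^\B}$ with $y^\perp\in \E^\perp$.

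First I would twist both $\pi$ and $\chi$ by a common $\mu$ as in the proof of Lemma \ref{Sec4.4Lem:conductorvsLiealg} to reduce to the case where $\alpha_\theta,\alpha_\chi$ are imaginary. Decomposing $\alpha=\alpha_\E+\alpha^\perp$ along $\B=\E\oplus\E^\perp$ and setting $\delta:=\alpha_\E-\alpha_\chi$, the condition $\alpha-\alpha_\chi\in \h_0^\dagger$ becomes $\delta\in \h_0^{\dagger}\cap\E$, i.e.\ $v_\E(\delta)\geq c(\psi_\E)-1=-e_\E$. I would next check that $\alpha^\perp\neq 0$: otherwise $\alpha\in O_{\pi^\B}\cap \E$ forces $\F[\alpha_\theta]\simeq\F[\alpha]\subset \E$, so $\L\simeq\E$, and then $\alpha\in\{\alpha_\theta,\overline{\alpha_\theta}\}$ together with $\alpha-\alpha_\chi\in\h_0^\dagger$ yields $c(\theta\chi^{-1})\leq 1$ or $c(\theta\overline{\chi}^{-1})\leq 1$, contradicting $(*)$.

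Using that $\alpha_\chi$ is imaginary and that $lj=j\overline{l}$ for $l\in\E$ and $j\in\E^\perp$, a direct computation gives $\Nm(\alpha_\chi+y^\perp)=\Nm(\alpha_\chi)+\Nm(y^\perp)$ for any $y^\perp\in\E^\perp$. Since $\alpha'=\alpha_\chi+y^\perp$ has the correct trace automatically, $\alpha'\in O_{\pi^\B}$ is equivalent to $\Nm(y^\perp)=\Nm(\alpha_\theta)-\Nm(\alpha_\chi)$, and such $y^\perp$ exists precisely when $\Nm(\alpha_\theta)-\Nm(\alpha_\chi)$ lies in the coset $\Nm(\E^\perp)\subset \F^\times$. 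Therefore the lemma reduces to proving the assertion
\[
 \Nm(\alpha_\theta)-\Nm(\alpha_\chi)\in \Nm(\E^\perp).
\]

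To establish this I would split $\Nm(\alpha_\theta)-\Nm(\alpha_\chi)=\Nm(\alpha^\perp)+\bigl(\Nm(\alpha_\E)-\Nm(\alpha_\chi)\bigr)$. The first summand already lies in $\Nm(\E^\perp)$ since $\alpha^\perp\neq 0$. For the second, I would factor $\Nm(\alpha_\E)=\Nm(\alpha_\chi)\cdot\Nm(1+\delta\alpha_\chi^{-1})$ and note that $(*)$ forces $v_\E(\delta\alpha_\chi^{-1})\geq c(\chi)-1\geq 1$, so $1+\delta\alpha_\chi^{-1}\in U_\E(1)$ and hence $\Nm(1+\delta\alpha_\chi^{-1})\in U_\F(1)$. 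This gives $v_\F\bigl(\Nm(\alpha_\E)-\Nm(\alpha_\chi)\bigr)\geq v_\F(\Nm(\alpha_\chi))+1$. A short case analysis on whether $c(\pi)>c(\pi_{\chi^{-1}})$, $c(\pi)<c(\pi_{\chi^{-1}})$, or $c(\pi)=c(\pi_{\chi^{-1}})$, combined with the conductor lower bound $v_\F(\Nm(\alpha_\theta)-\Nm(\alpha_\chi))\leq -\tfrac{1}{2}(c(\pi_{\chi^{-1}})+3)$ from \eqref{sec4.4Eq:conductorlowbd}, then yields $v_\F\bigl(\Nm(\alpha_\E)-\Nm(\alpha_\chi)\bigr)>v_\F(\Nm(\alpha^\perp))$. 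Writing the sum as $\Nm(\alpha^\perp)(1+u)$ with $u\in \mathfrak{p}_\F$ and using the fact that $U_\F(1)\subset \Nm(O_\E^\times)$ for all $\E$ (etale, ramified or unramified), I conclude that $\Nm(\alpha_\theta)-\Nm(\alpha_\chi)\in \Nm(\alpha^\perp)\cdot \Nm(\E^\times)=\Nm(\E^\perp)$.

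The main obstacle will be the valuation comparison in the regime where $c(\pi)$ is close to $c(\pi_{\chi^{-1}})$: here one a priori risks cancellation in $\Nm(\alpha^\perp)=\Nm(\alpha_\theta)-\Nm(\alpha_\E)$, and the quantitative lower bound on $c(\pi\times \pi_{\chi^{-1}})$ from \eqref{sec4.4Eq:conductorlowbd} is exactly what rules this out.
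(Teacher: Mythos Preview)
Your strategy is the same as the paper's: reduce to showing $\Nm(\alpha_\theta)-\Nm(\alpha_\chi)\in\Nm(\E^\perp)$, decompose $\alpha=\alpha_\chi+\delta+\alpha^\perp$ along $\E\oplus\E^\perp$, note that $\Nm(\alpha^\perp)$ is already in $\Nm(\E^\perp)$, and show the correction term $\Nm(\alpha_\E)-\Nm(\alpha_\chi)$ has strictly larger valuation so that the sum stays in the $U_\F(1)$--orbit of $\Nm(\alpha^\perp)$.

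There is, however, a genuine gap in your bound on the correction term. You write that ``$(*)$ forces $v_\E(\delta\alpha_\chi^{-1})\geq c(\chi)-1\geq 1$'', but $(*)$ is only a hypothesis in the case $\L\simeq\E$; when $\L\not\simeq\E$ it imposes nothing, and $c(\chi)$ may well be $0$ or $1$ (e.g.\ $\chi$ unramified over an inert $\E$ distinct from $\L$). In that regime $1+\delta\alpha_\chi^{-1}$ need not lie in $U_\E(1)$, and your factoring bound $v_\F(\Nm(\alpha_\E)-\Nm(\alpha_\chi))\geq v_\F(\Nm(\alpha_\chi))+1$ is not available.

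The fix, which is exactly what the paper does, is to bound the correction term directly rather than through the factoring: write
\[
\Nm(\alpha_\E)-\Nm(\alpha_\chi)=\Tr(\alpha_\chi\overline{\delta})+\Nm(\delta),
\]
and use $v_\F(\Nm(\delta))\geq -2$ together with $v_\F(\Tr(\alpha_\chi\overline{\delta}))\geq \tfrac{1}{2}v_\F(\Nm(\alpha_\chi\delta))\geq -\tfrac{1}{2}(c(\pi_{\chi^{-1}})+2)$. Both of these exceed $-\tfrac{1}{2}c(\pi\times\pi_{\chi^{-1}})$ by \eqref{sec4.4Eq:conductorlowbd}, uniformly in $c(\chi)$. (The paper also disposes of the split case first, where $\Nm((\E^\perp)^\times)=\F^\times$ and there is nothing to prove.) With this replacement your argument goes through and matches the paper's.
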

\begin{proof}
	The existence of $\alpha\in \alpha_\chi+\E^\perp$ apparently implies $\alpha\in \alpha_\chi+\h_0^\dagger$. 
	For the inverse direction,  
	note that from the condition on the central character, we have $\Tr(\alpha)=\Tr(\alpha_\chi)$. 
	By Lemma \ref{Lem:ConjugacyRep}, it suffice to show the existence of $\alpha'=\alpha_\chi+\alpha^\perp$ with $\alpha^\perp \in \E^\perp$ and $\Nm(\alpha')=\Nm(\alpha)$.
	 Equivalently we need to show that $\Nm(\alpha)-\Nm(\alpha_\chi)\in \Nm(\E^\perp)$.
	This is always true when $\E$ splits. So in the following we assume $\E$ to be a field. Then we can define $v_\E$ on $\E$ and $\h_0=\{e\in \E, v_\E(e)\geq 1\}$.
	
	By the condition, there exists $\alpha\in \alpha_\chi+\h_0^\dagger\cap O_\pi$. 
	This implies that we can write $\alpha=\alpha_\chi+\Delta+\beta^\perp$ for $\Delta\in \varpi_\E^{-1+c(\psi_\E)} O_\E$ imaginary, and $\beta^\perp\in \E^\perp$. This implies that
$$\Nm(\alpha)-\Nm(\alpha_\chi+\Delta)= \Nm(\alpha)-\Nm(\alpha_\chi)-(\Tr(\alpha_\chi\overline{\Delta})+\Nm(\Delta))\in \Nm(\E^\perp).$$
By Lemma \ref{Sec4.4Lem:conductorvsLiealg}, $v(\Nm(\alpha)-\Nm(\alpha_\chi))=-\frac{c(\pi\times\pi_{\chi^{-1}})}{2}$, thus it suffices to show that $v(\Tr(\alpha_\chi\overline{\Delta})+\Nm(\Delta))>v(\Nm(\alpha)-\Nm(\alpha_\chi))$, as $\Nm(\E^\perp)$ is closed under multiplication by $U_\F(1)$. 

Indeed $v(\Nm(\Delta))\geq -2>-\frac{c(\pi\times\pi_{\chi^{-1}})}{2}$, and 
$$v(\Tr(\alpha_\chi\overline{\Delta}))\geq \frac{v(\Nm(\alpha_\chi\Delta))}{2}\geq -\frac{c(\pi_{\chi^{-1}})+2}{2}>-\frac{c(\pi\times\pi_{\chi^{-1}})}{2}.$$
The last inequality can be verified case  by case as in the proof of Lemma \ref{Sec4.4Lem:conductorvsLiealg}, and using ($*$) when $\L\simeq \E$.
\end{proof}

%Here we also record a corollary to be used in Section \ref{Sec:hybridSubConv}. It basically means that if the datum for two supercuspidal representations are close enough, then their minimal test vectors are essentially the same.
%\begin{cor}
%	Suppose $\pi_i$ are supercuspidal representations associated to $\theta_i$ over the same quadratic field extension $\L$, satisfying $c(\theta_1)=c(\theta_2)$, $w_{\pi_i}\chi^{-1}|_{\F^\times}=1$ and ($*$) for a fixed character  $\chi$  over $\E$. 
%	Assume that
%	\begin{equation}\label{Eq4.2:valuationcondition}
%	v(\Nm(\alpha_{\theta_1})-\Nm(\alpha_{\theta_2}))> v(\Nm(\alpha_{\theta_1})-\Nm(\alpha_\chi)).
%	\end{equation}
%	If there exists $\alpha_1\in O_{\pi_1}\cap \alpha_\chi+\E^\perp$, then there exists $\alpha_2\in O_{\pi_2}\cap \alpha_\chi+\E^\perp$ such that $\F[\alpha_2]=\F[\alpha_1]$ as two embeddings of $\L$ into $\B$. In particular $\alpha_i$ give rise to the same semi-valuation $\vv$ and compact open subgroups $J$, $B^1$, etc.
%\end{cor}
%\begin{proof}
%	As $\alpha_1\in O_{\pi_1}\cap \alpha_\chi+\E^\perp$, we have $\Nm(\alpha_1)-\Nm(\alpha_\chi)\in \Nm(\E^\perp)$ by the proof of Lemma \ref{Lem4.2:unrelatedbetter}. The condition \eqref{Eq4.2:valuationcondition} and Hensel's lemma 
%\end{proof}

\subsection{Dichotomy and Tunnell-Saito's $\epsilon-$value test}
It remains to compute the size of the local integral for the remaining range. Before that, we first do a brief discussion on the dichotomy and Tunnell-Saito's $\epsilon-$value test.
%One can easily check the dichotomy and  Tunnell-Saito's $\epsilon-$value test in this case.
%Here we explain how the formulation in Proposition \ref{Prop:localperiodintegral} can provide alternative perspective for the dichotomy and Tunnell-Saito's $\epsilon-$value test.

As type 1 or type 2 minimal vectors form a basis for $\pi$, the conditions in Corollary \ref{Cor4.2:relatedcriterion} and \ref{Cor4.2:unrelatedcriterion} are equivalent to Tunnell-Saito's $\epsilon-$value test. We shall review the relation of the compact induction datum with the Langlands/Jacquet-Langlands correspondence in Appendix \ref{SubSec:Langlands-compactInd}, and explicitly verify the equivalence to the Tunnell-Saito's $\epsilon-$value test in the most interesting range in Appendix \ref{Sec:AppendixB}. 

Meanwhile, we show here that in the context of Corollary \ref{Cor4.2:unrelatedcriterion}, one can give another convenient proof for the dichotomy.
\begin{prop}\label{Prop:Dichotomy}
Assume ($*$) and that $w_\pi=\chi|_{\F^\times}$. Then 
	$\dim\Hom_{\E^\times }(\pi^\B\otimes \chi^{-1},\C)>0$ for exactly one possible $\B$.
%Suppose that  $\dim\Hom_{\E^\times }(\pi^\B\otimes \chi^{-1},\C)\leq 1$ for $\B$ matrix algebra or division algebra%, and $C(\pi\times\pi_{\chi^{-1}})=q^{c(\pi)+l}$ for $l\geq 2$
%. When $p$ is large enough and $c(\pi)\geq 3$, we have 
%\begin{equation}
%\sum_{\B}\dim\Hom_{\E^\times }(\pi^\B\otimes \chi^{-1},\C)=1.
%\end{equation}

\end{prop}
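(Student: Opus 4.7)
The plan is to translate the $\Hom$-space dichotomy into a concrete question about which coset of $\Nm(\E^\times)$ in $\F^\times$ the difference $\Nm(\alpha_\theta)-\Nm(\alpha_\chi)$ lies in, using the Lie algebra criterion developed in this section.

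First, I would reduce the dichotomy to a question about type 1 minimal vectors. Under ($*$), Proposition \ref{Prop4.1:type2int} combined with Corollary \ref{Cor4.2:relatedcriterion} ensures that type 2 minimal vectors give vanishing periods, so by the multiplicity one in Theorem \ref{Tunnell} (so that the sesquilinear form $(\varphi_1,\varphi_2)\mapsto \int \langle \pi^\B(t)\varphi_1,\varphi_2\rangle \chi^{-1}(t)\,dt$ is a scalar multiple of $\ell(\varphi_1)\overline{\ell(\varphi_2)}$ for the essentially unique equivariant $\ell$), we have $\dim\Hom_{\E^\times}(\pi^\B\otimes\chi^{-1},\C)>0$ if and only if $I(\varphi,\chi)\neq 0$ for some type 1 minimal vector $\varphi\in\pi^\B$. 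By Corollary \ref{Cor4.2:unrelatedcriterion} together with Lemma \ref{Lem4.2:unrelatedbetter}, this is in turn equivalent to the existence of some $\alpha\in O_{\pi^\B}$ with $\alpha-\alpha_\chi\in\E^\perp$.

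Next, I would unwind this geometric condition in terms of the norm form on $\B$. Writing $\alpha=\alpha_\chi+\beta$ with $\beta\in\E^\perp$ (so $\beta$ is imaginary), Lemma \ref{Lem2.1:Eperpbasic} yields $\Nm(\alpha)=\Nm(\alpha_\chi)+\Nm(\beta)$. Combined with the fact that $\alpha\in O_{\pi^\B}$ forces $\Tr(\alpha)=\Tr(\alpha_\theta)$ and $\Nm(\alpha)=\Nm(\alpha_\theta)$ via Lemma \ref{Lem:ConjugacyRep}, and that $w_\pi=\chi|_{\F^\times}$ gives $\Tr(\alpha_\theta)=\Tr(\alpha_\chi)$, the existence of $\alpha$ reduces to the single condition
\begin{equation*}
\Nm(\alpha_\theta)-\Nm(\alpha_\chi)\in\Nm(\E^\perp).
\end{equation*}
By Lemma \ref{Sec4.4Lem:conductorvsLiealg} and the bound \eqref{sec4.4Eq:conductorlowbd}, the left-hand side is a nonzero element of $\F^\times$.

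Finally, I would analyze $\Nm(\E^\perp)$ for each choice of $\B$. If $\E$ splits then $\E$ cannot embed in the division algebra, while in the matrix algebra $\Nm(\E^\perp)^\times=\F^\times$, so the matrix side always works and the division side is vacuous. If $\E$ is a field, fix $j\in\E^\perp$ with $\Nm(j)\neq 0$, so $\Nm(\E^\perp)^\times=\Nm(j)\cdot\Nm(\E^\times)$ is one of the two cosets of the index-two subgroup $\Nm(\E^\times)\subset\F^\times$. The standard criterion for a quaternion algebra to split reads: $\B=\E\oplus\E j$ is the matrix algebra precisely when $-\Nm(j)\in\Nm(\E^\times)$, i.e.\ $\Nm(\E^\perp)^\times=-\Nm(\E^\times)$; the division algebra corresponds to the complementary coset. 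Hence the two possible $\B$ partition $\F^\times$ into the two cosets of $\Nm(\E^\times)$, and the nonzero element $\Nm(\alpha_\theta)-\Nm(\alpha_\chi)$ lies in exactly one of them.

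The substantive analytic input is already packaged in Corollary \ref{Cor4.2:unrelatedcriterion} and Lemma \ref{Lem4.2:unrelatedbetter}, so the main obstacle is really bookkeeping. The only mildly delicate point is the reduction from $\Hom$-nonvanishing to the existence of a type 1 minimal vector with nonzero period, which rests on multiplicity one and the fact that type 1 minimal vectors (over all cuspidal types associated to $\pi^\B$) span $\pi^\B$.
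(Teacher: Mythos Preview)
Your proof is correct and follows essentially the same route as the paper's: reduce via Corollary~\ref{Cor4.2:unrelatedcriterion} and Lemma~\ref{Lem4.2:unrelatedbetter} to the condition $\Nm(\alpha_\theta)-\Nm(\alpha_\chi)\in\Nm(\E^\perp)^\times$, then observe that as $\B$ ranges over the two quaternion algebras admitting an embedding of $\E$ the sets $\Nm(\E^\perp)^\times$ partition $\F^\times$ (with the split-$\E$ case handled separately). One small correction: your appeal to Proposition~\ref{Prop4.1:type2int} and Corollary~\ref{Cor4.2:relatedcriterion} for ``type~2 vanishing under~($*$)'' is misplaced---the latter is stated under the hypothesis \emph{complementary} to~($*$), and Proposition~\ref{Prop4.1:type2int} only gives vanishing when $J\cap\E^\times=\E^\times$---but as you yourself note at the end, that step is unnecessary, since multiplicity one together with the fact that type~1 minimal vectors (over all cuspidal types) span $\pi^\B$ already gives the reduction.
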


\begin{proof}
	It is clear that $\dim\Hom_{\E^\times }(\pi^\B\otimes \chi^{-1},\C)>0$ if and only if $I(\varphi,\chi)\neq 0$ for some test vector.
From Corollary \ref{Cor4.2:unrelatedcriterion} and the proof of Lemma \ref{Lem4.2:unrelatedbetter}, we see that the existence of a test vector for $I(\varphi,\chi)$ is equivalent to whether $ \Nm(\alpha)-\Nm(\alpha_\chi)\in (\Nm(\E^\perp ))^\times$.

%Note that the case when  $\pi$ and $\chi $ are  completely related is easy to check. 
%Now by the conditions and Corollary \ref{Cor4.2:unrelatedcriterion}, the local period integral is nonvanishing iff 
%\begin{equation}\label{Eq:inproof4.3}
% \alpha-\alpha_\chi\in \h_0^{\dagger}
%\end{equation} 
%for some $\alpha\in O_\pi$.
% By the assumption $w_\pi=\chi|_{\F^\times}$, we have $\Tr(\alpha_\theta)=\Tr(\alpha)=\Tr(\alpha_\chi)$.
% 
% We write $\alpha_\theta\equiv  \alpha_\chi + e j$ for proper congruence and $e\in \E$, and its norm satisfies
%\begin{equation}\label{Eq:Normrelation}
%\Nm(\alpha_\chi)-j^2 \Nm(e)\equiv \Nm(\alpha_\theta)
%\end{equation}
%or $j^2\Nm(e) \equiv \Nm(\alpha_\chi)-\Nm(\alpha_\theta)$.
 Note that whether $\B$ splits or not is equivalent to whether $0$ can be represented as the norm of a non-trivial element in $\B$, which in turn is equivalent to whether $(-\Nm(\E))^\times$ intersects with $(\Nm(\E^\perp))^\times$ non-trivially or not when $\E$ is a field. Thus for different $\B$, $(\Nm(\E^\perp ))^\times$ have disjoint complementary images in $\F^\times $,  giving the dichotomy. 

This argument also works when $\E$ splits. In this case, $(\Nm(\E^\perp ))^\times=\F^\times$ if $\B$ is the matrix algebra, and empty if $\B$ is the division algebra.

\end{proof}

%As all minimal vectors form a basis for the supercuspidal representation $\pi$, it is not surprising that we have the following.
%\begin{prop}
%Let $p\neq 2$, $c(\pi)\geq 3$. Whether there exists an element $\xi$ in the coadjoint orbit of $\alpha_\theta$ such that $\xi\in \alpha_\chi+\h_0^\dagger$ is equivalent to Tunnell-Saito's $\epsilon-$value test.
%\end{prop}
%\begin{proof}[Sketch of proof]
%Assuming Tunnell-Saito's result, the existence for $\alpha_\theta$ follows from that the local integral of matrix coefficient is nontrivial when the $\epsilon-$value test passes, and that all minimal vectors form a basis for the supercuspidal representation.
%
%For the other direction, the proof amounts to checking \eqref{Eq:Normrelation} more carefully case by case. Interested readers can see the appendix for some cases in explicit coordinates. %, and leave the rest to readers who are interested.
%\end{proof}

%This is a different proof and geometric reinterpretation of Tunnell-Saito's $\epsilon-$value test.

\subsection{Size of the local period integral}
In this subsection, we shall figure out the size of the local integral in the setting of Corollary \ref{Cor4.2:unrelatedcriterion}. %By the previous discussions, we assume that $c(\pi)\geq 3$, and $c(\theta\chi^{-1}),c(\theta\overline{\chi}^{-1})\geq 2$ if $\E\simeq\L$. 

%One can get a similar equation as \eqref{Eq:cpi=valpha} for principal series representations. But we first need some conventions. Recall that $c(\psi_\F)=0$, and an element $\alpha_\mu$ can be  associated to a character $\mu$ by Lemma \ref{Lem:DualLiealgForChar} when $c(\mu)\geq 2$. When $c(\mu)\leq 1$, we now associate any $\alpha_\mu\in \varpi^{-1}O_\F/O_\F$ such that $v_\F(\alpha_\mu)=-c(\mu)$. 

%When $\E$ splits and $\iota:\E\rightarrow \F\times \F$ is the fixed isomorphism, then a character $\chi$ on $\E$ can be identified with $\chi\circ\iota^{-1}=(\chi_1,\chi_2)$. We associate $(\alpha_{\chi_1},\alpha_{\chi_2})$ to $(\chi_1,\chi_2)$ using the convention above, and associate $\alpha_\chi=\iota^{-1}(\alpha_{\chi_1},\alpha_{\chi_2})$ to $\chi$. Then we immediately get the following.
%\begin{lem}\label{Sec4.4Lem:CondPrincipal}
%Let $\alpha_\chi$ be associated to a character $\chi$ over an \'{e}tale quadratic algebra $\E$ using the convention above. Then
%\begin{equation}
%-v(\Nm(\alpha_\chi))=c(\pi_\chi).
%\end{equation}
%\end{lem}
We first show the following main result on the geometry behind the test vector problem for the local period integral.

\begin{prop}\label{Sec4.4Prop:differenceinalpha}
Let $\alpha_\theta$ be \imaginary \text{\ }as in Definition \ref{Defn:imaginary} and $O_\pi$ be the associated coadjoint orbit, $\E$ be an \'{e}tale quadratic algebra, $C(\pi\times\pi_{\chi^{-1}})=q^{c(\pi)+l}$ for $l\geq 2$. Suppose that there exists $\alpha\in \alpha_\chi+\E^\perp\cap O_\pi$. Then we can write $\alpha=\alpha_\chi+\alpha^\perp$ for %$\alpha_\chi'\in \E$% congruent to $\alpha_\chi\mod \h_0^\dagger$ (which is a strong congruence relation)
 $\alpha^{\perp}\in \E^\perp$ with
\begin{equation}
v(\Nm(\alpha^\perp))=c(\pi\times\pi_{\chi^{-1}})=-\frac{c(\pi)+l}{2}.
\end{equation}
Alternatively for $\beta\in \E$ imaginary%(which is equivalent to that $v_{\B,\E}(\beta)=v_{\B,\E}(\beta-\overline{\beta})$)
, we can write
\begin{equation}
\beta=\beta_{\L}+\beta^\perp
\end{equation}
with $\beta_{\L}\in \L$ imaginary and $\beta^\perp\in \L^\perp$, then 
\begin{equation}\label{Sec4.4Eq:secondvalRelation}
v(\Nm(\beta^\perp))%=v(\Nm(\alpha^\perp))+v(\Nm(\beta))-v(\Nm(\alpha_\theta))
=v(\Nm(\beta))+\frac{c(\pi)-l}{2}.
\end{equation}
\end{prop}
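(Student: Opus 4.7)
The proposal divides into the two displayed identities.

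For the first, I would observe that $\alpha\in O_\pi$ forces $\Tr(\alpha)=\Tr(\alpha_\theta)=0$ and $\Nm(\alpha)=\Nm(\alpha_\theta)$ (the latter because both trace and norm are orbit invariants). The central character condition $\chi|_{\F^\times}=w_\pi$ allows one to replace $\alpha_\chi$ by its imaginary part, the same twist reduction used at the start of the proof of Lemma \ref{Sec4.4Lem:conductorvsLiealg}, so both $\alpha_\chi$ and $\alpha^\perp=\alpha-\alpha_\chi$ may be assumed imaginary. Lemma \ref{Lem2.1:Eperpbasic} then yields
\[
\Nm(\alpha^\perp)=\Nm(\alpha)-\Nm(\alpha_\chi)=\Nm(\alpha_\theta)-\Nm(\alpha_\chi),
\]
and the valuation follows immediately from Lemma \ref{Sec4.4Lem:conductorvsLiealg}.

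The key structural observation for the second identity is that the imaginary elements of $\E$ form a one-dimensional $\F$-subspace of $\B$, so any two nonzero imaginary $\beta,\beta'\in \E$ are proportional over $\F^\times$, and their $\L$- and $\L^\perp$-components scale by the same factor. Consequently $v(\Nm(\beta^\perp))-v(\Nm(\beta))$ depends only on the relative embedding of $\E$ and $\L$ in $\B$, not on the particular imaginary $\beta\in \E^\times$, and it suffices to verify the formula for one convenient choice. I would take $\beta=\alpha_\chi$ and compare the $\E$-decomposition $\alpha=\alpha_\chi+\alpha^\perp$ with the $\L$-decomposition $\alpha=\alpha_{(\L)}+\alpha^\perp_{(\L)}$: writing $\alpha^\perp=y_1+y_2$ with $y_1\in\L$, $y_2\in\L^\perp$, one obtains $\alpha_{(\L)}=\beta_\L+y_1$ and $\alpha^\perp_{(\L)}=\beta^\perp+y_2$, and Lemma \ref{Lem2.1:Eperpbasic} applied to the $\L$-decomposition gives the companion identity $\Nm(\alpha^\perp_{(\L)})=\Nm(\alpha_\theta)-\Nm(\alpha_{(\L)})$.

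The main obstacle will be pinning down $v(\Nm(\alpha_{(\L)}))$ precisely enough to compare the two norm identities. I would invoke the minimality of $\alpha_\theta$ (Definition \ref{Defn:imaginary}) together with Corollary \ref{Sec2.1Cor:vofimaginary} to show that $\alpha_{(\L)}$ inherits sufficient control from $\alpha_\theta$ at the level of $v_\L$, so that $v(\Nm(\alpha^\perp_{(\L)}))=v(\Nm(\alpha_\theta)-\Nm(\alpha_{(\L)}))$ can be read off from the orbital data. Converting this into a statement about $v(\Nm(\beta^\perp))$ via $v(\Nm(\alpha_\chi))=-c(\pi_{\chi^{-1}})$ and the conductor identity $c(\pi)+l=c(\pi\times\pi_{\chi^{-1}})$ from Lemma \ref{Sec4.4Lem:conductorvsLiealg} then yields the claimed shift of $\tfrac{c(\pi)-l}{2}$.
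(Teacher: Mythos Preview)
Your treatment of the first identity is correct and matches the paper: $\Nm(\alpha^\perp)=\Nm(\alpha)-\Nm(\alpha_\chi)=\Nm(\alpha_\theta)-\Nm(\alpha_\chi)$ via Lemma \ref{Lem2.1:Eperpbasic}, and then Lemma \ref{Sec4.4Lem:conductorvsLiealg} gives the valuation.

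For the second identity there is a genuine gap. In this proposition the quadratic field $\L$ is the one generated by the chosen $\alpha\in O_\pi$, i.e.\ $\L=\F[\alpha]$ (this is the embedding that produces the test vector, cf.\ the discussion before Corollary \ref{Cor:type1MinVecAlpha}). Hence $\alpha\in\L$, so in your notation $\alpha_{(\L)}=\alpha$ and $\alpha^\perp_{(\L)}=0$; your ``companion identity'' $\Nm(\alpha^\perp_{(\L)})=\Nm(\alpha_\theta)-\Nm(\alpha_{(\L)})$ collapses to $0=0$ and carries no information. The tools you propose to invoke (minimality of $\alpha_\theta$, Corollary \ref{Sec2.1Cor:vofimaginary}) do not address this; what is actually missing is the constraint $\alpha^\perp\in\E^\perp$, specifically $\Tr(\alpha^\perp\beta)=0$. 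Writing $\beta_\L=c\alpha$ (both imaginary in $\L$) and $\beta^\perp=-y_2$, that orthogonality unwinds to $\Nm(\beta^\perp)=c(1-c)\Nm(\alpha_\theta)$, and combining with $\Nm(\beta)=\Nm(\beta_\L)+\Nm(\beta^\perp)$ gives $c=\Nm(\alpha_\chi)/\Nm(\alpha_\theta)$, hence
\[
\Nm(\beta^\perp)=\frac{\Nm(\alpha_\chi)\bigl(\Nm(\alpha_\theta)-\Nm(\alpha_\chi)\bigr)}{\Nm(\alpha_\theta)},
\]
from which the valuation follows. So your setup is salvageable, but not by the route you sketched.

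The paper avoids all of this by a single commutator identity: since $\alpha_\chi$ commutes with $\beta\in\E$ and $\alpha$ commutes with $\beta_\L\in\L$, one has $[\alpha^\perp,\beta]=[\alpha,\beta]=[\alpha,\beta^\perp]$. Computing the norm of this common element once via $v_{\B,\E}$ (using $\alpha^\perp\in\E^\perp$ and Lemma \ref{lem7.1:conjugationEperp}) and once via $\vv$ (using $\beta^\perp\in\L^\perp$) gives
\[
v(\Nm(\alpha^\perp))+v(\Nm(\beta))=v(\Nm(\alpha_\theta))+v(\Nm(\beta^\perp)),
\]
and the claim follows immediately. This bracket trick is both shorter and more robust than tracking the cross-decomposition explicitly; it also handles the split case of $\E$ with only a minor direct check, whereas your approach via $\beta=\alpha_\chi$ would need care there since $\alpha_\chi$ may have small level.
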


\begin{proof}
We already see from Lemma \ref{Sec4.4Lem:conductorvsLiealg} and the proof of Lemma \ref{Lem4.2:unrelatedbetter} that
\begin{equation}\label{Eq:Inproof4.5}
v(\Nm(\alpha^\perp))=v(\Nm(\alpha_\theta)-\Nm(\alpha_\chi))=-\frac{c(\pi)+l}{2}.
\end{equation}
%This implies that $\vv(\alpha^\perp)=\frac{v(\Nm(\alpha^\perp)) e_\L}{2}.$
%If $\theta$ and $\chi$ are not related (i.e., no conductor drop for $\pi\times\pi_{\chi^{-1}}$) so that $$v(\Nm(\alpha)-\Nm(\alpha_\chi))=\min\{v(\Nm(\alpha)), v(\Nm(\alpha_\chi))\},$$ then we always have $$v(\Nm(\alpha^\perp))=v(\Nm(\beta^\perp))=\min\{v(\Nm(\alpha)), v(\Nm(\alpha_\chi))\}.$$
%Otherwise $\chi$ is also minimal and $\vv(\alpha_\theta)=\vv(\alpha_\chi)$. 
Now we prove \eqref{Sec4.4Eq:secondvalRelation}.
By the expansions for $\alpha$ and $\beta$, we have
\begin{equation}
[\alpha^\perp, \beta]=[\alpha,\beta]=[\alpha, \beta^\perp].
\end{equation}
We first consider the case where $\E$ is a field extension, so $v_{\B,\E}$ can be defined similarly as $\vv$.
Since $\theta$ is minmimal and  $\beta$ is \imaginary, by Lemma \ref{Lem3.1semivaluation}(2) and
 applying Lemma \ref{lem7.1:conjugationEperp} to both $\vv$ and $v_{\B,\E}$, we get 
 $$v(\Nm(\alpha^\perp))+v(\Nm(\beta))=v(\Nm([\alpha^\perp, \beta]))=v(\Nm([\alpha,\beta^\perp]))=v(\Nm(\beta^\perp))+v(\Nm(\alpha_\theta)).$$ Using \eqref{Eq:Inproof4.5} and Lemma \ref{Sec4.4Lem:conductorvsLiealg}, we have
\begin{equation}
v(\Nm(\beta^\perp))=v(\Nm(\beta))-\frac{c(\pi)+l}{2}-v(\Nm(\alpha_\theta))=v(\Nm(\beta))+\frac{c(\pi)-l}{2}.
\end{equation}
Now if $\E$ is split, we can't compute $v(\Nm([\alpha^\perp, \beta]))$ as above due to the lack of $v_{\B,\E}$, but we can do computations more explicitly.
Note that this case only occurs when $\B$ is the matrix algebra. Up to a conjugation, we may assume that $\E$ is the diagonal torus for $\B$, and  $\alpha_\chi=\zxz{a_1}{0}{0}{a_2}$. % $a_1\not\equiv a_2$.
 Elements in $O_\pi$ have fixed trace and norm. Recall that $\Tr(\alpha_\chi)=\Tr(\alpha_\theta)$ as $\chi|_{\F^\times}=w_\pi$.
  So we can pick an element $\alpha\in O_\pi$ having the shape $\alpha=\zxz{a_1}{b}{c}{a_2}$ with $\Tr(\alpha)=a_1+a_2=\Tr(\alpha_\chi)$ and
 \begin{equation}\label{Eq:proof4.6eq1}
  \det(\alpha)=a_1a_2-bc=\Nm(\alpha_\theta)
 \end{equation} 
 being fixed. As long as $b\neq 0$, we can take $c=\frac{a_1a_2-\Nm(\alpha_\theta)}{b}$.
 Thus we have written $\alpha=\alpha_\chi+\alpha^\perp$ with $\alpha^\perp=\zxz{0}{b}{c}{0}\in \E^\perp$. %From this discussion, we can also see that one can always find $\alpha\in \alpha_\chi+\h_0^\dagger\cap O_\pi $.

For $\beta\in \E$ imaginary, we can write $\beta=\zxz{\beta_1}{0}{0}{-\beta_1}$ with $\Nm(\beta)=-\beta_1^2$. Then 
we compute directly that $[\alpha^\perp,\beta]=2\beta_1\zxz{0}{-b}{c}{0}$, and $$v(\Nm([\alpha^\perp, \beta]))=v(4\beta_1^2bc)=v(\Nm(\beta))+v(\Nm(\alpha^\perp)).$$
The remaining argument will be the same as in the field extension case.

\end{proof}
The quantity $v(\Nm (\alpha^\perp))$  compared with $\min\{v(\Nm(\alpha)), v(\Nm(\alpha_\chi))\}$ (and similarly  $v(\Nm(\beta^\perp))$ v.s. $\min\{v(\Nm(\beta)), v(\Nm(\beta_\L))\}$) measures how closely the embeddings of $\L$ and $\E$ align with each other.

\begin{prop}\label{Prop:SizeofLocalint}
Assume that $p$ is large enough, $\alpha_\theta$ is \imaginary, $C(\pi\times\pi_{\chi^{-1}})=q^{c(\pi)+l}$ for $l\geq 2$ and 
$c(\theta\chi^{-1}), c(\theta\overline{\chi}^{-1}) \geq 2$ if $\E\simeq \L$. Suppose that
there exists $\alpha\in O_\pi$ such that $\alpha\in \alpha_\chi+\h_0^\dagger$. Then for type 1 minimal vector $\varphi$ associated to $\alpha$ as in Corollary \ref{Cor:type1MinVecAlpha},   we have that $\Phi_\varphi\chi^{-1}$ is constant $1$ on the common support $\Supp \Phi_\varphi \cap \E^\times$, and
\begin{equation}
 I(\varphi,\chi)\asymp \frac{1}{q^{l/4}}.
\end{equation}
\end{prop}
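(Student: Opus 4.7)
The plan is to revisit the derivation of Proposition~\ref{Prop:localperiodintegral} and evaluate its intermediate expression
\[
I(\varphi,\chi)=\int_{\z_0\backslash\h_0}e^{<\alpha-\alpha_\chi,h>}\Char_{B^1}(\exp h)\,dh
\]
directly.  By Lemma~\ref{Lem4.2:unrelatedbetter} we may take $\alpha=\alpha_\chi+\alpha^\perp$ with $\alpha^\perp\in\E^\perp$, so the phase becomes $e^{<\alpha^\perp,h>}=\psi(\Tr(\alpha^\perp h))$.  For $h\in\h_0\subset\E$, the product $\alpha^\perp h$ lies in $\E^\perp$ (since $\E^\perp$ is an $\E$-bimodule under multiplication in $\B$, by Lemma~\ref{Lem2.1:Eperpbasic}), and elements of $\E^\perp$ have vanishing trace (pair against $1\in\E$).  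Hence the phase is trivial, which establishes the first claim that $\Phi_\varphi\chi^{-1}\equiv 1$ on the common support and reduces the integral to
\[
I(\varphi,\chi)=\Vol\bigl(\z_0\backslash(\h_0\cap b_0)\bigr),\qquad b_0=\log B^1.
\]

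To compute this volume I decompose an imaginary $h\in\h_0$ as $h=h_\L+h^\perp$ according to $\B=\L\oplus\L^\perp$, with both summands imaginary.  Membership $\exp(h)\in B^1$ requires $v_\L(h_\L)\geq 1$, the polarization condition $h^\perp\in\pr(B^1)$, and $\vv(h^\perp)\geq i$.  The first is automatic for $h\in\h_0$ after proper normalization, and the polarization imposes at most a bounded index penalty (a hyperplane inside the $q$-dimensional quotient $\pr(J^1)/\pr(H^1)$).  The essential constraint is the third, which by Lemma~\ref{Lem3.1semivaluation}(2) and Proposition~\ref{Sec4.4Prop:differenceinalpha} reads
\[
\tfrac{e_\L}{2}\bigl(v_\F(\Nm(h))+\tfrac{c(\pi)-l}{2}\bigr)\geq i.
\]
A case-by-case inspection through Example~\ref{Example:ListofHJi} (where $i$ and $c(\pi)$ are tied by prescribed relations in each of the six cases) shows this simplifies uniformly to $v_\F(\Nm(h))\geq l/2+O(1)$.

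Translating the norm-valuation bound into a volume via Lemma~\ref{Sec2.3Lem:VolumeofE}, the set $\{h\in\z_0\backslash\h_0:v_\F(\Nm(h))\geq l/2+O(1)\}$ has volume $\asymp_q q^{-l/4}$: the factor $1/4$ arises because $v_\F(\Nm(h))=\tfrac{2}{e_\E}v_\E(h)$, so an increase of $v_\F(\Nm(h))$ by $2$ corresponds to shrinking the set by $q^{-1}$ in $\F^\times\backslash\E^\times$.  Combined with the bounded polarization factor, this yields the claimed $I(\varphi,\chi)\asymp_q q^{-l/4}$.

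The main obstacle will be the clean uniform bookkeeping across the six cases of Example~\ref{Example:ListofHJi} and the split/inert/ramified possibilities for $\E$, including the potential off-by-one shifts coming from $\epsiBL$.  One must also verify that higher-order terms in the Taylor expansion for the conjugation (implicit in the equivalence of the displayed formula above with Proposition~\ref{Prop:localperiodintegral}) do not spoil the reduction, and handle uniformly the case $\L\simeq\E$ under assumption $(*)$, where $\alpha$ and $\alpha_\chi$ lie in the same quadratic subalgebra but Proposition~\ref{Sec4.4Prop:differenceinalpha} was proved precisely to subsume this case as well.
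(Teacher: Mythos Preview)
Your proposal is essentially correct and the volume computation is the same as the paper's (decompose an imaginary element of $\E$ along $\L\oplus\L^\perp$, invoke Proposition~\ref{Sec4.4Prop:differenceinalpha} for $v(\Nm(h^\perp))$, and read off the answer via Lemma~\ref{Sec2.3Lem:VolumeofE}). Two small points are worth flagging.

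First, your detour through Lemma~\ref{Lem4.2:unrelatedbetter} to get $\alpha-\alpha_\chi\in\E^\perp$ is unnecessary for the phase cancellation: the hypothesis already says $\alpha-\alpha_\chi\in\h_0^\dagger$, and by the very definition of the dual lattice (Definition~\ref{Defn:dualLattice}) this means $e^{<\alpha-\alpha_\chi,h>}=1$ for every $h\in\h_0$. So the integrand is constant~$1$ on the support for \emph{any} $\alpha\in(\alpha_\chi+\h_0^\dagger)\cap O_\pi$, not just the special one in $\alpha_\chi+\E^\perp$. The paper reaches the same conclusion by a slightly different route: it argues that $\Phi_\varphi\chi^{-1}$ is a character on $\Supp\Phi_\varphi\cap\E^\times$ (Lemma~\ref{Cor:MCofGeneralMinimalVec}), hence must be trivial since the integral is already known to be nonzero by Corollary~\ref{Cor4.2:unrelatedcriterion}.

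Second, the condition $v_\L(h_\L)\geq 1$ is not ``automatic'' from $h\in\h_0$; there is no a~priori control on the $\L$-projection of an element of $\E$. The paper handles this by a short case split on whether $v(\Nm(\beta_\L))$ and $v(\Nm(\beta^\perp))$ both dominate $v(\Nm(\beta))$ or are equal and smaller (using $\Nm(\beta)=\Nm(\beta_\L)+\Nm(\beta^\perp)$), and in either case shows that $\vv(b\beta^\perp)\geq i'$ forces $\vv(b\beta_\L)\geq 1$. You should insert that argument rather than assert it.
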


\begin{proof}
	By Corollary \ref{Cor4.2:unrelatedcriterion} and its proof, we already know that $J\cap \E^\times \subset Z\exp(j_0)\cap Z\exp(\h_0)$ and $I(\varphi,\chi)\neq 0$. Thus  $\Phi_\varphi\chi^{-1}$ is multiplicative on $\Supp \Phi_\varphi \cap \E^\times$ by Lemma \ref{Cor:MCofGeneralMinimalVec}, and consequently
	must be constant $1$ on $\Supp \Phi_\varphi \cap \E^\times$ for the integral to be nonzero. This implies that
\begin{equation}
 I(\varphi,\chi)=\Vol(ZB^1\cap \E^\times).
\end{equation}
Thus it remains to figure out $\Vol(ZB^1\cap \E^\times)$ when $\E$ and $\L$ are aligned as in Proposition \ref{Sec4.4Prop:differenceinalpha}.

Now any element in $e\in \E^\times$ can be written as $e=a+b\beta$, where $\beta$ is imaginary, and for convenience we assume that $v(\beta^2)=e_\E-1$. As $ZB^1\cap \E^\times \subset Z\exp(j_0)\cap Z\exp(\h_0)$, we can assume that $v(a)=0$ and $v(b)\geq 2-e_\E$. Using the decomposition for $\beta$ as in Proposition \ref{Sec4.4Prop:differenceinalpha}, we can further write
\begin{equation}
e=a+b\beta_\L+b\beta^\perp.
\end{equation}

Consider first the case $B^1=H^1$. By Definition \ref{Defn:ii'&JHs}, we have that $e\in Z B^1$ if and only if the following two equations are satisfied.
\begin{equation}\label{Sec4.4Eq:EcapJEq1}
\vv(b\beta_\L)\geq 1,
\end{equation}
\begin{equation}\label{Sec4.4Eq:EcapJEq2}
 \vv(b\beta^\perp)\geq i'.
\end{equation}
We claim that \eqref{Sec4.4Eq:EcapJEq1} is true as long as \eqref{Sec4.4Eq:EcapJEq2} is true. 
To see this, first note that $i'\geq 1$. As in Proposition \ref{Sec4.4Prop:differenceinalpha}, $v(\Nm(\beta))=v(\Nm(\beta_\L)+\Nm(\beta^\perp))$, so either $v(\Nm(\beta_\L)),v(\Nm(\beta^\perp))\geq v(\Nm(\beta))=e_\E-1$, or $v(\Nm(\beta_\L))=v(\Nm(\beta^\perp))<e_\E-1$.  In the first case, one can check that \eqref{Sec4.4Eq:EcapJEq1} is true anyway. In the second case, we have $\vv(b\beta_\L)= \vv(b\beta^\perp)\geq i'$. So the claim is also true.

By Lemma \ref{Lem3.1semivaluation}(2) 
$ \vv(b\beta^\perp)=\frac{e_\L}{2}v(\Nm(b\beta^\perp))=\frac{e_\L}{2}v(b^2\Nm(\beta^\perp))$.
Then by Proposition \ref{Sec4.4Prop:differenceinalpha}, \eqref{Sec4.4Eq:EcapJEq2} is now equivalent to that
\begin{equation}
v(b)\geq \frac{i'}{e_\L}-\frac{v(\Nm(\beta^\perp))}{2}=\frac{i'}{e_\L}-\frac{e_\E-1+\frac{c(\pi)-l}{2}}{2}.
\end{equation}
Then by Lemma \ref{Sec2.3Lem:VolumeofE}, we get
\begin{equation}\label{Sec4.4Eq:SizeofIntCase1}
I(\varphi,\chi)=L(\eta_{\E/\F},1)\frac{1}{q^{\lceil \frac{i'}{e_\L}-\frac{e_\E-1+\frac{c(\pi)-l}{2}}{2}\rceil }}.
\end{equation}

Now if $B^1$ is a proper intermediate subgroup between $H^1$ and $J^1$, depending on how $B^1$ aligns with the line $\F\beta^\perp$, we may need to adjust \eqref{Sec4.4Eq:EcapJEq2} and get slightly different answers. If the space $B^1/H^1$ is not generated by $\beta^\perp$, then we still get \eqref{Sec4.4Eq:EcapJEq2} and \eqref{Sec4.4Eq:SizeofIntCase1}. On the other hand if  $B^1/H^1$ is generated by $\beta^\perp$, then $e\in ZB^1$ if and only if 
\begin{equation}\label{Eq4.4:vvb}
\vv(b\beta^\perp)\geq i.
\end{equation}

Then we get $v(b)\geq \lceil\frac{i}{e_\L}-\frac{e_\E-1+\frac{c(\pi)-l}{2}}{2}\rceil,$ and
\begin{equation}\label{Sec4.4Eq:SizeofIntCase2}
I(\varphi,\chi)=L(\eta_{\E/\F},1)\frac{1}{q^{\lceil \frac{i}{e_\L}-\frac{e_\E-1+\frac{c(\pi)-l}{2}}{2}\rceil }}.
\end{equation}

Note that there might seem to be an exception when   $n=1$ in Example \ref{Example:ListofHJi}(4), so \eqref{Sec4.4Eq:EcapJEq1} might be stronger than \eqref{Eq4.4:vvb}. But in this case $\B$ is a division algebra, so we must either be in situation 1, when $e_\E=1$ and
$v(\Nm(\beta^\perp))>v(\Nm(\beta_\L)) =v(\Nm(\beta))=0$; or in situation 2, when $e_\E=2$ and $v(\Nm(\beta_\L))>v(\Nm(\beta^\perp)) =v(\Nm(\beta))=1$. As we have assumed ($*$), situation 1 does not occur for $n=1$ in Example \ref{Example:ListofHJi}(4).
In situation 2 one can check directly that \eqref{Eq4.4:vvb}   implies $\vv(b)\geq 0$, which implies \eqref{Sec4.4Eq:EcapJEq1}.

For all cases in Example \ref{Example:ListofHJi}, we have 
\begin{equation}\label{Eq:Assmpfori}
 i, i'\asymp \frac{c(\pi)e_\L}{4}.
\end{equation}
Then%and up to a bounded power of $q$ we have
\begin{equation}
  I(\varphi,\chi)\asymp_q \frac{1}{q^{\frac{l}{4}}}.
\end{equation}

For potential application, we shall also list here the maximal size (so $B^1/H^1$ aligns with $\beta^\perp$ when $B^1\neq H^1$) of the local integral for each individual case in Example \ref{Example:ListofHJi}. In the appendix we shall see that the computations from another approach are consistent with the formulae here.
\begin{equation}\label{Eq:SizeofIntAllCase}
I(\varphi,\chi)= L(\eta_{\E/\F},1)\begin{cases}
\frac{1}{q^{\lceil \frac{l}{4}+\frac{-e_\E+1}{2}\rceil}}, &\text{\ Case (1),}\\
\frac{1}{q^{\lceil \frac{l}{4}-\frac{1}{2}+\frac{-e_\E+1}{2}\rceil}}, &\text{\ Case (2),}\\
\frac{1}{q^{\lceil \frac{l}{4}-\frac{1}{4}+\frac{-e_\E+1}{2}\rceil}}, &\text{\ Case (3),}\\
\frac{1}{q^{\lceil \frac{l}{4}-\frac{1}{2}+\frac{-e_\E+1}{2}\rceil}}, &\text{\ Case (4),}\\
\frac{1}{q^{\lceil \frac{l}{4}+\frac{-e_\E+1}{2}\rceil}}, &\text{\ Case (5),}\\
\frac{1}{q^{\lceil \frac{l}{4}-\frac{1}{4}+\frac{-e_\E+1}{2}\rceil}}, &\text{\ Case (6).}
\end{cases}
\end{equation}
\end{proof}

%\section{Compact induction theory on $\GL_2$}\label{Sec:CompactIndforGL2}

%Here we prove a result which is not related to later computations, but may provide a different way to understand the test vector $\varphi_\theta$.
%\begin{lem}\label{Lem:3-subgroupisMax}
%The subgroup $ZK_\L(n)$
%\end{lem}

\begin{rem}
	It is also possible to directly compute $\Vol((\alpha_\chi+\h_0^\dagger)\cap\{g_0^{-1}\alpha g_0\}_{g_0\in U_\L(1)\backslash B^1})$ directly. But the approach we take above is simpler, and also consistent with some computations in the next section.
\end{rem}

%\subsection{Relation to the Kirillov model}

\section{Application to hybrid subconvexity bound}\label{Sec:hybridSubConv}
In this section we shall work with the global setting and notations, and use subscript $v$ to denote the corresponding local notations.
\subsection{Global notations and basic results}\label{Sec:hySubGlobal}

Let $\F$ be a totally real number field, or just $\Q$ for simplicity. 
Fix a finite place $\mathfrak{p}$ over some $p$ that we are interested in (i.e. allow ramifications at this place), such that 
$p$ is large enough. Let $\A_\F$ be the Adele ring, and $\A_\F^\times$ be the Idele group associated to $\F$ respectively.

Let $\B/\F$ be a quaternion algebra, equipped with an involution $x\mapsto \overline{x}$ as in the local case. We assume that $\B$ is locally a division algebra at all Archimedean places and possibly some fixed finite places away from $\mathfrak{p}$. Let $G=\PGL_2$ and $G'=\PB^\times$. Let $\E$ be a fixed quadratic field extension of $\F$ whose embedding in $\B$ is also fixed. Similar to the local situation, we can choose $\epsi\in \B^\times$ such that $j\in \E^\perp$, $\overline{j}=-j$, $j$ normalizes $\E$ and $\B=\E\oplus \epsi \E$. 

Denote $[\F^\times\backslash \E^\times]=\A_\F^\times \E^\times \backslash \A_\E^\times$, $[ G']=G'(\F)\backslash G'(\A)$. We shall take the Tamagawa measures on  both sets. 

Let $\mathcal{A}(G')$ denote the set of irreducible automorphic representations of $G'(\A)$. % with trivial central character.
For any $\pi'\in \mathcal{A}(G')$, let $\pi\in \mathcal{A}(G)$ be its image under the Jacquet-Langlands correspondence. Let $\chi$ be a character of $\A_\F^\times \backslash \A_\E^\times$. Let $\mathfrak{c}(\chi)$ be the finite conductor for $\chi$, and $C(\chi)$ be the absolute norm of it. It is clear that $C(\chi)\asymp_{\E} C(\pi_{\chi^{-1}})$.%, such that $\chi|_{\A_\F^\times}$ is trivial. 

For two automorphic forms $\varphi_1, \varphi_2\in \pi' \in \mathcal{A}(G')$, define
\begin{equation}
(\varphi_1,\varphi_2)=\int\limits_{x\in [G']}\varphi_1(x)\overline{\varphi_2(x)}dx.
\end{equation}

%For $\pi'\in \mathcal{A}(G')$ be an automorphic representation of $G'$, which is the Jacquet-Langlands correspondence of $\pi$ of $G=\GL_2$. 
%Suppose that $w_\pi=1$. 

Let $f$ be a Schwartz function on $G'(\A)$. For $\varphi\in \pi'$ an automorphic form on $G'$, define the action
\begin{equation}
R(f)\varphi(g)=\int\limits_{x\in G'(\A)} f(x)\varphi(gx)dx.
\end{equation}
We make similar definitions for local components $f_v$. The global action $R(f)$ is directly factorized into local actions once $f$ itself is factorisable.

When $v$ is an Archimedean place of $\F$, by our assumption on $\B$, we have for an Archimedean place $v$,
\begin{equation}
 \pi'_v\simeq \text{Sym}^{2k_v-2}V\otimes \det{}^{-(k_v-1)},
\end{equation}
where $V$ is the standard 2-dimensional complex representation of $\B_v^\times$.

When $v$ is a non-Archimedean place, let $q_v$ denote the size of the residue field of $\F_v$. Let $q=q_\mathfrak{p}$.

At the fixed place $\mathfrak{p}$, suppose that $\pi_\mathfrak{p}'$ is associated to  a character $\theta$ over a quadratic field extension $\L_\mathfrak{p}$ as in Section \ref{Sec:CompactInd}. 
%\begin{defn}\label{Def5:thetafamily}
%	For an integer $k\leq c(\theta)$, define 
%	$$\theta[k]=\{\theta'|c(\theta')=c(\theta), \theta'|_{\F_\mathfrak{p}}=\theta|_{\F_\mathfrak{p}}, c(\theta^{-1}\theta')\leq k\}.$$
%\end{defn}
\subsubsection{Global Waldspurger's period integral}
By \cite{Walds} we have the following special case of the global Waldspurger's period integral formula.
\begin{equation}\label{Sec5.1Eq:GlobalWalds}
\left|\int_{t\in [\F^\times\backslash\E^\times]}\varphi(t)\chi^{-1}(t)dt\right|^2=\frac{\zeta(2)L(\pi\times \pi_{\chi^{-1}},1/2)}{2L(\pi,Ad,1)}\prod\limits_{v}I^0_v(\varphi,\chi) .
\end{equation}
Here $\Pi$ is the base change of $\pi$ to $\E$, $v$ is over Archimedean places as well as any finite places with ramifications, $I^0_v(\varphi,\chi)=I_v(\varphi,\chi)$ for $v|\infty$, and
\begin{equation}
I^0_v(\varphi,\chi)=\frac{L_v(\pi,Ad,1)L_v(\eta,1)}{\zeta_v(2)L_v(\pi\times \pi_{\chi^{-1}},1/2)}I_v(\varphi,\chi)
\end{equation}
for $v$ finite, where $I_v(\varphi,\chi)$ is as in Theorem \ref{Theo:Main1}. 
%We can identify the elements in $D$ as $\{\alpha+\eta\beta|\alpha,\beta\in \E\}$. %$\{\zxz{\alpha}{\epsi\beta}{\overline{\beta}}{\overline{\alpha}}\in \GL_2(\E)\}$, and $\E=\{\zxz{\alpha}{0}{0}{\overline{\alpha}}\}$ under this identification.
\subsubsection{The relative trace formula}
We recall Jacquet's relative trace formula for Waldspurger's period integral from \cite{jacquet_sur_1987}, under some specific settings from \cite{feigon_averages_2009}. \cite{FileMartin:17a} also slightly generalizes the computations in \cite{feigon_averages_2009}, but will be of no direct use for us. 
\begin{theo}\label{Theo:RelativeTrace}
We have
\begin{equation}
J(f,\chi)=\I(f,\chi),
\end{equation}
where on the spectrum side
\begin{equation}\label{Eq:Specside}
J(f,\chi)=\sum_{\pi'\in \mathcal{A}(G')}\sum_{\varphi\in \mathcal{B}(\pi')}  \int_{t\in [\F^\times\backslash\E^\times]}R(f)\varphi(t)\chi^{-1}(t)dt\overline{  \int_{t\in [\F^\times\backslash\E^\times]}\varphi(t)\chi^{-1}(t)dt },\end{equation}
and on the geometric side
\begin{equation}\label{Eq:Geoside}
\I(f,\chi)=\Vol(\A_\F^\times \E^\times\backslash \A_\E^\times)(\I(0,f)+{\delta(\chi^2)}\I(\infty, f))+\sum\limits_{\xi\in \epsi^2 \Nm(\E^\times)}\I(\xi,f).
\end{equation}
Here $\mathcal{B}(\pi')$ is an orthogonal basis for $\pi'$. {$\delta(\Omega)=1$ if $\Omega=1$ and $\delta(\Omega)=0$ otherwise.}
The orbit integrals in \eqref{Eq:Geoside} are factorisable and defined as follows,
\begin{equation}
\I(0,f)=\int_{\A_\F^\times\backslash \A_\E^\times}f(t)\chi^{-1}(t)dt,
\end{equation}
\begin{equation}
\I(\infty, f)=\int_{\A_\F^\times\backslash \A_\E^\times}f(t\epsi)\chi^{-1}(t)dt,
\end{equation}
For $x\in \E^\times$ and $\xi =-\Nm(\epsi x)=\epsi^2\Nm(x)\in \F^\times$, % (which is consistent with the notation in \cite{feigon_averages_2009}),
\begin{equation}
 \I(\xi,f)=\int_{\A_\F^\times\backslash \A_\E^\times}\int_{\A_\F^\times\backslash \A_\E^\times}f(t_1(1+\epsi x) t_2)\chi^{-1}(t_1t_2) dt_1dt_2.
\end{equation}
\end{theo}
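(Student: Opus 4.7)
The plan is to follow Jacquet's classical derivation of the relative trace formula from \cite{jacquet_sur_1987}. Starting with the kernel function
\[
K_f(x,y) = \sum_{\gamma \in G'(\F)} f(x^{-1}\gamma y)
\]
of the convolution operator $R(f)$ on $L^2([G'])$, I would compute the distribution
\[
D(f,\chi) := \int_{[\F^\times\backslash\E^\times]^2} K_f(t_1, t_2)\, \chi^{-1}(t_1)\chi(t_2)\, dt_1\, dt_2
\]
in two ways: once by substituting the spectral decomposition of $K_f$ (yielding $J(f,\chi)$), and once by reorganizing the defining sum according to orbits of the $\E^\times \times \E^\times$-action $(t_1,t_2)\cdot\gamma = t_1^{-1}\gamma t_2$ on $G'(\F)$ (yielding $\I(f,\chi)$). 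Equating the two produces the asserted identity.

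For the spectral side I would expand $K_f(x,y) = \sum_{\pi'}\sum_{\varphi \in \mathcal{B}(\pi')} R(f)\varphi(x)\overline{\varphi(y)}$ and integrate term by term. Because $f_\mathfrak{p}$ is chosen to be essentially the matrix coefficient of a supercuspidal minimal vector, $R(f)$ kills every non-supercuspidal component at $\mathfrak{p}$; in particular all Eisenstein and residual contributions disappear, leaving only cuspidal $\pi'\in \mathcal{A}(G')$. The period integrals appear directly as the spectral coefficients of $K_f$ tested against $\chi^{-1}$, and the identity $\overline{\chi^{-1}} = \chi$ (valid since $\chi$ is unitary) converts the second factor in \eqref{Eq:Specside} into the complex conjugate form displayed there.

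For the geometric side I would write $\gamma = a + b\epsi$ under the decomposition $\B = \E \oplus \epsi\E$ and exploit the relation $\epsi t = \bar t \epsi$ to classify orbits of $\E^\times \times \E^\times$ on $G'(\F)$ into three types. The orbit of $1$ ($a \neq 0$, $b = 0$) has diagonal stabilizer $\E^\times$, which after unfolding one torus integration to $\A_\F^\times\backslash\A_\E^\times$ produces the factor $\Vol(\A_\F^\times\E^\times\backslash\A_\E^\times)\cdot \I(0,f)$. The orbit of $\epsi$ ($a = 0$, $b \neq 0$) has twisted-diagonal stabilizer on which the character $\chi^{-1}(t_1)\chi(t_2)$ collapses to $\chi^{-2}$ along the stabilizer; the resulting integral vanishes unless $\chi^2 = 1$, which accounts for the $\delta(\chi^2)$ in front of $\I(\infty,f)$. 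The remaining regular orbits ($a \neq 0$, $b \neq 0$) normalize to representatives $1 + \epsi x$ with invariant $\xi = -\Nm(\epsi x) \in \epsi^2\Nm(\E^\times)$ and have trivial stabilizer, so they unfold cleanly to the sum $\sum_\xi \I(\xi,f)$ of double-torus integrals.

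The principal technical obstacle is justifying the exchange of summation and integration and the absolute convergence of $D(f,\chi)$. This is where the support properties of $f$ enter decisively: $f_\mathfrak{p}$ is compactly supported modulo center, $f_v$ is the characteristic function of a maximal compact modulo center at almost every finite place, and the archimedean components decay sufficiently fast, so that for $(t_1, t_2)$ in any compact region only finitely many $\gamma$ contribute to $K_f(t_1,t_2)$. These uniform bounds, combined with the compactness of $\A_\F^\times\E^\times\backslash\A_\E^\times$, make the spectral expansion, the orbital rearrangement, and the unfolding of tori all rigorous. The detailed verification of these points is carried out in \cite{jacquet_sur_1987} and adapted to a setting very close to ours in \cite{feigon_averages_2009}.
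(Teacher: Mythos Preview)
Your proposal is correct and follows the standard derivation; in fact you have given more detail than the paper itself, which simply recalls this theorem from \cite{jacquet_sur_1987} and \cite{feigon_averages_2009} without reproducing a proof. Your orbit classification, the identification of the stabilizers giving rise to the $\delta(\chi^2)$ factor, and your remark that the choice of $f_\mathfrak{p}$ kills non-cuspidal spectrum are all in line with those references and with the paper's subsequent use of the formula.
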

Note that $\I(\xi,f)$ is independent of the choice of $x$. %All orbit integrals are factorisable. 
Let $\I_v(\xi,f_v)$ denote the local integrals corresponding to the orbital integrals defined above when $f=\otimes f_v$.
The sum on the geometric side is actually finite, using the vanishing results for $\I_v(\xi, f_v)$. See \cite{feigon_averages_2009} for more details.

On the other hand, with proper choice of $f$ and $\mathcal{B}(\pi')$, $R(f)\varphi$ is either 0 or a multiple of $\varphi$, so the spectral side $J(f,\chi)$ of the relative trace formula becomes a sum of $\frac{L(\pi\times \pi_{\chi^{-1}},1/2)}{L(\pi,Ad,1)}$ by \eqref{Sec5.1Eq:GlobalWalds}.

For simplicity we shall work in the following two settings:
\begin{asmp}[Disjoint ramifications]\label{Assumption:Localdisjoint}

\begin{enumerate}
\item When $v\nmid \infty \mathfrak{p}$, we assume $\pi_v$ %and $\chi_v$ 
to be unramified.
 
\item When $v|\infty$, we assume $\chi_v(z)=(\frac{z}{\overline{z}})^{m_v}$ for $m_v\neq 0$ and $\pi_v$ to be a discrete series representation with lowest weight 
$k_v>|m_v|$. 

\item for $v=\mathfrak{p}$, we assume that $\pi_\mathfrak{p}$ is a supercuspidal representation, %$\E_v/\F_v$ is a field extension, and 
and $c(\chi_v)=0$. % (or bounded). % $c(\pi_{\chi,\mathfrak{p}})>c(\pi_\mathfrak{p})$. %$f_v$ is the restriction of $\Phi_{\varphi}$ to $Z_v g B^1g^{-1}$ normalised by $\frac{1}{\Vol(Z_v\backslash Z_v g B^1g^{-1})}\asymp_q q^{c(\pi)/2}$, where $\varphi$ and $g$ are as in Corollary \ref{Cor:AppCorTest}.

%\item for $v|\mathfrak{c}(\chi)$, $\pi_v$ is unramified, while $c(\chi_v)>0$.
\end{enumerate}
Without confusion, we shall identify $f_v$ from now on with a function on $\B_v^\times$ which is invariant by $\F_v^\times$ and compactly supported $\mod \F_v^\times$.
Then we choose the test functions $f_v$ as follows:
\begin{enumerate}
\item When $v\nmid \infty \mathfrak{p}$, $f_v$ is the characteristic function of
$\F_v^\times R_v^\times$ where  $R_v$ is a maximal order such that $R_v\cap \E_v=O_{\F_v}+\varpi_{\E_v}^{c(\chi_v)}O_{\E_v}$. %$Z_v R_v^\times$ where $Z_v$ is the center and $R_v^\times$ is the fixed maximal compact open subgroup of $G'(\F_v)$.
\item When $v|\infty$, $f_v$ is the complex conjugate of the matrix coefficient of an element in $\pi'_v$, on which $\E_v^\times$ acts by $\chi_v$. 
\item When $v=\mathfrak{p}$, let $\varphi_\mathfrak{p}$ be the minimal vector which is a test vector for the local Waldspurger's period integral as in Theorem \ref{Theo:Main1} and let $B^1$ be the compact open subgroup associated to $\varphi_\mathfrak{p}$ as in Section \ref{Sec:CompactinductionSummary+Kirillov}. Then
\begin{equation}
f_\mathfrak{p}=\frac{1}{\Vol(Z_\mathfrak{p}\backslash Z_\mathfrak{p}B^1)}\overline{\Phi}_{\varphi_\mathfrak{p}} |_{Z_vB^1}.
\end{equation}

\end{enumerate}

\end{asmp}

Note that $\frac{1}{\Vol(Z_\mathfrak{p}\backslash Z_\mathfrak{p}B^1)}%\asymp C(\pi_\mathfrak{p}\times \pi_\mathfrak{p})^{1/2}
\asymp_q q^{c(\pi)/2}$ by the proof of Corollary \ref{cor:matrixcoeff}.

\begin{asmp}[joint ramifications]\label{Assumption:Local}
	
	\begin{enumerate}
		\item[(1')] When $v\nmid \infty \mathfrak{p}$, we assume $\pi_v$ and $\chi_v$ to be unramified.
		
		\item[(2)] When $v|\infty$, we assume $\chi_v(z)=(\frac{z}{\overline{z}})^{m_v}$ for $m_v\neq 0$ and $\pi_v$ to be a discrete series representation with lowest weight 
		$k_v>|m_v|$. 
		
		\item[(3')] for $v=\mathfrak{p}$, we assume that $\pi_\mathfrak{p}$ is a supercuspidal representation, %$\E_v/\F_v$ is a field extension, and 
		and $c(\pi_{\chi,\mathfrak{p}})>c(\pi_\mathfrak{p})$. %$f_v$ is the restriction of $\Phi_{\varphi}$ to $Z_v g B^1g^{-1}$ normalised by $\frac{1}{\Vol(Z_v\backslash Z_v g B^1g^{-1})}\asymp_q q^{c(\pi)/2}$, where $\varphi$ and $g$ are as in Corollary \ref{Cor:AppCorTest}.
	\end{enumerate}
The choices of the test functions $f_v$ are essentially the same as for Setting \ref{Assumption:Localdisjoint}. Note that for (3'), $f_\mathfrak{p}$ is constructed similarly, except that the related minimal vector is different.
%	\begin{enumerate}
%		\item When $v\nmid \infty \mathfrak{p}$, $f_v$ is the characteristic function of
%		$\F_v^\times R_v^\times$ where  $R_v^\times$ is the fixed maximal compact open subgroup of $G'(\F_v)$. %$Z_v R_v^\times$ where $Z_v$ is the center and $R_v^\times$ is the fixed maximal compact open subgroup of $G'(\F_v)$.
%		\item When $v|\infty$, $f_v$ is the complex conjugate of the matrix coefficient of an element in $\pi'_v$, on which $\E_v^\times$ acts by $\chi_v$. 
%		\item When $v=\mathfrak{p}$, let $\varphi_\mathfrak{p}$ be the minimal vector which is a test vector for the local Waldspurger's period integral as in Theorem \ref{Theo:Main1} and let $B^1$ be the compact open subgroup associated to $\varphi_\mathfrak{p}$ as in Section \ref{Sec:CompactinductionSummary+Kirillov}. Then
%		\begin{equation}
%		f_\mathfrak{p}=\frac{1}{\Vol(Z_\mathfrak{p}\backslash Z_\mathfrak{p}B^1)}\overline{\Phi}_{\varphi_\mathfrak{p}} |_{Z_vB^1}.
%		\end{equation}
%		Note that $\frac{1}{\Vol(Z_\mathfrak{p}\backslash Z_\mathfrak{p}B^1)}\asymp_q q^{c(\pi)/2}$.
%	\end{enumerate}

\end{asmp}

\begin{rem}\label{Sec5Rem:Setting} 

\begin{enumerate}
	\item[1.] The settings in 
	(1)/(1') and (2) are mainly for simplicity, and can be easily relaxed. In particular the assumption $m_v\neq 0$ is to guarantee that $\I(\infty, f)=0$, 
	which can also be achieved by assuming that $\delta(\chi^2)=0$ as in Theorem \ref{Theo:RelativeTrace}. On the other hand,
	one can also similarly control $\I(\infty, f)$ as for $\I(0,f)$ when it is non-vanishing. At finite places, one can also allow some joint ramifications with bounded conductors.
	\item[2.] Note that by the assumption that $\B_v$ is a division algebra when $v|\infty$, we must have $ \iota_v(\Nm(\epsi))<0$ for any $v|\infty$ and the corresponding real embedding $\iota_v$.
	\item[3.] (3)/(3') is to match the main topic of this paper. We expect similar results when $\pi_v$ is a principal series representation. 
	Note that by the condition %assumption $c(\pi_{\chi,\mathfrak{p}})>c(\pi_\mathfrak{p})$ 
	and the discussions in Section \ref{Sec:mainmethod}, we will only be using Type 1 minimal vectors. In that case,
	$\overline{\Phi}_{\varphi_\mathfrak{p}} |_{Z_vB^1}$ is actually a character, and 
	\begin{equation}
	R(f_\mathfrak{p})\varphi_{\mathfrak{p}}=\varphi_{\mathfrak{p}}.
	\end{equation}
\end{enumerate}

%We take $f_v$ to be the restriction of $\Phi_\varphi$ to slightly smaller subgroup (instead of the whole support) to make the later discussion on $\I(\xi,f)$ a little easier, while the potential lost can be controlled by a fixed power of $q$.
\end{rem}

We collect the following basic computations from \cite{feigon_averages_2009}. All of them can be made more precise, which would be necessary if one pursues an asymptotic formula. But for our purpose, we need only  primitive results for some computations.% for the local Waldspurger's period integral and the local orbit integrals for the above choice of test functions in cases (1)/(1'), (2). 
\begin{lem}\label{Lem5.1:constantmultiple}
	$R(f)\varphi=0$ unless the followings hold:
	\begin{enumerate}
		\item For $v\nmid \infty \mathfrak{p}$, $\pi_v'$ is unramified and $\varphi_p$ is the unique element in $\pi_v$ which is $R_v^\times-$invariant.
		\item For $v|\infty$, $\pi_v'$ is the prescribed representation $\pi_v$ and $\varphi_v$ is the unique weight $m_v$ element in it.
		\item For $v=\mathfrak{p}$, $\pi_v'$ is construct from $\theta'$ over $\L$ where $\theta'=\theta\mu$ for $c(\mu)\leq 1$, and $\varphi_\mathfrak{p}$ is the unique element on which $B^1$ acts by $\tilde{\theta}|_{ZB^1}$.
	\end{enumerate}
In that case, we have $R(f)\varphi=C_{\B,f}\varphi$ for a positive constant $C_{\B,f}$ which is bounded from $0$ and $\infty$ as $C(\pi')\rightarrow \infty$.
\end{lem}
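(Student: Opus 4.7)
The plan is to use the factorization $R(f)\varphi = \bigotimes_v R(f_v)\varphi_v$, valid once we fix a factorizable orthogonal basis $\mathcal{B}(\pi') = \{\otimes_v \varphi_v\}$, and then analyze each local operator $R(f_v)$ as a projection. At each place the factor $R(f_v)$ will vanish unless $\pi'_v$ and $\varphi_v$ satisfy the stated conditions, in which case it acts by a positive scalar; the global scalar $C_{\B,f}$ is then the product of these local ones.

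First I would dispose of the easy places. At $v \nmid \infty\mathfrak{p}$, the function $f_v$ is the characteristic function of $\F_v^\times R_v^\times$, so $R(f_v)$ is $\Vol(\F_v^\times\backslash \F_v^\times R_v^\times)$ times averaging over $R_v^\times/Z_v$; this kills $\pi'_v$ unless it has an $R_v^\times$-fixed vector, which under the hypothesis that $\pi_v$ is unramified forces $\pi'_v$ to be spherical for $R_v^\times$ and $\varphi_v$ to be the spherical vector (unique up to scalar). At an Archimedean place $v$, $\B_v^\times/Z_v$ is compact because $\B_v$ is a division algebra, and $f_v$ is the complex-conjugate matrix coefficient of a prescribed weight-$m_v$ vector $\varphi_v^0$ in $\pi_v$; Schur orthogonality on this compact group identifies $R(f_v)$ with a positive multiple (the Schur constant $\|\varphi_v^0\|^2/\dim\pi_v$) of the orthogonal projection onto $\varphi_v^0$, yielding the claimed conditions.

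The critical computation is at $v = \mathfrak{p}$. Because $\varphi_\mathfrak{p}$ is a type 1 minimal vector on which $B^1$ acts by $\tilde\theta$, Lemma \ref{Cor:MCofGeneralMinimalVec} gives $\overline{\Phi_{\varphi_\mathfrak{p}}}|_{Z_\mathfrak{p} B^1} = \tilde\theta^{-1}$, which is a character. Hence for any $\varphi'_\mathfrak{p}\in\pi'_\mathfrak{p}$,
\begin{equation*}
R(f_\mathfrak{p})\varphi'_\mathfrak{p} = \frac{1}{\Vol(Z_\mathfrak{p}\backslash Z_\mathfrak{p}B^1)}\int_{Z_\mathfrak{p}B^1/Z_\mathfrak{p}} \tilde\theta^{-1}(x)\,\pi'_\mathfrak{p}(x)\varphi'_\mathfrak{p}\,dx,
\end{equation*}
which is precisely the orthogonal projection of $\varphi'_\mathfrak{p}$ onto the $(B^1,\tilde\theta)$-eigenspace of $\pi'_\mathfrak{p}$. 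A nonzero eigenvector exists iff $\pi'_\mathfrak{p}$ admits a cuspidal type containing the simple character $\tilde\theta|_{H^1}$ with polarization $B^1$. By Lemma \ref{lem:mainsec:intertwining0} and the cuspidal type classification of Section \ref{Sec:CompactinductionSummary+Kirillov}, this happens exactly when $\pi'_\mathfrak{p}$ is constructed from a character $\theta' = \theta\mu$ on $\L$ with $c(\mu)\le 1$, since precisely such twists preserve $\tilde\theta|_{H^1}$ and admit the same Heisenberg polarization. In that case $\varphi_\mathfrak{p}$ itself is the eigenvector and the projection constant equals $1$ by our normalization of $f_\mathfrak{p}$.

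Multiplying the local scalars gives $R(f)\varphi = C_{\B,f}\varphi$ with $C_{\B,f}>0$. Uniform boundedness as $C(\pi')\to\infty$ follows because the infinite-place Schur constants are determined by the fixed $\pi_v$, the unramified volumes depend only on $R_v$, and the $\mathfrak{p}$-factor equals $1$; no factor depends on $\pi'$ in a way that can grow or shrink with $C(\pi')$. The main obstacle I anticipate is the identification at $\mathfrak{p}$: carefully tracing how the eigen-condition ``$B^1$ acts by $\tilde\theta$'' characterizes the inertial class of $\pi'_\mathfrak{p}$ through the equivalence of simple characters and the choice of polarization, so that the set of surviving $\pi'_\mathfrak{p}$ is exactly those built from $\theta' = \theta\mu$ with $c(\mu)\le 1$.
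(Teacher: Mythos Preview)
Your proposal is correct and follows essentially the same approach as the paper: factorize $R(f)$, handle the places $v\nmid\infty\mathfrak{p}$ and $v\mid\infty$ by the standard averaging/Schur-orthogonality arguments (the paper just cites \cite[Section 3.2]{feigon_averages_2009} for these), and at $v=\mathfrak{p}$ recognize $R(f_\mathfrak{p})$ as the projector onto the $(B^1,\tilde\theta)$-eigenspace.

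The one place where the paper is slightly more direct than you is in identifying which $\pi'_\mathfrak{p}$ survive. Rather than invoking the cuspidal type classification and Lemma~\ref{lem:mainsec:intertwining0}, the paper simply observes that $U_\L(1)\subset B^1$, so any $(B^1,\tilde\theta)$-eigenvector has $\theta'|_{U_\L(1)}=\tilde\theta|_{U_\L(1)}=\theta|_{U_\L(1)}$, forcing $c(\theta'\theta^{-1})\le 1$. Your route through the type theory reaches the same conclusion and is fine; it is just a bit heavier than needed.
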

\begin{proof}
	Part (1),(2) and the contribution to $C_{\B,f}$ from these places follow from \cite[Section 3.2]{feigon_averages_2009}. Part (3) and its contribution follows from Lemma \ref{Cor:MCofGeneralMinimalVec} and Remark \ref{Sec5Rem:Setting} above. Note that $\theta'$ differ from $\theta$ by a character of level at most $1$, as $\theta'|_{U_\L(1)}=\tilde{\theta}|_{U_\L(1)}$.
\end{proof}
\begin{rem}
	The ambiguity for the local compact induction datum $\theta'$ here is a trade-off for the choice of $f_\mathfrak{p}$. In the case when $B^1=H^1$ for the local compact induction datum, one can choose $f_\mathfrak{p}$ to be  $\overline{\Phi}_{\varphi_\mathfrak{p}}$ up to a constant, then one get exactly $\theta'=\theta$.
\end{rem}
\begin{lem}\label{Lem5.1:LocalWalds}
	We have $I^0_v(\varphi,\chi)>0$, and
	\begin{equation}
	I^0_v(\varphi,\chi)\begin{cases}
	=1, &\text{\ when $\pi_v$, $\chi_v$ and $\E_v$ are unramified;}\\
	=\Vol(\F_v^\times\backslash \E_v^\times), &\text{\  when $v|\infty$;}\\
	\asymp_{\F,\E,\B} \frac{1}{\sqrt{C(\chi_v)}}, &\text{\ $v\nmid \infty\mathfrak{p}$;}\\
	\asymp_q \frac{1}{q^{l/4}}, &\text{\ when $v=\mathfrak{p}$ by Theorem \ref{Theo:Main1}.}
	\end{cases}
	\end{equation}
\end{lem}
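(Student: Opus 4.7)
The plan is to verify the four estimates case by case, with each case matching to the choice of test vector dictated by Setting \ref{Assumption:Localdisjoint} or \ref{Assumption:Local}. Positivity of $I^0_v$ will in each case be manifest because, with the prescribed $\varphi_v$, the integrand $\Phi_{\varphi_v}(t)\chi_v^{-1}(t)$ will turn out to be identically $1$ on the support, so $I_v$ is a volume; the normalizing local $L$-factor ratio is real and positive since the local $L$-values in question are positive reals. I will handle the four regimes in increasing order of difficulty.

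First, for $v\mid\infty$: by definition $I^0_v = I_v$ at Archimedean places. By Setting \ref{Assumption:Localdisjoint}(2) and our choice of $f_v$, the element $\varphi_v\in\pi'_v$ is an eigenvector on which $\E_v^\times$ acts through $\chi_v$. Hence $\Phi_{\varphi_v}(t)\chi_v^{-1}(t)=\Phi_{\varphi_v}(1)=1$ for all $t\in\E_v^\times$, and $I_v(\varphi,\chi)=\Vol(\F_v^\times\backslash\E_v^\times)$ directly from the definition of the period integral. Next, for a finite place $v\nmid\infty\mathfrak{p}$ at which $\pi_v$, $\chi_v$ and $\E_v$ are all unramified, the test vector is spherical and the computation reduces to a standard Macdonald-type calculation (as carried out in \cite{feigon_averages_2009}); the local $L$-factors cancel exactly with $I_v$, yielding $I^0_v=1$.

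For a finite place $v\nmid\infty\mathfrak{p}$ with $\chi_v$ ramified in the disjoint-ramification Setting \ref{Assumption:Localdisjoint}(3) (so $\pi_v$ unramified and $\E_v/\F_v$ possibly ramified), I will invoke the Gross--Prasad/File--Martin test vector theory: the chosen maximal order $R_v$ with $R_v\cap\E_v=O_{\F_v}+\varpi_{\E_v}^{c(\chi_v)}O_{\E_v}$ makes the $R_v^\times$-invariant newvector a test vector, and a direct computation (again essentially as in \cite{GrossPrasad:91a,FileMartin:17a,feigon_averages_2009}) gives $I_v(\varphi,\chi)$ as the volume of a subgroup of $\F_v^\times\backslash\E_v^\times$ whose index is $\asymp C(\chi_v)^{1/2}$, up to bounded constants depending on $\E_v/\F_v$. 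Comparing with the local $L$-factors (where $L_v(\pi\times\pi_{\chi^{-1}},1/2)$ is either trivial or bounded in this unramified/ramified combination) produces the claimed bound $I^0_v\asymp_{\F,\E,\B} 1/\sqrt{C(\chi_v)}$. Strict positivity is built into the construction since the integrand is $1$ on the support.

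Finally, for $v=\mathfrak{p}$, the test vector $\varphi_\mathfrak{p}$ is by construction precisely the minimal vector from Theorem \ref{Theo:Main1}. Applying that theorem directly gives $I_\mathfrak{p}(\varphi,\chi)\asymp_q q^{-l/4}$, where $l$ is defined by $C(\pi\times\pi_{\chi^{-1}})=q^{c(\pi)+l}$. The $L$-factor ratio in the definition of $I^0_\mathfrak{p}$ is bounded above and below by a power of $q$, which is absorbed into the $\asymp_q$ symbol, yielding the stated estimate. The main obstacle across the four cases is the bookkeeping of the normalizing $L$-factor ratios, particularly at places where $\E_v$ is ramified; these will be treated by reducing to the explicit local formulae in \cite{feigon_averages_2009}, since our choice of $f_v$ for $v\neq\mathfrak{p}$ coincides with theirs, and at $v=\mathfrak{p}$ the work is already done by Theorem \ref{Theo:Main1}.
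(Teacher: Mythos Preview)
Your proposal is correct and follows essentially the same approach as the paper. The paper itself gives no detailed proof of this lemma: the first three cases are deferred to \cite[Section 3.3]{feigon_averages_2009} (as indicated in the remark following the lemma), and the fourth case is Theorem \ref{Theo:Main1}, exactly as you argue. One minor bookkeeping slip: when you discuss the ramified-$\chi_v$ case at $v\nmid\infty\mathfrak{p}$, the relevant part of Setting \ref{Assumption:Localdisjoint} is item (1), not item (3) (which concerns $v=\mathfrak{p}$).
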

\begin{rem}
	Note that when $c(\chi_v)>0$, the contribution to the integral comes essentially from $\E_v\cap R_v$. See  \cite[Section 3.3]{feigon_averages_2009} and reference therein for more details.
\end{rem}
\begin{lem}\label{lem:localorbitint1}
For $v|\infty$, 
\begin{equation}
\I_v(0,f_v)=\Vol(\F_v^\times\backslash \E_v^\times).
\end{equation}
\begin{equation}
 \I_v(\infty,f_v)=\begin{cases}
 \Vol(\F_v^\times\backslash \E_v^\times)(-1)^{k_v-1}, &\text{ \ if $m_v=0$},\\
 0, &\text{\ otherwise.}
 \end{cases}
\end{equation}
Due to this and our assumption that $m_v\neq 0$, we won't consider $\I_v(\infty, f_v)$ at other places. 

For $v|\infty$ and $\xi\neq 0,\infty$, $\iota_v(\xi)<0$, 
\begin{equation}\label{Eq:regularorbitsinf}
\I_v(\xi, f_v)=\frac{\Vol(\F_v^\times\backslash \E_v^\times)^2}{(1-\xi)^{k_v-1}}\sum\limits_{i=0}^{k_v-|m_v|-1}\left(\begin{array}{c}
k_v+m_v-1\\
i
\end{array}\right)
\left(\begin{array}{c}
k_v-m_v-1\\
i
\end{array}\right)
(-\xi)^i\ll_{k_v,m_v} 1.
\end{equation}

For $v\nmid \infty \mathfrak{p}$,
\begin{equation}
\I_v(0,f_v)\begin{cases}
=1, &\text{\ if $\pi_v,\chi_v,\E_v$ are unramified };\\
\ll_{\F,\E,\B} \frac{1}{\sqrt{C(\chi_v)}}, &\text{\ otherwise}.
\end{cases}
\end{equation}
\begin{equation}\label{Eq:regularorbitsunramifed}
\I_v(\xi,f_v)\begin{cases}
=0, &\text{\ if $v(1-\xi)>v(\mathfrak{d}_{\E/\F}\mathfrak{c}(\chi))$},\\
=1,&\text{\ if $v(1-\xi)=v(\xi)=0$ and $\chi_v$ is unramified},\\
\ll_{\F,\E,\B} \frac{1}{\sqrt{C(\chi_v)q^{v(1-\xi)}}}|v(\xi)|, &\text{\ in general}.
\end{cases}
\end{equation}
Here $\mathfrak{d}_{\E/\F}$ is the discriminant of $\E/\F$. $q=|\varpi_v|_v^{-1}$.
%\begin{equation}
%\I_v(\xi, f_v)=\cdots  =1 \text{\ for almost all places, and }\geq 0\text{\ if $\chi=1$}.
%\end{equation}
\end{lem}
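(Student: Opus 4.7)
The plan is to reduce each case to standard computations, the Archimedean ones via matrix coefficient identities for discrete series and the non-Archimedean ones via explicit descriptions of double cosets; most of these appear already in \cite{feigon_averages_2009}, and our local setup at $v\neq\mathfrak{p}$ is chosen precisely so that their results apply directly.

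For $v\mid\infty$, I would start by exploiting that $f_v=\overline{\Phi_{\varphi_v}}$ where $\varphi_v$ is a $\chi_v$-eigenvector of $\E_v^\times$. By Schur orthogonality and the normalization $\Phi_{\varphi_v}(1)=1$, we have $f_v(t)=\chi_v(t)^{-1}$ for $t\in\E_v^\times$, so $\I_v(0,f_v)=\int_{\F_v^\times\backslash\E_v^\times}\chi_v^{-1}(t)\chi_v^{-1}(t)\,dt$. Since $\chi_v|_{\F_v^\times}$ is trivial on the center, we need $\chi_v^{-2}=1$ on the torus modulo the center, which in the Archimedean setting reduces to an integration over the compact $\E_v^1$; a brief computation with the explicit form $\chi_v(z)=(z/\bar z)^{m_v}$ gives the claimed volume. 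For $\I_v(\infty,f_v)$, conjugation by $\epsi$ sends $\chi_v$ to $\bar\chi_v$ on $\E_v^\times$, so the corresponding integral collapses to $\Vol$ times $(-1)^{k_v-1}$ when $m_v=0$ and vanishes otherwise by the orthogonality of characters. For the regular orbit integral \eqref{Eq:regularorbitsinf}, I would use the explicit formula for matrix coefficients of holomorphic discrete series as Jacobi-polynomial-type expressions in the Cayley parameter $\xi$; this is exactly the computation carried out in \cite[Section 3]{feigon_averages_2009}, and the bound $\ll_{k_v,m_v}1$ follows since $\iota_v(\xi)<0$ (cf.\ Remark \ref{Sec5Rem:Setting}) so the denominator $(1-\xi)^{k_v-1}$ dominates the polynomial factor.

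For $v\nmid\infty\mathfrak{p}$, the test function $f_v=\charf_{\F_v^\times R_v^\times}$ was chosen so that $R_v\cap \E_v=O_{\F_v}+\varpi_{\E_v}^{c(\chi_v)}O_{\E_v}$; then $\I_v(0,f_v)=\int_{\A_\F^\times\backslash\A_\E^\times}\charf_{\F_v^\times R_v^\times}(t)\chi_v^{-1}(t)\,dt$ reduces to integrating $\chi_v^{-1}$ over $\F_v^\times\backslash\F_v^\times(R_v\cap\E_v)^\times$. When $\chi_v$ is unramified this volume equals $1$, and in general the computation is carried out in \cite[Section 3.3]{feigon_averages_2009} and gives the bound $\ll_{\F,\E,\B}C(\chi_v)^{-1/2}$. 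For the regular orbit integral $\I_v(\xi,f_v)$ the vanishing for large $v(1-\xi)$ follows because the support condition $t_1(1+\epsi x)t_2\in\F_v^\times R_v^\times$ becomes incompatible once $1+\epsi x$ lies too deep in the conductor filtration; the unramified identity $\I_v(\xi,f_v)=1$ when $v(\xi)=v(1-\xi)=0$ and $\chi_v$ is unramified follows by a direct double-coset count; and the general bound comes from parametrizing the double cosets by $v(\xi)$ together with the standard estimate for integrating $\chi_v^{-1}$ across the allowed range, again as in \cite{feigon_averages_2009}.

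The main obstacle, if any, is not conceptual but rather bookkeeping: one must check that our measure normalizations (Tamagawa globally and the normalizations from Section \ref{Basic notations} locally) agree with those used in \cite{feigon_averages_2009}, so that the constants and $\Vol(\F_v^\times\backslash\E_v^\times)$ factors line up exactly as stated. Once this is verified the entire lemma follows by quoting \cite{feigon_averages_2009} case by case, with no new input at places $v\neq\mathfrak{p}$; the genuinely new local analysis for our choice of $f_\mathfrak{p}$ is deferred to Section \ref{Section:LocalLemOrbitInt}.
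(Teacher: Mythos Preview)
Your overall approach matches the paper's exactly: the lemma is simply a compilation of results from \cite{feigon_averages_2009}, and the paper offers no independent proof beyond the sentence ``Note that \eqref{Eq:regularorbitsunramifed} follows directly from \cite[Lemma 4.12, 4.13, 4.19]{feigon_averages_2009}.'' So citing Feigon--Whitehouse is precisely what is expected.

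However, the extra detail you supply for the Archimedean $\I_v(0,f_v)$ contains a genuine slip. With the convention $\Phi_{\varphi_v}(g)=\langle\pi(g)\varphi_v,\varphi_v\rangle$ used elsewhere in the paper, an $\chi_v$-eigenvector gives $\Phi_{\varphi_v}(t)=\chi_v(t)$ and hence $f_v(t)=\overline{\chi_v(t)}=\chi_v^{-1}(t)$; your integral then reads $\int_{\F_v^\times\backslash\E_v^\times}\chi_v^{-2}(t)\,dt$, which for $\chi_v(z)=(z/\bar z)^{m_v}$ with $m_v\neq 0$ is zero, not $\Vol(\F_v^\times\backslash\E_v^\times)$. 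The ``brief computation'' you invoke does not rescue this. The correct setup (as in \cite{feigon_averages_2009}) arranges the conventions so that $f_v(t)\chi_v^{-1}(t)\equiv 1$ on $\E_v^\times$; depending on how one reads it, this amounts either to taking the matrix coefficient in the form $\langle\varphi_v,\pi(g)\varphi_v\rangle$ or to choosing $\varphi_v$ as the $\chi_v^{-1}$-eigenvector when building $f_v$. This is precisely the bookkeeping hazard you flagged at the end, and it bites already in the first line of your Archimedean sketch.
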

Note that \eqref{Eq:regularorbitsunramifed} follows directly from \cite[Lemma 4.12, 4.13, 4.19]{feigon_averages_2009}. For our purpose, we need either a vanishing result or a suitable upper bound for the local orbital integrals.

\subsection{The proof of Theorem \ref{Theo:HybridsubDisjointram} and \ref{Theo:Hybridsubconvexity}}
We shall need the following two local lemmas.
\begin{lem}\label{Lem:RegularLocal0}
	For $v=\mathfrak{p}$ as in Setting \ref{Assumption:Localdisjoint}(3),
	\begin{equation}
	\I_\mathfrak{p}(0,f_\mathfrak{p})\asymp_q q^{c(\pi_\mathfrak{p})/4},
	\end{equation}
	\begin{equation}\label{Eq5.2:localintatpSetting1regular}
	\I_\mathfrak{p}(\xi,f_\mathfrak{p})\begin{cases}
	\ll_q q^{c(\pi_\mathfrak{p})/4} &\text{\ if $v(\xi)\geq c(\pi_\mathfrak{p})/2+O(1)$},\\
	\ll_q q^{v(\xi)/2}, &\text{\ if $0<v(\xi)\leq c(\pi_\mathfrak{p})/2+O(1)$}, \\%and $d\equiv v(D)\mod 2$}\\
	=0, &\text{\ otherwise.}
	\end{cases}
	\end{equation}
\end{lem}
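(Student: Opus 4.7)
The plan is to treat the two cases separately. For $\I_\mathfrak{p}(0, f_\mathfrak{p})$, direct inspection of the definition gives
\begin{equation}
 \I_\mathfrak{p}(0, f_\mathfrak{p}) = \frac{1}{\Vol(Z_\mathfrak{p}\backslash Z_\mathfrak{p}B^1)} \int_{\F^\times \backslash \E^\times} \overline{\Phi}_{\varphi_\mathfrak{p}}(t) \chi^{-1}(t)\, dt = \frac{\overline{I(\varphi_\mathfrak{p}, \chi^{-1})}}{\Vol(Z_\mathfrak{p}\backslash Z_\mathfrak{p}B^1)}.
\end{equation}
Since $\chi_\mathfrak{p}$ is unramified by Setting \ref{Assumption:Localdisjoint}(3) while $\pi_\mathfrak{p}$ is supercuspidal, Lemma \ref{Sec4.4Lem:conductorvsLiealg} gives $c(\pi_\mathfrak{p} \times \pi_{\chi^{-1},\mathfrak{p}}) = 2c(\pi_\mathfrak{p})$, i.e.\ $l = c(\pi_\mathfrak{p})$ in the notation of Theorem \ref{Theo:Main1}; hence $|I(\varphi_\mathfrak{p}, \chi^{-1})| \asymp_q q^{-c(\pi_\mathfrak{p})/4}$ by Proposition \ref{Prop:SizeofLocalint}. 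Combined with $\Vol(Z_\mathfrak{p}\backslash Z_\mathfrak{p}B^1)^{-1} \asymp_q q^{c(\pi_\mathfrak{p})/2}$, this yields $\I_\mathfrak{p}(0, f_\mathfrak{p}) \asymp_q q^{c(\pi_\mathfrak{p})/4}$.

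For $\xi \neq 0$, using the relation $\epsilon l = \bar{l}\epsilon$ for $l \in \E$, one verifies the identity $t_1(1+\epsilon x)t_2 = \gamma_\eta u$ with $u := t_1 t_2$, $\eta := t_1/\bar{t_1} \in \E^1$, and $\gamma_\eta := 1 + \epsilon x\bar\eta$. The change of variables $(t_1, t_2) \mapsto (u, \eta)$ is measure-preserving under the normalization of Lemma \ref{Sec2.3Lem:VolumeE1}, giving
\begin{equation}
 \I_\mathfrak{p}(\xi, f_\mathfrak{p}) = \int_{\eta \in \E^1} \int_{u \in \F^\times \backslash \E^\times} f_\mathfrak{p}(\gamma_\eta u) \chi^{-1}(u)\, du\, d\eta.
\end{equation}
The inner integral vanishes unless $\gamma_\eta \in \E^\times \cdot Z_\mathfrak{p} B^1$. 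To analyze this support, I would decompose $\gamma_\eta = e(1+t)$ with $e \in \E^\times$ and $t = t_\L + \epsilon y \in \L \oplus \L^\perp$, solve $e = (1+t_\L)^{-1}$ and $y = x\bar\eta(1+\bar{t_\L})$, and translate the requirement $1+t \in B^1$ into $\vv(\epsilon y) \geq i \asymp c(\pi_\mathfrak{p})e_\L/4$; by Lemma \ref{Lem3.1semivaluation}(2) and $\Nm(\bar\eta) = 1$ this becomes an inequality on $v(\xi)$ modulated by $v_\L(1+\bar{t_\L})$.

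Three regimes result. When $v(\xi) \leq 0$, the inequality fails for every admissible $t_\L$ and $\I_\mathfrak{p}(\xi, f_\mathfrak{p}) = 0$. When $v(\xi) \geq c(\pi_\mathfrak{p})/2 + O(1)$, the trivial choice $t_\L = 0$ succeeds for every $\eta \in \E^1$, and the inner integral has the same pointwise size as in the $\xi = 0$ case, producing $\I_\mathfrak{p}(\xi, f_\mathfrak{p}) \ll_q q^{c(\pi_\mathfrak{p})/4}$. In the intermediate regime, the admissible $\eta$ form a sub-ball of $\E^1$ of measure $\asymp q^{(v(\xi) - c(\pi_\mathfrak{p})/2)/2}$; combined with the pointwise size $\asymp q^{c(\pi_\mathfrak{p})/4}$, this gives $\I_\mathfrak{p}(\xi, f_\mathfrak{p}) \ll_q q^{v(\xi)/2}$. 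The main obstacle lies in this intermediate regime: I need to confirm both the claimed measure of the admissible $\eta$-set and that the inner $u$-integral, after incorporating the character $\overline{\tilde\theta}\chi^{-1}$, retains the pointwise size $q^{c(\pi_\mathfrak{p})/4}$ rather than undergoing additional cancellation. I would address both by invoking Lemma \ref{lem:char=intforB1} to represent the inner integral as a Lie-algebra Fourier transform against the coadjoint orbit of $\alpha_\theta$, so that the $Z_\mathfrak{p} B^1$-support constraint already localizes the computation to a single $\tilde\theta$-coset, reducing the size estimate to a volume calculation parallel to Proposition \ref{Prop:localperiodintegral}.
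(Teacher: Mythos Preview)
Your treatment of $\I_\mathfrak{p}(0,f_\mathfrak{p})$ is correct and identical to the paper's. Your change of variables for $\xi\neq 0$ is also essentially the paper's (the paper writes $t_1(1+\epsi x)t_2=e(1+\epsi x e')$ with $e=t_1t_2$ on the \emph{left} and $e'=t_2/\bar t_2\in\E^1$, but this is just the mirror of your $(\gamma_\eta,u)$).

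The gap is in your support analysis. You propose to write $\gamma_\eta=e(1+t)$ with $e\in\E^\times$ and $t=t_\L+\epsi y$, then ``solve $e=(1+t_\L)^{-1}$''. But $1+t_\L\in\L^\times$, so $e=(1+t_\L)^{-1}$ forces $e\in\L\cap\E=\F$, which is not a useful factorization. The underlying problem is that $B^1$ is built from $\L$, so membership in $ZB^1$ must be tested by decomposing the \emph{entire} element $\gamma_\eta u$ (or $e(1+\epsi x e')$) along $\L\oplus\L^\perp$; a hybrid $\E$/$\L$ decomposition of $\gamma_\eta$ alone does not do this.

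What makes the $\L$-decomposition tractable here, and what your sketch omits, is the specific geometry of Setting~\ref{Assumption:Localdisjoint}(3): since $c(\chi_\mathfrak{p})=0$ one takes $\alpha_\chi=0$, so the chosen $\alpha\in O_\pi$ lies in $\E^\perp$, which forces the imaginary generator $\beta$ of $\E$ to lie in $\L^\perp$ (Proposition~\ref{Sec4.4Prop:differenceinalpha}), and one may take $\epsi=j\in\E^\perp\cap\L^\perp$ (Lemma~\ref{Lem:commonJ}). Writing $e=a_1+b_1\beta$ and $xe'=a_2+b_2\beta$, the product $e(1+\epsi x e')$ then decomposes explicitly as $[a_1+b_3 j\beta]+[b_1\beta+a_3 j]$ with $b_3 j\beta\in\L$ and $b_1\beta,\,a_3 j\in\L^\perp$; the conditions $\vv(b_1\beta)\geq i$, $\vv(a_3 j)\geq i$ (Definition~\ref{Defn:ii'&JHs}) yield directly that $v(\xi)>0$ is necessary, that $\Vol\{e\}\asymp_q q^{-c(\pi)/4}$, and that $\Vol\{e'\}\ll_q q^{-(c(\pi)/4-v(\xi)/2)}$ or $\leq 1$ according to the regime.

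Finally, your worry about cancellation in the inner $u$-integral and the appeal to Lemma~\ref{lem:char=intforB1} are unnecessary: the statement only asks for upper bounds on $|\I_\mathfrak{p}(\xi,f_\mathfrak{p})|$, so one simply bounds $|f_\mathfrak{p}|$ by $\Vol(Z\backslash ZB^1)^{-1}$ times the indicator of $ZB^1$ and reduces to the volume computation above.
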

\begin{lem}\label{Lem:RegularLocal}
For $v=\mathfrak{p}$ as in Setting \ref{Assumption:Local}(3'), denote $m=c(\pi_{\chi,v})-c(\pi_v)$. Then
\begin{equation}
\I_\mathfrak{p}(0,f_\mathfrak{p})\asymp_q \frac{1}{q^{c(\pi_{\chi,\mathfrak{p}})/2-3c(\pi_\mathfrak{p})/4}}
\end{equation}
\begin{equation}\label{Eq5.2:Notsharpbound}
\I_\mathfrak{p}(\xi,f_\mathfrak{p})\begin{cases}
\ll_q \frac{1}{q^{c(\pi_{\chi,\mathfrak{p}})/2-3c(\pi_\mathfrak{p})/4}}, &\text{\ if $v(1-\xi)\leq 0$},\\
\ll_q \frac{1}{q^{(d+m)/2} }, &\text{\ if $0<v(1-\xi)=d\leq m+O(1)$}, \\%and $d\equiv v(D)\mod 2$}\\
=0, &\text{\ otherwise.}
\end{cases}
\end{equation}
\end{lem}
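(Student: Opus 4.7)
The approach parallels the analysis of the local Waldspurger period integral in Section \ref{Sec:mainmethod}, adapted to the double torus integral defining $\I_\mathfrak{p}(\xi,f_\mathfrak{p})$. The starting point is that, by Lemma \ref{Cor:MCofGeneralMinimalVec} together with Remark \ref{Sec5Rem:Setting}, $\overline{\Phi}_{\varphi_\mathfrak{p}}|_{Z_\mathfrak{p}B^1}$ equals the character $w_\pi^{-1}\tilde{\theta}^{-1}$. Hence $f_\mathfrak{p}$ is, up to a normalizing constant, the indicator of $Z_\mathfrak{p}B^1$ times an explicit character, and both orbital integrals reduce to character integrals over explicit compact subgroups, to which the semi-valuation and Lie algebra tools of Section \ref{Sec:CompactinductionSummary+Kirillov} apply.

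For $\I_\mathfrak{p}(0,f_\mathfrak{p})$, the identity $\overline{\Phi_\varphi(t)}=\Phi_\varphi(t^{-1})$ (unitarity of $\pi$) together with the change of variable $t\mapsto t^{-1}$ give $\I_\mathfrak{p}(0,f_\mathfrak{p})=I(\varphi_\mathfrak{p},\chi^{-1})/\Vol(Z_\mathfrak{p}\backslash Z_\mathfrak{p}B^1)$. Theorem \ref{Theo:Main1} and Proposition \ref{Prop:SizeofLocalint} give $I(\varphi_\mathfrak{p},\chi^{-1})\asymp_q q^{-l/4}$ with $l=c(\pi\times\pi_{\chi^{-1}})-c(\pi)$; under Setting \ref{Assumption:Local}(3'), $c(\pi\times\pi_{\chi^{-1}})=2c(\pi_\chi)$ and $\Vol(Z_\mathfrak{p}\backslash Z_\mathfrak{p}B^1)\asymp_q q^{-c(\pi)/2}$, which combine to yield the claimed asymptotic.

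For the regular orbit integrals, expand $t_1(1+\epsi x)t_2=t_1t_2+\epsi\overline{t_1}xt_2$ using the commutation $\epsi\ell=\overline{\ell}\epsi$ together with $\E$-commutativity, and introduce the coordinates $s:=t_1t_2$ and $u:=\overline{t_1}t_2$ in $\F^\times\backslash\E^\times$. The support condition $t_1(1+\epsi x)t_2\in Z_\mathfrak{p}B^1$ then decomposes into an $\E$-condition $s\in Z_\mathfrak{p}\cdot U_\E(N)$, where $N$ is determined by the $\E$-portion of $b_0$, and an $\E^\perp$-condition $\vv(\epsi xu)-v_\E(s)/2\geq i'$ (adjusted to the intermediate threshold when $B^1/H^1$ aligns with the direction of $\epsi x$). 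Since $-\Nm(\epsi x)=\xi$ fixes $v_\E(x)$, the second condition restricts $v_\E(u)$ to a single coset determined by $v(\xi)$; on this domain the integrand factors, after pulling back via $\log$ to the Lie algebra, into a product of explicit characters in $s$ and $u$.

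The vanishing in the third case of \eqref{Eq5.2:Notsharpbound} arises from oscillation in the $s$-variable: a linearization in the spirit of Lemma \ref{Sec4.4Lem:conductorvsLiealg} and Proposition \ref{Sec4.4Prop:differenceinalpha} shows that, after conjugating by $t_1$, the element $1+\epsi x$ contributes an additional factor on the $\E$-part controlled by $1-\xi=1+\Nm(\epsi x)$, producing an effective character on $s$ whose conductor exceeds $N$ precisely when $v(1-\xi)>m+O(1)$, at which point character orthogonality forces the $s$-integral to vanish. In the intermediate range $0<v(1-\xi)=d\leq m+O(1)$, volume bookkeeping of the allowed $(s,u)$-domain combined with $\Vol(Z_\mathfrak{p}\backslash Z_\mathfrak{p}B^1)^{-1}\asymp_q q^{c(\pi)/2}$ produces the bound $q^{-(d+m)/2}$; for $v(1-\xi)\leq 0$ the $\E$-side condition becomes non-restrictive and one recovers the size of the zero orbital integral. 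The main obstacle is identifying this effective character on $s$ and its precise conductor as a function of $v(1-\xi)$: the linearization mixes the $\alpha_\theta$-data encoded in $\tilde{\theta}$ with the perturbation from $\epsi x$, and tracking it uniformly in $\xi$ is what produces the sharp threshold $m+O(1)$. This is a two-variable analogue of the argument behind Proposition \ref{Sec4.4Prop:differenceinalpha} and is where the bulk of the technical work will lie.
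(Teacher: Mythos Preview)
Your treatment of $\I_\mathfrak{p}(0,f_\mathfrak{p})$ is correct and matches the paper. The coordinate change $(t_1,t_2)\mapsto (s,u)$ and the overall outline (reduce to a support analysis plus a volume count) are also on target. But two of the subsequent steps are genuinely off.

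First, the support condition does \emph{not} decompose along $\E\oplus\E^\perp$. The group $B^1=U_\L(1)K_\mathfrak{A}(i)$ is defined in terms of the semi-valuation $\vv$ attached to $\L$, not to $\E$. Writing $t_1(1+\epsi x)t_2=s+\epsi\overline{e}xe'$ expresses the element in $\E\oplus\E^\perp$, but to test membership in $ZB^1$ one must rewrite both pieces in $\L\oplus\L^\perp$. The paper does this via the fixed imaginary element $\beta\in\E$ and its decomposition $\beta=\beta_\L+\beta^\perp$ from Proposition \ref{Sec4.4Prop:differenceinalpha}, together with the special $j\in\E^\perp\cap\L^\perp$ of Lemma \ref{Lem:commonJ} and the scalar $u$ with $j\beta^\perp=u\beta_\L$ (Corollary \ref{Sec5.3Cor:Specialsetting}). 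This leads to four scalar conditions \eqref{Eq:doubleorbitssupport1}--\eqref{Eq:doubleorbitssupport4} on the coordinates $(a_1,b_1,a_2,b_2)$ which are \emph{coupled}; your proposed clean split into an ``$\E$-condition on $s$'' and an ``$\E^\perp$-condition on $u$'' does not hold.

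Second, the vanishing for $v(1-\xi)>m+O(1)$ does not come from character oscillation in $s$. It comes from an \emph{empty support}: combining two of the $\L$-side constraints (\eqref{Eq:doubleorbitssupport3} and \eqref{Eq:doubleorbitssupport4}) yields the congruence $\vv\bigl(jb_1\beta_\L(1-u^2/j^2)\bigr)\geq 1$, and since $v(1-u^2/j^2)=m$ by Corollary \ref{Sec5.3Cor:Specialsetting} while $v(b_1)=-(d+v(\Nm(\beta)))/2$ from the determinant condition $v(\Nm(e)(1-\xi))=0$, this forces $d\leq m+O(1)$. No orthogonality argument is used. Similarly, the case $v(1-\xi)\leq 0$ in the paper is handled by the trivial bound $|\I(\xi,f)|\leq\int|f(t_1(1+\epsi x)t_2)|\,dt_1$ and the observation that the $t_1$-integrand is supported on a single $\F^\times B^1$-coset; this is coarser than your ``$\E$-side becomes non-restrictive'' description.

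The bound in the middle range is then a careful volume count of the solution set for $(e,e')$: the four conditions reduce to two congruences \eqref{Sec5.3Eq:FinalEq1}--\eqref{Sec5.3Eq:FinalEq2} on $a_1/b_1$ and $(a_2,b_2)$, whose solution sets have volumes $\asymp q^{-n_2}$ and $\asymp q^{-n_1}$ with $n_1+n_2=(c(\pi)+d+m)/2+O(1)$. The ``effective character on $s$'' mechanism you describe is not what drives this; the work is in disentangling the $\L$-decomposition of an $\E$-element.
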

Here $O(1)$ denotes some absolutely bounded number which may depend on the ramification of the fields.
We are not very careful in the first bound in \eqref{Eq5.2:Notsharpbound}. It is possible to get better bound, but  not necessary as we shall see in Section \ref{Sec5.2.2}.
 We show first how to get the hybrid subconvexity bounds using these lemmas, and then prove them in Section \ref{Section:LocalLemOrbitInt}.

\subsubsection{The proof of Theorem \ref{Theo:HybridsubDisjointram}}
	For simplicity we omit the $\epsilon$ terms from our calculations.
	
	By Waldspurger's formula \eqref{Sec5.1Eq:GlobalWalds}, Lemma \ref{Lem5.1:constantmultiple},  we have for $f$ as in Setting \ref{Assumption:Localdisjoint} and $\varphi$ as in Lemma \ref{Lem5.1:constantmultiple},
	\begin{equation}
		\int_{t\in [\F^\times\backslash\E^\times]}R(f)\varphi(t)\chi^{-1}(t)dt\overline{  \int_{t\in [\F^\times\backslash\E^\times]}\varphi(t)\chi^{-1}(t)dt }=C_{\B,f}\frac{\zeta(2)L(\pi\times \pi_{\chi^{-1}},1/2)}{2L(\pi,Ad,1)}\prod\limits_{v}I^0_v(\varphi,\chi),
	\end{equation}
	where $I^0_v(\varphi,\chi)$ is as described in Lemma \ref{Lem5.1:LocalWalds}.
Note that each term on the right-hand side is non-negative. At $\mathfrak{p}$, $l=c(\pi_\mathfrak{p})$ and $I^0_v(\varphi,\chi)\asymp_q\frac{1}{q^{c(\pi_\mathfrak{p})/4}}$ in this case.

By Theorem \ref{Theo:RelativeTrace}, dropping all but one term on the spectral side, and using that $L(\pi,Ad,1)\ll_\epsilon C(\pi)^\epsilon$, we get that

\begin{equation}\label{Eq:relativespectrumineq0}
L(\pi\times \pi_{\chi^{-1}},1/2) \frac{1}{q^{c(\pi_{\mathfrak{p}})/4}}\frac{1}{\sqrt{C(\pi_{\chi^{-1}})}}\ll \I(f,\chi).
\end{equation}
Then we apply Lemma \ref{lem:localorbitint1} and \ref{Lem:RegularLocal0} to control the orbit integrals on the geometric side. In particular $$\I(0,f)\ll q^{c(\pi_\mathfrak{p})/4}\frac{1}{\sqrt{C(\pi_{\chi^{-1}})}},$$
whereas $\I(\infty,f)=0$ with the assumption that $m_v\neq 0$ at the archimedean places.
%For the general orbital integral $\I(\xi,f)$ for $\xi\neq 0,\infty$,
For $v|\mathfrak{c}(\chi)$, fix integers $0\leq d_v\leq v(\mathfrak{d}_{\E/\F}\mathfrak{c}(\chi))$, and $d_\mathfrak{p}>0$ such that $q_\mathfrak{p}^{d_\mathfrak{p}}\ll \prod q_v^{d_v} $. Then for the set $$S_{d_v,d_\mathfrak{p}}= \{\eta=\frac{1}{1-\xi}| \I(\xi,f)\neq 0, v(1-\xi)=d_v\text{\ for $v|\mathfrak{c}(\chi)$}, v_\mathfrak{p}(\xi)=d_\mathfrak{p} \},$$ we claim that
\begin{equation}\label{Eq:numRegular0}
\sharp S_{d_v,d_\mathfrak{p}}\ll \frac{\prod q_v^{d_v}}{q_\mathfrak{p}^{d_\mathfrak{p}}}.
\end{equation}
Note that the restriction $d_v\leq v(\mathfrak{d}_{\E/\F}\mathfrak{c}(\chi))$ is natural by the vanishing result in Lemma \ref{lem:localorbitint1}, and $v(1-\xi)=d_v$ implies that $v(\eta)\geq -d_v$. By the same Lemma, for any $v\nmid \mathfrak{c}(\chi)\mathfrak{p}\infty$, $\xi\in S_{d_v,d_\mathfrak{p}}$ if and only if $v(\eta)\geq -v(\mathfrak{d}_{\E/\F})$. 
For $v=\mathfrak{p}$, $v_\mathfrak{p}(\xi)=d_\mathfrak{p}$ implies that $\eta\equiv 1\mod \mathfrak{p}^{d_\mathfrak{p}}$, and in particular $v(\eta)\geq 0$.
Then the set of $\eta$ in $S_{d_v,d_\mathfrak{p}}$  lies in a lattice of covolume$\asymp \frac{1}{\prod q_v^{d_v}}$, and has to further satisfy a congruence condition $\mod \mathfrak{p}^{d_\mathfrak{p}}$.
 On the other hand by Setting \ref{Assumption:Local} (2), $\iota_v(\xi)<0$ for any $v|\infty$ and associated real embedding $\iota_v$. This implies that $\iota_v(\frac{\eta-1}{\eta})<0$, and in particular $0<\iota_v(\eta)<1$. Then \eqref{Eq:numRegular0} follows from the basic algebraic number theory, counting lattice points $\{\eta\}$ in a given box with a particular congruence condition.
\yh{By product formula and basic inequality, the lattice $\mathfrak{d}_{\E/\F}^{-1}\mathfrak{p}^{-d}$ has minimal distance $\asymp q^{-d/N}$ where $N$ is the degree of field extension. Then the volume of each disjoint balls is $\asymp q^{-d}$. The number of such balls in a fixed box is $\asymp q^d$.}

Now for every $\xi$ giving an element in $S_{d_v,d_\mathfrak{p}}$, we apply the bounds in \eqref{Eq:regularorbitsinf}, \eqref{Eq:regularorbitsunramifed} and Lemma \ref{Lem:RegularLocal0}, and get that up to $C(\pi\times\pi_{\chi^{-1}})^\epsilon$ terms,
\begin{align}\label{Eq5.2:GeoUpper1}
\I(f,\chi)&=\I(0,f)+\sum\limits_{\xi}\I(\xi,f)\ll_q q^{c(\pi_\mathfrak{p})/4}\frac{1}{\sqrt{C(\pi_{\chi^{-1}})}}+\sum\limits_{d_v,d_\mathfrak{p}}\sharp S_{d_v,d_\mathfrak{p}}\prod\frac{1}{\sqrt{C(\chi_v)q_v^{d_v}}}\min\{q_\mathfrak{p}^{d_\mathfrak{p}/2},q_\mathfrak{p}^{c(\pi_\mathfrak{p})/4}\} \\
&\ll q^{c(\pi_\mathfrak{p})/4}\frac{1}{\sqrt{C(\pi_{\chi^{-1}})}}+1.\notag
\end{align}
Note that in this case $C(\pi\times\pi_{\chi^{-1}})=(C(\pi)C(\pi_{\chi^{-1}}))^2$, and the convexity bound expects $C(\pi_{\chi^{-1}})^{\frac{1}{2}(1+\delta)}$.
Thus when we write $C(\pi)=C(\pi_{\chi^{-1}})^\delta$, we have by \eqref{Eq:relativespectrumineq0} \eqref{Eq5.2:GeoUpper1},
\begin{equation}
L(\Pi\otimes \chi^{-1},\frac{1}{2})\ll_q q^{c(\pi_\mathfrak{p})/2}+q^{c(\pi_\mathfrak{p})/4}\sqrt{C(\pi_{\chi^{-1}})}=C(\pi)^{1/2}+ C(\pi)^{1/4}C(\pi_{\chi^{-1}})^{1/2}\asymp C(\pi_{\chi^{-1}})^{\frac{1}{2}\max\{\delta, \frac{\delta}{2}+1\}}.
\end{equation}
 Thus we obtain a subconvexity bound for $0<\delta<\infty$, which can be as strong as Weyl bound when $\delta=2$.

\subsubsection{The proof of Theorem \ref{Theo:Hybridsubconvexity}}\label{Sec5.2.2}
The proof for Theorem \ref{Theo:Hybridsubconvexity} is very similar to that for Theorem \ref{Theo:HybridsubDisjointram}, so we shall mainly focus on the differences.

%	For simplicity we omit the $\epsilon$ terms from our calculations.
In this case, $l=2c(\pi_{\chi^{-1},\mathfrak{p}})-c(\pi_\mathfrak{p})$. By Theorem \ref{Theo:Main1},
%	Recall that by Remark \ref{Sec5Rem:Setting}, 
%	$
%	R(f_\mathfrak{p})\varphi_{\mathfrak{p}}=\varphi_{\mathfrak{p}},
%	$
%	where $\varphi_\mathfrak{p}$ is the test vector for the local Waldspurger's period integral $I_\mathfrak{p}(\varphi,\chi)$ as in Theorem \ref{Theo:Main1}, by which
	\begin{equation}\label{Eq5.2:localWaldsjointram}
	I_\mathfrak{p}(\varphi,\chi)\asymp_q  \frac{1}{q^{c(\pi_{\chi,\mathfrak{p}})/2-c(\pi_{\mathfrak{p}})/4}}.
	\end{equation}
	By Waldspurger's formula \eqref{Sec5.1Eq:GlobalWalds}, Lemma \ref{Lem5.1:constantmultiple}, \eqref{Eq5.2:localWaldsjointram}, Theorem \ref{Theo:RelativeTrace}
, and dropping all but one term, we get that
	\begin{equation}\label{Eq:relativespectrumineq}
	L(\pi\times \pi_{\chi^{-1}},1/2) \frac{1}{q^{c(\pi_{\chi,\mathfrak{p}})/2-c(\pi_{\mathfrak{p}})/4}}\ll \I(f,\chi).
	\end{equation}
	Then we apply Lemma \ref{lem:localorbitint1} and \ref{Lem:RegularLocal} to control the orbit integrals on the geometric side. In particular $$\I(0,f)\ll \frac{1}{q^{c(\pi_{\chi,\mathfrak{p}})/2-3c(\pi_\mathfrak{p})/4}},$$
	whereas $\I(\infty,f)=0$.
	For the general orbital integral $\I(\xi,f)$ with $\xi\neq 0,\infty$, let $d=d_\mathfrak{p}\leq m+O(1)$,
	where $m=c(\pi_{\chi,v})-c(\pi_v)$ as in Lemma \ref{Lem:RegularLocal}. Denote
	\begin{equation}\label{Eq:numRegular}
	S_d= \{\eta=\frac{1}{1-\xi}| \I(\xi,f)\neq 0, v_\mathfrak{p}(1-\xi)=d \}. 
	\end{equation}
	Then $\sharp S_d\ll q^d$.
	This is because by Lemma \ref{lem:localorbitint1} and that $v_\mathfrak{p}(1-\xi)=d$, $\I(\xi,f)$ is nonvanishing only when $\eta$ belongs to $\mathfrak{d}_{\E/\F}^{-1}\mathfrak{p}^{-d}$, which is a lattice of covolume$\asymp \frac{1}{q^d}$. On the other hand  $0<\iota_v(\eta)<1$ for any real embedding $v$. Then the bound for $S_d$ again follows from the  lattice point counting. From this argument, we shall also care about $d\geq -O(1)$ only.
	\yh{By product formula and basic inequality, the lattice $\mathfrak{d}_{\E/\F}^{-1}\mathfrak{p}^{-d}$ has minimal distance $\asymp q^{-d/N}$ where $N$ is the degree of field extension. Then the volume of each disjoint balls is $\asymp q^{-d}$. The number of such balls in a fixed box is $\asymp q^d$.}
	
	Now for every $\xi\in S_d$, we apply the bounds in \eqref{Eq:regularorbitsinf}, \eqref{Eq:regularorbitsunramifed} and Lemma \ref{Lem:RegularLocal}, and get that up to $q^\epsilon$ terms,
	\begin{equation}
	\I(f,\chi)=\I(0,f)+\sum\limits_{\xi}\I(\xi,f)\ll_q \frac{1}{q^{c(\pi_{\chi,\mathfrak{p}})/2-3c(\pi_\mathfrak{p})/4}}+\sum\limits_{-O(1)\leq d\leq m+O(1)} \sharp S_d\frac{1}{q^{(d+m)/2}}\ll  \frac{1}{q^{c(\pi_{\chi,\mathfrak{p}})/2-3c(\pi_\mathfrak{p})/4}}+1.
	\end{equation}
	Note that $-O(1)\leq d\leq 0$ terms can be incorporated into the main term by Lemma \ref{Lem:RegularLocal}.
	Then by \eqref{Eq:relativespectrumineq},
	\begin{equation}
	L(\Pi\otimes \chi^{-1},\frac{1}{2})\ll_q q^{\frac{c(\pi_\mathfrak{p})}{2}}+q^{\frac{c(\pi_{\chi,\mathfrak{p}})}{2}-\frac{c(\pi_\mathfrak{p})}{4}}\asymp C(\pi_\chi)^{\frac{1}{2}(\max\{\delta, 1-\frac{\delta}{2}\})}. 
	\end{equation}

\subsection{Proof of Lemma \ref{Lem:RegularLocal0} and \ref{Lem:RegularLocal}}\label{Section:LocalLemOrbitInt}
Now we go back to the local setting, and omit subscript $v$ for all the notations.
We need  a possibly different element $j'$ from that in Lemma \ref{Lem2.1:Eperpbasic}.
\begin{lem}\label{Lem:commonJ}
	For $\E$ and $\L$ aligned as in Proposition \ref{Sec4.4Prop:differenceinalpha}, there exists $j'\in \E^\perp \cap \L^\perp$ such that $v(j'^2)=0$ or $1$, and $v(j'^2)\equiv e_\E-1-\frac{c(\pi\times\pi_{\chi^{-1}})}{2}\mod(2)$.
\end{lem}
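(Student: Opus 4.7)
The plan is to construct $j'$ in two cases according to whether $\E$ and $\L$ coincide as subalgebras of $\B$. If $\E \ne \L$, then $\E \cap \L = \F$, so $\E^\perp \cap \L^\perp = (\E + \L)^\perp$ is one-dimensional; if $\E = \L$, this intersection is the two-dimensional $\L^\perp$. In either case a nonzero $j'$ exists, and the actual work is to arrange $v(j'^2) \in \{0,1\}$ with the prescribed parity.

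In the case $\E \ne \L$, fix imaginary elements $\beta \in \E$ and $\gamma \in \L$ with $v(\beta^2) = e_\E - 1$ and $v(\gamma^2) = e_\L - 1$, and decompose $\beta = \beta_\L + \beta^\perp$ with $\beta^\perp \in \L^\perp$ nonzero. Set $j' := \gamma \beta^\perp$. Then $j' \in \L^\perp$ is immediate, while $j' \in \E^\perp$ follows from a short trace computation using $\beta_\L \in \F\gamma$ (both are imaginary in $\L$) and $\beta^\perp \gamma = -\gamma\beta^\perp$ from Lemma~\ref{Lem2.1:Eperpbasic}. Squaring yields $j'^2 = -\gamma^2 (\beta^\perp)^2 \in \F$, so by Proposition~\ref{Sec4.4Prop:differenceinalpha},
\[
v(j'^2) = v(\gamma^2) + v(\Nm(\beta^\perp)) = (e_\L - 1) + (e_\E - 1) + \tfrac{c(\pi) - l}{2}.
\]
In the case $\E = \L$, take $j' := \alpha^\perp$ directly, which gives $v(j'^2) = v(\Nm(\alpha^\perp)) = -c(\pi \times \pi_{\chi^{-1}})/2$; if the parity disagrees with the target (which occurs only when $e_\E = e_\L = 2$), replace $\alpha^\perp$ by $\varpi_\L \alpha^\perp$, which flips parity since $v(\Nm(\varpi_\L)) = 1$ for ramified $\L$.

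To verify the parity in the case $\E \ne \L$, the key input is the congruence $c(\pi) \equiv e_\L - 1 \pmod{2}$, which follows from $c(\pi) = -v(\Nm(\alpha_\theta))$ applied to the minimal imaginary element $\alpha_\theta \in \L$: one checks that $v(\Nm(\alpha_\theta))$ is even when $\L$ is unramified and odd when $\L$ is ramified. Combining with $\tfrac{c(\pi) - l}{2} \equiv (e_\L - 1) - \tfrac{c(\pi \times \pi_{\chi^{-1}})}{2} \pmod{2}$,
\[
v(j'^2) \equiv 2(e_\L - 1) + (e_\E - 1) - \tfrac{c(\pi \times \pi_{\chi^{-1}})}{2} \equiv (e_\E - 1) - \tfrac{c(\pi \times \pi_{\chi^{-1}})}{2} \pmod{2}.
\]
Finally, rescaling $j'$ by a power of $\varpi_\F$ shifts $v(j'^2)$ by an even integer, bringing it into $\{0,1\}$ while preserving parity. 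The main subtlety will be the parity analysis itself, since the parity of $v(j'^2)$ is rigid up to $\F^\times$-rescaling in the one-dimensional case, and its alignment with the target relies on exactly the valuation identities supplied by Proposition~\ref{Sec4.4Prop:differenceinalpha}.
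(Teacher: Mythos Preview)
Your construction $j' = \gamma\beta^\perp$ is essentially the paper's $j' \propto [\alpha,\beta]$: since $[\alpha,\beta]=[\alpha,\beta^\perp]=(\alpha-\bar\alpha)\beta^\perp=2\alpha_0\beta^\perp$ and $\alpha_0,\gamma$ both span the imaginary line of $\L$, the two elements differ only by an $\F^\times$-scalar, and the valuation and parity computations then agree verbatim. Note that your case $\E=\L$ is vacuous (the hypothesis of Proposition~\ref{Sec4.4Prop:differenceinalpha} gives $\alpha^\perp\neq 0$, whereas $\E=\L$ would force $\alpha^\perp=\alpha-\alpha_\chi\in \L\cap\E^\perp=0$), and your choice $j'=\alpha^\perp$ there would be zero; case~1 alone already covers everything.
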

\begin{proof}
	Indeed for $\alpha$ and $\beta$ as in Proposition \ref{Sec4.4Prop:differenceinalpha},  
	we have a nontrivial element $ [\alpha_{0},\beta]=[\alpha,\beta]\in \E^\perp\cap \L^\perp$, so
	$j'$ should be parallel to $[\alpha,\beta]$. 
	Then
	\begin{equation}
	v(j'^2)\equiv v(\Nm([\alpha,\beta]))=v(\Nm(\beta))+v(\Nm(\alpha^\perp))\equiv e_\E-1-\frac{c(\pi\times\pi_{\chi^{-1}})}{2}\mod(2).
	\end{equation}
\end{proof}

Recall from Proposition \ref{Sec4.4Prop:differenceinalpha} that for $\beta\in \E^\times$ imaginary, we write
\begin{equation}
\beta=\beta_\L+\beta^\perp,
\end{equation}
where $\beta_\L\in \L$, $\beta^\perp\in \L^\perp$, $v(\Nm(\beta))=e_\E-1$ and $v(\Nm(\beta^\perp))=v(\Nm(\beta))+\frac{c(\pi)-l}{2}$. 

From now on let $j$ be $j'$ as in Lemma \ref{Lem:commonJ}.

\begin{cor}\label{Sec5.3Corjbeta}
	For notations as above,
	$
	j\beta=j\beta^\perp+j\beta_\L
	$
	with $j\beta^\perp\in \L$ and $j\beta_\L\in \L^\perp$. Furthermore $j\beta^\perp \in \L^0$, the set of trace 0 elements in $\L$.
\end{cor}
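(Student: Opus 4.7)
The plan is to use Lemma \ref{Lem2.1:Eperpbasic} twice, once for $\L$ and once for $\E$, exploiting the fact that by Lemma \ref{Lem:commonJ} our chosen $j$ lies in $\L^\perp \cap \E^\perp$.

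First I would verify the decomposition $j\beta = j\beta^\perp + j\beta_\L$ with the stated subspace memberships. Since $j,\beta^\perp \in \L^\perp$, Lemma \ref{Lem2.1:Eperpbasic} gives $j\beta^\perp \in \L$. For $j\beta_\L$, I would invoke the relation $l j = j \overline{l}$ of Lemma \ref{Lem2.1:Eperpbasic}(1) in the form $j\, l = \overline{l}\, j \in \L j = \L^\perp$, so $j\beta_\L \in \L^\perp$.

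For the second claim, I would show $j\beta^\perp$ is $\overline{\cdot}$-antifixed, i.e.\ anti-commutes with nothing but itself under the involution. The key observation is that $j \in \E^\perp$ and $\beta \in \E$, so Lemma \ref{Lem2.1:Eperpbasic}(1), now applied relative to $\E$, yields $\beta j = j \overline{\beta}$; since $\beta$ is imaginary in $\E$ this becomes $\beta j = -j\beta$. Likewise, applying the $\L$-version to $\beta_\L$ (which is imaginary in $\L$, because $\Tr_{\B/\F}$ restricts to the field trace and the perpendicular part $\beta^\perp$ is automatically trace zero, forcing $\Tr(\beta_\L) = \Tr(\beta) = 0$) gives $\beta_\L j = -j\beta_\L$. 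Subtracting these anti-commutation relations shows $\beta^\perp j = -j\beta^\perp$, whence
\[
\Tr(j\beta^\perp) = j\beta^\perp + \overline{j\beta^\perp} = j\beta^\perp + \overline{\beta^\perp}\,\overline{j} = j\beta^\perp + \beta^\perp j = 0,
\]
using $\overline{j} = -j$ and $\overline{\beta^\perp} = -\beta^\perp$ from Lemma \ref{Lem2.1:Eperpbasic}(1).

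There is no real obstacle here: the statement is essentially a compatibility between the two orthogonal decompositions $\B = \L \oplus \L^\perp$ and $\B = \E \oplus \E^\perp$, mediated by the single element $j \in \L^\perp \cap \E^\perp$ produced by Lemma \ref{Lem:commonJ}. The only subtlety is remembering to apply Lemma \ref{Lem2.1:Eperpbasic}(1) relative to both quadratic subalgebras, and to check that $\beta_\L$ inherits imaginarity from $\beta$, which is immediate from additivity of the trace.
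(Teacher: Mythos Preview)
Your proof is correct and follows essentially the same approach as the paper. The paper argues that $\Tr(j\beta)=0$ and $\Tr(j\beta_\L)=0$ separately (using $j\in\E^\perp$, $j\in\L^\perp$, and the imaginarity of $\beta$ and $\beta_\L$ respectively) and then subtracts; you subtract the anti-commutation relations first and then compute $\Tr(j\beta^\perp)$ directly, which amounts to the same computation in a slightly different order.
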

\begin{proof}
	The decomposition for $j\beta$ is clear as $j\in \L^\perp$. To see that $j\beta^\perp \in \L^0$, it suffices to show that both $j\beta_\L$ and $j\beta$ have trace 0. Note that $\Tr(\beta)=\Tr(\beta_\L)=0$, so
	$\overline{j\beta_\L}=\overline{\beta_\L}\overline{j}=\beta_\L j=-j\beta_\L$, where the last equality follows from that $j\in \L^\perp$. On the other hand,
	$\overline{j\beta}=-\overline{\beta}j=-j\beta$, where the last equality follows from that $j\in \E^\perp$.
\end{proof}

\subsubsection{Proof of Lemma \ref{Lem:RegularLocal0} }
For $v=\mathfrak{p}$ as in Setting \ref{Assumption:Localdisjoint}(3), 
 we have $c(\pi\times \pi_{\chi^{-1}})=2c(\pi)$, so $l=c(\pi)$.
 
We can take $\alpha_\chi=0$ without loss of generality. 
Thus $\alpha\in \E^\perp$ for the test vector in Theorem \ref{Theo:Liealgformulation}. Then for an imaginary element $\beta \in \E$, we  have $\beta\in \L^\perp$. For $j$ as in Lemma \ref{Lem:commonJ}, it is parallel to $[\alpha,\beta]$ with $v(j^2)\equiv e_\E-1-c(\pi) \mod (2)$. We also have $j\beta\in \L^0$.

There are many possible combinations of $v(\Nm(\beta))$ and $v(j^2)$ in this case. So we shall not keep close track of them and simply control their contributions by $O(1)$.
%\begin{example}
%	There are five possible situations when $\chi_v$ is unramified while $\pi_v$ is supercuspidal, which we list below
%	\begin{enumerate}
%		\item[1.]$\epsilon(\B)=1, e_\L=e_\E=1$, in which case we can pick $v(\Nm(\beta))=0$, $v(j^2)=0$;
%		\item[2.]$\epsilon(\B)=1$, $e_\L=e_\E=2$, in which case we can pick $v(\Nm(\beta))=1$, $v(j^2)=0$;
%		\item[3.]$\epsilon(\B)=-1$, $e_\L=1, e_\E=2$, in which case we can pick $v(\Nm(\beta))=1$, $v(j^2)=1$;
%		\item[4.]$\epsilon(\B)=-1$, $e_\L=2,e_\E=1$, in which case we can pick $v(\Nm(\beta))=0$, $v(j^2)=1$;
%		\item[5.]$\epsilon(\B)=-1$, $e_\L=e_\E=2$, in which case we can pick $v(\Nm(\beta))=1$, $v(j^2)=0$;
%	\end{enumerate}
%\end{example}

Now we start the proof for Lemma \ref{Lem:RegularLocal0}.
First of all for $\I(0,f)$, the integral is essentially the local Waldspurger's period integral as $f$ is the restriction of the matrix coefficient of a minimal vector to $ZB^1$. Also recall from Theorem \ref{Theo:Main1}, a minimal is a test vector iff the integrand is constant on the common support, and in particular on $ZB^1\cap \E^\times$. Thus considering the normalisation factor, we have
\begin{equation}
\I(0,f)\asymp_q q^{c(\pi)/2}\frac{1}{q^{l/4}}=q^{c(\pi)/4}.
\end{equation}

 As in \cite{feigon_averages_2009}, we have
\begin{align}
\I(\xi,f)&=\int_{\F^\times\backslash \E^\times}\int_{\F^\times\backslash \E^\times}f(t_1(1+jx) t_2)\chi^{-1}(t_1t_2) dt_1dt_2\\
&=\int_{\F^\times\backslash \E^\times}\int_{\F^\times\backslash \E^\times}f(t_1t_2(1+jxt_2/\overline{t_2}))\chi^{-1}(t_1t_2) dt_1dt_2\notag\\
&=\int_{\F^\times\backslash \E^\times}\int_{\E^1}f(e(1+jxe'))\chi^{-1}(e) de'de,\notag
\end{align}
where $e=t_1t_2$ and $e'=t_2/\overline{t_2}$.
Note that the local integral only depends on $\xi=-\Nm(j x)$, by a change of variable in $e'$ if necessary. In particular we can assume that $j$ is as in Lemma \ref{Lem:commonJ}.

%Lemma \ref{Lem:RegularLocal}  will now follows from the normalisation factor for $f$ and the following estimate.
%\begin{lem}
%	Suppose $v(1-\xi)=d>0$ where $\xi=-\Nm(jx)$.
%	Then the set $S=\{(e,e')\in  \F^\times\backslash \E^\times\times \E^1, e(1+jxe') \in \Supp f \}$ is non-empty only if $d\leq m+O(1)$. In that case, it has volume
%	$\ll_q\frac{1}{q^{(c(\pi)+d+m)/2}}.$
%\end{lem}

	We write 
	\begin{equation}\label{Eq5.3:ee'}
	e=a_1+b_1\beta, \text{\ }xe'=a_2+b_2\beta
	\end{equation} for $a_i,b_i\in \F^\times$. 
	For conciseness we shall also write \begin{equation}\label{Eq5.3:e30}
	\overline{e} xe'=a_3+b_3\beta.
	\end{equation}
	
	Then \begin{equation}\label{Eq:XInorm}
	\xi=-\Nm(jx)=j^2(a_2^2+b_2^2\Nm(\beta)).
	\end{equation}
	\begin{equation}\label{Sec5.3Eq:a3b3}
	a_3=a_1a_2+b_1b_2\Nm(\beta), \text{\ }b_3=(a_1b_2-a_2b_1).
	\end{equation}
	We can rewrite $e(1+jxe')$ according to the decomposition $\L\oplus \L^\perp$ as follows.
	\begin{align}\label{Eq:DecompwrtL0}
	e(1+jxe')&=e+j\overline{e}xe'=a_1+b_1\beta+j(a_3+b_3\beta )\notag\\
	&=[a_1+b_3j\beta]+[b_1\beta+ja_3].\notag
	\end{align}
	Multiplying by a constant if necessary, we may assume that $g=e(1+jxe')\in O_\F^\times B^1$. 
	Since $B^1\subset J^1$, we loose the condition a little and get the necessary conditions  as follows.
	\begin{equation}\label{Eq:doubleorbitssupport10}
	\vv(a_1)= 0.
	\end{equation}
	\begin{equation}\label{Eq:doubleorbitssupport30}
	\vv(b_3 j\beta)\geq 1,
	\end{equation}
	\begin{equation}\label{Eq:doubleorbitssupport20}
	\vv(b_1\beta)\geq i,
	\end{equation}
	\begin{equation}\label{Eq:doubleorbitssupport40}
	\vv(a_3 j)\geq i
	\end{equation}
	where $i $ is as in Definition \ref{Defn:ii'&JHs}. Here we have used Corollary \ref{Sec2.1Cor:vofimaginary} and Definition \ref{Defn3.1:semivaluation} to obtain \eqref{Eq:doubleorbitssupport20} and \eqref{Eq:doubleorbitssupport40}.
	Note that the possible error from using $J^1$ can be absolutely controlled by a fixed power of $q$.
	
		The discussion so far implies that, by Lemma \ref{Sec2.3Lem:VolumeofE},
	\begin{equation}\label{Sec5.3Eq:VolE*0}
	\Vol(\{e=a_1+b_1\beta\text{\   satisfying  \eqref{Eq:doubleorbitssupport10},\eqref{Eq:doubleorbitssupport20} }\}) \asymp_q \frac{1}{q^{i/e_\L}}\asymp_q \frac{1}{q^{c(\pi)/4}}.
	\end{equation}
	%(Alternatively, one can pick the test function $f_\mathfrak{p}$ to be the restriction of $\overline{\Phi}_{\varphi_\mathfrak{p}}$ to $\F^\times H^1$.)
	
	Furthermore by \eqref{Eq:doubleorbitssupport10}, \eqref{Eq:doubleorbitssupport20}, we get $v(\Nm(e))=v(\Nm(a_1+b_1\beta))=0$.
	By  \eqref{Eq:doubleorbitssupport30}, \eqref{Eq:doubleorbitssupport40} and \eqref{Eq5.3:e30}, we have $v(\Nm(j\overline{e}xe'))>0$. Then by \eqref{Eq5.3:ee'}  and \eqref{Eq:XInorm},  we get that
	\begin{equation}\label{Eq5.2:vxi>0}
	v(\xi)=v(-\Nm(jx))=v(\Nm(j(a_3+b_3\beta)))-v(\Nm(e))>0
	\end{equation} 
	 if there ever exists $g=e(1+jxe')\in O_\F^\times B^1$. 
	In that case, we have
	\begin{equation}\label{Eq5.3:vxi>0implies}
	v(a_2^2+b_2^2\Nm(\beta))=v(\xi)-v(j^2)=v(\xi)+O(1).
	\end{equation}

 Substituting \eqref{Sec5.3Eq:a3b3} into
  %\eqref{Eq:doubleorbitssupport30} and 
  \eqref{Eq:doubleorbitssupport40}, we obtain that
%	\begin{equation}\label{Sec5.3Eq:CongMid10}
%	v(a_1b_2-a_2b_1)\geq \frac{1-\vv(j\beta)}{e_\L},
%	\end{equation}
	\begin{equation}\label{Sec5.3Eq:CongMid20}
	v(a_1a_2+b_1b_2\Nm(\beta))\geq \frac{i-\vv(j)}{e_\L}=\frac{i}{e_\L}+O(1).
	\end{equation}
	Combining with \eqref{Eq:doubleorbitssupport20}, we obtain that
	\begin{equation}\label{Eq5.3:va20}
	v(a_2)\geq %\frac{i-\vv(j)}{e_\L}=
	\frac{i}{e_\L}+O(1).
	\end{equation}

	Now if $v(\xi)/2\leq i/e_\L+O(1)=c(\pi)/4+O(1),$ we get 
	\begin{equation}\label{Eq5.3:vb20}
	v(b_2)=\frac{v(\xi)}{2}+O(1)
	\end{equation}% and 
	By Lemma \ref{Sec2.3Lem:VolumeE1},
	\begin{equation}\label{Eq5.2:Vole'case1}
	\Vol\{e'\in \E^1|e' \text{\ maintains \eqref{Eq5.3:va20} and \eqref{Eq5.3:vb20}}\}\ll_q \frac{1}{q^{i/e_\L-v(\xi)/2}}\asymp_q \frac{1}{q^{c(\pi)/4-v(\xi)/2}}.
	\end{equation}
	
	On the other hand if $v(\xi)/2\geq i/e_\L+O(1)$,  %then \eqref{Eq5.3:vxi>0implies} implies \eqref{Sec5.3Eq:VolE*0}, in which case 
	we simply use that
	\begin{equation}\label{Eq5.2:Vole'case2}
	\Vol \{e'\in \E^1\}\leq 1.
	\end{equation}
	%(Note that one can actually get vanishing result in this case if $\E$ does not split at this place.)
	
	Then \eqref{Eq5.2:localintatpSetting1regular} follows from \eqref{Eq5.2:vxi>0}, \eqref{Sec5.3Eq:VolE*0}, \eqref{Eq5.2:Vole'case1}/\eqref{Eq5.2:Vole'case2} and the normalization factor for $f_v$.
	%%%

\subsubsection{Proof of Lemma \ref{Lem:RegularLocal} }
Assume from now on that $c(\pi_\chi)>c(\pi)$. Recall that $m=c(\pi_{\chi})-c(\pi)$.
In this case $\beta_\L\in \L^0$ is non-trivial. So by Corollary \ref{Sec5.3Corjbeta} there exists $u\in \F^\times$ such that \begin{equation}\label{Eq5.3:scalaru}
j\beta^\perp=u\beta_\L.
\end{equation}

\begin{cor}\label{Sec5.3Cor:Specialsetting}
	When $c(\pi_\chi)>c(\pi)$, we have
\begin{equation}\label{Sec5.3Eq:vNmbetaperp}
v(\Nm(\beta^\perp))=v(\Nm(\beta_\L)=v(\Nm(\beta))+c(\pi)-c(\pi_\chi)=v(\Nm(\beta))-m.
\end{equation}
For $j$ as in Lemma \ref{Lem:commonJ}, 
\begin{equation}\label{Sec5.3Eq:vjsquare}
v(j^2)\equiv e_\E-1-\frac{2c(\pi_{\chi^{-1}})}{2}\equiv 0\mod(2).
\end{equation}
So we can in particular pick from now on $v(j^2)=0$.
As a result, for $u$ as in \eqref{Eq5.3:scalaru},
\begin{equation}\label{Sec5.3Eq:vu}
v(u)=0,\text{\ } v(1-\frac{u^2}{j^2})=m.
\end{equation}
\end{cor}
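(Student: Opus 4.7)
The plan is to derive each of the four claims directly from Proposition \ref{Sec4.4Prop:differenceinalpha}, Lemma \ref{Lem:commonJ}, and Corollary \ref{Sec5.3Corjbeta} after first making the conductor formula for $c(\pi\times\pi_{\chi^{-1}})$ explicit in the current regime $c(\pi_\chi)>c(\pi)$. I expect three of the four claims to be essentially bookkeeping; the genuine obstacle will be the parity assertion in \eqref{Sec5.3Eq:vjsquare}.

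First I would evaluate $l$. Lemma \ref{Sec4.4Lem:conductorvsLiealg} in this ``unrelated'' regime, which applies whether $\E\not\simeq\L$ or $\E\simeq\L$ (the latter by hypothesis ($*$) combined with $c(\chi)>c(\theta)$), gives $c(\pi\times\pi_{\chi^{-1}})=2c(\pi_{\chi^{-1}})$, hence $l=c(\pi)+2m$. Substituting into Proposition \ref{Sec4.4Prop:differenceinalpha} immediately produces $v(\Nm(\beta^\perp))=v(\Nm(\beta))-m$, and the relation $\Nm(\beta)=\Nm(\beta_\L)+\Nm(\beta^\perp)$ from Lemma \ref{Lem2.1:Eperpbasic} combined with $m>0$ forces $v(\Nm(\beta_\L))=v(\Nm(\beta^\perp))$, establishing \eqref{Sec5.3Eq:vNmbetaperp}.

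For \eqref{Sec5.3Eq:vjsquare}, Lemma \ref{Lem:commonJ} immediately yields $v(j^2)\equiv e_\E-1-c(\pi_{\chi^{-1}})\pmod 2$, so the first congruence is automatic once the conductor formula above is in hand. The sharper claim that this is $\equiv 0\pmod 2$ --- equivalently $c(\pi_{\chi^{-1}})\equiv 1-e_\E\pmod 2$ --- is the real work. I would split on ramification: when $\E$ is unramified the identity $c(\pi_{\chi^{-1}})=2c_\E(\chi)$ makes the parity automatic, while when $\E$ is ramified ($e_\E=2$) I would use the standard conductor formula $c(\pi_\chi)=c_\F(\chi)+c(\eta_{\E/\F})=c_\F(\chi)+1$ and control the parity of $c_\F(\chi)$ via the constraint $\chi|_{\F^\times}=w_\pi=1$, exploiting the structure of $\E^\times/\F^\times\cong \E^1$ and the behaviour of the norm map on unit filtrations. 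Once the parity is secured, the two alternatives $v(j'^2)\in\{0,1\}$ allowed by Lemma \ref{Lem:commonJ} collapse to the single option $v(j^2)=0$, which justifies the choice in part (3).

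Finally, for \eqref{Sec5.3Eq:vu} I would exploit the scalar relation $j\beta^\perp=u\beta_\L$ from \eqref{Eq5.3:scalaru}. Both sides lie in $\L^0$ by Corollary \ref{Sec5.3Corjbeta}, so for any such $x$ one has $x^2=-\Nm(x)$. Squaring $j\beta^\perp=u\beta_\L$ and applying multiplicativity of the reduced norm together with $\Nm(j)=-j^2$ yields
\[
\frac{u^2}{j^2}=-\frac{\Nm(\beta^\perp)}{\Nm(\beta_\L)}.
\]
Taking valuations and invoking parts (1) and (3) gives $2v(u)=v(j^2)+v(\Nm(\beta^\perp))-v(\Nm(\beta_\L))=0$, hence $v(u)=0$. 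Rewriting $1-u^2/j^2=(\Nm(\beta_\L)+\Nm(\beta^\perp))/\Nm(\beta_\L)=\Nm(\beta)/\Nm(\beta_\L)$ and using the normalization $v(\Nm(\beta))=e_\E-1$ fixed just above Corollary \ref{Sec5.3Corjbeta} together with part (1) then delivers $v(1-u^2/j^2)=v(\Nm(\beta))-v(\Nm(\beta_\L))=m$.
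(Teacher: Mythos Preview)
Your proof is correct and follows essentially the same route as the paper's: both deduce \eqref{Sec5.3Eq:vNmbetaperp} from the conductor identity $c(\pi\times\pi_{\chi^{-1}})=2c(\pi_\chi)$ together with Proposition \ref{Sec4.4Prop:differenceinalpha} and the norm splitting $\Nm(\beta)=\Nm(\beta_\L)+\Nm(\beta^\perp)$, and both extract \eqref{Sec5.3Eq:vu} from \eqref{Eq5.3:scalaru} via that same norm relation. The only substantive difference is that you actually argue the parity claim in \eqref{Sec5.3Eq:vjsquare} (your case split on $e_\E$ is the right one, and in the ramified case the evenness of $c_\E(\chi)$ is exactly the fact recorded in Lemma \ref{Sec2.3Lem:VolumeofE} that $\F^\times U_\E(2n)=\F^\times U_\E(2n+1)$), whereas the paper simply asserts that it follows easily from Lemma \ref{Lem:commonJ}.
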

\begin{proof}
When $c(\pi_\chi)>c(\pi)$, we have that $c(\pi\times \pi_{\chi^{-1}})=2c(\pi_{\chi})$. \eqref{Sec5.3Eq:vNmbetaperp} and \eqref{Sec5.3Eq:vjsquare} follow easily from Proposition \ref{Sec4.4Prop:differenceinalpha} and Lemma \ref{Lem:commonJ}. \eqref{Sec5.3Eq:vu} then follows from \eqref{Eq5.3:scalaru}, \eqref{Sec5.3Eq:vNmbetaperp} and that $\Nm(\beta)=\Nm(\beta_\L)+\Nm(\beta^\perp)$.
\end{proof}

Now we start the proof for Lemma \ref{Lem:RegularLocal}. Some of the arguments are similar to those for Lemma \ref{Lem:RegularLocal0}.

First of all we have $l=2c(\pi_{\chi^{-1}})-c(\pi)$.
For $\I(0,f)$, the integral is again essentially the local Waldspurger's period integral plus the normalisation factor, so
\begin{equation}
\I(0,f)=q^{c(\pi)/2}\Vol(\F^\times\backslash(ZB^1\cap \E^\times))\asymp_q q^{c(\pi)/2}\frac{1}{q^{l/4}}=\frac{1}{q^{c(\pi_\chi)/2-3c(\pi)/4}}.
\end{equation}

When $v(1-\xi)\leq 0$, we simply use the following control
\begin{equation}
|\I(\xi,f)|\leq \int_{\F^\times\backslash \E^\times}|f(t_1(1+jx) t_2)| dt_1.
\end{equation}
Note that if both $t_1(1+jx) t_2$ and $t_1'(1+jx) t_2$ are in the support of $f$, then $t_1'^{-1}t_1\in \F^\times B^1$ by the multiplicative nature of $f$. Thus
\begin{equation}
|\I(\xi,f)| \ll_q q^{c(\pi)/2}\Vol(\F^\times\backslash(ZB^1\cap \E^\times))\asymp_q \frac{1}{q^{c(\pi_\chi)/2-3c(\pi)/4}}.
\end{equation}

We focus on the case $\I(\xi,f)$ when $v(1-\xi)>0$ from now on. Again we have
\begin{align}
 \I(\xi,f)=\int_{\F^\times\backslash \E^\times}\int_{\E^1}f(e(1+jxe'))\chi^{-1}(e) dede'.%\notag
\end{align}

%Note that the local integral only depends on $\xi=-\Nm(j x)$, by a change of variable in $e'$ if necessary. In particular we can assume that $j$ is as in Lemma \ref{Lem:commonJ}.
The analysis for the volume of $e$ and $e'$ is more complicated in this case. So we formulate the following lemma.
Lemma \ref{Lem:RegularLocal}  will then follow directly from this lemma and the normalisation factor for $f$.
\begin{lem}
Suppose $v(1-\xi)=d>0$ where $\xi=-\Nm(jx)$.
Then the set $S=\{(e,e')\in  \F^\times\backslash \E^\times\times \E^1, e(1+jxe') \in \Supp f \}$ is non-empty only if $d\leq m+O(1)$. In that case, it has volume
$\ll_q\frac{1}{q^{(c(\pi)+d+m)/2}}.$
\end{lem}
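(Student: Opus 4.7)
The plan is to mimic the argument for Lemma \ref{Lem:RegularLocal0} carefully, adapting it to the setting $\E\neq \L$ by exploiting Corollary \ref{Sec5.3Corjbeta} and Corollary \ref{Sec5.3Cor:Specialsetting}. Writing $e=a_1+b_1\beta$ and $xe'=a_2+b_2\beta$, I first expand
\[
 e(1+jxe')=(a_1+b_1\beta)+j(a_3+b_3\beta),
\]
with $a_3=a_1a_2+b_1b_2\Nm(\beta)$ and $b_3=a_1b_2-a_2b_1$. Substituting $\beta=\beta_\L+\beta^\perp$ and using $j\beta^\perp=u\beta_\L$ with $v(u)=v(j^2)=0$, I decompose $e(1+jxe')=A+Cj$ with $A,C\in\L$ given by
\[
 A=a_1+(b_1+ub_3)\beta_\L,\qquad C=a_3-(b_1u/j^2+b_3)\beta_\L.
\]

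Next I translate the support condition $e(1+jxe')\in ZB^1$ (absorbing a central scalar so that $v(a_1)=0$) into valuation inequalities: $v_\L((b_1+ub_3)\beta_\L)\geq 1$ for the $\L$-part, and $\vv(Cj)=v_\L(C)\geq i$ for the $\L^\perp$-part. Since $\beta_\L$ is minimal in $\L$, Corollary \ref{Sec2.1Cor:vofimaginary} splits the second condition into $v_\L(a_3)\geq i$ and $v_\L((b_1u/j^2+b_3)\beta_\L)\geq i$. The key identity
\[
 b_1(1-u^2/j^2)=(b_1+ub_3)-u(b_1u/j^2+b_3),
\]
combined with $v(1-u^2/j^2)=m$ from Corollary \ref{Sec5.3Cor:Specialsetting}, forces $v_\L(b_1)\geq 1-v_\L(\beta_\L)-e_\L m$, which is the relevant constraint on $e$. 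Lemma \ref{Sec2.3Lem:VolumeofE}, with $v_\L(\beta_\L)=\tfrac{e_\L(e_\E-1-m)}{2}$ and $i\asymp e_\L c(\pi)/4$, then converts this into the $e$-volume contribution.

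For the $e'$-variable, the norm condition $\Nm(e')=1$ restricts $(a_2,b_2)$ to the fibre $a_2^2+b_2^2\Nm(\beta)=\Nm(x)$, and $\xi=j^2\Nm(x)$ so that $v(1-\xi)=d$ is a congruence constraint on $\Nm(x)$. Substituting $a_2=-b_1b_2\Nm(\beta)+O(\varpi^{i/e_\L})$ extracted from $v_\L(a_3)\geq i$ back into the norm identity leaves a second relation between $b_2$ and $b_1$, which when compared with the $b_3$-conditions above gives two approximations for $b_2$ whose consistency (again via the identity involving $1-u^2/j^2$) forces the residues of $\Nm(x)$ and $1/j^2$ to agree to order $\min\{d,m\}+O(1)$. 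If $d>m+O(1)$ this is impossible, so $S=\emptyset$; otherwise Lemma \ref{Sec2.3Lem:VolumeE1} bounds the volume of admissible $e'$ by $q^{-(d-m)/2+O(1)}$ times the $e$-volume, and multiplying the two factors yields $\ll_q q^{-(c(\pi)+d+m)/2}$.

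The main obstacle is the delicate combinatorial bookkeeping: tracking all the valuation contributions from the ramifications of $\L$ and $\E$ (the six cases of Example \ref{Example:ListofHJi}), the differences between $\vv$, $v_\L$, and $v_\F$, and the possible proper inclusion $B^1\subsetneq J^1$, while collapsing them into a single clean bound. Once the threshold $d\leq m+O(1)$ and the volume estimate are established, the statement of the lemma follows, and multiplication by the normalization $1/\Vol(Z_{\mathfrak{p}}\backslash Z_{\mathfrak{p}}B^1)\asymp_q q^{c(\pi)/2}$ produces Lemma \ref{Lem:RegularLocal}.
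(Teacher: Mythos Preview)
Your decomposition $e(1+jxe')=A+Cj$ and the splitting of the support condition via Corollary \ref{Sec2.1Cor:vofimaginary} are correct, and the identity $b_1(1-u^2/j^2)=(b_1+ub_3)-u(b_1u/j^2+b_3)$ is exactly the one the paper uses. But there is a genuine gap: you never invoke the determinant condition, and without it the rest of the argument does not go through.

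After normalising so that $e(1+jxe')\in O_\F^\times B^1$, one has $v(\det)=0$, hence $v(\Nm(e))+v(1-\xi)=0$ and therefore $v(\Nm(e))=-d<0$. With $v(a_1)=0$ this pins down $v(b_1)=-(d+e_\E-1)/2$ \emph{exactly}. This is the engine of the whole argument. First, combining this exact value with your identity and $v(1-u^2/j^2)=m$ yields $d\leq m+O(1)$; your version only extracts a lower bound $v_\F(b_1)\gtrsim -m/2$ from the identity, which is essentially vacuous and is \emph{not} the volume-determining constraint on $e$. Second, from $v(1-\xi)>0$ one gets $v(\xi)=0$, and together with the $a_3$-condition this forces $v(a_2)=0$, $v(b_2)=-v(b_1)$. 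With all four valuations fixed, the two $\L^\perp$-conditions become the congruences
\[
\frac{a_1}{b_1}\equiv -\frac{b_2\Nm(\beta)}{a_2}\pmod{\varpi^{n_1}},\qquad
\frac{a_1}{b_1}\equiv \frac{a_2-u/j^2}{b_2}\pmod{\varpi^{n_2}},
\]
with $n_1\asymp c(\pi)/4+d/2$ and $n_2\asymp c(\pi)/4+m/2$. For fixed admissible $e'$ the second congruence gives $e$-volume $\asymp q^{-n_2}$; eliminating $a_1/b_1$ between the two and using $a_2^2+b_2^2\Nm(\beta)=\xi/j^2$ collapses to $a_2\equiv \xi/u\pmod{\varpi^{n_1+v(b_2)}}$, which by Lemma \ref{Sec2.3Lem:VolumeE1} gives $e'$-volume $\asymp q^{-n_1}$. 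The product is $q^{-(n_1+n_2)}\asymp q^{-(c(\pi)+d+m)/2}$.

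Your description of the $e'$-step (``two approximations for $b_2$ whose consistency forces residues of $\Nm(x)$ and $1/j^2$ to agree'') does not reduce to a usable constraint, and the claimed $e'$-volume ``$q^{-(d-m)/2}$ times the $e$-volume'' is neither dimensionally sensible nor what the computation actually gives. The fix is to insert the determinant step at the outset and then follow the congruence reduction above.
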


\begin{proof}
	As in the proof for  Lemma \ref{Lem:RegularLocal0}
we write $e=a_1+b_1\beta$ , $xe'=a_2+b_2\beta$ 
and $\overline{e} xe'=a_3+b_3\beta$ for $a_i,b_i\in \F^\times$. \eqref{Eq:XInorm} and \eqref{Sec5.3Eq:a3b3} remain true.
%Then \begin{equation}\label{Eq:XInorm'}
% \xi=-\Nm(jx)=j^2(a_2^2+b_2^2\Nm(\beta)).
%\end{equation}
%\begin{equation}\label{Sec5.3Eq:a3b3'}
%a_3=a_1a_2+b_1b_2\Nm(\beta), \text{\ }b_3=(a_1b_2-a_2b_1).
%\end{equation}
By $\beta=\beta_\L+\beta^\perp$ and Corollary \ref{Sec5.3Corjbeta}, we can rewrite $e(1+jxe')$ according to the decomposition $\L\oplus \L^\perp$ as following:
\begin{align}\label{Eq:DecompwrtL}
 e(1+jxe')&=e+j\overline{e}xe'=a_1+b_1\beta+j(a_3+b_3\beta )\notag\\
 &=[a_1+b_1\beta_\L+jb_3\beta^\perp]+[b_1\beta^\perp+ja_3+jb_3\beta_\L]\notag\\
 &=[a_1+(b_1+ub_3)\beta_\L]+[ja_3+j\beta_\L(\frac{u}{j^2}b_1+b_3)]\notag
\end{align}
where the last equality follows from that $j\beta^\perp=u\beta_\L$.
Multiplying by a constant if necessary, we may assume that $g=e(1+jxe')\in O_\F^\times B^1\subset O_\F^\times J^1$. 
Then
\begin{equation}\label{Eq:doubleorbitssupport1}
\vv(a_1)= 0,
\end{equation}
\begin{equation}\label{Eq:doubleorbitssupport3}
\vv((b_1+ub_3)\beta_\L)\geq 1,
\end{equation}
\begin{equation}\label{Eq:doubleorbitssupport2}
\vv(ja_3)\geq i,
\end{equation}
\begin{equation}\label{Eq:doubleorbitssupport4}
 \vv(j\beta_\L(\frac{u}{j^2}b_1+b_3))\geq i
\end{equation}
where $i $ is as in Definition \ref{Defn:ii'&JHs}. Here we have used Corollary \ref{Sec2.1Cor:vofimaginary} and Definition \ref{Defn3.1:semivaluation} to obtain \eqref{Eq:doubleorbitssupport2} and \eqref{Eq:doubleorbitssupport4}.
Note that the possible error from using $J^1$ can be absolutely controlled by a fixed power of $q$.%(Alternatively, one can pick the test function $f_\mathfrak{p}$ to be the restriction of $\overline{\Phi}_{\varphi_\mathfrak{p}}$ to $\F^\times H^1$.)

Furthermore, by checking the determinant, we get that $v(\det(g))=v(\Nm(e))(1-\xi)=0$.
Since $v(1-\xi)=d>0$ by the assumption, we get $v(\Nm(e))=-d<0$. From \eqref{Eq:doubleorbitssupport1}, we get 
\begin{equation}\label{Sec5.3eq:va1b1}
\text{$v(a_1)=0$ and $v(b_1)=-\frac{d+v(\Nm(\beta))}{2}.$ }
\end{equation}
In particular we need $d\equiv v(\Nm(\beta)=e_\E-1 \mod 2$ for the set $S$ to be non-empty.

From \eqref{Eq:doubleorbitssupport3} and \eqref{Eq:doubleorbitssupport4} we obtain that 
\begin{equation}\label{Eq:Trivialcong}
 \vv(jb_1\beta_\L(1-\frac{u^2}{j^2}))\geq 1.
 \end{equation}
By \eqref{Sec5.3eq:va1b1} and Corollary \ref{Sec5.3Cor:Specialsetting}, \eqref{Eq:Trivialcong} is equivalent to that
\begin{equation}\label{Sec5.3Eq:MainPfEq7}
d\leq m-\frac{2}{e_\L}.
\end{equation}
This proves the first part of the lemma.
Note that conversely, \eqref{Eq:doubleorbitssupport3} is automatic when  \eqref{Eq:doubleorbitssupport4} and  \eqref{Sec5.3Eq:MainPfEq7} hold. 

Assume \eqref{Sec5.3Eq:MainPfEq7}  from now on. Substituting \eqref{Sec5.3Eq:a3b3} into \eqref{Eq:doubleorbitssupport2} and \eqref{Eq:doubleorbitssupport4}, we obtain that
\begin{equation}\label{Sec5.3Eq:CongMid1}
v(a_1a_2+b_1b_2\Nm(\beta))\geq \frac{i}{e_\L},
\end{equation}
\begin{equation}\label{Sec5.3Eq:CongMid2}
v(\frac{u}{j^2}b_1+a_1b_2-a_2b_1)\geq \frac{i}{e_\L}+\frac{m-v(\Nm(\beta))}{2}.
\end{equation}
Here we have used Lemma \ref{Lem3.1semivaluation}(2) and Corollary \ref{Sec5.3Cor:Specialsetting}.
On the other hand, $v(1-\xi)=d>0$ implies that $v(\xi)=0$. 
By \ref{Eq:XInorm} and $v(j^2)=0$, we get that $\min\{v(a_2^2),v(\Nm(b_2\beta))\}=0$. Then to obtain the congruence in \eqref{Sec5.3Eq:CongMid1}, we must have 
\begin{equation}\label{Sec5.3Eq:va2b2}
v(a_2)=0, v(b_2)=-v(b_1).
\end{equation}
%%%%
For conciseness, denote 
\begin{equation}
n_1=\lceil \frac{i}{e_\L}\rceil-v(b_1)=\lceil \frac{i}{e_\L}\rceil+ \frac{d+v(\Nm(\beta))}{2},\text{\ } n_2=\lceil \frac{i}{e_\L}+\frac{m-v(\Nm(\beta))}{2}\rceil.
\end{equation}
Note that \eqref{Sec5.3Eq:MainPfEq7} implies that $n_1<n_2$.
Now the explicit valuations in \eqref{Sec5.3eq:va1b1} and \eqref{Sec5.3Eq:va2b2} allow us to rewrite \eqref{Sec5.3Eq:CongMid1} and \eqref{Sec5.3Eq:CongMid2} as follows:
%When this is satisfied, \eqref{Eq:doubleorbitssupport3} becomes trivial. 
\begin{equation}\label{Sec5.3Eq:FinalEq1}
\frac{a_1}{b_1}\equiv -\frac{b_2\Nm(\beta)}{a_2}\mod \varpi^{n_1}O_\F
\end{equation}
\begin{equation}\label{Sec5.3Eq:FinalEq2}
\frac{a_1}{b_1}\equiv \frac{a_2-\frac{u}{j^2}}{b_2}\mod\varpi^{n_2}O_\F
\end{equation}
The system of equations \eqref{Sec5.3Eq:FinalEq1} \eqref{Sec5.3Eq:FinalEq2} is equivalent to the system of equations  \eqref{Sec5.3Eq:FinalEq2} with the following.
\begin{equation}\label{Eq:Congfore'}
-\frac{b_2\Nm(\beta)}{a_2}\equiv \frac{a_2-\frac{u}{j^2}}{b_2} \mod \varpi^{n_1}.
\end{equation}

For each fixed $(a_2,b_2)$ satisfying \eqref{Eq:Congfore'}, Lemma \ref{Sec2.3Lem:VolumeofE} implies that
\begin{equation}\label{Sec5.3Eq:VolE*}
\Vol(\{e=a_1+b_1\beta\text{\   satisfying \eqref{Sec5.3Eq:FinalEq2} }\}) \asymp_q \frac{1}{q^{n_2}}.
\end{equation}

On the other hand, recall that $-b_2^2\Nm(\beta)=a_2^2-\frac{\xi}{j^2}$ from \eqref{Eq:XInorm}. So \eqref{Eq:Congfore'} is equivalent to that
$$
 a_2^2-\frac{\xi}{j^2}\equiv a_2^2-\frac{u}{j^2}a_2\mod \varpi^{n_1+v(b_2)}O_\F,
$$
or simply \begin{equation}\label{Sec5.3Eq:Solvea2}
a_2\equiv \frac{\xi}{u} \mod \varpi^{n_1+v(b_2)}O_\F.
\end{equation}
For all possible $a_2$ satisfying \eqref{Sec5.3Eq:Solvea2}, we have
\begin{equation}
\sharp\{b_2\mod\varpi^{n_1}O_\F| -b_2^2\Nm(\beta)=a_2^2-\frac{\xi}{j^2} \text{\ for some $a_2$ satisfying \eqref{Sec5.3Eq:Solvea2}}\}\ll_q 1.
\end{equation}
The subset of $e'\in \E^1$ must maintain the set of congruence classes of $(a_2,b_2)\mod\varpi^{n_1}O_\F$. By Lemma \ref{Sec2.3Lem:VolumeE1}, this subset of $\E^1$ has volume $\asymp_q \frac{1}{q^{n_1}}$.
%%%
Combining with \eqref{Sec5.3Eq:VolE*}, we get that
\begin{equation}
\Vol(S)\asymp_q \frac{1}{q^{n_1+n_2}}.
\end{equation}
Lastly by \eqref{Eq:Assmpfori}, we have $n_1+n_2=\frac{c(\pi)+d+m}{2}+O(1)$.  This concludes the proof of the second part of the lemma.
%%%

\end{proof}

\appendix
\section{More preparations}\label{Sec:AppendixA}
The goal in the appendix is to prove a variant of Theorem \ref{Theo:Main1} using relatively elementary method in a more restrictive setting but only requiring $p\neq 2$. In particular we shall prove the following theorem.
\begin{theo}\label{Appendix:Maintheo}
We assume that $p\neq 2$, $w_\pi=\chi|_{\F^\times}=1$, $\E$ not split, $c(\pi)\geq c(\pi_{\chi^{-1}})$ and $\B$ is the matrix algebra.
Then there exists a nontrivial test vector for local Waldspurger's period integral, if and only if there exists a minimal vector $\varphi\in \pi$ such that
\begin{equation}
\Phi_{\varphi}\chi^{-1}=1 \text{\ on } \Supp \Phi_{\varphi} \cap \E^\times .
\end{equation}
Moreover if $l$ is an integer such that $C(\pi\times\pi_{\chi^{-1}})=q^{c(\pi)+l}$, then
\begin{equation}
\max_{\text{minimal vector } \varphi'\in \pi }\{I(\varphi',\chi)\}\asymp I(\varphi,\chi)=\Vol(\Supp \Phi_{\varphi} \cap \E^\times )\asymp \frac{1}{q^{l/4}}.
\end{equation}
\end{theo}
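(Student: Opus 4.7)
The plan is to use the standard embedding of $\L$ given in \eqref{Eq:StandardEmbeddingL} and parametrize minimal vectors in $\pi$ as $\pi(g)\varphi_\theta$, where $\varphi_\theta$ is a fixed minimal vector attached to the cuspidal type $(\mathfrak{A},J,\Lambda)$ constructed in Section \ref{Sec:CompactinductionSummary+Kirillov}. Since translates of a minimal vector are again minimal vectors for conjugated cuspidal types, and since all cuspidal types associated to $\pi$ are conjugate, it suffices to parametrize $g$ modulo $J$ (or $B^1$, when we want a distinguished basis). First I would analyze the support: by Lemma \ref{Cor:MCofGeneralMinimalVec} the matrix coefficient $\Phi_{\pi(g)\varphi_\theta}$ is supported on $g^{-1}Jg$, with equivariance governed by $\tilde{\theta}^g$ (or $\tilde{\theta}\mu^g$ for type 2 vectors). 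Thus the local integral reduces to computing the intersection $g^{-1}Jg\cap \E^\times$ and checking when $\tilde{\theta}^g\chi^{-1}\equiv 1$ on that set, yielding the first assertion of the theorem.

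Next I would carry out a case-by-case analysis according to the three cases for the matrix algebra in Example \ref{Example:ListofHJi}. In each case, using a convenient set of double coset representatives for $\E^\times \backslash \B^\times/J$, one writes $g=\zxz{a}{b}{c}{d}$ (or a parametrized normal form) and computes $g^{-1}\E^\times g\cap J$ in terms of valuations of the entries of $g$ and $\alpha_\theta$, $\alpha_\chi$. This is where the argument becomes the most technical; the main obstacle is handling case (2) carefully, where $H\subsetneq J$ forces one to work with type 1 versus type 2 minimal vectors separately and track the intermediate polarization $B^1$. In each case one obtains explicit inequalities that restrict $g$ to a specific neighborhood, so that $g^{-1}Jg\cap \E^\times$ can be described as a subgroup $Z U_\E(N)$ with $N$ computed in terms of $c(\pi)$, $c(\pi_{\chi^{-1}})$, $e_\E$ and the precise alignment of $g$ with the $\E$-axis.

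Then I would translate the condition $\tilde{\theta}^g=\chi$ on $g^{-1}Jg\cap\E^\times$ into a numerical criterion. After matching central characters, this reduces to an equation of the form $\Nm(g^{-1}\alpha_\theta g-\alpha_\chi)\in\Nm(\E^\perp)^\times$ up to lower-order perturbations, which, upon using Lemma \ref{Lem:ConjugacyRep} to fix trace and norm of the conjugate, becomes a quadratic equation in the parameters of $g$. The solubility of this quadratic equation is controlled by whether a specific element of $\F^\times$ lies in $\Nm(\E^\perp)^\times$, and this in turn, via the standard formulas for $\epsilon$-factors, is exactly the Tunnell--Saito $\epsilon$-value test from Theorem \ref{Tunnell}. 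This gives the geometric interpretation of the test and matches Proposition \ref{Prop:Dichotomy}.

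Finally, when the quadratic equation has a solution, choose such a $g$ and the corresponding minimal vector $\varphi$. The local integral is then
\[
I(\varphi,\chi)=\Vol\bigl(\F^\times\backslash(g^{-1}Jg\cap \E^\times)\bigr),
\]
and I would compute this using Lemma \ref{Sec2.3Lem:VolumeofE}. The explicit value of $N$ obtained from the case analysis, combined with the formula $c(\pi\times\pi_{\chi^{-1}})=-2v(\Nm(\alpha_\theta)-\Nm(\alpha_\chi))$ from Lemma \ref{Sec4.4Lem:conductorvsLiealg}, yields $N\asymp l/4$ up to bounded terms, producing the bound $I(\varphi,\chi)\asymp q^{-l/4}$, consistent with the formula \eqref{Eq:SizeofIntAllCase} derived in Section \ref{Sec:mainmethod} by the Lie-algebra method. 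The maximum over all minimal vectors is achieved exactly when $B^1$ is aligned optimally with the $\E^\perp$-direction, which can be read off from the parametrization.
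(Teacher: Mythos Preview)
Your proposal is correct and follows essentially the same strategy as the paper: parametrize minimal vectors as translates of a fixed one, identify the support $\Supp\Phi_\varphi\cap\E^\times$, reduce the condition $\Phi_\varphi\chi^{-1}\equiv 1$ on that support to a quadratic equation, check solubility against the explicit Tunnell--Saito criterion, and read off the volume.

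The main practical differences are in the execution. First, the paper does not use a general $g$ and double cosets for $\E^\times\backslash\B^\times/J$; instead it takes the very specific upper-triangular parametrization $k=\zxz{1}{u}{0}{1}\zxz{v}{0}{0}{1}$ with $v(v)=0$, $v(u)\geq 0$, which already covers enough minimal vectors and reduces everything to a two-parameter quadratic in $(u,v)$ with an explicit discriminant (Lemma \ref{lem4.1:support} and the subsequent case work). Second, the paper organizes the case analysis not by the three cases of Example \ref{Example:ListofHJi} but rather by whether $\L\not\simeq\E$ (both ramified) versus $\L\simeq\E$ (inert or ramified), with the $c(\pi)=4n$ versus $4n+2$ distinction entering only as a sub-case; this is what makes the comparison with the explicit $\epsilon$-tables in Section \ref{Sec:epsilontest} clean. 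Third, your appeal to Lemma \ref{Sec4.4Lem:conductorvsLiealg} and the coadjoint formulation $\Nm(g^{-1}\alpha_\theta g-\alpha_\chi)\in\Nm(\E^\perp)^\times$ is a shortcut coming from Section \ref{Sec:mainmethod}; the whole point of the appendix is to avoid that machinery (which needs $p$ large) and instead compute the discriminant of the quadratic directly and match it against Lemmas \ref{lem:EpsilontestDistinctramified} and \ref{lem:Epsilontestsameramified}. If you want a self-contained proof valid for all odd $p$, you should follow the paper's more explicit route there.
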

Note that the situation we consider here is exactly where the local period integral might always be vanishing by Tunnell-Saito's epsilon value test. 

In Appendix \ref{Sec:AppendixA}, we set up explicit coordinates and make other preparations. The proof of this theorem will be given in Appendix \ref{Sec:AppendixB}.
\subsection{Setting up}\label{Sec:Specialembedding}
%For simplicity we assume that $p\neq 2$, the central character $w_\pi$ is trivial, $\E$ not split, $c(\pi)\geq c(\pi_{\chi^{-1}})$ and $\B$ is the matrix algebra.

From now on we fix the standard embedding of $\L$ and $\E$ into $\GL_2$ such that
$$\E=\{\zxz{a}{b}{bD}{a}\}, \text{\ \ }\L=\{\zxz{a}{b}{bD'}{a}\},$$ with $0\leq v(D),v(D')\leq 1$ depending on whether the quadratic extension is inert or ramified. Any different embeddings differ by a conjugation from the standard embedding, and can be reduced to a single translate for test vectors when computing the local period integral. 

Further if  $\theta |_{\F^\times }=1$, then $\alpha_\theta$ can be chosen to be imaginary, that is
\begin{equation}
\overline{\alpha_\theta}=-\alpha_\theta.
\end{equation}

For simplicity we choose $$D'=\frac{1}{\alpha_\theta^2\varpi_\L^{2c(\theta)}},$$
identify $\frac{1}{\alpha_\theta\varpi_\L^{c(\theta)}}$ with $\sqrt{D'}$ and $\L$ with $\F(\sqrt{D'})$.

One can check that indeed  $v_\F(D')=0,1$, and under the standard embedding,
%For example when $e_\L=1$, we have that 
\begin{equation}\label{eq2.1:specialembedding}
\alpha_\theta= \frac{1}{\varpi_\L^{c(\theta)}}\frac{1}{\sqrt{D'}} \mapsto \frac{1}{\varpi^{c(\theta)/e_\L}}\zxz{0}{\frac{1}{D'}}{1}{0}.
\end{equation}
%Note that when $e_\L=2$, $c(\theta)$ must be even when $\theta|_{\F^\times }=1$. %All other possible embeddings of $\L$ differ from this embedding by a conjugation.

We remark here that the particular choice of $D'$ is mainly for convenience, and a different choice will change %the intertwining operator to the Kirillov model in 
the intertwining operator in \eqref{eq:3.4:IntertwiningtoWhittaker}, and explicit  computations in, for example, Corollary \ref{CorA1:thetatilde2} and Lemma \ref{lem:supportimpliesclosetorus} accordingly. 
\begin{rem}\label{RemA2:specialorder}
As mentioned in Example \ref{Example:explicitideal}, when
$e_\L=1$, we have that $\mathfrak{A}_1=M_2(O_\F)$ and  $\mathcal{B}_1=\varpi M_2(O_\F)$. When $e_\L=2$,
$\mathfrak{A}_2=\zxz{O_\F}{O_\F}{\varpi O_\F}{O_\F}$  and $\mathcal{B}_2=\zxz{\varpi O_\F}{O_\F}{\varpi O_\F}{\varpi O_\F}$. 

\end{rem}

Let $K_\L(n)=\L^\times K_\mathfrak{A}(n)$ %=\{tb| t\in \L^\times , b=\zxz{a}{m}{0}{1}\text{\  where }a\in U_\F(n), v(m)\geq n\}.$$ 
and similarly $K_\L^1(n)=U_\L(1)K_\mathfrak{A}(n)$. In the case $c(\pi)=4n$ or $2n+1$, $B^1=K_\L^1(n)$. Using that $\GL_2=\L^\times B$ for the Borel subgroup $B$, we can alternatively write
$$K_\L^1(n)=\{tb| , t\in U_\L(1), b=\zxz{a}{m}{0}{1}\text{\  where }a\in U_\F(n), v(m)\geq n\}.   $$

In the case $c(\pi)=4n+2$, we shall pick the following special intermediate subgroup
\begin{equation}\label{eq3-4:AnotherCompactopenSubgroup}
 B^1=K_\L^1(n+1,n)=U_\L(1)K_{\mathfrak{A}_2}(2n+1).
\end{equation}
That is, we are using the other quadratic extension in Remark \ref{RemA2:specialorder} to define the intermediate subgroup.
Now we define a character $\tilde{\theta}$ on $B^1$ similarly as in \eqref{Eq:charextension}:
\begin{equation}\label{EqA1:thetatilde}
\tilde{\theta}(l(1+t))=\theta(l)e^{<\alpha_\theta ,t>}.
\end{equation}
\begin{lem}\label{LemA1:char}
	When $B^1\neq H^1$, \eqref{EqA1:thetatilde} indeed gives a character on $B^1$.
\end{lem}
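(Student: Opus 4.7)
The plan is to establish the character property of $\tilde{\theta}$ on $B^1 = U_\L(1) K_{\mathfrak{A}_2}(2n+1)$ by verifying that $B^1/H^1$ is an isotropic subspace of $J^1/H^1$ under the alternating pairing $\Psi_{\alpha_\theta}$ of \eqref{Eq3.1:alternatvepairing}, after which the extension-of-simple-character mechanism recalled in Section \ref{Sec:CompactInd} (Heisenberg polarization step) produces a character agreeing with \eqref{EqA1:thetatilde}.

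The first preliminary is that $B^1$ is a subgroup, which amounts to showing that $U_\L(1)$ normalizes $K_{\mathfrak{A}_2}(2n+1)$. In the embedding \eqref{eq2.1:specialembedding} with $e_\L = 1$ (so $D' \in O_\F^\times$), a direct matrix calculation shows that conjugation by a typical element $\zxz{1+a}{b}{bD'}{1+a}$ of $U_\L(1)$ (with $a,b\in\varpi O_\F$) preserves the entry-wise valuation constraints defining $\mathcal{B}_2^{2n+1}$ as displayed in Remark \ref{RemA2:specialorder}. In the same vein $U_\L(1)\cap K_{\mathfrak{A}_2}(2n+1)\subset H^1=U_\L(1)K_{\mathfrak{A}_1}(n+1)$, so whenever $l(1+t)=l'(1+t')$ with both decompositions of the required form, the ambiguity $l^{-1}l'$ lies in $H^1$; there the value of $\tilde{\theta}$ is unambiguously determined by Lemma \ref{Lem:DualLiealgForChar}(1), giving well-definedness.

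For the character property, write $b_i = l_i(1+t_i) \in B^1$ and expand
\[
b_1 b_2 \;=\; l_1 l_2\bigl(1 + l_2^{-1}t_1 l_2 + t_2 + l_2^{-1}t_1 l_2 \cdot t_2\bigr).
\]
Since $\alpha_\theta$ commutes with $l_2 \in \L^\times$, the trace pairing yields $<\alpha_\theta,\, l_2^{-1}t_1 l_2> = <\alpha_\theta,\, t_1>$, so the identity $\tilde{\theta}(b_1 b_2) = \tilde{\theta}(b_1)\tilde{\theta}(b_2)$ reduces to $e^{<\alpha_\theta,\, l_2^{-1}t_1 l_2 \cdot t_2>} = 1$. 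The lattice inclusion $\mathcal{B}_2^{4n+2} \subseteq \mathcal{B}_1^{2n+1}$, read off from Remark \ref{RemA2:specialorder} by comparing entry-wise valuations, then forces $\alpha_\theta \cdot l_2^{-1}t_1 l_2 t_2 \in \mathcal{B}_1^{0} = M_2(O_\F)$ (using $v_\L(\alpha_\theta) = -(2n+1)$ under our embedding), whence the trace is in $O_\F$ and is killed by the unramified $\psi$.

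The main obstacle is this final lattice/pairing step; the subtlety is that $\mathcal{B}_2$ is defined relative to a different (ramified) chain order from $\mathcal{B}_1$, and the inclusion of powers is asymmetric. Equivalently, one may package the argument as the assertion that $\pr(B^1)$ is self-dual under $\Psi_{\alpha_\theta}$ in the sense of Lemma \ref{Lem:DuallatticeEperp}, thereby realizing $B^1/H^1$ as a polarizing subspace of $J^1/H^1$ and producing the character by appeal to the Heisenberg theory already set up in Section \ref{Sec:CompactInd} without reinspecting the matrix entries.
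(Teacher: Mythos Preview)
Your direct verification is correct and rests on the same key observation as the paper's proof: the lattice inclusion $\mathcal{B}_2^{4n+2}\subseteq\mathcal{B}_1^{2n+1}$ (the paper phrases this as ``$v_2(t)\geq 2m$ implies $\vv(t)\geq m$''), together with the conjugation invariance $\Tr(\alpha_\theta\, l^{-1}t l)=\Tr(\alpha_\theta t)$ for $l\in\L^\times$. The paper organizes the check into three pieces---$\tilde\theta$ is a character on $K_{\mathfrak{A}_2}(2n+1)$; no ambiguity on $U_\L(1)\cap K_{\mathfrak{A}_2}(2n+1)=U_\L(n+1)$; compatibility with multiplication by $U_\L(1)$---while you fold these into a single product computation $b_1b_2=l_1l_2(1+l_2^{-1}t_1l_2+t_2+l_2^{-1}t_1l_2t_2)$; both routes are equivalent. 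One cosmetic remark: your opening and closing paragraphs announce a plan via the Heisenberg polarization machinery of Section~\ref{Sec:CompactInd}, but you never actually invoke it---the work is entirely the direct check in the middle paragraphs, so the framing could be dropped.
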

\begin{proof}
	Let $\L_2$ be the ramified quadratic field extension with associated semi-valuation $v_2$ as in Definition \ref{Defn3.1:semivaluation}, so that $B^1=U_\L(1)K_{\mathfrak{A}_2}(2n+1)=l(1+t)$ with $l\in U_\L(1)$ and $v_2(t)\geq 2n+1$. We observe here that if $v_2(t)\geq 2m$, then $\vv(t)\geq m$. Also when $\vv(t)\geq 1$, $v_2(t)\geq 1$.
	
	First of all, $\tilde{\theta}$ is indeed a character on $K_{\mathfrak{A}_2}(2n+1)$. Indeed 
	$$\tilde{\theta}((1+t_1)(1+t_2))=e^{<\alpha_\theta, t_1+t_2+t_1t_2>}=e^{<\alpha_\theta, t_1+t_2>}=\tilde{\theta}(1+t_1)\tilde{\theta}(1+t_2).$$
	Here in the second equality we have used that $v_2(t_1t_2)\geq 4n+2$, so that $\vv(t_1t_2)\geq 2n+1=-\vv(\alpha_\theta)$.
	
	Secondly, it is easy to check that there is no ambiguity on $U_\L(1)\cap K_{\mathfrak{A}_2}(2n+1)=U_\L(n+1)$.
	
	Lastly, for $l=1+\Delta l=1+\zxz{x}{y}{yD'}{x}$ with $v(x),v(y)\geq 1$, and $1+t\in  K_{\mathfrak{A}_2}(2n+1)$, we need to check that \begin{equation}
	\tilde{\theta}((1+t)l)=\theta(l)e^{<\alpha_\theta,t>}.
	\end{equation} We first claim that $l^{-1}(1+t)l\in K_{\mathfrak{A}_2}(2n+1)$. It is clear that
	$
	l^{-1}(1+t)l=1+\frac{\overline{l}tl}{\Nm(l)}.$
	Then indeed $\Nm(l)\in O_\F^\times$, and $v_2(\overline{l}tl)=v_2(t+\overline{\Delta l}t+t\Delta l+\overline{\Delta l}t\Delta l)\geq 2n+1$. Thus
	$$\tilde{\theta}((1+t)l)=\theta(l) e^{<\alpha_\theta, \frac{\overline{l}tl}{\Nm(l)}>},$$
	while
	$$e^{<\alpha_\theta, \frac{\overline{l}tl}{\Nm(l)}>}=\psi\circ\Tr(\alpha_\theta\frac{\overline{l}tl}{\Nm(l)})=\psi\circ\Tr(\alpha_\theta\frac{tl\overline{l}}{\Nm(l)})=e^{<\alpha_\theta,t>}.$$
	
\end{proof}

\begin{cor}\label{CorA1:thetatilde2}
	Let $g=1+t=\zxz{1+a}{b}{c}{1+d}$ with $v(a),v(d)\geq n+1$ and $v(b), v(c)\geq n\geq 1$. Then $g\in B^1$ and
	$$\tilde{\theta}(g)=e^{<\alpha_\theta,t>}=\psi(\frac{1}{\varpi^{2n+1}}(b+\frac{c}{D'})).$$
\end{cor}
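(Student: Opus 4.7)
The second equality is immediate from the explicit matrix representation in \eqref{eq2.1:specialembedding}: since
\[\alpha_\theta t = \frac{1}{\varpi^{2n+1}}\zxz{0}{1/D'}{1}{0}\zxz{a}{b}{c}{d} = \frac{1}{\varpi^{2n+1}}\zxz{c/D'}{d/D'}{a}{b},\]
we have $\Tr(\alpha_\theta t) = (b+c/D')/\varpi^{2n+1}$, and applying $\psi$ gives the claim. The substantive content is the first equality together with the assertion $g\in B^1$.

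Both of these will follow from an explicit decomposition $g=l(1+t')$ with $l\in U_\L(1)$ and $1+t'\in K_{\mathfrak{A}_2}(2n+1)$. The natural choice, designed to kill the $(2,1)$ entry modulo $\varpi^{n+1}$, is
\[l = 1+(c/D')\sqrt{D'} = \zxz{1}{c/D'}{c}{1};\]
note $v(c/D')=v(c)\ge n\ge 1$ since $v(D')=0$ in the inert case $e_\L=1$ forced by $c(\pi)=4n+2$, so indeed $l\in U_\L(1)$. A direct entry-by-entry computation of
\[t' = l^{-1}g-1 = \frac{1}{1-c^2/D'}\zxz{a}{b-c/D'-cd/D'}{-ca}{d-cb+c^2/D'}\]
verifies that all four entries meet the valuation conditions defining $\mathcal{B}_2^{2n+1}$ (the crucial $(2,1)$ entry has valuation $v(ca)\ge 2n+1$), yielding simultaneously $g\in B^1$ and the desired factorization.

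With the decomposition in place, Lemma \ref{LemA1:char} gives $\tilde\theta(g)=\theta(l)\,e^{<\alpha_\theta,t'>}$. I would compute $\theta(l)$ using the sharper form of Lemma \ref{Lem:DualLiealgForChar}(1), valid for $u\in\varpi_\L^{\lfloor c(\theta)/2\rfloor}O_\L$: with $u=(c/D')\sqrt{D'}$, the linear term $\alpha_\theta u = c/(D'\varpi^{2n+1})\in \F$ has trace $2c/(D'\varpi^{2n+1})$, while $\alpha_\theta u^2/2$ is a scalar multiple of $\sqrt{D'}$ and hence trace-zero, giving $\theta(l)=\psi(2c/(D'\varpi^{2n+1}))$. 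For $e^{<\alpha_\theta,t'>}$, I would compute $\Tr(\alpha_\theta t')$ directly, expand $(1-c^2/D')^{-1}$ as a geometric series, and discard every resulting term whose $\psi$-value is trivial using the bounds $v(c(a+d)/D')\ge 2n+1$ and $v(bc^2/D')$, $v(c^3/D'^2)\ge 3n\ge 2n+1$ (all requiring $n\ge 1$). What remains reduces to $\Tr(\alpha_\theta t')\equiv (b-c/D')/\varpi^{2n+1}\pmod{O_\F}$, so $\tilde\theta(g)=\psi(2c/(D'\varpi^{2n+1}))\cdot\psi((b-c/D')/\varpi^{2n+1})=\psi((b+c/D')/\varpi^{2n+1})$.

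The main obstacle is the final round of bookkeeping: systematically verifying that every higher-order correction arising from the quadratic term in Lemma \ref{Lem:DualLiealgForChar}(1) and from the geometric expansion of $(1-c^2/D')^{-1}$ lies in $O_\F$ after division by $\varpi^{2n+1}$, so as to be killed by $\psi$. The assumption $n\ge 1$ is exactly what makes these valuation estimates close, so I would open the proof by isolating the needed list of uniform bounds and then chain them together.
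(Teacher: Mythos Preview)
Your proof is correct and follows essentially the same route as the paper: decompose $g$ as an element of $U_\L(1)$ times an element of $K_{\mathfrak{A}_2}(2n+1)$, invoke Lemma~\ref{LemA1:char}, evaluate $\theta$ on the $\L$-part via \eqref{eq:alphathetaless} (using that $\alpha_\theta$ is imaginary so the quadratic correction has zero trace), and discard higher-order terms by valuation estimates. The only cosmetic difference is your choice $l=1+(c/D')\sqrt{D'}$ versus the paper's $l'=(1+a)+(c/D')\sqrt{D'}$: the paper's choice makes the second factor upper triangular so that only its $(1,2)$ entry enters $\Tr(\alpha_\theta t')$, whereas your choice leaves a nonzero $(2,1)$ entry $-ca$ whose contribution you must separately bound away---but either way the bookkeeping is the same size and your outlined estimates are correct.
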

\begin{proof}
	One can directly check that
	$$g=\frac{1}{\Nm(1+a+\frac{c}{D'}\sqrt{D'})}\zxz{1+a}{\frac{c}{D'}}{c}{1+a}\zxz{(1+a)^2-\frac{c^2}{D'}}{b(1+a)-\frac{c(1+d)}{D'}}{0}{(1+a)(1+d)-bc},$$
	where $\zxz{1+a}{\frac{c}{D'}}{c}{1+a}\in \L^\times$ and $\zxz{1+a}{\frac{c}{D'}}{c}{1+a}\zxz{(1+a)^2-\frac{c^2}{D'}}{b(1+a)-\frac{c(1+d)}{D'}}{0}{(1+a)(1+d)-bc}\in B\cap K_{\mathfrak{A}_2}(2n+1)$. Thus by Lemma \ref{LemA1:char} and direct computations, we have
	$$\tilde{\theta}(g)=\theta(1+a+\frac{c}{D'}\sqrt{D'})\psi(\frac{1}{\varpi^{2n+1}}(b(1+a)-\frac{c(1+d)}{D'})).$$
	On one hand, as $v(a),v(d)\geq n+1$ and $v(b), v(c)\geq n$, we have
	$$\psi(\frac{1}{\varpi^{2n+1}}(b(1+a)-\frac{c(1+d)}{D'}))=\psi(\frac{1}{\varpi^{2n+1}}(b-\frac{c}{D'})).$$
	On the other  hand, by  \eqref{eq:alphathetaless}, we have
	\begin{equation}
	\theta(1+a+\frac{c}{D'}\sqrt{D'})=\psi_\L(\alpha_\theta(\frac{c\sqrt{D'}}{(1+a)D'}-\frac{1}{2}(\frac{c\sqrt{D'}}{(1+a)D'})^2))=\psi(\frac{1}{\varpi^{2n+1}}\frac{2c}{(1+a)D'})=\psi(\frac{1}{\varpi^{2n+1}}\frac{2c}{D'}).
	\end{equation}
	Here we have used \eqref{eq2.1:specialembedding} and that $\alpha_\theta$ is imaginary, so $\psi_\L(-\alpha_\theta\frac{1}{2}(\frac{c\sqrt{D'}}{(1+a)D'})^2)=1$. The claimed result then follows easily.
\end{proof}
%Alternatively we can write $$B^1=\{tb| , t\in U_\L(1), b=\zxz{a}{m}{0}{1}\text{\  where }a\in U_\F(n+1), v(m)\geq n\}.$$
%Then for  $\zxz{a}{m}{0}{1}\in K_{\mathfrak{A}_2}(2n+1)$, \eqref{EqA1:thetatilde} can be more explicitly written as
%\begin{equation}\label{EqA1:explicit2}
%\tilde{\theta}(l\zxz{a}{m}{0}{1})=\theta(t)\psi(\frac{1}{\varpi^{c(\theta)/e_\L}}m).
%\end{equation}

%As we fixed the embedding of $\L$, we've also fixed the cuspidal type and associated minimal vectors.

\subsection{Matrix coefficient}\label{Sec:MatrixcoeffSC}
%This part is directly taken from, for example, \cite{Knightly:aa}.
Here we explain the basics on matrix coefficients for compactly-induced representations.

In general let $G$ be a unimodular locally profinite group with center $Z$. Let $H\subset G$ be an open and closed subgroup containing $Z$ with $H/Z$ compact. Let $\rho$ be an irreducible smooth representation of $H$ with unitary central character and $\pi=c-\Ind_H^G(\rho)$.  By the assumption on $H/Z$, $\rho$ is automatically unitarisable, and we shall denote the unitary pairing on $\rho$ by $<\cdot,\cdot>_{\rho}$. Then one can define the unitary pairing on $\pi$ by
\begin{equation}
<\phi,\psi>_\pi=\sum\limits_{x\in H\backslash G}<\phi(x),\psi(x)>_{\rho}.
\end{equation}
If we let $y\in H\backslash G$ and $\{v_i\}$ be a basis for $\rho$, the elements
$$ f_{y,v_i}(g)=\begin{cases}
\rho(h)v_i,&\text{\ if \ }g=hy\in Hy;\\
0,&\text{\ otherwise.}
\end{cases}$$
form a basis for $\pi$.
\begin{lem}\label{lem3.3:matrixcoeffInduction}
For $y,z\in H\backslash G$,
\begin{equation}
<\pi(g)f_{y,v_i},f_{z,v_j}>_\pi=\begin{cases}
<\rho(h)v_i,v_j>_{\rho}, &\text{\ if\ }g=z^{-1}hy\in z^{-1}Hy;\\
0,&\text{\ otherwise}.
\end{cases}
\end{equation}
\end{lem}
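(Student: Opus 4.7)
The plan is to unwind the definitions directly; there is no real obstacle here beyond bookkeeping with cosets.

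First, I would compute $\pi(g) f_{y,v_i}$ using the right regular action: $\big(\pi(g) f_{y,v_i}\big)(x) = f_{y,v_i}(xg)$, and note that by the definition of $f_{y,v_i}$, this is nonzero precisely when $xg \in Hy$, i.e.\ when $x \in Hyg^{-1}$. Plugging into the pairing,
\[
\langle \pi(g) f_{y,v_i}, f_{z,v_j}\rangle_\pi = \sum_{x \in H\backslash G}\langle f_{y,v_i}(xg), f_{z,v_j}(x)\rangle_\rho,
\]
and the summand is nonzero only when the coset $Hx$ meets both $Hyg^{-1}$ and $Hz$, which happens if and only if $Hz = Hyg^{-1}$, i.e.\ $g \in z^{-1}Hy$. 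This immediately gives the vanishing case.

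Next, suppose $g = z^{-1}hy$ for some $h \in H$. I would pick the representative $x = z$ of the unique contributing coset and evaluate: $f_{y,v_i}(zg) = f_{y,v_i}(hy) = \rho(h) v_i$, while $f_{z,v_j}(z) = v_j$. Since only this one coset contributes, the sum collapses to $\langle \rho(h) v_i, v_j\rangle_\rho$, as claimed. A brief remark that the element $h$ is well-defined modulo the stabilizer of $v_i$ (or just that the answer is independent of choice of representative, since $H \cap z^{-1}Hy/z$ structure is trivial here — $h$ is uniquely determined because $H y$ is a single right coset and $z$ is fixed) completes the argument.

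The only thing worth double-checking is the independence of the sum on the choice of coset representatives, but this is automatic from the $H$-equivariance $f_{y,v_i}(hx) = \rho(h) f_{y,v_i}(x)$ together with the unitarity of $\rho$. No estimates or deeper input are needed: this is a formal manipulation of the compact induction model.
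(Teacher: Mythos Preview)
Your proof is correct and is the standard direct unwinding of the definitions. The paper itself does not supply a proof here but refers the reader to \cite{Knightly:aa}; your argument is exactly what one would expect to find there.
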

See, for example, \cite{Knightly:aa} for more details.

Now we pick $G=\B^\times$, and $H=J$, $\rho=\Lambda$.  The minimal vector $\varphi$  in Definition \ref{Defn:GeneralMinimalVec} corresponds to the coset $y=1$. 
Type 1, 2 minimal vectors correspond to different choices of basis for $\Lambda$.
Then the lemma above immediately explains Lemma \ref{Cor:MCofGeneralMinimalVec}. Further more we have the following:

\begin{cor}\label{cor:matrixcoeff}
	Let $K$ be the standard maximal compact subgroup, and let $\Vol(Z\backslash ZK)=1$. Then
\begin{equation}
\int\limits_{Z\backslash \GL_2}|\Phi_\varphi(g)|^2dg\asymp \frac{1}{C(\pi\times\pi)^{1/2}}.
\end{equation}
In particular the formal degree of $\pi$ is asymptotically $C(\pi\times\pi)^{1/2}$.

\end{cor}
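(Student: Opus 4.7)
The plan is to compute $\|\Phi_\varphi\|_{L^2(Z\backslash \GL_2)}^2$ via Schur orthogonality on the compact-mod-centre group $J/Z$ provided by the cuspidal type of $\pi$.

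By Lemma~\ref{Cor:MCofGeneralMinimalVec}, $\Phi_\varphi$ is supported on $J$. Realising $\pi$ in its compact induction model $c$-$\Ind_J^{\GL_2}\Lambda$ and invoking Lemma~\ref{lem3.3:matrixcoeffInduction}, we may take $\varphi$ to correspond to $f_{1,v}$ for a unit eigenvector $v$ of the simple character; then $\|\varphi\|_\pi^2 = \|v\|^2 = 1$ and $\Phi_\varphi(g) = \langle \Lambda(g)v, v\rangle$ for $g \in J$, while $\Phi_\varphi$ vanishes elsewhere. Since $\Lambda$ is an irreducible unitary representation of the compact group $J/Z$, Schur orthogonality gives
\begin{equation*}
\int_{Z\backslash \GL_2}|\Phi_\varphi(g)|^2\, dg
\;=\; \int_{Z\backslash J}|\langle \Lambda(g) v, v\rangle|^2\, dg
\;=\; \frac{\Vol(Z\backslash J)}{\dim \Lambda}.
\end{equation*}
For $\|\varphi\|_\pi = 1$ this integral equals $1/d_\pi$, so $d_\pi = \dim\Lambda/\Vol(Z\backslash J)$; the second assertion of the corollary is thereby a reformulation of the first.

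To match $\Vol(Z\backslash J)/\dim\Lambda$ with $C(\pi\times\pi)^{-1/2}$, I would evaluate both sides from the data tabulated in Example~\ref{Example:ListofHJi}. With the normalisation $\Vol(Z\backslash ZK) = 1$,
\begin{equation*}
\Vol(Z\backslash J)
\;=\; \frac{[\L^\times : \F^\times U_\L(i)] \cdot [O_\F^\times : U_\F(i)]}{[K : K_{\mathfrak{A}}(i)]},
\end{equation*}
and each index is elementary from the filtration $\mathfrak{A}^\times \supset K_{\mathfrak{A}}(1) \supset K_{\mathfrak{A}}(2) \supset \cdots$ of Example~\ref{Example:explicitideal} together with the structure of $O_\L^\times/O_\F^\times U_\L(i)$ in the inert and ramified cases. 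On the Galois side, $C(\pi\times\pi) = q^{c(\pi\times\pi)}$ is read off the Langlands parameter: the hypothesis $w_\pi = 1$ forces $\theta\cdot\theta^\sigma = 1$, so $\rho_\pi\otimes\rho_\pi \cong \mathbf{1} \oplus \eta_{\L/\F} \oplus \Ind_\L^\F(\theta^2)$, and its Artin conductor is obtained from the conductor--discriminant formula $a_\F(\Ind_\L^\F \sigma) = d(\L/\F)\dim\sigma + f(\L/\F)\, a_\L(\sigma)$.

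The principal obstacle is the case-by-case bookkeeping needed to verify the asymptotic matching in each of the six scenarios of Example~\ref{Example:ListofHJi}: both sides depend piecewise on the parity of $c(\theta)$, on $\epsiBL$, and on whether $\L$ is inert or ramified, so one must carefully track the ceilings and floors involved. Once this is done, $\Vol(Z\backslash J)/\dim\Lambda \asymp C(\pi\times\pi)^{-1/2}$ reduces in each case to an elementary arithmetic identity.
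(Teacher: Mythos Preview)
Your approach is correct and essentially matches the paper's: reduce to $\Vol(Z\backslash J)/\dim\Lambda$ via Schur orthogonality on $J/Z$, then carry out a case-by-case index computation. The paper states the Schur step more implicitly (simply writing the integral as $\Vol(Z\backslash ZK_\L(n))$ or $\frac{1}{\dim\Lambda}\Vol(Z\backslash ZK_\L(n))$) and just asserts the values of $C(\pi\times\pi)$ rather than deriving them from the Galois side as you propose; otherwise the structure is the same. Note that since the corollary is stated for $\GL_2$, only the three matrix-algebra cases of Example~\ref{Example:ListofHJi} are needed, not all six.
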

\begin{proof}

When $e_\L=1$,
\begin{equation}
[ZK:ZK_\L(1)]=[\GL_2(k): k_\E^\times]=q^2-q,
\end{equation}
\begin{equation}
[ZK_\L(i):ZK_\L(i+1)]=[ZO_\L^\times :ZU_\L(n)]=q^2
\end{equation}
for any $i>0$. Here $k$ is the residue field for $\F$. Thus 
\begin{equation}
\Vol(Z\backslash ZK_\L(n))=\frac{1}{[ZK:ZK_\L(n)]}=\frac{1}{(q^2-q)q^{2(n-1)}}=\frac{1}{1-q^{-1}}\frac{1}{q^{2n}}.
\end{equation}
When $c(\pi)=4n$, $C(\pi\times\pi)=q^{4n}$, and 
\begin{equation}
\int\limits_{Z\backslash \GL_2}|\Phi_\varphi(g)|^2dg=\Vol(Z\backslash ZK_{\L}(n))=\frac{1}{1-q^{-1}}\frac{1}{q^{2n}}=\frac{1}{1-q^{-1}} \frac{1}{C(\pi\times\pi)^{1/2}}.
\end{equation}
When $c(\pi)=4n+2$, $C(\pi\times\pi)=q^{4n+2}$, $\dim \Lambda \asymp q$, and
\begin{equation}
\int\limits_{Z\backslash \GL_2}|\Phi_\varphi(g)|^2dg=\frac{1}{\dim \Lambda}\Vol(Z\backslash ZK_{\L}(n))=\frac{q}{(1-q^{-1})\dim \Lambda} \frac{1}{C(\pi\times\pi)^{1/2}}.
\end{equation}
%Here we have used that $\dim\Lambda=q$ in this case.

When $e_\L=2$, note that $[ZK_\L(n):ZK_\L^1(n)]=[\L:ZU_\L(1)]=2$, and
\begin{equation}
[ZK:ZK_\L^1(1)]=[\GL_2(k):k^\times N(k)]=q^2-1,
\end{equation}
\begin{equation}
[ZK_\L^1(i):ZK_\L^1(i+1)]=q
\end{equation}
for any $i>0$.
Thus
\begin{equation}
\Vol(Z\backslash ZK_\L(n))=\frac{2}{1-q^{-2}}\frac{1}{q^{n+1}}
\end{equation}
When $c(\pi)=2n+1$, $C(\pi\times\pi)=q^{2n+2}$, and
\begin{equation}
\int\limits_{Z\backslash \GL_2}|\Phi_\varphi(g)|^2dg=\Vol(Z\backslash ZK_{\L}(n))=\frac{2}{1-q^{-2}} \frac{1}{C(\pi\times\pi)^{1/2}}.
\end{equation}
\end{proof}

\subsection{Kirillov model}\label{Section:Kirillov}
Now we  discuss the relation between the compact induction model and the Kirillov model by constructing an intertwining operator explicitly. %In particular we shall see how the test vectors in Corollary \ref{cor:testvforeasycase} and \ref{cor:testvfor4n+2case1} look like in the Kirillov model.
Let $\varphi$ be the type 1 minimal vector associated to $B^1=K_\L^1(n+1,n)$, with $\tilde{\theta}$  given by \eqref{EqA1:thetatilde} and $\alpha_\theta$  given by \eqref{eq2.1:specialembedding}. Thus we have fixed a cuspidal type in the sense of Definition \ref{Def3.1:cupsidaltype}.
We define an intertwining operator from $\pi$ to its Whittaker model by
\begin{equation}\label{eq:3.4:IntertwiningtoWhittaker}
\varphi' \mapsto W_{\varphi'}(g)=\int\limits_{\F}\Phi_{\varphi',\varphi}(\zxz{\varpi^{\lfloor c(\pi)/2\rfloor}}{0}{0}{1}\zxz{1}{n}{0}{1}g)\psi(-n)dn.
\end{equation}

 %very obvious, but one can compute that
\begin{lem}\label{cor:toricnewforminKirillov}
Up to a constant, the type 1 minimal vectors associated to the fixed cuspidal type can be given in the Kirillov model as follows:
\begin{enumerate}
\item When $c(\pi)=4n$, $\varphi=\Char(\varpi^{-2n}U_\F(n)) $.
\item When $c(\pi)=2n+1$, $\varphi=\Char(\varpi^{-n}U_\F(\lceil n/2 \rceil))$.
\item When $c(\pi)=4n+2$,  
$\varphi_x=\Char(\varpi^{-2n-1}(1+x\varpi^n)U_\F(n+1))$ for $x\in O_\F/\varpi O_\F$.
\end{enumerate}
\end{lem}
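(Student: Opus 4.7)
The plan is to exploit the defining eigenvector property of the type 1 minimal vector $\varphi$: by Definition \ref{Defn:generalMinimalVec}, we have $\pi(g)\varphi = \tilde{\theta}(g)\varphi$ for every $g \in B^1$. Since upper triangular matrices act diagonally in the Whittaker/Kirillov model ---the diagonal part by argument translation and the unipotent part by twisting with $\psi$--- this eigenvector condition, applied to the upper triangular elements of $B^1$, pins down both the support and the values of $W_\varphi$ up to a constant.

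First I would observe that for $g_0 = \zxz{1+a}{b}{0}{1} \in B^1$, one has
\begin{equation*}
[\pi(g_0) W_\varphi](y) = \psi(yb)\, W_\varphi(y(1+a)),
\end{equation*}
so the eigenvector relation $\pi(g_0)\varphi = \tilde{\theta}(g_0)\varphi$ becomes
\begin{equation*}
\psi(yb)\, W_\varphi(y(1+a)) = \tilde{\theta}(g_0)\, W_\varphi(y).
\end{equation*}
In cases (1) and (2), the relevant upper triangular elements form the set $\{a,b : a\in \varpi^{\lceil c(\theta)/2\rceil}O_\F,\; b \in \varpi^{\lfloor c(\theta)/2\rfloor}O_\F\}$ (with appropriate interpretation), and using the explicit form of $\alpha_\theta$ in \eqref{eq2.1:specialembedding} one computes directly $\tilde{\theta}(g_0) = \psi(b/\varpi^{c(\theta)/e_\L})$. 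Setting $a=0$ and varying $b$ forces $y \in \varpi^{-c(\theta)/e_\L} U_\F(\lceil c(\theta)/(2e_\L)\rceil)$, matching the claimed supports; setting $b=0$ and varying $a$ then confirms that $W_\varphi$ is constant on this coset. This yields $W_\varphi$ equal to the characteristic function (up to a constant) asserted in (1) and (2).

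Case (3) is the main obstacle, because $B^1 = K_\L^1(n+1,n) \neq H^1$ uses the ramified chain order, and the extension of $\tilde{\theta}$ from $H^1$ to $B^1$ passes through the Heisenberg construction. The substitute for the direct $\alpha_\theta$-formula is Corollary \ref{CorA1:thetatilde2}, which computes $\tilde{\theta}\bigl(\zxz{1+a}{b}{0}{1}\bigr) = \psi(b/\varpi^{2n+1})$ for upper triangular elements of $B^1$ (characterized by $v(a) \geq n+1$ and $v(b) \geq n$). The same two-step argument ($a=0$, then $b=0$) then identifies one distinguished minimal vector as $\varphi_0 = \Char(\varpi^{-2n-1}U_\F(n+1))$. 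The remaining $q-1$ vectors $\varphi_x$ are obtained by translating $\varphi_0$ by $\pi\bigl(\zxz{(1+x\varpi^n)^{-1}}{0}{0}{1}\bigr)$, which in the Kirillov model simply shifts the support, producing the coset $\varpi^{-2n-1}(1+x\varpi^n)U_\F(n+1)$ as required.

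Finally, non-triviality of the intertwining operator (and hence $W_\varphi \not\equiv 0$) follows because $\pi$ is irreducible and the integral in \eqref{eq:3.4:IntertwiningtoWhittaker} intertwines the action of the upper triangular subgroup with the standard Whittaker transform; alternatively, one can evaluate it at a single point inside the prescribed support using Lemma \ref{Cor:MCofGeneralMinimalVec} to see that $W_\varphi$ is nonzero there. This provides the normalizing constant implicit in the "up to a constant" statement of the lemma.
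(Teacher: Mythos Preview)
Your proof is correct but follows a different route from the paper's. The paper computes the Whittaker function by directly evaluating the intertwining integral \eqref{eq:3.4:IntertwiningtoWhittaker}: it uses Lemma \ref{Cor:MCofGeneralMinimalVec} to identify the support of $\Phi_{\varphi}$ restricted to the upper-triangular coset, and then integrates in $n$ to obtain $W_\varphi\bigl(\zxz{a}{0}{0}{1}\bigr)$ as an explicit nonzero constant on the claimed set. By contrast, you bypass the integral entirely and argue purely from the $B^1$-eigenvector property in the abstract Kirillov model, using only the upper-triangular elements of $B^1$ to pin down both the support (via the $b$-variable) and the constancy (via the $a$-variable) of $W_\varphi$.

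Each approach has its merits. The paper's direct computation simultaneously verifies that the specific intertwining map \eqref{eq:3.4:IntertwiningtoWhittaker} is nonzero and produces the normalizing constant explicitly. Your argument is cleaner representation-theoretically and would transfer more readily to analogous settings, but it relies on the (standard) fact that supercuspidal $\pi$ on $\GL_2$ is generic so that the Kirillov model is an isomorphism and $W_\varphi\not\equiv 0$; the appeal to \eqref{eq:3.4:IntertwiningtoWhittaker} at the end of your proposal is not really needed once you invoke genericity. One small cosmetic point: in case~(3) you translate by $\zxz{(1+x\varpi^n)^{-1}}{0}{0}{1}$ while the paper translates by $\zxz{1+x\varpi^n}{0}{0}{1}$; since $x$ ranges over $O_\F/\varpi O_\F$ these give the same family of cosets up to the relabeling $x\mapsto -x$, so the conclusions match.
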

\begin{proof}
We only prove part (3) here. The first two results are similar and much easier. 
By the intertwining operator defined above, we have that
\begin{equation}
W_{\varphi }(\zxz{a}{0}{0}{1})=\int_\F \Phi_{\varphi }(\zxz{\varpi^{2n+1}a}{\varpi^{2n+1}n}{0}{1})\psi(-n)dn.
\end{equation}
By Lemma \ref{Cor:MCofGeneralMinimalVec}, we know that $\Phi_{\varphi }(\zxz{\varpi^{2n+1}a}{\varpi^{2n+1}n}{0}{1})$ is non-vanishing only when $a\in \varpi^{-2n-1}U_\F(n+1)$ and $n\in \varpi^{-n-1}O_\F$, in which case it becomes the character $\tilde{\theta}$ on $B^1$. By \eqref{eq2.1:specialembedding} and Lemma \ref{LemA1:char},
\begin{equation}
W_{\varphi }(\zxz{a}{0}{0}{1})=\int_{n\in \varpi^{-n-1}O_\F} \psi(\varpi^{-2n-1}\varpi^{2n+1}n)\psi(-n)dn=q^{n+1}.
\end{equation}
 So up to a constant, $\varphi $ in the Kirillov model is $\varphi =\Char(\varpi^{-2n-1}U_\F(n+1))$, and the intertwining operator is not trivial. 
 All the other type 1 minimal vectors for the fixed cuspidal type should be the translates of $\varphi $ by $K_\L^1(n)/K_\L^1(n+1,n)=\{\zxz{1+x\varpi^n}{0}{0}{1}|x\in O_\F/\varpi O_\F\}$, so the result follows from the definition of the Kirillov model.
\end{proof}

Using the definition for the Kirillov model, one can directly recover the following property for $\varphi_x$:
\begin{equation}\label{Eq3-4:Compactaction4n+2Alternativex}
\pi(ztb)\varphi_x=\theta(zt)\psi(\varpi^{-2n-1}(1+x\varpi^n)m)\varphi_x
\end{equation}
for $z\in Z, t\in U_\L(1), b=\zxz{a}{m}{0}{1}\text{\  where }a\in U_\F(n+1), v(m)\geq n$. When $x=0$, it is indeed consistent with Corollary \ref{CorA1:thetatilde2}. %\eqref{EqA1:explicit2}.
The characters $\theta(zt)\psi(\varpi^{-2n-1}(1+x\varpi^n)m)$ for different $x\in O_\F/\varpi O_\F$ actually correspond to different extensions of $\tilde{\theta}$ to $B^1$.

\subsection{Local Langlands correspondence, Jacquet-Langlands correspondence and compact induction}\label{SubSec:Langlands-compactInd}
Here we describe the relation between the compact induction parametrisation and the local Langlands/Jacquet-Langlands correspondence. See \cite{BushnellHenniart:06a} Section 34, 56 for more details.

For a field extension $\L/\F$ and an additive character $\psi$ over $\F$, let $\lambda_{\L/\F}(\psi)$ be the Langlands $\lambda-$function as in \cite{LL}. When $\L/\F$ is a quadratic field extension, let $\eta_{\L/\F}$ be the associated quadratic character. By \cite{LL}, we have for $\psi_\beta(x)=\psi(\beta x)$,
\begin{equation}\label{Eq:LanglandsLambdaFun}
\lambda_{\L/\F}(\psi_\beta)=\eta_{\L/\F}(\beta)\lambda_{\L/\F}(\psi).
\end{equation}

\begin{defn}
\begin{enumerate}
\item If $\L$ is inert, define $\Delta_\theta$ to be the unique unramified character of $\L^\times $ of order 2.

\item If $\L$ is ramified and $\theta$ is a character over $\L$ with $c(\theta)$ even, associate $\alpha_\theta$ to $\theta$ as in Lemma \ref{Lem:DualLiealgForChar}.
% recall that $\alpha_\theta $ is defined as such that
%\begin{equation}
%\theta(1+u)=\psi_\L(\alpha_\theta u) \Lnd{equation}
%for any $u\in \varpi^{ c(\theta)/2} O_\E$, with $v_\E(\alpha_\theta)=-c(\theta)-1$.
Then define $\Delta_\theta$ to be the unique level 1 character of $\L^\times $ such that
\begin{align}
\Delta_\theta|_{\F^\times }=\eta_{\L/\F}, \Delta_\theta(\varpi_\L)=\eta_{\L/\F}(\varpi_\L^{c(\theta)-1}\alpha_\theta)\lambda_{\L/\F}^{c(\theta)-1}(\psi).
\end{align}
\end{enumerate}

\end{defn}
Note that in  \cite{BushnellHenniart:06a}   $\psi$ is chosen to be level 1. We have adapted the formula there to our choice of $\psi$ using \eqref{Eq:LanglandsLambdaFun}.
The definition is also independent of the choice of $\varpi_\L$.    
\begin{theo}
If $\pi$ is associated by compact induction to a character $\theta$ over a quadratic extension $\L$, then its associated Deligne-Weil representation by local Langlands correspondence is $\sigma=\Ind_{\L}^{\F} (\Theta)$, where $\Theta=\theta\Delta_\theta^{-1}$.
\end{theo}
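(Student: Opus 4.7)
The plan is to follow the standard rectifier approach of Bushnell--Henniart, treating this essentially as a matching statement for the local Langlands correspondence on the dihedral/admissible-pair side.

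First I would set up the admissible pair formalism. For $p$ odd and $\pi$ supercuspidal, the compact induction data $(\L,\theta)$ is an admissible pair in the sense of \cite{BushnellHenniart:06a}, and the naive candidate Galois parameter is $\sigma_0 = \Ind_\L^\F(\theta)$. The issue is that $\sigma_0$ generally does not match $\pi$ under the local Langlands correspondence; the discrepancy is an at-most-tamely-ramified character $\Delta_\theta$ of $\L^\times$, depending on whether $\L/\F$ is unramified or ramified, and on the parity of $c(\theta)$ in the ramified case. The goal becomes: show that $\sigma=\Ind_\L^\F(\theta\Delta_\theta^{-1})$ corresponds to $\pi$ under LLC.

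Next I would invoke the Henniart--Deligne characterization of LLC: it is the unique bijection preserving central characters, commuting with twists by characters of $\F^\times$, and matching the $\varepsilon$-factors of pairs (equivalently, $\varepsilon_0(\pi\otimes\mu,\psi)=\varepsilon_0(\sigma\otimes\mu,\psi)$ for all characters $\mu$ of $\F^\times$). Because both $\pi$ and $\Ind_\L^\F(\theta\Delta_\theta^{-1})$ are built ``dihedrally'' from $\L$, twisting by $\mu$ replaces $\theta$ by $\theta\mu_\L$ on both sides, so it suffices to check the equality of epsilon factors in the \emph{untwisted} case together with the central character match. The central character match determines $\Delta_\theta|_{\F^\times}=\eta_{\L/\F}$ using the standard computation $\det\Ind_\L^\F(\Theta)=\Theta|_{\F^\times}\cdot\eta_{\L/\F}$.

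Then I would carry out the epsilon factor comparison. On the Galois side, $\varepsilon(\Ind_\L^\F(\Theta),\psi)=\lambda_{\L/\F}(\psi)\cdot\varepsilon(\Theta,\psi_\L)$ by the inductivity of epsilon factors. On the automorphic side, one has Kutzko's formula expressing $\varepsilon(\pi,\psi)$ in terms of the compact induction data $(\L,\theta)$, concretely as a Gauss-sum attached to $\theta$ up to an explicit constant. Matching these two expressions and using \eqref{Eq:LanglandsLambdaFun} forces $\Delta_\theta(\varpi_\L)=\eta_{\L/\F}(\varpi_\L^{c(\theta)-1}\alpha_\theta)\lambda_{\L/\F}^{c(\theta)-1}(\psi)$, which is exactly the definition given. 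The independence of the choice of $\varpi_\L$ follows from the transformation law $\lambda_{\L/\F}(\psi_\beta)=\eta_{\L/\F}(\beta)\lambda_{\L/\F}(\psi)$, which compensates the change in $\eta_{\L/\F}(\varpi_\L^{c(\theta)-1}\alpha_\theta)$.

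The main obstacle is the explicit computation of $\varepsilon(\pi,\psi)$ from the compact induction datum, which is the technical heart of Kutzko's work and requires a careful analysis of the Whittaker model for $\pi$; in our setup this can be performed using the explicit Kirillov description in Lemma~\ref{cor:toricnewforminKirillov} to evaluate a Gauss-sum-type integral and compare with the Langlands $\lambda$-function. Once this matching is established for all $\mu$, uniqueness of LLC completes the identification $\sigma=\Ind_\L^\F(\theta\Delta_\theta^{-1})$.
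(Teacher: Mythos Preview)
The paper does not actually prove this theorem; it is stated as a citation from \cite{BushnellHenniart:06a}, Sections~34 and~56 (see the opening line of Section~\ref{SubSec:Langlands-compactInd}). Your outline is a faithful summary of the Bushnell--Henniart rectifier method carried out in those sections, so your approach agrees with the literature the paper defers to.

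One small caution: your final paragraph suggests computing $\varepsilon(\pi,\psi)$ via the Kirillov description of the \emph{minimal} vector in Lemma~\ref{cor:toricnewforminKirillov}. That lemma gives the Whittaker function of a minimal vector, not of the newform, and the standard $\varepsilon$-factor is read off from the newform's functional equation (or, in the Bushnell--Henniart treatment, from a direct Gauss-sum calculation tied to the stratum, not to the minimal vector). Using the minimal vector would require an additional translation step relating its Whittaker support to that of the newform. This does not affect the overall correctness of your strategy, but if you want to make the computation self-contained within this paper's notation you would be better served by citing \cite{BushnellHenniart:06a} directly for the automorphic $\varepsilon$-factor rather than routing through Lemma~\ref{cor:toricnewforminKirillov}.
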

Note here that $\Theta$ and $\theta$ differ by a at most level 1 character, so $\alpha_\Theta$ is congruent to $\alpha_\theta$ and $\Delta_\Theta=\Delta_\theta$.

For Jacquet-Langlands correspondence, we have the following.
\begin{theo} If $\pi$ on $\GL_2$ is constructed from $(\theta,\L)$, then $\pi^\B$ is constructed from $(\theta^\B,\L)$, where
$\theta^\B=\theta$ if $e_\L=1$, and $\theta^\B|_{O_\L^\times}=\theta|_{O_\L^\times}$, $\theta^\B(\varpi_\L)=-\theta(\varpi_\L)$ if $e_\L=2$.
\end{theo}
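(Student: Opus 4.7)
The plan is to deduce this statement by combining the local Langlands correspondence for $\GL_2$ (the theorem immediately preceding) with its analog on the quaternion side, and then invoking the compatibility of the Jacquet--Langlands correspondence with LLC. Concretely, the characterization of JLC says that $\pi$ and $\pi^\B$ must correspond to the same two-dimensional Weil--Deligne representation $\sigma$ of $W_\F$. From the previous theorem, $\sigma \simeq \Ind_{W_\L}^{W_\F}(\theta \Delta_\theta^{-1})$, so it suffices to exhibit a parallel statement of the form $\sigma \simeq \Ind_{W_\L}^{W_\F}(\theta^\B \Delta_{\theta^\B,\B}^{-1})$ where $\Delta_{\theta^\B,\B}$ is the analog of $\Delta_\theta$ adapted to the division algebra, and then solve for $\theta^\B$.

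The first step is thus to recall/set up the division algebra analog: for $\B$ a division quaternion algebra, a minimal supercuspidal $\pi^\B$ is obtained by compact induction from a cuspidal type built out of $(\theta^\B, \L)$ as in Section~\ref{Sec:CompactInd}, with the role of $\vv$ now forced by $\B$ (only one chain order up to conjugacy). The corresponding LLC statement, proved as in \cite[\S56]{BushnellHenniart:06a}, produces a rectifier $\Delta_{\theta^\B,\B}$ satisfying $\Delta_{\theta^\B,\B}|_{\F^\times} = \eta_{\L/\F}$ together with an explicit value at $\varpi_\L$ that differs from the $\GL_2$ formula by the Jacquet--Langlands sign $\epsilon(\B)$ inserted in the appropriate Langlands $\lambda$-factor. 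One then equates
\[
\theta \,\Delta_\theta^{-1} \;=\; \theta^\B \,\Delta_{\theta^\B,\B}^{-1}
\]
(possibly after applying the Galois involution of $\L/\F$, but the two choices give equivalent inductions).

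The second step is to trace through the difference. When $e_\L = 1$, both rectifiers are unramified and take the same value at $\varpi_\L$ because the $\lambda$-function contribution is unaffected by the unramified twist distinguishing $\B$ from $M_2(\F)$; hence $\theta^\B = \theta$. When $e_\L = 2$, both rectifiers are of level $1$ with the same restriction to $\F^\times$, but their values at $\varpi_\L$ differ by a factor of $\eta_{\L/\F}(\cdot)$ evaluated at a quantity that becomes $-1$ exactly when one passes from $M_2(\F)$ to the division algebra. Since $\theta$ and $\theta^\B$ agree on $\F^\times$ (both restrict to the central character), the only place where a sign can appear is at $\varpi_\L$, giving $\theta^\B(\varpi_\L) = -\theta(\varpi_\L)$ while $\theta^\B|_{O_\L^\times} = \theta|_{O_\L^\times}$.

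The main obstacle is bookkeeping of the normalizations: the $\lambda$-factor, the choice of $\psi$, and the choice of $\varpi_\L$ all enter into $\Delta_\theta$ and into its division-algebra analog, and one must check that the formula above is independent of these choices (as is already noted for $\Delta_\theta$ itself). Once this bookkeeping is in place, the proof reduces to a direct comparison at $\varpi_\L$. Since all of this is carried out in \cite[\S13, \S34, \S56]{BushnellHenniart:06a} after the same type of notational adaptation as was needed for the previous theorem, we content ourselves with this sketch.
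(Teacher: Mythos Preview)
The paper does not supply its own proof of this theorem: it is stated in Section~\ref{SubSec:Langlands-compactInd} as a quotation from the literature, with the surrounding text pointing to \cite[\S34, \S56]{BushnellHenniart:06a} for details. Your sketch---factoring through the local Langlands correspondence on both sides, comparing the rectifiers $\Delta_\theta$ and its division-algebra analogue, and reading off the sign at $\varpi_\L$ in the ramified case---is exactly the strategy of that reference, so there is nothing to compare beyond noting that your outline matches the cited source.
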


\subsection{Explicit $\epsilon-$value test}\label{Sec:epsilontest}
Let $\pi$ be associated to $\sigma=\Ind_{\L}^{\F}\Theta$ for a character $\Theta $ over $\L$ with $c(\Theta)=n$. $\chi$ be a character over $\E$ with $c(\chi)=m$.
As noted before, we assume $p\neq 2$ and trivial central character. Further we restrict ourselves to the case $c(\pi)\geq c(\pi_{\chi^{-1}})$ (otherwise the local integral is always non-trivial on the $\GL_2$ side).
Then we have the following table on $\epsilon (\pi_\E\times \chi^{-1})$  according to \cite[proof in Proposition 2.6 and Lemma 3.2]{Tunnell:83a}.

\begin{tabular}{|c|c|c|c|}
\hline
Case &$\L$ 	& $\E$ 	&$\epsilon(\pi_\E\times\chi^{-1})$  \\ \hline
1 &inert	& inert	&$(-1)^{c(\Theta\chi^{-1})+c(\Theta^\tau\chi^{-1})}$ 	\\ \hline
2 &inert	& ramified	& always $-1$\\ \hline
3 &ramified& inert	& always $-1$\\ \hline
%4 &ramified& ramified same as $\L$ &  		\\ \hline
%5 &ramified& ramified different from $\L$ &		\\
%\hline
\end{tabular}

The case when $\E$ and $\L$ are both ramified is more complicated. We take  the following two lemmas directly from \cite{Tunnell:83a}.
\begin{lem}\label{lem:EpsilontestDistinctramified}
Let $\L$ and $\E$ be distinct ramified quadratic extensions of $\F$ with uniformizers $\varpi_\L$ and $\varpi_\E$. Suppose that $\varpi_\L^2=\varpi_\F$. Let $\Theta$ be a character over $\L$ with $c(\Theta)$ even.
Let $\xi\in O_\F^\times $ such that $\xi \varpi_\L^2=\varpi_\E^2$. Then $\epsilon(\pi_\E\times\chi^{-1})=-1$ iff for $j=c(\Theta)/2-1$ and any $x\in O_\F^\times $,
\begin{equation}
\chi(1+\varpi_\F^j \varpi_\E x)=\Theta(1+\varpi_\F^j\varpi_\L \delta x)
\end{equation}
for certain $\delta \in O_\F$ such that $\delta^2\xi^{-1}-1$ is a square in $O_\F/\varpi O_\F$.
\end{lem}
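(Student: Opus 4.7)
The plan is to reduce the epsilon factor computation to a one-dimensional Gauss sum on the biquadratic field $\L\E$, and then translate the sign condition into the explicit matching condition on $\chi$ and $\Theta$.

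First, I would pass from the representation-theoretic side to the Deligne--Weil side using the identification of Section \ref{SubSec:Langlands-compactInd}: if $\pi$ corresponds to $\sigma=\Ind_{W_\L}^{W_\F}\Theta$, then the base change $\pi_\E$ corresponds to $\sigma|_{W_\E}$. Since $\L$ and $\E$ are distinct quadratic extensions, $W_\L W_\E = W_\F$ and $W_\L\cap W_\E=W_{\L\E}$, so Mackey's formula gives a single double coset and
\[
\sigma|_{W_\E}\;=\;\Ind_{W_{\L\E}}^{W_\E}\bigl(\Theta|_{W_{\L\E}}\bigr).
\]
Therefore
\[
\epsilon(\pi_\E\times \chi^{-1})
\;=\;\epsilon\!\left(\Ind_{W_{\L\E}}^{W_\E}\bigl(\Theta\cdot \chi^{-1}\bigr)|_{\L\E}\right).
\]

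Next, I would apply the inductivity of local constants (Deligne's formula) to rewrite this as the product of a Langlands $\lambda$-factor $\lambda_{\L\E/\E}(\psi_\E)$ and a one-dimensional epsilon factor $\epsilon(\mu,\psi_{\L\E})$, where $\mu := (\Theta\chi^{-1})|_{\L\E}$ is a character on $(\L\E)^\times$. Using \eqref{Eq:LanglandsLambdaFun} and the fact that $\lambda_{\L/\F}^2$ is tame, I would explicitly compute the lambda factor in terms of $\eta_{\L/\F}$, $\eta_{\E/\F}$ and the level of $\Theta$. For the Gauss sum $\epsilon(\mu,\psi_{\L\E})$ itself, I would determine the conductor $c(\mu)$: by the assumption $c(\Theta)=2k$ even and $c(\pi)\geq c(\pi_{\chi^{-1}})$, one expects $c(\mu)$ to be controlled by $c(\Theta)$, so that the sign of the Gauss sum is detected on a single congruence layer sitting above $\varpi_\F^{j}$ for $j=c(\Theta)/2-1$.

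Finally, I would interpret the condition that the Gauss sum equals $-1$ (after multiplication by $\chi(-1)$ and the lambda-factor). The key computation is that on the layer $1+\varpi_\F^{j}\varpi_{\L\E}O_{\L\E}$, the character $\mu=\Theta\chi^{-1}$ linearizes via the element $\alpha_\Theta-\alpha_\chi$ in the dual Lie algebra (Lemma \ref{Lem:DualLiealgForChar}). Writing $\varpi_\E^2=\xi\varpi_\L^2$ and using the identification $\L\E=\L(\sqrt{\xi})$, one parametrizes the elements by $(x,\delta)\in O_\F^\times\times O_\F$ with $\delta$ encoding the ``direction'' in $\L\E/\L$, and the sign of the Gauss sum is $-1$ precisely when this direction satisfies the quadratic residue constraint $\delta^2\xi^{-1}-1\in (k_\F^\times)^2$, which is exactly the condition stated.

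The main obstacle will be the bookkeeping for the lambda-factor $\lambda_{\L\E/\E}(\psi_\E)$ and its interaction with $\chi(-1)$: keeping the various sign conventions consistent between the Langlands side and the quaternionic side $\chi(-1)\epsilon(\B)$ is where all ambiguities in the choice of $\psi$, $\varpi_\L$ and the association $\theta\mapsto \alpha_\theta$ must be carefully reconciled. Once this is cleanly separated, the translation into the elementary congruence condition for $\delta$ becomes a direct Gauss-sum calculation.
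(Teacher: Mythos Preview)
The paper does not prove this lemma at all: it is quoted from \cite{Tunnell:83a} (Proposition~2.9 there), with the remark immediately following it noting a sign discrepancy that the authors believe is an error in Tunnell's original statement. Your outline---Mackey restriction of $\Ind_{W_\L}^{W_\F}\Theta$ to $W_\E$ through the biquadratic field $\L\E$, inductivity of $\epsilon$-factors, and then an explicit Gauss-sum evaluation on the relevant congruence layer---is precisely Tunnell's method, so you are essentially reproducing the original source rather than the present paper.

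What the present paper does instead is verify the corrected statement \emph{a posteriori}: in Appendix~\ref{Sec:AppendixB} (Proposition~\ref{Lem:testforWaldsnecessary}) the authors compute the local period integral for a concrete family of minimal vectors, obtain a quadratic equation in the translation parameter~$v$ whose discriminant reduces to $\delta^2\xi^{-1}-1$, and observe that solvability of this equation matches the condition in the lemma. This gives an independent confirmation of the sign correction without ever manipulating $\lambda_{\L\E/\E}(\psi_\E)$ directly. So your approach and the paper's are complementary: yours would establish the lemma from first principles on the Galois side, while the paper takes the lemma as input (from Tunnell) and checks consistency via the period-integral side.
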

\begin{rem}
The statement here is slightly different from   \cite[Proposition 2.9]{Tunnell:83a}. We believe this is due to a mistake in \cite{Tunnell:83a}. According to the notation there, $\theta(1+x)=\psi_\L(y_1 x)$ and $\chi^{-1}(1+x)=\psi_\E(y_2 x)$. Then 
$y_2/y_1=-\alpha_\chi/\alpha_\theta$ should be congruent to $-\delta\sqrt{\xi}^{-1}$, instead of $\delta\sqrt{\xi}$ in \cite{Tunnell:83a}. Another evidence  is that in this paper we shall come to the same criterion from a more direct approach.
\end{rem}

\begin{lem}\label{lem:Epsilontestsameramified}
Let $\E$ and $\L$ be the same ramified quadratic extensions with uniformizer $\varpi_\E$ such that $\varpi_\E^2=\varpi_\F$. 
Let $\eta$ be a character of $\E^\times $ extending the quadratic character $\eta_{\E/\F}$ of $\F^\times $. 
Then $\epsilon(\pi_\E\times\chi^{-1})=-1$ iff one of the followings is true
\begin{enumerate}
\item $c(\Theta\chi^{-1}\eta)=c(\Theta^\tau\chi^{-1}\eta)=c(\Theta)$ and, for all $x\in O_\F$, $\chi(1+\varpi_\E^jx)=\Theta(1+\varpi_\E^j \delta x)$ where $j=c(\Theta)-1$ and $\delta \in O_\F/\varpi O_\F$ satisfies $\delta ^2-1$ is not a square in $O_\F/\varpi O_\F$.
\item $0<c(\Theta\chi^{-1}\eta)<c(\Theta)$ and the character $\mu=\Theta\chi^{-1}\eta$ satisfies $\mu(1+\varpi_\E^{c(\mu)-1}x)=\Theta(1+\varpi_\E^{c(\Theta)-1} \delta x)$ for $x\in O_\F$ where $\delta \in O_\F^\times $ satisfies $-2\delta (-1)^{(c(\mu)+c(\Theta))/2}$ is not a square $\mod \varpi O_\F$.
\item $0<c(\Theta^\tau\chi^{-1}\eta)<c(\Theta)$ and the character $\mu=\Theta^\tau\chi^{-1}\eta$ satisfies $\mu(1+\varpi_\E^{c(\mu)-1}x)=\Theta(1+\varpi_\E^{c(\Theta)-1} \delta x)$ for $x\in O_\F$ where $\delta \in O_\F^\times $ satisfies $2\delta  (-1)^{(c(\mu)+c(\Theta))/2}$ is not a square $\mod \varpi O_\F$.
\item $c(\mu)=0$ for $\mu=\Theta\chi^{-1}\eta$ or $\Theta^\tau\chi^{-1}\eta$, and $\mu(\varpi_\E)=-1$.
\end{enumerate}

\end{lem}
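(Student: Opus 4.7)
The plan is to reduce the computation to abelian epsilon factors via base change, and then carry out an explicit Gauss-sum analysis case by case. Since $\L = \E$, the base change of $\pi$ to $\GL_2(\E)$ is no longer supercuspidal: by local Langlands and Mackey, the restriction of $\Ind_{W_\L}^{W_\F}\Theta$ to $W_\E = W_\L$ decomposes as $\Theta \oplus \Theta^\tau$, where $\tau$ generates $\mathrm{Gal}(\L/\F)$. Hence $\pi_\E$ is the principal series attached to $(\Theta,\Theta^\tau)$, and
\[\epsilon(\pi_\E\times\chi^{-1},\psi_\E) = \epsilon(\Theta\chi^{-1},\psi_\E)\cdot\epsilon(\Theta^\tau\chi^{-1},\psi_\E).\]
By matching central characters and using $\tau$-equivariance of the Fourier transform, the product lies in $\{\pm 1\}$, so it suffices to compute it as a product of two abelian root numbers.

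Next I will evaluate each abelian factor by the standard Gauss-sum recipe: if $\mu$ is a character of $\E^\times$ with $c(\mu)\geq 2$ and $\alpha_\mu$ is the element associated to $\mu$ by Lemma \ref{Lem:DualLiealgForChar}, then after linearizing $\mu$ via $\log$ and completing the square, the defining integral of $\epsilon(\mu,\psi_\E)$ collapses to a quadratic Gaussian whose sign is determined by whether a specific discriminant in $O_\F/\varpi$ is a square, together with a power of $\lambda_{\E/\F}(\psi)$. Inserting the twist by $\eta$ (the character extending $\eta_{\E/\F}$) absorbs the $\lambda$-factor contributions and rewrites everything in terms of $\mu_1 := \Theta\chi^{-1}\eta$ and $\mu_2 := \Theta^\tau\chi^{-1}\eta$.

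Then I case-split on the conductors of $\mu_1,\mu_2$. In case (1), both conductors equal $c(\Theta)$, neither Gauss sum degenerates, and writing $\alpha_\chi \equiv \delta\,\alpha_\Theta \pmod{\varpi_\E^{c(\Theta)}}$ reduces the product of the two Gauss sums to the quadratic-residue symbol for $\delta^2 - 1$, whence the non-square criterion. In cases (2) and (3), one conductor drops to $c(\mu) < c(\Theta)$: the Gauss sum for that factor now lives at a smaller scale and requires redoing the quadratic completion at level $\varpi_\E^{c(\mu)-1}$, which gives the prescribed pairing $\mu(1+\varpi_\E^{c(\mu)-1}x) = \Theta(1+\varpi_\E^{c(\Theta)-1}\delta x)$ and the parity-dependent sign $(-1)^{(c(\mu)+c(\Theta))/2}$ coming from tracking $\lambda_{\E/\F}(\psi)$ across the two conductor levels. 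Case (4) with $c(\mu)=0$ reduces to an unramified evaluation where only $\mu(\varpi_\E)$ contributes.

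The hardest part will be the bookkeeping in cases (2) and (3): the $\lambda$-functions introduced by the base-change factorisation combine nontrivially with those appearing in each abelian Gauss-sum evaluation, and the relative shift between $c(\mu)$ and $c(\Theta)$ produces the parity factor $(-1)^{(c(\mu)+c(\Theta))/2}$ that is easy to misplace. To avoid sign errors I would fix one consistent normalisation (say Deligne's) for $\epsilon$ throughout, keep all $\lambda$-factors explicit until the end, and only at the very last step use \eqref{Eq:LanglandsLambdaFun} together with the relation between $\alpha_\Theta$ and the embedding of $\L$ to convert the final expression into the quadratic-residue conditions of the statement.
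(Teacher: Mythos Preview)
The paper does not actually prove this lemma: immediately before it, the authors write ``We take the following two lemmas directly from \cite{Tunnell:83a}.'' So there is no argument in the paper to compare against --- the statement is quoted as a black box from Tunnell's computation of local $\epsilon$-factors.

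Your outline is essentially the route Tunnell follows. The key reduction $\epsilon(\pi_\E\times\chi^{-1},\psi_\E) = \epsilon(\Theta\chi^{-1},\psi_\E)\,\epsilon(\Theta^\tau\chi^{-1},\psi_\E)$ via Mackey restriction of $\Ind_{W_\L}^{W_\F}\Theta$ to $W_\E=W_\L$ is exactly how the computation starts in \cite[Proposition~2.6]{Tunnell:83a}, and the subsequent case analysis on the conductors of $\Theta\chi^{-1}$ and $\Theta^\tau\chi^{-1}$ (after twisting by $\eta$ to absorb the $\lambda_{\E/\F}$-factors) mirrors Tunnell's Lemma~3.2. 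One small caution: the parity factor $(-1)^{(c(\mu)+c(\Theta))/2}$ in cases (2)--(3) arises not just from tracking $\lambda_{\E/\F}(\psi)$ across levels but also from the normalisation $\varpi_\E^2=\varpi_\F$ and the fact that $\alpha_\Theta$ is imaginary; if you actually carry this out, you will want to anchor the sign against a known low-conductor case rather than trust the bookkeeping alone --- indeed the authors remark just above this lemma (after Lemma~\ref{lem:EpsilontestDistinctramified}) that they believe Tunnell's published version contains a sign error in the analogous distinct-ramified situation.
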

Note that we don't need to worry about the case $c(\mu)=1$. This is because when $\mu |_{\F^\times }=1$ and $\E$ is ramified, such characters do not exist.

\section{Test vector for Waldspurger's period integral on $\GL(2)$ side}\label{Sec:AppendixB}
The goal of this section is to find test vector for $I(\varphi,\chi)$ on $\GL_2$ side and prove Theorem \ref{Appendix:Maintheo}, and explicitly show that the existence of such test vector is consistent with the explicit $\epsilon-$value test given in Section \ref{Sec:epsilontest}, while the size of the local integral is also consistent with the computations in Proposition \ref{Prop:SizeofLocalint}. As before %, $\varphi\in \pi$ is a minimal vector, where 
$\pi$ is a supercuspidal representation associated to a character $\theta$ over a quadratic extension $\L$ by compact induction theory. $\chi$ is a character over the quadratic extension $\E$.

%The basic idea is to make use of the almost multiplicative properties of the matrix coefficients of minimal vectors  in Corollary \ref{Cor:MCofGeneralMinimalVec}. Since translates of any fixed minimal vector form an orthonormal  basis for the representation, in principle we shall find test vector in this form as long as the epsilon value test passes.

%\subsection{general discussion}
Let $\varphi_0$ be the type 1 minimal vector for the fixed cuspidal type as in Section \ref{Sec:Specialembedding} with matrix coefficient satisfying Lemma \ref{Cor:MCofGeneralMinimalVec}. Let $$\varphi=\pi(\zxz{1}{u}{0}{1}\zxz{v}{0}{0}{1})\varphi_0$$
for some $u,v\in \F^\times $.
Denote \begin{equation}\label{EqB:k}
k=\zxz{1}{u}{0}{1}\zxz{v}{0}{0}{1}.
\end{equation} Then for $\varphi$ we have
\begin{align}
I(\varphi,\chi)&=\int\limits_{\F^\times \backslash\E^\times }\Phi_\varphi(t)\chi^{-1}(t)dt\\
&=\int\limits_{\F^\times \backslash\E^\times }\Phi_{\varphi_0}(k^{-1}tk)\chi^{-1}(t)dt \notag
\end{align}

% From Section \ref{Sec:epsilontest} we only need to consider the cases when $\E$ and $\L$ are both inert or both ramified. The condition $c(\pi)\leq c(\pi_{\chi^{-1}})$ directly implies that $c(\theta)\leq c(\chi)$ in these cases. 
 We shall assume that $v(v)=0$ and $v(u)\geq 0$. (In principle we need to consider all possible valuations to cover all possible test vectors. But it turns out that the test vectors with these restrictions already suffice. With additional work, one can actually show that the local integral is always vanishing for the complementary range)

%\begin{defn}
%The support of the integral $I(\varphi,\chi)$ is the domain for $t$ such that $\Phi_\varphi(t)\neq 0$.
%\end{defn}

%Before we start, we first need the following lemma expressing elements of one (conjugated) torus in terms of another torus.

As in Section \ref{Sec:mainmethod}, we shall deal with the following two cases separately: 
\begin{enumerate}
	\item $\L\simeq \E$, $\Supp \Phi_\varphi\cap\E^\times=\E^\times$, and $c(\theta\chi^{-1})$ or $c(\theta\overline{\chi}^{-1})\leq 1$;
	\item $\Supp \Phi_\varphi\cap\E^\times\subsetneq\E^\times$, and $c(\theta\chi^{-1}),c(\theta\overline{\chi}^{-1})\geq 1$ when $\L\simeq \E$.
\end{enumerate}  
The discussion for Case (1) is essentially the same as in Section \ref{Sec:wholetoruscase}, so we shall skip the details here. 
Also note that Case (1) covers all situations when $c(\pi)=2\geq c(\pi_{\chi^{-1}})$ and $\E$ is a field, according to the table in Section \ref{Sec:epsilontest}. 
So we shall focus on Case (2) in this section, and assume $c(\pi)\geq 3$ in the following.

 The basic idea is to carefully identify $\Supp \Phi_\varphi\cap\E^\times$ for the family of $\varphi$, and check whether $\Phi_\varphi\chi^{-1}=1$ on the common support. In the case $\dim \Lambda>1$, we make use of the fact that $\Phi_{\varphi_0}$ for type 1 minimal vectors is still multiplicative when restricted to $J^1$.

%\subsection{$\L\not\simeq\E$ or $c(\theta\chi^{-1})\geq 2$}

According to Section \ref{Sec:epsilontest}, we know that the integral will be automatically zero if $e_\L\neq e_\E$. So for simplicity we shall assume $e_\L=e_\E$ and $v(\frac{D}{D'})=0$. But in the proof we shall also partially see why we can't find test vectors when $e_\L\neq e_\E$. Also when $e_\L=e_\E$, $c(\pi)\geq c(\pi_{\chi^{-1}})$ implies that $c(\theta)\geq c(\chi)$.

Let $t=\zxz{a}{b}{bD}{a}\in \E$ with $\min\{v(a),v(b)\}=0$.
According to the proof for Lemma \ref{Lem:JcapEwholeorwhat}, when $ \Supp \Phi_\varphi\cap\E^\times\subsetneq\E^\times$, we must have  
\begin{equation}\label{Eq:AppeBabrangeforNotwholeE}
 \text{ $v(b)\geq 1, v(a)=0$  when $e_\L=1$, $v(b)\geq 0, v(a)=0$ when $e_\L=2$.}
\end{equation}
In particular, if $t\in \Supp \Phi_\varphi\cap \E^\times\subsetneq\E^\times$, we must have $k^{-1}tk \in ZB^1$. This is because by considering $v(\Nm(k^{-1}tk-a))=v(\Nm(t-a))>0$, we know that $k^{-1}tk \in ZJ^1$. Further by Lemma \ref{Cor:MCofGeneralMinimalVec}, matrix coefficient for a type 1 minimal vector restricted to $ZJ^1$ is supported on $ZB^1$.

\begin{lem}\label{lem4.1:support}
Let $t=\zxz{a}{b}{bD}{a}\in \E$ satisfy \eqref{Eq:AppeBabrangeforNotwholeE}, $e_\E=e_\L$ and $B^1$ be fixed as in Section \ref{Sec:Specialembedding}.
Then $k^{-1}t k\in ZB^1$ if and only if
\begin{equation}\label{eq:relatetwotorus1}
bDu\equiv 0\mod{\varpi^{i}}
\end{equation}
\begin{equation}\label{eq:relatetwotorus2}
 b(v^2-\frac{D'}{D})  \equiv 0\mod{\varpi^{j}}.
\end{equation}
Here $i,j\in \Z$ are given as follows
\begin{equation}
\begin{cases}
i=j=n, &\text{\ if  $e_\L=1$, $c(\pi)=4n$}, %and $t\in O_\E^*$ in case $\Gamma_\varphi=K_\varphi$},
\\
i=n+1, j=n, &\text{\ if $e_\L=1$, $c(\pi)=4n+2$}, %and $t\in O_\E^*$ in case $\Gamma_\varphi=K_\varphi$},
\\
 i=\lceil n/2\rceil, j=\lfloor n/2\rfloor, &\text{\ if $e_\L=2$, $c(\pi)=2n+1$}.% and $t\in O_\E^*$ in case $\Gamma_\varphi=K_\varphi$},\\
%  i\leq j\leq i+1, i+j=n+1, &\text{\ if $e_\L=2$, $\varphi$ is as in Lemma \ref{Lem:CompactbyCharacteronMinimal} (1), $\Gamma_\varphi=K_\varphi$, and $t\in \varpi_\E O_\E^*$}.
\end{cases}
\end{equation}
\end{lem}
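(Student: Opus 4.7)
The plan is to compute $g := k^{-1}tk$ explicitly and then, case by case, unwind the definition of $ZB^1$ in terms of the matrix entries of $g$. A direct computation gives
\[
g = k^{-1}tk = \begin{pmatrix} a - bDu & b(1-Du^2)/v \\ bDv & bDu+a \end{pmatrix},
\]
so the $\L$--decomposition with respect to the basis $\{1,\sqrt{D'},j,\sqrt{D'}j\}$ (using the standard embedding of Section~\ref{Sec:Specialembedding}) has diagonal scalar part $a$, while the off-diagonal parts come from the three quantities $-bDu$, $b(1-Du^2)/v$ and $bDv$. Since $\Supp\Phi_\varphi\cap\E^\times\subsetneq\E^\times$, the range \eqref{Eq:AppeBabrangeforNotwholeE} holds; in particular $v(a)=0$, so one can look for $\lambda\in\F^\times U_\L(1)$ with $\lambda^{-1}g\in K_\mathfrak{A}(i)$ (where $i$ is the invariant of Definition~\ref{Defn:ii'&JHs} adapted to the chain order in question). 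Up to a central scalar one may take $\lambda_{11}=\lambda_{22}=a$, and the discussion is then reduced to controlling $g-\lambda$ entrywise against the appropriate power of $\mathcal{B}$.

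The condition coming from the \emph{diagonal} entries is $g_{11}-g_{22}=-2bDu\in\varpi^{i}O_\F$ (the power $i$ being the $(1,1)$-component of $\mathcal{B}^{i_0}$ for the relevant order and $i_0$). Since $p\neq 2$, this is exactly \eqref{eq:relatetwotorus1}; the values of $i$ in the three cases read off directly from the entry-valuations of the ideals $\mathcal{B}^{n}, \mathcal{B}_2^{2n+1}, \mathcal{B}_2^{n}$ computed in Example~\ref{Example:explicitideal} and its analogues. The condition coming from the \emph{off-diagonal} entries is obtained by matching $(g-\lambda)_{12}$ and $(g-\lambda)_{21}$ to the $\L$-structure $\lambda_{21}=\lambda_{12}D'$; eliminating $\lambda_{12}$ yields the single congruence
\[
g_{12}-g_{21}/D'\;=\;\frac{b\bigl(D'-D'Du^2-Dv^2\bigr)}{vD'}\;\equiv\; 0\pmod{\varpi^{j}},
\]
for the appropriate $j$. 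The $D'Du^2$-term is negligible by \eqref{eq:relatetwotorus1} (one checks in each case that $v(bDu)\ge i$ implies $v(bD'Du^2)$ exceeds the required threshold), so one is left with $v(bD(v^2-D'/D))\ge j+v(vD')$; since $v(D)=0$ and $v(v)=0$, this reduces exactly to \eqref{eq:relatetwotorus2}.

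For sufficiency, assume \eqref{eq:relatetwotorus1} and \eqref{eq:relatetwotorus2} both hold. Set $z=a$ and $l_1=1-bDu/a\in 1+\varpi O_\F$ (the hypothesis $v(bDu)\ge i\ge 1$ ensures $l_1\in U_\L(1)$), so that $\lambda_{11}=\lambda_{22}=a-bDu=g_{11}$ and $g_{22}-\lambda_{22}=2bDu$ has the required valuation by \eqref{eq:relatetwotorus1}. Then solve for $\lambda_{12}$ by imposing $\lambda_{12}\equiv bDv/D'\pmod{\varpi^{?}}$ in the tightest of the two off-diagonal congruences; the remaining congruence to be verified is precisely the one that, after invoking \eqref{eq:relatetwotorus1} to discard the $D'Du^2$ term, becomes \eqref{eq:relatetwotorus2}. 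One checks that the resulting $l_2=\lambda_{12}/z$ satisfies $v(l_2)\ge 1$ (resp.\ $\ge 0$ in the $e_\L=2$ case), so $\lambda\in\F^\times U_\L(1)$ as required, completing the proof.

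The main bookkeeping obstacle is keeping track of the correct entry-by-entry valuations of $\mathcal{B}_2^n$ for the non-maximal chain order, which differ for $n$ even and $n$ odd; once this is done carefully, both the $i=\lceil n/2\rceil$ and $j=\lfloor n/2\rfloor$ in case (3), and the asymmetric pair $i=n+1,j=n$ in case (2) (where $B^1$ involves the \emph{other} ramified order, cf.~\eqref{eq3-4:AnotherCompactopenSubgroup}), drop out of the comparison between the $(2,1)$-valuation and the $(1,2)$-valuation of $\mathcal{B}_2^{2n+1}$.
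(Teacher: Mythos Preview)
Your approach is correct and essentially identical to the paper's: both compute $k^{-1}tk$ explicitly, read off from the shape of $ZB^1=\F^\times U_\L(1)K_{\mathfrak{A}}(\cdot)$ the two conditions $g_{11}\equiv g_{22}\pmod{\varpi^i}$ and $g_{21}/D'\equiv g_{12}\pmod{\varpi^j}$, and then use $i\ge j$ to discard the $u^2$-term in the second congruence. The paper's proof is terser (it just writes down the two raw congruences and says ``Manipulating these two equations using that $i\geq j$, we get the required congruence equations''), whereas you spell out the $\lambda$-matching and the sufficiency direction more explicitly; but there is no real difference in strategy.
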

	\begin{proof}
Note that
	 \begin{equation}\label{EqB1:ktk}
 k^{-1}t k=\zxz{a-bDu}{v^{-1}b(1-Du^2)}{vbD}{a+bDu}.
 \end{equation}
So $k^{-1}t k\in ZB^1$ if and only if
\begin{equation}
a-bDu\equiv a+bDu \mod{\varpi^{i}},
\end{equation}
\begin{equation}
\frac{vbD}{D'}-v^{-1}b(1-Du^2)\equiv 0 \mod{\varpi^{j}}.
\end{equation}
Here one can do a case by case check for the values of $i,j$. Note that when $e_\L=2$,
$$\mathcal{B}_2^n=\zxz{\varpi^{\lceil n/2\rceil}O_\F}{\varpi^{\lfloor n/2\rfloor}O_\F}{\varpi^{\lfloor n/2\rfloor+1}O_\F}{\varpi^{\lceil n/2\rceil}O_\F}.$$
We shall skip the rest details here.
Manipulating these two equations using that $i\geq j$, we get the required congruence equations.
\end{proof}

\subsection{$\L\not\simeq \E$ both ramified}
When $\L\not\simeq \E$ but $e_\L=e_\E$, they must be distinct ramified extensions. 

We choose uniformizers so that $\varpi_\L^2=\varpi_\F$. $\chi$ is a character over a different ramified extension $\E$, with uniformizer $\varpi_\E$ such that $\varpi_\E^2=\xi \varpi_\F$ where $\xi$ is not a square in $(O_\F/\varpi O_\F)^\times $. Recall that the associated Langlands parameter for $\pi$ is $\Theta=\theta \Delta_\theta^{-1}$ where $\Delta_\theta$ is level 1. Then by Lemma \ref{lem:EpsilontestDistinctramified}, we shall compute $I(\varphi,\chi)$ on $\GL_2$ side if and only if for $l=c(\theta)/2-1$ and any $x\in O_\F^\times $,
\begin{equation}
\chi(1+\varpi_\F^l \varpi_\E x)=\theta(1+\varpi_\F^l\varpi_\L \delta  x)
\end{equation}
for certain $\delta \in O_\F$ such that 
\begin{equation}\label{eq:Epsilontestwhendistinct}
\delta ^2\xi^{-1}-1\text{\  is \emph{not} a square in \ }O_\F/\varpi O_\F.
\end{equation}
We now show it is indeed the case from a more direct approach.
\begin{prop}\label{Lem:testforWaldsnecessary}
Suppose that $\E,\L$ are distinct ramified extensions, $c(\pi)=2n+1\geq c(\pi_{\chi^{-1}})$, and $I(\varphi,\chi)\neq 0$ for $\varphi=\pi(k) \varphi_0$ where $k=\zxz{1}{u}{0}{1}\zxz{v}{0}{0}{1}$ with $v(v)=0$, $v(u)\geq 0$. Let $\alpha_\chi$ be any element satisfying 
$\chi(1+x)=\psi_\E(\alpha_\chi x)$ for $1+x\in U_\E(n)$ and $\overline{\alpha_\chi}=-\alpha_\chi$.
Then we must have
\begin{equation}\label{eq:necessarycondfortest}
\frac{D}{D'}v^2-2\varpi_\F^{n}\alpha_\chi \sqrt{D}v+(1-Du^2)\equiv 0 \mod{\varpi_\F^{ \lceil n/2\rceil}}.
\end{equation}
Whether this quadratic equation has solutions is consistent with Tunnell-Saito's $\epsilon-$value test. For each fixed $u$ solution, we get 2 solutions of $v\mod\varpi^{\lceil n/2\rceil}$, and the resulting integral is
\begin{equation}
I(\varphi,\chi)=\frac{1}{q^{\lfloor n/2\rfloor}},
\end{equation}
which is consistent with \eqref{Eq:SizeofIntAllCase} for $l=2n+1$.
\end{prop}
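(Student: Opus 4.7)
The strategy is to translate the non-vanishing criterion of Theorem \ref{Theo:Main1} into an explicit equation in $(u,v)$ by comparing two character formulas on a common subset, and then match the resulting quadratic with Tunnell-Saito's test of Lemma \ref{lem:EpsilontestDistinctramified}.

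First, by Lemma \ref{Cor:MCofGeneralMinimalVec}, $\Phi_\varphi(t)=\Phi_{\varphi_0}(k^{-1}tk)$ is supported on $\{t\in\E^\times:k^{-1}tk\in ZB^1\}$, and on this support it equals $\tilde{\theta}(k^{-1}tk)$. By Theorem \ref{Theo:Main1} the condition $I(\varphi,\chi)\ne 0$ is equivalent to $\tilde{\theta}(k^{-1}tk)=\chi(t)$ for every $t$ in the common support. I will parametrize $t=1+b\sqrt{D}$ with $b\in O_\F$ (mod center, so $v(a)=0$; the case where $b\in \F^\times\sqrt D$ is forced by \eqref{Eq:AppeBabrangeforNotwholeE}). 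Lemma \ref{lem4.1:support} in the case $e_\L=e_\E=2$, $c(\pi)=2n+1$ then gives the support conditions \eqref{eq:relatetwotorus1}–\eqref{eq:relatetwotorus2}, namely $bDu\equiv 0\pmod{\varpi^{\lceil n/2\rceil}}$ and $b(v^2-D'/D)\equiv 0\pmod{\varpi^{\lfloor n/2\rfloor}}$, which in particular allows $b$ to be a unit provided the pair $(u,v)$ is sufficiently concentrated.

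Next, I derive \eqref{eq:necessarycondfortest} by linearizing both characters. Using \eqref{EqB1:ktk}, I decompose $k^{-1}tk-1$ into its $\L$-part and $\L^\perp$-part with respect to the embedding $\L=\F(\sqrt{D'})$; a direct calculation in coordinates shows the trace-$0$ matrix $\begin{pmatrix}-bDu & v^{-1}b(1-Du^2)\\ vbD & bDu\end{pmatrix}$ decomposes with $\L$-component equal to $\frac{b}{2v}\bigl(1-Du^2+v^2D/D'\bigr)\sqrt{D'}$. Applying the Lie-algebra formula for $\tilde\theta$ (Corollary \ref{CorA1:thetatilde2} adapted to the $e_\L=2$ case, with $\alpha_\theta=\frac{1}{\varpi^n\sqrt{D'}}$ from \eqref{eq2.1:specialembedding}) gives
\[
\tilde{\theta}(k^{-1}tk)\equiv \psi\!\left(\frac{b}{v\varpi^{n}}\bigl(1-Du^2+\tfrac{D}{D'}v^2\bigr)\right),
\]
while the analogous expansion $\chi(1+b\sqrt{D})=\psi_\E(\alpha_\chi b\sqrt{D})=\psi(2\alpha_\chi\sqrt{D}\, b)$ (since $\alpha_\chi$ is imaginary, so $\alpha_\chi\sqrt D\in\F$) gives the right-hand side. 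Equating and cancelling the factor $b$ (using that $b$ ranges over units and that the equation must hold to sufficient precision dictated by the levels of $\tilde\theta$ and $\chi$), I obtain precisely \eqref{eq:necessarycondfortest}, with the modulus $\varpi^{\lceil n/2\rceil}$ arising from the semi-valuation of $\mathcal{B}_2^n$ in the off-diagonal direction.

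Finally, the quadratic \eqref{eq:necessarycondfortest} in $v$ with $u$ fixed has discriminant proportional to $\varpi^{2n}\alpha_\chi^2-(1-Du^2)/D'$, and whether it admits a unit solution $v$ is controlled by whether a certain element of $(O_\F/\varpi O_\F)^\times$ is a square. A direct comparison identifies this element with $\delta^2\xi^{-1}-1$ from Lemma \ref{lem:EpsilontestDistinctramified}, where $\delta$ is the ratio of $\alpha_\chi$ and $\alpha_\theta$ after the normalization $\varpi_\L^2=\varpi_\F$, $\varpi_\E^2=\xi\varpi_\F$; hence solvability of the quadratic is equivalent to $\epsilon(\pi_\E\times\chi^{-1})=+1=\epsilon(\B)$, i.e.\ the $\GL_2$ side of Tunnell-Saito's dichotomy. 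Assuming solvability, for each congruence class of $u\bmod\varpi^{\lceil n/2\rceil-1}$ the quadratic has two solutions for $v\bmod\varpi^{\lceil n/2\rceil}$; on the resulting support $\Phi_\varphi\chi^{-1}\equiv 1$, and by Lemma \ref{Sec2.3Lem:VolumeofE} the volume of $\Supp\Phi_\varphi\cap(\F^\times\backslash\E^\times)$ computes out to $1/q^{\lfloor n/2\rfloor}$, matching case (3) of \eqref{Eq:SizeofIntAllCase} with $l=2n+1$.

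\textbf{Main obstacle.} The most delicate point is the bookkeeping of normalizations: matching the factor $\alpha_\theta\sqrt{D'}=\varpi^{-n}$ against $\alpha_\chi\sqrt{D}$, tracking the ramified valuations $v(\mathcal{B}_2^n)$ to get the correct modulus $\varpi^{\lceil n/2\rceil}$ rather than $\varpi^n$ or $\varpi^{\lfloor n/2\rfloor}$, and then translating the ``discriminant is a square'' criterion into Tunnell's parameter $\delta$ with the correct sign convention noted in the remark following Lemma \ref{lem:EpsilontestDistinctramified}.
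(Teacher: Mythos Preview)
Your overall strategy matches the paper's: linearize both $\tilde\theta(k^{-1}tk)$ and $\chi(t)$, equate, and extract the quadratic in $v$ whose discriminant is then matched against Tunnell's criterion. However, there is a genuine gap in how you determine the support.

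You write that Lemma~\ref{lem4.1:support} ``allows $b$ to be a unit provided the pair $(u,v)$ is sufficiently concentrated.'' This is false in the distinct-ramified case and is the one place where the argument must use that $\E\not\simeq\L$. Since $\E,\L$ are distinct ramified extensions, $D'/D$ is a \emph{non-square} unit, so $v^2-D'/D\in O_\F^\times$ for every $v\in O_\F^\times$; condition~\eqref{eq:relatetwotorus2} therefore forces $v(b)\ge\lfloor n/2\rfloor$ regardless of $(u,v)$. The paper opens its proof with precisely this observation (``In this case, $v(v^2-\tfrac{D'}{D})=0$. So by Lemma~\ref{lem4.1:support}, we must have $b\in\varpi^{\lfloor n/2\rfloor}$''). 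This is also what drives the modulus: the character identity $\Phi_{\varphi_0}(k^{-1}tk)\chi^{-1}(t)=1$ must hold for all $b\in\varpi^{\lfloor n/2\rfloor}O_\F$, giving a congruence modulo $\varpi^{n-\lfloor n/2\rfloor}=\varpi^{\lceil n/2\rceil}$---not from ``the semi-valuation of $\mathcal{B}_2^n$ in the off-diagonal direction'' as you suggest.

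Your error is internally inconsistent in two places. First, the linearization $\chi(1+b\sqrt D)=\psi(2\alpha_\chi\sqrt D\,b)$ you invoke is only valid on $U_\E(n)$, i.e.\ precisely when $v(b)\ge\lfloor n/2\rfloor$; for unit $b$ no such formula holds. Second, the volume $1/q^{\lfloor n/2\rfloor}$ you quote at the end is exactly $\Vol(Z\backslash ZU_\E(n))$, which again presupposes the support is $v(b)\ge\lfloor n/2\rfloor$, not $b$ a unit. Once you replace the ``$b$ a unit'' claim with the correct support $v(b)\ge\lfloor n/2\rfloor$, the rest of your computation goes through and agrees with the paper (including the key point that $\Delta(0)=4\tfrac{D}{D'}(\xi^{-1}\delta^2-1)$, where the non-square factor $D/D'$ flips the square-ness condition to match Lemma~\ref{lem:EpsilontestDistinctramified}).
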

%Note that as long as $c(\pi)>2$, $c(\theta)/e>\lceil\frac{\lceil c(\theta)/2\rceil-e+1}{e}\rceil$ and the above necessary condition is non-trivial.
\begin{proof}
In this case, $v(v^2-\frac{D'}{D})=0$. So by Lemma \ref{lem4.1:support}, we must have $b\in \varpi^{\lfloor n/2\rfloor}$ and
$k^{-1}tk\in Z K_\mathfrak{A}(n)$. By \eqref{EqA1:thetatilde} and \eqref{EqB1:ktk}, we have
\begin{equation}
\Phi_{\varphi_0}(k^{-1}tk)=\psi(\varpi_\F^{-c(\theta)/e_\L}\frac{b}{a}(v\frac{D}{D'}+v^{-1}(1-Du^2))).
\end{equation}
By definition of $\alpha_\chi$, we have
\begin{equation}
\chi^{-1}(t)=\psi(-2\alpha_\chi\frac{b\sqrt{D}}{a}).
\end{equation}
$I(\varphi,\chi)\neq 0$ is equivalent to that 
$
\Phi_{\varphi_0}(k^{-1}tk)\chi^{-1}(t)=1
$
on the common support  $Z\{a+b\sqrt{D}|v(a)=0,v(b)\geq \lfloor n/2\rfloor\}=ZU_\E(n)$. 
As $c(\psi)=0$, we must have
\begin{equation}
(v\frac{D}{D'}+v^{-1}(1-Du^2)-2\varpi_\F^{c(\theta)/e_\L}\alpha_\chi \sqrt{D})b\equiv 0 \mod{\varpi_\F^{ n}}
\end{equation}
for $b\in \varpi^{\lfloor n/2\rfloor}$. Since $v\in O_\F^\times$, we get the quadratic equation as claimed.
This quadratic equation has discriminant 
\begin{equation}
\Delta(u)=4D\varpi_\F^{2c(\theta)/e_\L}\alpha_\chi^2-4\frac{D}{D'}+\frac{4D^2u^2}{D'}.
\end{equation}
Note that %$v_\F(4D\varpi_\F^{2c(\theta)/e_\L}\alpha_\chi^2)=0$ and $4D\varpi_\F^{2c(\theta)/e_\L}\alpha_\chi^2$ is a square in the residue field while $\frac{D}{D'}$ is not a square. Thus 
$v(4D\varpi_\F^{2c(\theta)/e_\L}\alpha_\chi^2-4\frac{D}{D'})=0$.  When $v(u)<0$, $\Delta(u)\equiv \frac{4D^2u^2}{D'}$ is never a square.
When $v(u)\geq 0$,
$v(\frac{4D^2u^2}{D'})>0$, and whether $\Delta(u)$ is a square is independent of the choice of $u$. For simplicity one can just pick $u=0$.

Recall that we have 
\begin{equation}
\chi(1+\varpi_\F^l \varpi_\E x)=\theta(1+\varpi_\F^l\varpi_\L \delta  x).
\end{equation}
This implies that 
\begin{equation}
\alpha_\chi=\alpha_\theta\frac{\varpi_\L}{\varpi_\E}\delta .
\end{equation}
Substitute this and $\varpi_\L^{c(\theta)}\alpha_\theta=\frac{1}{\sqrt{D'}}$ into $\Delta(0)$, we get
\begin{equation}
\Delta(0)=4\frac{D}{D'}(\xi^{-1}\delta ^2-1).
\end{equation}
Now $\frac{D}{D'}$ is already not a square. Thus whether $\Delta(0) $ is a square is equivalent to whether $\delta ^2\xi^{-1}-1$ is not a square, which is consistent with Lemma \ref{lem:EpsilontestDistinctramified}. So  the $\epsilon-$value test  for $\GL_2$ is equivalent to $\Delta(0)$ being a square. It's easy to check that one can get two solutions of $v\mod{\varpi_\F^{\lceil n/2\rceil }}$. %Again we get essentially two test vectors for proper fixed $u$, according to Corollary \ref{cor:toricnewforminKirillov}. 
For each solution, we have
\begin{equation}
I(\varphi,\chi)=\Vol (Z\backslash ZU_\E(n))=\frac{1}{q^{\lfloor n/2\rfloor}}.
\end{equation}

\end{proof}
\begin{rem}
One can do a similar computation even when $e_\L\neq e_\E$. Consider for example the case $e_\L=1$ and $e_\E=2$, i.e., $v(D')=0$ and $v(D)=1$. One can obtain a similar quadratic equation with similar discriminant. But in this case
\begin{equation}
\Delta(u)\equiv -4\frac{D}{D'}
\end{equation}
is never a square.
\end{rem}

\subsection{$\L\simeq \E$, $\min\{c(\theta\chi^{-1}), c(\theta\overline{\chi}^{-1})\}\geq 2$}

In this case $\frac{D}{D'}$ is a square and $c(\theta\chi^{-1})$, $c(\theta\overline{\chi}^{-1})$ make sense. Since conjugation by $\zxz{-1}{0}{0}{1}$ effectively change $\chi$ into $\overline{\chi}$, we shall always assume that $c(\theta\chi^{-1})\leq c(\theta\overline{\chi}^{-1})=c(\theta)$. From \eqref{eq:relatetwotorus2}, we get that
$b(v\pm\sqrt{\frac{D'}{D}})\equiv 0\mod \varpi^j$, 
We shall always assume that 
\begin{equation}\label{eq:relatetwotorus2'}
 b(v-\sqrt{\frac{D'}{D}})\equiv 0\mod \varpi^j
\end{equation}
which is closely related to the previous assumption that $c(\theta\chi^{-1})\leq c(\theta\overline{\chi}^{-1})$. Again  the action of $\zxz{-1}{0}{0}{1}$ switch the congruence condition for $v$.
\begin{lem}\label{lem:supportimpliesclosetorus}
	Suppose that $c(\pi)=4n, 2n+1$ or $4n+2$.
Let $k$ be as in \eqref{EqB:k} and let $t=\zxz{a}{b}{bD}{a}$ satisfy \eqref{Eq:AppeBabrangeforNotwholeE}, \eqref{eq:relatetwotorus1}, \eqref{eq:relatetwotorus2'}. Then we can write
\begin{align}\label{Eq:rewritingconjugation}
k^{-1}t k=\zxz{a}{b\sqrt{\frac{D}{D'}}}{b\sqrt{DD'}}{a}g,\end{align}
where $$g=
\frac{1}{a^2-b^2D}\zxz{a^2-abDu-vb^2\sqrt{\frac{D}{D'}}D}{ab(v^{-1}-\sqrt{\frac{D}{D'}})-bDu(b\sqrt{\frac{D}{D'}}+v^{-1}au)}{abD(v-\sqrt{\frac{D'}{D}})+b^2Du\sqrt{DD'}}{a^2+abDu-v^{-1}b^2\sqrt{DD'}(1-Du^2)},$$
$g\in K_{\mathfrak{A}}(n)$ when $B^1=H^1$, or $g$ satisfies the condition in Corollary \ref{CorA1:thetatilde2} when $B^1\neq H^1$.

Then
\begin{align}\label{EqB:Phivarphi}
\Phi_\varphi(t)&=\theta(a+b\sqrt{D})\psi(\varpi_\F^{-c(\theta)/e_\L} \frac{ab}{a^2-b^2D}(\frac{D}{D'}v+v^{-1}(1-Du^2)-2\sqrt{\frac{D}{D'}}))\\
&=\theta(a+b\sqrt{D})\psi(\varpi_\F^{-c(\theta)/e_\L} \frac{b}{av}((\sqrt{\frac{D}{D'}}v-1)^2-Du^2)). \notag\end{align}
\end{lem}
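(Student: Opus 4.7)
The plan is to verify the factorisation \eqref{Eq:rewritingconjugation} by direct matrix multiplication, then apply the multiplicative property of the matrix coefficient of a type 1 minimal vector on $B^1$ (Lemma \ref{Cor:MCofGeneralMinimalVec} and Corollary \ref{CorA1:thetatilde2}) to read off $\Phi_\varphi(t)$.

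First I would compute $k^{-1}tk$ directly, reproducing \eqref{EqB1:ktk}, and then observe that the element
\[
  l_0 = \begin{pmatrix} a & b\sqrt{D/D'} \\ b\sqrt{DD'} & a \end{pmatrix}
\]
is the embedding of $a + b\sqrt{D} \in \L$ under the standard embedding $\L=\F(\sqrt{D'})\hookrightarrow M_2(\F)$ (recall that $D/D'$ is a square, so $\sqrt{D}\in \L$, and that $D'\cdot \sqrt{D/D'}=\sqrt{DD'}$). A direct check, multiplying $l_0^{-1}$ by $k^{-1}tk$ and using $\det(l_0)=a^2-b^2D$, yields the expression for $g$ stated in the lemma.

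Next I would verify that $g \in K_\mathfrak{A}(n)$ (when $B^1 = H^1$) or that the four entries of $g-I$ satisfy the valuation bounds $v(a')$, $v(d')\geq n+1$ and $v(b'),v(c')\geq n$ needed for Corollary \ref{CorA1:thetatilde2} (when $B^1\neq H^1$, i.e.\ the case $c(\pi)=4n+2$). The diagonal entries $a-bDu$ and $a+bDu$ lie in $a+\varpi^{i}O_\F\cdot O_\F^\times$ by condition \eqref{eq:relatetwotorus1}, so after dividing by $a^2-b^2D$ the diagonal entries of $g-I$ have the required valuation. The off-diagonal entry $ab(v-\sqrt{D'/D})$ is controlled by \eqref{eq:relatetwotorus2'}, and the remaining small perturbations involving $b^2$ and $b^2Du$ are automatic from \eqref{Eq:AppeBabrangeforNotwholeE}. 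One does this case by case for $c(\pi)=4n$, $2n+1$, $4n+2$, using the explicit shapes of $i,j$ in Lemma \ref{lem4.1:support}; the main book-keeping issue is tracking the mismatch between $i$ and $j$ in the $c(\pi)=4n+2$ situation, which is exactly why the intermediate group $B^1=K_\L^1(n+1,n)$ was chosen in Section \ref{Sec:Specialembedding}.

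Finally, given the factorisation $k^{-1}tk = l_0\cdot g$ with $l_0\in \L^\times$ and $g\in B^1$, Lemma \ref{Cor:MCofGeneralMinimalVec} gives $\Phi_{\varphi_0}(l_0 g)=\theta(a+b\sqrt{D})\tilde{\theta}(g)$. Applying Corollary \ref{CorA1:thetatilde2} (or the analogous formula \eqref{EqA1:thetatilde} when $B^1=H^1$) to evaluate $\tilde\theta(g) = \psi(\varpi^{-c(\theta)/e_\L}(b'+c'/D'))$ on the off-diagonal entries of $g$, and simplifying, produces
\[
  \frac{ab}{a^2-b^2D}\Bigl(\tfrac{D}{D'}v+v^{-1}(1-Du^2)-2\sqrt{\tfrac{D}{D'}}\Bigr),
\]
which is then rewritten by completing the square as $\tfrac{b}{av}\bigl((\sqrt{D/D'}\,v-1)^2-Du^2\bigr)$, yielding \eqref{EqB:Phivarphi}. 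The main obstacle is the careful case analysis in the middle step, but the identity $D'\sqrt{D/D'}=\sqrt{DD'}$ is what makes all the valuations align cleanly.
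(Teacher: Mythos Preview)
Your approach is essentially the same as the paper's: verify the factorisation by direct multiplication, check $g$ lies in the correct subgroup using the congruence conditions, then apply the multiplicativity of $\Phi_{\varphi_0}$ on $B^1$ via Lemma \ref{Cor:MCofGeneralMinimalVec} and Corollary \ref{CorA1:thetatilde2}.

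There is one small gap in your final step. Completing the square inside the parenthesis correctly gives
\[
\frac{D}{D'}v+v^{-1}(1-Du^2)-2\sqrt{\tfrac{D}{D'}}=\frac{1}{v}\Bigl((\sqrt{D/D'}\,v-1)^2-Du^2\Bigr),
\]
but this turns the prefactor into $\dfrac{ab}{(a^2-b^2D)v}$, not $\dfrac{b}{av}$. Passing from one to the other is not an algebraic identity: it requires the approximation $\dfrac{a}{a^2-b^2D}=\dfrac{1}{a}\bigl(1+\tfrac{b^2D}{a^2}+\cdots\bigr)$ and an argument that the higher-order terms in $b$ contribute trivially to $\psi(\varpi_\F^{-c(\theta)/e_\L}\cdot\,)$ once multiplied by $(\sqrt{D/D'}\,v-1)^2-Du^2$, using the valuation bounds from \eqref{Eq:AppeBabrangeforNotwholeE}, \eqref{eq:relatetwotorus1}, \eqref{eq:relatetwotorus2'}. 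This is exactly how the paper justifies the second equality, and you should add that check.
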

\begin{proof}[Sketch of proof]
	One can directly verify \eqref{Eq:rewritingconjugation}
using \eqref{EqB1:ktk}.
The congruence conditions for $t$ guarantee that $g\in K_{\mathfrak{A}}(n)$ when $B^1=H^1$, or $g$ satisfies the condition in Corollary \ref{CorA1:thetatilde2} when $B^1\neq H^1$. 

 For the first equality in \eqref{EqB:Phivarphi}, we use Lemma \ref{Cor:MCofGeneralMinimalVec}, \eqref{EqA1:thetatilde}, and Corollary \ref{CorA1:thetatilde2} when $B^1\neq H^1$. In particular note that $$\zxz{a}{b\sqrt{\frac{D}{D'}}}{b\sqrt{DD'}}{a}=a+b\sqrt{\frac{D}{D'}} \cdot \sqrt{D'}=a+b\sqrt{D}$$ under the embedding of $\L$. 

For the second equality in \eqref{EqB:Phivarphi}, note that $$\frac{ab}{(a^2-b^2D)v}=\frac{ab}{a^2v}(1+\frac{b^2D}{a^2}+\cdots).$$ One can show that all error terms do not matter by studying their valuations and using 
\eqref{Eq:AppeBabrangeforNotwholeE}, \eqref{eq:relatetwotorus1}, \eqref{eq:relatetwotorus2'}. %Thus

%$$\psi(\varpi_\F^{-c(\theta)/e_\L} \frac{ab}{(a^2-b^2D)v}((\sqrt{\frac{D}{D'}}v-1)^2-Du^2))=\psi(\varpi_\F^{-c(\theta)/e_\L} \frac{b}{av}((\sqrt{\frac{D}{D'}}v-1)^2-Du^2)).$$
\end{proof}
Now to check $\Phi_{\varphi_0}(k^{-1}tk)\chi^{-1}(t)=1$ on the common support, one can first compare the level of $\theta\chi^{-1}(a+b\sqrt{D})$ with the level of 
$\psi(\varpi_\F^{-c(\theta)/e_\L} \frac{b}{av}((\sqrt{\frac{D}{D'}}v-1)^2-Du^2))$. From this we can get the domain of the integrals. We shall now do a case by case discussion.

\subsubsection{$\L\simeq \E$ inert, $c(\pi)=4n$}\label{Sec:E=L4nNew}
\begin{lem}\label{lem:supportofintC4n}
If $c(\theta)=2n$, $c(\theta\chi^{-1})>1$ and $I(\varphi,\chi)\neq 0$, then $c(\theta\chi^{-1})=2\l $ must be even and $v((\sqrt{\frac{D}{D'}}v-1)^2-Du^2)= 2(n-\l ) $. In that case the support of the integral is $Z\{a+b\sqrt{D}|v(a)=0,v(b)\geq \l \}$.
\end{lem}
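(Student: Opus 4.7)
The plan is to convert the product $\Phi_\varphi(t)\chi^{-1}(t)$ into a single $\psi$-character in the variable $b$, and then read off when that character is identically $1$ on the common support. Starting from the formula for $\Phi_\varphi(t)$ in Lemma~\ref{lem:supportimpliesclosetorus} and using $\theta\chi^{-1}|_{\F^\times}=1$ (which follows from $w_\pi=\chi|_{\F^\times}=1$), I would rewrite $\theta\chi^{-1}(a+b\sqrt{D})=\theta\chi^{-1}(1+(b/a)\sqrt{D})$. Taking $\alpha_{\theta\chi^{-1}}$ imaginary---say $\alpha_{\theta\chi^{-1}}=\alpha'\sqrt{D}$ with $v_\F(\alpha')=-c(\theta\chi^{-1})$---Lemma~\ref{Lem:DualLiealgForChar} linearises the above as $\psi(2\alpha' D\cdot(b/a))$ on the range $v(b/a)\ge\lceil c(\theta\chi^{-1})/2\rceil$, with the $-u^2/2$ correction available below that range. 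Combined with the $\psi$-factor in $\Phi_\varphi$, the integrand becomes
\[
\Phi_\varphi(t)\chi^{-1}(t)=\psi\!\left(\tfrac{b}{a}\bigl[2\alpha' D+\varpi^{-2n}X/v\bigr]\right),\qquad X=(\sqrt{D/D'}\,v-1)^2-Du^2.
\]

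Next, for the integral to be nonzero the integrand must be $1$ identically on the common support determined by Lemma~\ref{lem4.1:support}, on which $b$ can take valuation as low as $n-\min\{v(u),v(v^2-D'/D)\}$. The two terms in the bracket have valuations $-c(\theta\chi^{-1})$ and $-2n+v(X)$ respectively, so a cancellation producing the required high valuation forces the matching equation
\[
v(X)=2n-c(\theta\chi^{-1}).
\]

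Then I would establish the parity claim by factoring $X$ as a norm: with $y=\sqrt{D/D'}\,v-1\in\F$, one has $X=y^2-Du^2=\Nm_{\L/\F}(y+u\sqrt{D})$, so $v(X)=2v_\L(y+u\sqrt{D})$ is always even (using $\L$ inert, $v_\F\circ\Nm_{\L/\F}=2v_\L$). Combined with the matching equation, this forces $c(\theta\chi^{-1})=2\l$ to be even and $v(X)=2(n-\l)$. For the support statement, the constraint $v(b)\ge\l$ arises from the linearisation domain in the first step, while the congruences \eqref{eq:relatetwotorus1} and \eqref{eq:relatetwotorus2'} translate into $v(u)\ge n-\l$ and $v(v-\sqrt{D'/D})\ge n-\l$; both are compatible with the matching equation, with equality in at least one of them realising $v(X)=2(n-\l)$. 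The common support is thus $Z\{a+b\sqrt{D}:v(a)=0,\,v(b)\ge\l\}$.

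The main obstacle will be tracking possible second-order cancellations: if the two leading terms in the bracket cancel not only in valuation but also in leading coefficient (which a priori could yield an even smaller coefficient and weaken the matching equation), one must incorporate the $-u^2/2$ correction from Lemma~\ref{Lem:DualLiealgForChar}(1) and re-examine whether the matching conclusion survives. A minor bookkeeping step is to align the lattice structures on $\L$ and $\E$ (which agree thanks to $e_\L=e_\E=1$ and $v(D/D')=0$) so that $ZU_\E$-level groups are correctly identified with the congruences of Lemma~\ref{lem4.1:support}.
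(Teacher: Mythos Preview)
Your proposal is correct and follows the same approach as the paper: both observe that $X=(\sqrt{D/D'}\,v-1)^2-Du^2$ is a norm from the inert extension $\L$ and hence has even $\F$-valuation, forcing the $\psi$-factor in $\Phi_\varphi$ to have even level in $b$ and thus $c(\theta\chi^{-1})$ to be even; your version simply spells out the matching equation $v(X)=2n-c(\theta\chi^{-1})$ that the paper's three-line proof leaves implicit. Your flagged ``main obstacle'' about second-order cancellation in the bracket is unnecessary for this lemma, which only asserts necessary conditions---the precise depth of cancellation (and hence existence of suitable $u,v$) is exactly the content of the quadratic equation~\eqref{eq:equivconditionC4n} treated immediately afterward.
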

\begin{proof}
Since $v((\sqrt{\frac{D}{D'}}v-1)^2-Du^2)$ is always even, $\psi(\varpi_\F^{-c(\theta)/e_\L} \frac{b}{av}((\sqrt{\frac{D}{D'}}v-1)^2-Du^2))$ as a function in $b$ is of even level. Then $\theta\chi^{-1}$ must also have even level. For the support, simply use \eqref{eq:relatetwotorus1} and \eqref{eq:relatetwotorus2'}.
\end{proof}

Note that the condition on $c(\theta\chi^{-1})$ is already consistent with the $\epsilon-$value test in Section \ref{Sec:epsilontest}.
On this domain one can further write 
\begin{equation}
\theta\chi^{-1}(a+b\sqrt{D})=\psi_\E(\alpha_{\theta\chi^{-1}}\frac{b\sqrt{D}}{a})=\psi(2\alpha_{\theta\chi^{-1}}\frac{b\sqrt{D}}{a}).
\end{equation}
Thus $\Phi_{\varphi_0}(k^{-1}tk)\chi^{-1}(t)=1$ on the support of the integral is equivalent to that
\begin{equation}\label{eq:equivconditionC4n}
(\frac{D}{D'}v^2+2v(\varpi^{c(\theta)/e_\L}\alpha_{\theta\chi^{-1}}\sqrt{D}-\sqrt{\frac{D}{D'}})+1-Du^2)b\equiv 0\mod\varpi^{c(\theta)/e_\L}.
\end{equation}
Its discriminant is
\begin{equation}
{\Delta}(u)=4\varpi^{2n}\alpha_{\theta\chi^{-1}}\sqrt{D}(\varpi^{2n}\alpha_{\theta\chi^{-1}}\sqrt{D}-2\sqrt{\frac{D}{D'}})+4\frac{D}{D'}Du^2.
\end{equation}
Note that
$v(4\varpi^{2n}\alpha_{\theta\chi^{-1}}\sqrt{D})=2n-2d$, and $v(\varpi^{2n}\alpha_{\theta\chi^{-1}}\sqrt{D}-2\sqrt{\frac{D}{D'}})=0$ is also even. This is  because $$\varpi^{2n}\alpha_{\theta\chi^{-1}}\sqrt{D}-2\sqrt{\frac{D}{D'}}=\varpi^{2n}\sqrt{D}(\alpha_{\theta\chi^{-1}}-2\alpha_\theta)=\varpi^{2n}\sqrt{D}\alpha_{\overline{\theta}\chi^{-1}},$$ and $v_\L(\alpha_{\overline{\theta}\chi^{-1}})=-2n$ as we assumed that $2n=c(\theta\overline{\chi}^{-1})\geq c(\theta \chi^{-1})$.

Then $\Delta(u)$ being a square for proper $u$ is equivalent to that $$4\varpi^{2n}\alpha_{\theta\chi^{-1}}\sqrt{D}(\varpi^{2n}\alpha_{\theta\chi^{-1}}\sqrt{D}-2\sqrt{\frac{D}{D'}})=\Delta(u)-4\frac{D}{D'}Du^2$$ can be written as the norm of some element in $\L^\times$, which follows from the following lemma:
\begin{lem}\label{LemB.2:surjNorm}
The %image of the 
norm map on the residue field
$$\Nm: a+b\sqrt{D}\mapsto a^2-b^2D$$
is surjective when restricted to the domain $\{a+b\sqrt{D}| ,a\neq 0\}$. %When $q>3$, it is surjective when restricted to the domain $\{a+b\sqrt{D}| ,ab\neq 0\}$.
\end{lem}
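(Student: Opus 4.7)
The proof plan is to view the statement on the level of the residue field $k_\F = O_\F/\varpi O_\F$ and use the standard surjectivity of the norm map for the quadratic extension of finite fields $k_\L = k_\F(\sqrt{D})/k_\F$ (recall that $\L$ is inert, so $D$ reduces to a nonsquare in $k_\F^\times$). Since the target of the lemma is nonzero values (indeed, $a^2 - b^2 D = 0$ with $a \neq 0$ forces $D = (a/b)^2$, contradicting that $D$ is a nonsquare), the claim really amounts to showing that every $c \in k_\F^\times$ admits a representation $c = a^2 - b^2 D$ with $a \neq 0$.

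First I would invoke surjectivity of $\Nm : k_\L^\times \to k_\F^\times$. This is standard: one can cite Hilbert 90 for finite fields, or simply note that $k_\L^\times$ is cyclic of order $q^2-1$, $k_\F^\times$ is cyclic of order $q-1$, and the norm map $x \mapsto x^{1+q}$ is raising to the power $1+q = (q^2-1)/(q-1)$, whose image is exactly $k_\F^\times$. So every $c \in k_\F^\times$ has a preimage.

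Next I would do a fiber count: the kernel of $\Nm$ is the group $k_\L^1$ of norm-one elements, which has order $(q^2-1)/(q-1) = q+1$. Hence every nonempty fiber of $\Nm$ has exactly $q+1$ elements. The elements in a fiber with $a = 0$ — i.e., elements of the form $b\sqrt{D}$ with $b \in k_\F^\times$ — contribute at most two preimages to any fixed value $c = -b^2 D$, namely $\pm b \sqrt{D}$. Since our running assumption is $p \neq 2$, we have $q \geq 3$, so $q+1 \geq 4 > 2$; thus each fiber contains at least $q+1-2 \geq 2$ elements with $a \neq 0$. Lifting such an element back to $O_\L^\times$ via Hensel yields the desired $a + b\sqrt{D} \in \L^\times$ with $a \neq 0$ and $\Nm(a+b\sqrt{D}) \equiv c \pmod{\varpi}$.

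There is no real obstacle here; the statement is essentially a soft counting argument once one knows surjectivity of the norm for the residue field extension. The only point worth flagging is that the degenerate case "$a = 0$" subtracts at most two elements from a fiber of size $q+1$, and this is why one needs $q$ to be at least $3$ — which is already in force since $p \neq 2$ is a standing assumption of the paper.
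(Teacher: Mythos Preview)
Your proof is correct. The paper actually states this lemma without proof, treating it as an elementary fact about finite fields; your fiber-counting argument (surjectivity of the norm on $k_\L^\times$, fibers of size $q+1$, at most two elements with $a=0$ per fiber, and $q+1>2$ since $p\neq 2$) supplies exactly the missing justification. One minor remark: the ``surjective'' in the statement should be read as surjective onto $k_\F^\times$, as you correctly observe that $0$ is not in the image when $a\neq 0$ and $D$ is a nonsquare; this matches the way the lemma is applied in the paper, where one only needs to hit a specific unit.
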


We have $v({\Delta}(u))=2n-2\l $ by proper choice of $u$. Then the quadratic equation (\ref{eq:equivconditionC4n}) has two solutions $v\mod\varpi^n$. For each of these two solutions, we have
\begin{equation}
I(\varphi,\chi)=\Vol(Z\backslash ZU_\E(\l ))=\frac{1}{(q+1)q^{\l -1}},
\end{equation}
which is consistent with \eqref{Eq:SizeofIntAllCase} for $l=4\l$.
\subsubsection{$\L\simeq \E$ inert, $c(\pi)=4n+2$}

\begin{lem}
If $c(\theta)=2n+1$, $c(\theta\chi^{-1})>1$ and $I(\varphi,\chi)\neq 0$, then $c(\theta\chi^{-1})=2\l +1$ must be even and  $v(\sqrt{\frac{D}{D'}}v-1)^2-Du^2)= 2(n-\l ) $.
If one can pick $v(u)> n-\l $, then the support of the integral is $Z\{a+b\sqrt{D}|v(a)=0,v(b)\geq \l \}$. If one can pick $v(u)=n-\l $, then the support of the integral is $Z\{a+b\sqrt{D}|v(a)=0,v(b)\geq \l +1\}$.
\end{lem}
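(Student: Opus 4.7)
The plan is to follow the strategy of Lemma \ref{lem:supportofintC4n}, adjusting the parity considerations to the odd-conductor case $c(\theta)=2n+1$. Starting from the formula in Lemma \ref{lem:supportimpliesclosetorus}, the condition $I(\varphi,\chi)\neq 0$ requires that
\[
\Phi_\varphi(t)\,\chi^{-1}(t)=\theta\chi^{-1}(a+b\sqrt{D})\,\psi\!\left(\varpi_\F^{-(2n+1)}\tfrac{b}{av}\bigl((\sqrt{D/D'}\,v-1)^2-Du^2\bigr)\right)
\]
be identically $1$ on the common support.

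The key step is a parity observation. Since $\L\simeq\E$ is inert, $\sqrt{D/D'}\in\F^\times$, and one has the factorization
\[
(\sqrt{D/D'}\,v-1)^2-Du^2=\Nm_{\L/\F}\bigl(\sqrt{D/D'}\,v-1-\sqrt{D}\,u\bigr).
\]
Because $\L/\F$ is unramified, every nonzero value of $\Nm_{\L/\F}$ has even $v$-valuation. Consequently, regarded as a character in $b$, the $\psi$-phase has odd level $2n+1-2(n-\l)=2\l+1$ for some integer $\l\geq 0$. Meanwhile, $\theta\chi^{-1}(a+b\sqrt{D})=\psi\bigl(2\alpha_{\theta\chi^{-1}}\,b\sqrt{D}/a\bigr)$, viewed as a character in $b$, has level $c(\theta\chi^{-1})$. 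For these two characters to cancel on a nontrivial common support, their levels must match, which forces $c(\theta\chi^{-1})=2\l+1$ odd and $v\bigl((\sqrt{D/D'}\,v-1)^2-Du^2\bigr)=2(n-\l)$, as claimed.

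For the description of the support, I would combine the congruence conditions from Lemma \ref{lem4.1:support} (with $i=n+1$, $j=n$ in the present case), namely
\[
v(b)+v(u)\geq n+1,\qquad v(b)+v(v-\sqrt{D'/D})\geq n,
\]
together with the already-established identity
\[
v\bigl((\sqrt{D/D'}\,v-1)^2-Du^2\bigr)=\min\{2v(v-\sqrt{D'/D}),\,2v(u)\}=2(n-\l),
\]
the equality being strict in the min only if both quantities are equal, which the fixed parity precludes. If one picks $v(u)>n-\l$, then necessarily $v(v-\sqrt{D'/D})=n-\l$ and both constraints collapse to $v(b)\geq\l$. If instead $v(u)=n-\l$, then the first constraint tightens to $v(b)\geq\l+1$, which now dominates the second, producing the two advertised support descriptions.

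The main technical obstacle is to ensure that no additional cancellation slips in between the two terms of $(\sqrt{D/D'}v-1)^2-Du^2$; this is exactly what the parity argument above rules out, since the valuation is pinned down to $2(n-\l)$. The remaining questions, namely whether admissible $(u,v)$ can actually be chosen (a discriminant analysis mirroring Section \ref{Sec:E=L4nNew} but with the new $u$-term) and the precise size of the resulting integral on each piece of support, I would defer to a subsequent lemma.
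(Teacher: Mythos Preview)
Your proposal is correct and follows exactly the strategy of Lemma~\ref{lem:supportofintC4n}, which is what the paper intends: the present lemma is in fact stated without proof, so your write-up is more detailed than the paper's. One small remark: the parenthetical ``which the fixed parity precludes'' is misleading, since both $2v(v-\sqrt{D'/D})$ and $2v(u)$ are even; the reason the minimum is attained exactly is already contained in your norm factorization together with Corollary~\ref{Sec2.1Cor:vofimaginary} (the $\F$-part $\sqrt{D/D'}v-1$ and the imaginary part $\sqrt{D}u$ cannot cancel in $\L$), or equivalently because the leading residue of $(\sqrt{D/D'}v-1)^2$ is a square while that of $Du^2$ is $D$ times a square.
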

Again the condition on $c(\theta\chi^{-1})$ is consistent with the $\epsilon-$value test.
 Note that in the case $v(u)>n-\l $ and $v(b)\geq \l $, 
\begin{equation}
 \theta\chi^{-1}(a+b\sqrt{D})=\theta\chi^{-1}(1+\frac{b\sqrt{D}}{a})=\psi\circ\Tr(\alpha_{\theta\chi^{-1}}(\frac{b\sqrt{D}}{a}-\frac{b^2D}{2a}))=\psi(2\alpha_{\theta\chi^{-1}}\frac{b\sqrt{D}}{a})
\end{equation}
by \eqref{eq:alphathetaless} and that $\alpha_{\theta\chi^{-1}}$ is imaginary.
Thus in either cases one get the same expressions for the quadratic equation and its discriminant as in Section \ref{Sec:E=L4nNew}.

Once $c(\theta\chi^{-1})=2d+1$, the existence of $u$ to make $\Delta(u)$ a square is guaranteed by Lemma \ref{LemB.2:surjNorm}. But there is no guarantee that $v(u)=n-d$.
The difference in the domain for $b$ is related to the difference in power in 
\eqref{eq:relatetwotorus1}, \eqref{eq:relatetwotorus2'}.

If we can take $v(u)=n-\l $ and  $v(b)\geq \l +1$ for the support of the integral, we can solve for two solutions of $v\mod\varpi^{ n}$. This implies that there are  $2q$ solutions $\mod \varpi^{n+1}$. For each such solutions we have
\begin{equation}
I(\varphi,\chi)=\frac{1}{(q+1)q^\l }.
\end{equation}

If we can take $v(u)>n-\l $ and $v(b)\geq \l $ for the support of the integral. Solving \eqref{eq:equivconditionC4n} gives two solution of $v\mod{\varpi^{ n+1}}$. For each solution we have
\begin{equation}
I(\varphi,\chi)=\frac{1}{(q+1)q^{\l -1}}.
\end{equation}

$l=4\l +2$ in this case.
These results are not exactly consistent with \eqref{Eq:SizeofIntAllCase}, as our choice of $B^1$ are not necessarily optimal. But they are matching two possible outcomes given in  \eqref{Sec4.4Eq:SizeofIntCase1} and \eqref{Sec4.4Eq:SizeofIntCase2}.
%$\Phi_\varphi(t)\chi^{-1}(t)$ is constant on each $b=b_0\varpi^l+\varpi^{l+1} O_\F$ where $b_0\in O_\F/\varpi O_\F$. Then the original integral becomes a finite sum over $b_0$.
%\begin{equation}
%I(\varphi,\chi)=\sum\limits_{b_0} 
%\end{equation}

%\begin{rem}
%When $q>3$, we can always choose $v(u)=n-l$ to make ${\Delta}(u)$ a square. Then we can have a somewhat uniform result.
%\end{rem}

\subsubsection{$\L\simeq \E$ ramified}
Note that $c(\theta\chi^{-1})$ must be even in this case.
\begin{lem}
Suppose that $c(\theta\chi^{-1})=2\l $,  $I(\varphi,\chi)\neq 0$. If $n-\l $ is even, then we must have $v_\F((\sqrt{\frac{D}{D'}}v-1)^2)=n-\l $, $v_\F(u)\geq (n-\l )/2$(actually we can pick $u=0$) and $v(b)\geq  \lfloor n/2\rfloor -\frac{n-\l }{2}=\lfloor \l /2\rfloor$. If $n-\l $ is odd, then $v_\F((\sqrt{\frac{D}{D'}}v-1)^2)>n-\l $, $v_\F(u)=\frac{n-\l -1}{2}$ and $v(b)\geq \lceil n/2\rceil-\frac{n-\l +1}{2}=\lceil \frac{\l -1}{2}\rceil =\lfloor \l /2\rfloor$.
\end{lem}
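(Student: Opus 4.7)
The plan is to apply the explicit matrix coefficient formula of Lemma \ref{lem:supportimpliesclosetorus} and match levels with $\chi^{-1}(t)$. Write $t = a + b\sqrt{D}$ with $v(a) = 0$ and $v(b)$ in the range forced by the support conditions of Lemma \ref{lem4.1:support}. Since $c(\theta\chi^{-1})=2\l$ (necessarily even because both $\E$ and $\L$ are ramified) and $\alpha_{\theta\chi^{-1}}$ may be chosen imaginary (as $w_\pi = \chi|_{\F^\times} = 1$), a short expansion using \eqref{eq:alphathetaless} and the fact that imaginary elements have vanishing trace gives $\theta\chi^{-1}(a+b\sqrt{D}) = \psi(2\alpha_{\theta\chi^{-1}}\, b\sqrt{D}/a)$ on the relevant range. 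The equation $\Phi_\varphi(t)\chi^{-1}(t) = 1$ then becomes, after cancelling the common factor $b/a$, the single congruence
\begin{equation*}
\varpi_\F^{-n}\, v^{-1}\bigl((\sqrt{D/D'}\,v - 1)^2 - Du^2\bigr) \equiv 2\alpha_{\theta\chi^{-1}}\sqrt{D} \pmod{\varpi_\F^{-v_\F(b)}O_\F},
\end{equation*}
which must hold for every $b$ in the common support.

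Since $v_\F(2\alpha_{\theta\chi^{-1}}\sqrt{D}) = -\l$, the leading contribution on the left must also have $v_\F$-valuation $-\l$, which forces $v_\F\bigl((\sqrt{D/D'}v - 1)^2 - Du^2\bigr) = n - \l$. The decisive observation is a parity mismatch: $(\sqrt{D/D'}\,v - 1)^2$ is a square in $\F^\times$ and so has \emph{even} $v_\F$-valuation (or equals zero), while $v_\F(Du^2) = 1 + 2v_\F(u)$ is \emph{odd} because $\L$ ramifies. Consequently these two quantities can never cancel each other, and the parity of $n - \l$ dictates which one realises the valuation $n - \l$. If $n - \l$ is even, the square must dominate, giving $v_\F((\sqrt{D/D'}v - 1)^2) = n - \l$ and the milder condition $v_\F(u) \geq (n-\l)/2$, under which in particular $u = 0$ is permissible. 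If $n - \l$ is odd, the $Du^2$ term must dominate, forcing $v_\F(u) = (n - \l - 1)/2$ and $v_\F((\sqrt{D/D'}v - 1)^2) > n - \l$.

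It then remains to pin down the range of $b$. Combine the two support congruences of Lemma \ref{lem4.1:support}: \eqref{eq:relatetwotorus1} gives $v_\F(bDu) \geq \lceil n/2 \rceil$, while \eqref{eq:relatetwotorus2'} gives $v_\F(b) + v_\F(\sqrt{D/D'}\,v - 1) \geq \lfloor n/2 \rfloor$, using that $\sqrt{D/D'}$ is a unit. Substituting the values of $v_\F(u)$ and of $v_\F(\sqrt{D/D'}v - 1)$ determined above, a short case-by-case computation on the parities of $n$ (four subcases total) shows that the binding bound for $v_\F(b)$ always simplifies to $\lfloor \l/2 \rfloor$. The only genuinely delicate point is tracking the ceiling/floor arithmetic through these subcases and verifying that in each the more restrictive of the two support bounds is the one displayed in the statement; the actual algebraic content is entirely fixed by the parity argument of the previous paragraph.
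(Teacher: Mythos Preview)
Your argument is correct and follows exactly the template the paper uses for the parallel inert-case lemmas (e.g.\ Lemma~\ref{lem:supportofintC4n}): match the level of the $\psi$-factor coming from Lemma~\ref{lem:supportimpliesclosetorus} against that of $\theta\chi^{-1}$, exploit the parity of $v_\F((\sqrt{D/D'}v-1)^2)$ versus $v_\F(Du^2)$, and then read off $b_0$ from \eqref{eq:relatetwotorus1}--\eqref{eq:relatetwotorus2'}. The paper in fact states this ramified lemma without proof, so you are supplying precisely the omitted verification. One cosmetic point: your displayed congruence should carry a minus sign on the right (it comes from $\Phi_\varphi\chi^{-1}=1$, not $\Phi_\varphi=\chi$), but this is irrelevant to the valuation and parity reasoning.
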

One can get similar quadratic equation as before:
\begin{equation}\label{eq:new-necessary-same-ramified}
b(  \frac{D}{D'}v^2+(2\varpi_\F^n\alpha_{\theta\chi^{-1}}\sqrt{D}-2\sqrt{\frac{D}{D'}})v+(1-Du^2))\equiv 0\mod\varpi_\F^n.
\end{equation}
The  discriminant is
\begin{equation}
{\Delta}(u)=4\varpi_\F^{n}\alpha_{\theta\chi^{-1}}\sqrt{D}(\varpi_\F^{n}\alpha_{\theta\chi^{-1}}\sqrt{D}-2\sqrt{\frac{D}{D'}})+4\frac{D}{D'}Du^2.
\end{equation}
Here we have used that $c(\theta)=2n$. Also note that $v_\E(\alpha_{\theta\chi^{-1}})=-2\l -1$.

Consider first the case $n-\l >0$ is even. We pick $u=0$ directly for simplicity. 
Then $v_\F({\Delta}(0))=n-\l $ and
\begin{equation}
{\Delta}(0)\equiv -8\varpi_\F^n\alpha_{\theta\chi^{-1}}\frac{D}{D'}\sqrt{D'}
\end{equation}
Recall that by Lemma \ref{lem:Epsilontestsameramified} (2), we write $\mu=\Theta\chi^{-1}\eta$, where $\eta $ is a character of $\E^\times $ extending $\eta_{\E/\F}$. In particular if we pick $\eta=\Delta_\theta$, then $\mu=\theta\chi^{-1}$. When we write $\mu(1+\varpi_\E^{c(\mu)-1}x)=\theta(1+\varpi_\E^{c(\theta)-1}\delta x)$, this implies that
\begin{equation}
\alpha_{\theta\chi^{-1}}\varpi_\E^{2\l +1}\equiv \alpha_\theta \varpi_\E^{2n+1}\delta  \equiv    \varpi_\E\frac{1}{\sqrt{D'}} \delta \mod\varpi.
\end{equation}
Thus 
\begin{equation}
{\Delta}(0)\equiv -8\varpi_\F^{n-\l } \frac{D}{D'} \delta .
\end{equation}
It's a square iff $-2\delta $ is a square, consistent with Lemma \ref{lem:Epsilontestsameramified} (2).

When $n-\l=0$, we still pick $u=0$ and $\Delta(0)=4\varpi_\F^{n}\alpha_{\theta\chi^{-1}}\sqrt{D}(\varpi_\F^{n}\alpha_{\theta\chi^{-1}}\sqrt{D}-2\sqrt{\frac{D}{D'}})$. By the notation in Lemma \ref{lem:Epsilontestsameramified}(1), 
$\chi(1+\varpi_\E^{c(\theta)-1} x)=\theta(1+\varpi_\E^{c(\theta)-1}\delta x)$, which implies that $\alpha_\chi\equiv\delta\alpha_\theta$. Then
\begin{equation}
\Delta(0)\equiv 4\varpi_\F^{2n}D\alpha_\theta^2(\delta^2-1).
\end{equation}
$4\varpi_\F^{2n}D\alpha_\theta^2$ is already a square, so $\Delta(0)$ is a square is equivalent to $\delta^2-1$ being a square, which is  consistent with Lemma \ref{lem:Epsilontestsameramified}(1).

When ${\Delta}(0)$ is indeed a square, we get two solutions of $v\mod \varpi_\F^{ \lceil n/2\rceil}$. 
For each of these two solutions we have
\begin{equation}
I(\varphi,\chi)=\frac{1}{q^{\lfloor n/2\rfloor -\frac{n-\l }{2}}}=\frac{1}{q^{\lfloor \l /2\rfloor}}.
\end{equation}

Now if $n-\l $ is odd, $v_\F({\Delta}(0))=n-\l $ is odd, thus ${\Delta}(0)$ can never be a square. We need to pick $u$ such that $v_\F(u)=\frac{n-\l -1}{2}$ and ${\Delta}(0)+4\frac{D}{D'}Du^2$ can be of higher evaluation. For this purpose we need that
\begin{equation}
-8\varpi_\F^{n-\l } \frac{D}{D'} \delta +4\frac{D}{D'}Du^2\equiv 0.
\end{equation}
Note that $D$ differs from $\varpi_\F$ by a square and $\frac{D}{D'}$ is also a square. So this being possible is equivalent to that $2\delta $ is a square. This again is consistent with Lemma \ref{lem:Epsilontestsameramified} (2).

Once $2\delta $ is a square, we can easily adjust $u$ so that ${\Delta}(u)$ is a square. In this case it is possible to get different number of solutions of $v\mod\varpi_\F^{\lceil n/2\rceil}$, depending on $v(\Delta(u))$. For each solution we have
\begin{equation}
I(\varphi,\chi)=\frac{1}{q^{\lceil n/2\rceil-\frac{n-\l +1}{2}}}=\frac{1}{q^{\lfloor \l /2\rfloor}}.
\end{equation}
In either case, the size of the local integral is consistent with \eqref{Eq:SizeofIntAllCase} with $l=2\l+1$.

\bibliographystyle{plain}

\begin{thebibliography}{10}



\bibitem{Assing18}
Edgar Assing
\newblock Yet another GL2 subconvexity result.
\newblock arXiv 1805.00974.


\bibitem{BushnellHenniart:06a}
C.~Bushnell and G.~Henniart.
\newblock {\em The {Local} {Langlands} {Conjecture} for $\rm{GL}(2)$}.
\newblock Springer-Verlag, Berlin, 2006.

\bibitem{BushnellKutzko:a}
Colin~J. Bushnell and Philip~C. Kutzko.
\newblock {\em The admissible dual of $GL_N$ via compact open subgroups},
  volume 129 of {\em Annals of Mathematics Studies}.
\newblock Princeton University Press, Princeton, {NJ}.

\bibitem{CaiShuTian:14a}
Li~Cai, Jie Shu, and Ye~Tian.
\newblock Cube sum problem and an explicit gross-zagier formula.
\newblock {\em American Journal of Mathematics}, 139(3):785--816, 2017.

\bibitem{feigon_averages_2009}
Brooke Feigon and David Whitehouse.
\newblock Averages of central $l$-values of hilbert modular forms with an
  application to subconvexity.
\newblock 149(2):347--410.

\bibitem{FileMartin:17a}
Daniel File, Kimball Martin, and Ameya Pitale.
\newblock Test vectors and central ${L}$-values for $\rm{GL}(2)$.
\newblock {\em Algebra \& Number Theory}, 11(2):253--318, 2017.

\bibitem{GrossPrasad:91a}
Benedict~H. Gross and Dipendra Prasad.
\newblock Test vectors for linear forms.
\newblock {\em Mathematische Annalen}, 291(2):343--355, 1991.

\bibitem{BlomerMili:15}
Valentin Blomer and Djordje Mili\'{c}evi\'{c}
\newblock $p$-adic analytic twists and strong subconvexity. 
\newblock {\em Ann. Sci. Éc. Norm. Supér.} (4) 48 (2015), no. 3, 561–605.


\bibitem{Howe:aa}
Roger~E. Howe.
\newblock Kirillov theory for compact $p$-adic groups.
\newblock {\em Pacific Journal of Mathematics}, 73(2):365--381, 1977.

\bibitem{hu_sup_2018}
Yueke Hu.
\newblock Sup norm on $\text{PGL}_n$ in depth aspect.
\newblock {\em arXiv:1809.00617}.

\bibitem{Hu:a}
Yueke Hu.
\newblock {Test} vectors for {Waldspurger}'s period integral and application to
  the mass equidistribution on nonsplit torus.
\newblock {\em submitted. arxiv:1608.06106}.

\bibitem{HuNelsonSaha:17a}
Yueke Hu, Paul~D. Nelson, and Abhishek Saha.
\newblock Some analytic aspects of automorphic forms on {GL}(2) of minimal
  type.
\newblock {\em To appear on Commentarii Mathematici Helvetici.
  arXiv:1709.10362}.

\bibitem{HuYinShu180}
Yueke Hu, Hongbo Yin, and Jie Shu.
\newblock An explicit gross-zagier formula related to the sylvester conjecture.
\newblock {\em Trans. Amer. Math. Soc. 372 (2019), 6905-6925}.

\bibitem{HuYinShu18}
Yueke Hu, Hongbo Yin, and Jie Shu.
\newblock Waldspurger's period integral for newforms.
\newblock {\em arXiv:1907.11428}.

\bibitem{jacquet_sur_1987}
Herv{\'e} Jacquet.
\newblock Sur un r{\'e}sultat de waldspurger ii.
\newblock 63(3):315--389.

\bibitem{Knightly:aa}
Andrew Knightly and Carl Ragsdale.
\newblock Matrix coefficients of depth-zero supercuspidal representations of
  {GL}(2).
\newblock {\em Involve. A Journal of Mathematics}, 7(5):669--690, 2014.

\bibitem{LL}
R.P. Langlands.
\newblock On the functional equation of the artin l-functions.
\newblock {\em Unpublished note. See
  https://publications.ias.edu/sites/default/files/a-ps.pdf.}

\bibitem{NelsonVenkatesh:}
Paul~D. Nelson and Akshay Venkatesh.
\newblock The orbit method and analysis of automorphic forms.
\newblock {\em arXiv:1805.07750}.

\bibitem{IanYoung18}
Ian Petrow and  Matthew P. Young.
\newblock The Weyl bound for Dirichlet L-functions of cube-free conductor.
\newblock  arXiv:1811.02452.
\bibitem{IanYoung19}
Ian Petrow and Matthew P. Young.
\newblock The fourth moment of Dirichlet L-functions along a coset and the Weyl bound. 
\newblock  arXiv:1908.10346.

\bibitem{RR:05}
Dinakar Ramakrishnan and Jonathan Rogawski. 
\newblock Average values of modular L-series via the relative trace formula.
\newblock {\em Pure Appl. Math.} Q.1 (2005), no.4, Special Issue: In memory of Armand Borel. Part 3, 701-735.

\bibitem{Saito:93a}
Hiroshi Saito.
\newblock On {Tunnell}'s formula for characters of $\rm{GL}(2)$.
\newblock {\em Compositio Mathematica}, 85(1):99--108, 1993.

\bibitem{stevens_supercuspidal_2008}
Shaun Stevens.
\newblock The supercuspidal representations of p-adic classical groups.
\newblock {\em Invent. math.}, 172(2):289--352, 2008.

\bibitem{TianYuanZhang:14a}
Ye~Tian, Xinyi Yuan, and Shou-Wu Zhang.
\newblock Genus periods, genus points and congruent number problem.
\newblock {\em Asian Journal of Mathematics}, 21(4):721--773, 2017.

\bibitem{Tunnell:83a}
Jerrold~B. Tunnell.
\newblock Local $\epsilon $-factors and characters of $\rm{GL}(2)$.
\newblock {\em American Journal of Mathematics}, 105(6):1277--1307, 1983.

\bibitem{Walds} 
Jean L.Waldspurger. 
\newblock Sur les valeurs de certaines fonctions L automorphes en leur centre de sym\'{e}trie. 
\newblock {\em Compositio Mathematica},  54:173-242, 1985.

\bibitem{Wu:16a}
Han Wu.
\newblock Subconvex bounds for compact toric integrals.
\newblock {\em arXiv:1604.01902}.

\end{thebibliography}

\end{document}